\documentclass{article}
\usepackage[T1]{fontenc}
\usepackage{amsmath, amsthm, amssymb, fullpage}

\newtheorem{proposition}{Proposition}[subsection]
\newtheorem{corollary}[proposition]{Corollary}
\newtheorem{lemma}[proposition]{Lemma}
\newtheorem{theorem}[proposition]{Theorem}
\theoremstyle{definition}
\newtheorem{definition}[proposition]{Definition}

\usepackage{xcolor}

\usepackage{tikz}
\usepackage{tikz-cd}

\theoremstyle{remark}

\newtheorem{re}[proposition]{Remark}
\newtheorem*{re*}{Remark}
\newtheorem{example}[proposition]{Example}

\def\D{\Delta}
\def\bA{\mathbb{A}}
\def\bC{\mathbb{C}}

\def\bF{\mathbb{F}}
\def\bG{\mathbb{G}}
\def\bP{\mathbb{P}}
\def\bZ{\mathbb{Z}}
\def\cA{\mathcal{A}}
\def\cB{\mathcal{B}}
\def\cC{\mathcal{C}}

\def\cO{\mathcal{O}}

\def\cW{\mathcal{W}}

\def\cZ{\mathcal{Z}}
\def\fA{\mathfrak{A}}
\def\fE{\mathfrak{E}}

\def\fW{\mathfrak{W}}

\def\cE{\mathcal{E}}

\newcommand{\bfc}{\mathbf{c}}

\newcommand{\bfct}{\mathbf{c}_{\mathbf{t}}}
\newcommand{\uw}{\underline{\omega}}
\newcommand{\bL}{\mathbb{L}}
\newcommand{\bT}{\mathbb{T}}
\newcommand{\bLa}{\mathbb{L}^{\alpha}}
\newcommand{\bTa}{\mathbb{T}^{\alpha}}
\newcommand{\bLL}{\mathbb{L}^{\mathrm{log}}}
\newcommand{\bTT}{\mathbb{T}^{\mathrm{log}}}
\newcommand{\Ta}{T^*_{\alpha}}
\newcommand{\Tad}{T_{\alpha}}

\newcommand{\bTs}{\mathbb{T}^{\mathrm{slog}}}

\def\cL{\mathcal{L}}
\def\cLA{\mathcal{L}_{\mathcal{A}}}
\def\fC{\mathfrak{C}}

\def\fCgn{\mathfrak{C}_{g,n}}
\def\fMgn{\mathfrak{M}_{g,n}}
\def\bfCgn{\overline{\mathcal{C}}_{g,n}}
\def\bfMgn{\overline{\mathcal{M}}_{g,n}}
\newcommand{\RM}{R\overline{\mathcal{M}}_{g,n}(X,\beta)}
\newcommand{\MP}{\overline{\mathcal{M}}_{g,n}(\mathbb{P}^r,\beta)}
\newcommand{\MPp}{\overline{\mathcal{M}}_{g,n}(\mathbb{P}^r,\beta)^E}
\newcommand{\RMP}{R\overline{\mathcal{M}}_{g,n}(\mathbb{P}^r,\beta)}
\newcommand{\RH}{R\underline{Hom}}
\newcommand{\RW}{R\underline{Sec}}
\newcommand{\Hom}{\underline{Hom}}
\newcommand{\Weil}{\underline{Sec}}
\newcommand{\HB}{\mathcal{H}_{g,n}}
\newcommand{\RHB}{R\mathcal{H}_{g,n}}
\newcommand{\RHBM}{R\mathcal{H}_{R\mathcal{W}}}
\newcommand{\RHBP}{R\mathfrak{H}_{g,n}}
\newcommand{\RHBPM}{R\mathfrak{H}_{R\mathfrak{W}}}
\newcommand{\w}{\omega_{\overline{\mathcal{C}}_{g,n}/\overline{\mathcal{M}}_{g,n}}}
\newcommand{\W}{\omega_{\mathfrak{C}_{g,n}/\mathfrak{M}_{g,n}}}

\renewcommand{\dim}{\mathrm{dim}}
\renewcommand{\log}{\mathrm{log}}

\newcommand{\V}{\mathbf{V}}
\newcommand{\Tot}{\mathbf{Tot}}
\newcommand{\LogDStk}{\mathbf{LogDStk}}
\newcommand{\DStk}{\mathbf{DStk}}
\newcommand{\DSchm}{\mathbf{DSchm}}
\newcommand{\DPStk}{\mathbf{DPStk}}
\newcommand{\DRing}{\mathbf{DRing}}
\newcommand{\QCoh}{\mathbf{QCoh}}
\newcommand{\LL}{\mathbf{L}}
\newcommand{\RR}{\mathbf{R}}
\newcommand{\DR}{\mathbf{DR}}

\newcommand{\op}{\mathrm{op}}
\newcommand{\cl}{\mathrm{cl}}
\newcommand{\cofib}{\mathrm{cofib}}
\newcommand{\fib}{\mathrm{fib}}
\newcommand{\coker}{\mathrm{coker}}
\renewcommand{\ker}{\mathrm{ker}}
\newcommand{\id}{\mathrm{id}}
\newcommand{\pt}{\mathbf{pt}}
\newcommand{\red}{\mathrm{red}}
\newcommand{\tred}{\mathrm{tred}}
\newcommand{\tw}{\mathrm{tw}}
\newcommand{\vdim}{\mathrm{vdim}}

\newcommand{\vir}{\mathrm{vir}}
\newcommand{\im}{\mathrm{im}}
\newcommand{\ev}{\mathrm{ev}}
\newcommand{\rk}{\mathrm{rk}}
\newcommand{\Ob}{\mathrm{Ob}}
\newcommand{\st}{\mathrm{st}}

\DeclareMathOperator{\map}{\mathbf{Map}}
\DeclareMathOperator{\Sym}{Sym}
\DeclareMathOperator{\Spec}{Spec}
\DeclareMathOperator{\Proj}{\mathrm{Proj}}

\newcommand{\longhookrightarrow}{\lhook\joinrel\longrightarrow}

\usepackage{hyperref}

\title{Deligne--Faltings Logarithmic Derived Geometry with a View towards Logarithmic GLSM}
\author{Felix Janda and Ruoxi Li}

\begin{document}

\maketitle

\abstract{
  Many perfect obstruction theories and virtual cycles in enumerative geometry are shadows from derived geometry.
  In this paper, we give a general construction of reduced perfect obstruction theories arising from a geometric reduction of the derived stack $X$ with respect to a $(-1)$-shifted log function $w_\alpha$.
  Notably, this construction leads to a decomposition of the virtual cycle $[X]^\vir$ into the reduced virtual cycle $[X]^\red$ and a reduced boundary cycle $[\Delta_X]^\red$.
  Our construction is motivated by future applications to the moduli stack of UMD logarithmic R-maps in log GLSM \cite{CJR21}.
  Along the way, we develop logarithmic analogs of several constructions in derived geometry, in particular proving a logarithmic Lagrangian intersection theorem.
}

\setcounter{tocdepth}{2}
\tableofcontents

\setcounter{section}{-1}
\section{Introduction}

An essential difficulty in defining invariants in enumerative geometry is that the relevant moduli stacks $X_\cl$ of maps or sheaves are rarely smooth or even equidimensional, and hence their ordinary fundamental classes do not provide the expected-dimensional cycles needed for enumerative invariants.
However, according to Kontsevich's hidden smoothness principle \cite{Kontsevich95}, singular moduli stacks are often the shadows of (quasi-)smooth derived enhancements.
This led to the development of \emph{perfect obstruction theories} and \emph{virtual fundamental classes/cycles} of Li--Tian \cite{LiTi98} and Behrend--Fantechi \cite{BeFa97}, and has been one of the major drivers for the development of derived geometry \cite{CFKa09, STV15, Schurg13, Pridham13, STG16}.
In particular, if a Deligne--Mumford stack $X_\cl$ admits a quasi-smooth derived thickening $X$, applying the cotangent complex functor to the closed embedding $X_\cl \hookrightarrow X$ yields a perfect obstruction theory on $X_\cl$ in the sense of \cite{BeFa97}, see \cite{STV15}.

A different challenge to defining enumerative invariants is that integrating against the (virtual) fundamental class requires the moduli stack $U_\cl$ to be proper.
There are several ways to resolve this.
One approach is to use the cosection localized virtual cycles of Kiem--Li \cite{KiLi13}:
Given a cosection $\sigma$ of the obstruction sheaf of $U_\cl$, this allows us to replace the virtual cycle $[U_\cl]^\vir$ by a \emph{cosection-localized virtual cycle} $[U_\cl]^\vir_\sigma$, which is supported on a closed \emph{degeneracy locus} $U_\cl(\sigma) \subset U_\cl$.
There are many cases when $U_\cl(\sigma)$ is proper even when $U_\cl$ is not, and in such a situation, we can define enumerative invariants as integrals against $[U_\cl]^\vir_\sigma$.
From a derived geometry viewpoint, cosections are related to \emph{$(-1)$-shifted differential $1$-forms}, which arise naturally by differentiating \emph{$(-1)$-shifted functions}, see \cite[Section~6]{AKLPR22P}\footnote{Note that we refer to a specific arXiv version, not the latest or published version.} and \cite{OhTh24, KiPa25}.

A second way to resolve non-properness is to find a compactification $X_\cl$ of $U_\cl$, and to extend the perfect obstruction theory from $U_\cl$ to $X_\cl$.
We may then define enumerative invariants by integrating against $[X]^\vir$.
Usually, there are many ways to compactify $U_\cl$, and these may lead to different versions of enumerative invariants.
Logarithmic geometry often leads to modular compactifications of moduli stacks.

Chang--Li's moduli stack $U_\cl$ of stable maps with $p$-fields \cite{ChLi12} is neither smooth nor proper.\footnote{See \cite{CJW21, Picciotto21, OhTh24, KiPa25} for more recent treatments of stable (quasi-)maps with $p$-fields.}
Despite this, Chang--Li apply cosection localization to define enumerative invariants of $U_\cl$, and they prove that (up to a sign) those invariants agree with Gromov--Witten invariants of quintic threefolds. The log GLSM program \cite{CJRS22, CJR21, CJR-tropical, CJR-effective, CJR-localization} instead constructs a compactification $X_{\cl}$ of UMD (uniform minimal degeneracy) logarithmic R-maps, and constructs \emph{two} perfect obstruction theories (and hence virtual cycles) for $X_{\cl}$: the \emph{canonical perfect obstruction theory} and the \emph{reduced perfect obstruction theory}.
The canonical perfect obstruction theory is defined via the deformation theory of logarithmic maps.
The definition of the reduced perfect obstruction theory is more subtle.
Its construction motivates this work and is discussed further below.
As proven in \cite[Theorem~1.9]{CJR21}, the reduced virtual cycle of UMD log R-maps recovers the cosection-localized virtual cycle of Chang--Li, and hence thev Gromov--Witten invariants of interest.
This allows performing computations of invariants via virtual localization on the moduli of UMD log R-maps, which was not possible for the moduli of stable maps with $p$-fields or the moduli of stable maps to quintic threefolds.
In particular, this has been applied to major higher genus computations and structural predictions for Gromov--Witten invariants \cite{GJR17p,GJR18P}.

The reduced perfect obstruction theories considered in the context of the enumerative geometry of K3 surfaces \cite{OkPa10, MaPa13, MPT10}\footnote{See also \cite{STV15} and \cite[Section~6]{AKLPR22P} for the derived geometric point of view.} turn a vanishing canonical virtual cycle into an interesting reduced virtual cycle by removing a trivial factor from the obstruction sheaf.
In particular, this construction increases the virtual dimension by one.
The reduced perfect obstruction theory for the moduli of UMD log R-maps is different --- it is obtained from the canonical perfect obstruction theory by using a \emph{twisted cosection} to remove a two-term ``boundary'' complex from the canonical perfect obstruction theory, and virtual dimension of the canonical and reduced perfect obstruction theories are the same.

The motivation for this paper is to build a general derived-geometric framework for reducing perfect obstruction theories along a logarithmic boundary, using twisted cosections.
In the following sections, we outline the theory.
In particular we will discuss the Decomposition Formula $[X_\cl]^\vir = [X_\cl]^\red + [(\D_{X_\cl})_\cl]^\red$, Theorem~\ref{thm:intro-decompose-virt-cycle}, of $[X_\cl]^\vir$ into the reduced virtual cycle $[X_\cl]^\red$ and the reduced boundary virtual cycle $[(\D_{X_\cl})_\cl]^\red$.

\subsection{Deligne--Faltings Logarithmic Derived Geometry}

In Section \ref{sec:2}, we develop aspects of Deligne--Faltings rank one derived log geometry related to twisted cosections from Log GLSM \cite{CJR21}.
We follow \cite{Olssonlogstk03} and \cite{Pridham13} in spirit, but we work in a technically much simpler setting, while retaining interesting geometric features, such as the canonical recursive structure carried by every derived log stack, see Definition \ref{def:can-recur-str}.
We refer the reader to \cite{Rognes09, Bhatt12, Pridham13, STG16, zhang26} for other (more sophisticated) approaches to derived log geometry.

Set $\cA:=[\bA^1/\bG_m]$. Following \cite{Olssonlogstk03}, a \emph{Deligne-Faltings log structure of rank $m$} on a derived stack $X$ is a morphism $\alpha_X: X \to \cA^m$.
Since we will mainly consider Deligne--Faltings log structures of rank $1$, we refer to a pair $(X, X \overset{\alpha_X}{\longrightarrow} \cA)$ as a \emph{derived log stack}. Consequently, we define the $\infty$-category $\LogDStk$ of derived log stacks to be the $\infty$-category $\DStk_{/\cA}$ of derived stacks over $\cA$. Hence, a morphism of derived log stacks is a morphism over $\cA$, and thus naturally preserves the log structure\footnote{Such a morphism in classical log geometry is called a strict morphism.}. Although the definition of morphisms of derived log stacks is somewhat restrictive, it has the advantage that homotopy fibered products are naturally defined.\footnote{When considering non-strict morphisms of classical log schemes, taking fibered products is somewhat subtle.} Following \cite{Olssonlogcotangent05}, the log cotangent complex $\bLL_{X}$ of $(X, \alpha_X)$ is the relative cotangent complex $\bL_{X/\cA}$.

A \emph{$(-1)$-shifted log function} on a derived log scheme is a $(-1)$-shifted function allowed to have poles along the boundary; see Definition \ref{def:shift-log-function-diff-form} and Remark \ref{rmk:understanding-of-shift-log-function}.
This will be a central notion, and most of derived log geometry developed here will be built around it.
In particular, if we restrict the $(-1)$-shifted log function to the open locus away from the boundary, then we obtain an ordinary $(-1)$-shifted function (see Remark \ref{rmk:generic-fib-shift-univ-line-bund}).

Taking the differential, a $(-1)$-shifted log function $w_\alpha$ on a derived log Artin stack $(X, \alpha_X)$ induces an exact $(-1)$-shifted log differential $1$-form
$$dw_\alpha: \cO_X \to \bLL_X \otimes^{\LL}_{\cO_{X}} \cO_X(\D_X) [-1].$$
Importantly, there is a close relation between $(-1)$-shifted log differential forms and log cosections, discussed below.
Following \cite{CS24}, we call $\bLa_X := \bLL_X \otimes^{\LL}_{\cO_{X}} \cO_X(\D_X)$ the twisted log cotangent complex of $(X, \alpha_X)$.\footnote{It is tempting to call $\bLa_X$ the log cotangent complex, as one might expect that an exact $(-1)$-shifted log differential $1$-form is a section of the $(-1)$-shifted log cotangent complex.} The linear derived stack $\Ta[-1] X := \V_X\left((\bLa_X[-1])^\vee\right)$ is called the $(-1)$-shifted log cotangent bundle\footnote{An analogous $0$-shifted log cotangent bundle appears, for example, in \cite{BDMOP19,MRWW24}.}. By abuse of notation, we also use $dw_{\alpha}$ to denote the morphism $X \to \Ta[-1]X$ induced by $dw_\alpha: \cO_X \to \bLa_X[-1].$ Then the \emph{derived log critical locus} of $w_\alpha$ is defined via the following homotopy Cartesian diagram in $\LogDStk$:
$$
    \begin{tikzcd}
{X(dw_\alpha)} \arrow[r] \arrow[d] \ar[dr, phantom, "\lrcorner_{\text{\tiny h}}", very near start] & {X} \arrow[d,"dw_\alpha"] \\
{X} \arrow[r, "0"]           & {\Ta[-1]X}          
    \end{tikzcd}
$$
Guided by Pantev--To\"en--Vaqui\'e--Vezzosi's work \cite{Vezzosi20, PTVV13} on shifted symplectic structures, we expect (see Remark \ref{rmk:log-cotangent-complex-der-log-crit-loci}) the following quasi-isomorphism
$$
\bTT_{X(dw_\alpha)} \simeq \bLa_{X(dw_\alpha)}[-2],
$$
which is established by the following theorem, building on \cite{Calaque19,CS24}.
\begin{theorem}[Logarithmic Lagrangian Intersection Theorem, see Theorem~\ref{thm:log-lag-intersect}]
\label{thm:intro-log-lag-intersect}
    Let $w_\alpha: X \to L_{\cA}[-n]$ be a $(-n)$-shifted log function on a derived log Artin stack $X$.
    Then $dw_\alpha: X \to \Ta[-n]X$ is a log Lagrangian, and the derived log critical locus $X(dw_\alpha)$ carries a $(-n-1)$-shifted log symplectic structure.
\end{theorem}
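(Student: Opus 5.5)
The strategy is to reduce everything to the relative (over $\cA$) shifted symplectic geometry of Calaque and of \cite{CS24}, and then run the classical Lagrangian intersection argument of \cite{PTVV13} in that relative setting.

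\textbf{Step 1: the twisted shifted cotangent bundle.} We work in $\LogDStk=\DStk_{/\cA}$, so that $\bLL_X=\bL_{X/\cA}$. Let $L_\cA\to\cA$ be the universal line bundle, with $\cO(\D)$ pulled back from $\cA$. Then $\Ta[-n]X=\V_X\bigl((\bLa_X[-n])^\vee\bigr)$ is the relative $(-n)$-shifted cotangent bundle of $X$ over $\cA$, twisted by the line bundle $\cO(\D)$.

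Following \cite{CS24}, we equip $\Ta[-n]X$ with a tautological relative $1$-form. This is the Liouville form $\lambda$, with values in $\cO(\D)[-n]$. Its de Rham differential $\omega=d\lambda$ is a closed relative $2$-form of degree $-n$ with twisted coefficients. Nondegeneracy is checked fiberwise: the standard exact sequence $\bT_{\text{fib}}\to\bT\to\pi^*\bT_X$ for the total space gives $\bT_{\Ta[-n]X/\cA}\simeq \bLa\otimes\cO(-\D)[n]\oplus\bT$ up to filtration. On this splitting $\omega$ pairs the two pieces perfectly. This is what we mean by a $(-n)$-shifted log symplectic structure, where log means relative to $\cA$ with twist $\cO(\D)$.

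\textbf{Step 2: $dw_\alpha$ is Lagrangian.} The section $dw_\alpha$ is exact. The pullback of the Liouville form along the section determined by the $1$-form $dw_\alpha$ is $dw_\alpha$ itself. Hence $(dw_\alpha)^*\omega=d_{\DR}(dw_\alpha)=d_{\DR}d_{\DR}w_\alpha$, which carries the canonical nullhomotopy; this nullhomotopy is the isotropic structure. Nondegeneracy amounts to the statement that the sequence
$$\bT_{X/\cA}\to (dw_\alpha)^*\bT_{\Ta[-n]X/\cA}\to \bLa_X[n]$$
induced by $\omega$ is a fiber sequence. Because the section is a graph, this reduces to the same statement for the zero section, which is the computation already done in Step 1.

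\textbf{Step 3: the critical locus.} Apply the relative Lagrangian intersection theorem to the two Lagrangians $0$ and $dw_\alpha$ in $\Ta[-n]X$ over $\cA$. This is the base-changed form of \cite{PTVV13} proved in \cite{Calaque19}, with the twisted coefficients handled as in \cite{CS24}. The theorem gives a $(-n-1)$-shifted relative symplectic structure on the fiber product $X(dw_\alpha)$, taken in $\LogDStk$.

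\textbf{Main obstacle.} The hardest point is making sense of closed forms with coefficients in the line bundle $\cO(\D)$. The relative de Rham algebra over $\cA$ with twisted weight is not the usual graded mixed algebra. To handle this, I would pass to the $\bG_m$-torsor $\bA^1\to\cA$ and work $\bG_m$-equivariantly, with $\cO(\D)$ as a weight-one character. Over the torsor the statement becomes an equivariant relative one over $\bA^1$, to which Calaque's and PTVV's results apply directly, and it then descends. I would check that the nondegeneracy maps descend, which follows since equivalences are detected after smooth base change.
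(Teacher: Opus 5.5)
Your outline is sound, and it matches the paper in substance on three points. You take the twisted symplectic form on $\Ta[-n]X$ and its tautological $1$-form from \cite{CS24}. You use the canonical nullhomotopy $\gamma_{dw_\alpha}$ of $(dw_\alpha)^*\omega$ as the isotropic structure. You build the form on $X(dw_\alpha)$ by the loop-concatenation argument of \cite{PTVV13}; the paper runs that argument directly in the twisted setting, since this is the one result of that subsection not available in \cite{CS24}.

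The genuine difference is how you get nondegeneracy of the Lagrangian structure on $dw_\alpha$. The paper follows \cite{Calaque19}. The universal property of the fiber sequence $\bTT_X\to(dw_\alpha)^*\bTT_{\Ta[-n]X}\to\bLa_X[-n]$ gives a comparison map $\phi^{\log}_{X,dw_\alpha}\colon\bTT_X\to\bTT_X$. This map is compatible with pullback, so by induction along smooth atlases the question reduces to a local computation on derived log affine schemes. The zero section is then just the case $w_\alpha\simeq 0$.

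You reverse this and deduce the graph case from the zero section. For that to be a proof you must name the mechanism:
\begin{itemize}
\item Let $t$ be fiberwise translation by $dw_\alpha$ on the linear stack $\Ta[-n]X$. Then $t^*\lambda\simeq\lambda+\pi^*dw_\alpha$.
\item Hence $t^*\omega\simeq\omega$, via the closed structure of the exact form $dw_\alpha$.
\item So $t$ is a log symplectomorphism with $t\circ 0=dw_\alpha$, and it transports $\gamma_0$ to a nondegenerate isotropic structure on $dw_\alpha$.
\end{itemize}
It is not enough to observe that $0^*\bTT_{\Ta[-n]X}$ and $(dw_\alpha)^*\bTT_{\Ta[-n]X}$ both split as $\bTT_X\oplus\bLa_X[-n]$. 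Nondegeneracy depends on the nullhomotopy supplied by the isotropic structure, not on the underlying complexes.

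Your route needs one nondegeneracy check instead of one per $w_\alpha$, but two caveats apply. First, that check for the zero section on a derived Artin stack is not the naive splitting computation of your Step 1. It is the same descent issue \cite{Calaque19} handles, so cite it or run it once. Second, you obtain a priori the transported structure rather than $\gamma_{dw_\alpha}$ itself. That suffices for the existence statement.

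Your ``main obstacle'' is not one. $L_\cA$ is pulled back from the base, so $\DR_\cA(X)\otimes_{\cO_\cA}L_\cA[-n]$ is already a graded mixed module in $\QCoh(\cA)$; this is exactly the paper's definition of $\fA^{p,cl}(X,-n)$. The torsor $\bA^1\to\cA$ is useful only for detecting equivalences after smooth base change.

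Finally, fix the shifts. The fiber tangent of $\Ta[-n]X$ is $\bLa_X[-n]$, not $\bLa\otimes\cO(-\D)[n]$. The last term of your Step 2 sequence should also be $\bLa_X[-n]$.
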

Theorem \ref{thm:intro-log-lag-intersect} is a log version of the Lagrangian Intersection Theorem \cite[Corollary~2.11]{PTVV13} (or the combination of \cite[Theorem~2.9]{PTVV13} and \cite[Theorem~2.22]{Calaque19}).
The shifted log symplectic structure introduced in Section \ref{sec:shift-log-symp} is obtained by specializing the twisted symplectic structure in \cite{CS24} to the case where the base stack is $\cA$, and the line bundle is the universal line bundle $L_\cA$ over $\cA$. One may also view the shifted log symplectic geometry in Section \ref{sec:shift-log-symp} as a derived analogue of $b^m$-symplectic geometry (see \cite{BDMOP19}). It would also be interesting to explore the relation among shifted symplectic structures \cite{PTVV13}, twisted symplectic structures \cite{CS24}, and shifted log symplectic structures (see the end of Section \ref{sec:shift-log-symp}).

\subsection{Logarithmic Cosections and Virtual Cycles}

A $(-1)$-shifted log function $w_\alpha$ on a quasi-smooth derived log Deligne--Mumford stack $(X, \alpha_X)$ naturally induces a log cosection
$$
\sigma^\log: \Ob^\log_{X_\cl} \to \cO_{X_\cl}(\D_{X_\cl})
$$
of the log obstruction sheaf $\Ob^\log_{X_\cl}$ (see Definition \ref{def:log-cosection}). Moreover, the derived degeneracy locus of the log cosection $\sigma^\log$ induced by $w_\alpha$ is defined to be the derived log critical locus $X(dw_\alpha)$. As mentioned in Remark \ref{rmk:shift-log-function}, the definition of log cosections is directly inspired by the twisted cosection in Log GLSM \cite{CJR21}.

From a $(-1)$-shifted log function $w_\alpha$ and its induced log cosection $\sigma^\log$, we may construct a log-meromorphic cosection $\widetilde{\sigma^\log}$ (see Definition \ref{def:log-mero-cosection}), and a reduced cosection $\sigma^\red$ (see Definition \ref{def:red-cosection}).

A log-meromorphic cosection $\widetilde{\sigma^\log}$ is a (meromorphic) cosection in the sense of \cite{KiLi13}, whose degeneracy locus supports the usual virtual cycle $[X_\cl]^\vir$ (see Proposition \ref{prop:log-meromorphic-cosection-localized-virt-cycle}).

The construction of the reduced cosection $\sigma^\red$ is the most crucial part of the paper. Given a $(-1)$-shifted log function $w_\alpha$ on a quasi-smooth derived log Deligne--Mumford stack $(X, \alpha_X)$, we have the following commutative diagram, where each square is a homotopy Cartesian diagram in $\LogDStk$.
$$
\begin{tikzcd}
{\D_X^\red} \arrow[r, hook] \arrow[d] \arrow[rd, "\lrcorner_{\text{\tiny h}}", phantom, very near start] & {X^\tred} \arrow[r] \arrow[d] \arrow[rd, "\lrcorner_{\text{\tiny h}}", phantom, very near start] & {X^\red} \arrow[r] \arrow[d, "w_{\alpha}^\red"'] \arrow[rd, "\lrcorner_{\text{\tiny h}}", phantom, very near start] & {X} \arrow[d, "w_{\alpha}"'] \\
{B\bG_m} \arrow[r, hook]           & {\cA} \arrow[r, "0"]           & {\mathbb{A}^1_{\mathcal{A}}[-1]} \arrow[r, "\tilde{s}"]           & {\mathcal{L}_{\mathcal{A}}[-1]}          
\end{tikzcd}
$$
Here, the morphism $\tilde{s}$ is induced by multiplication with the coordinate on $\cA$.
We call $X^\red$ the $w_\alpha$-reduction of $X$, and $\D_X^\red$ the $w_\alpha$-reduction of $\D_X$. The intermediate object $X^\tred$ leading to $\D_X^\red$ plays a role in the proof of the Decomposition Theorem \ref{thm:intro-decompose-virt-cycle}. The reduced cosection $\sigma^\red$ is naturally induced by the $(-1)$-shifted function $w_{\alpha}^\red$ on $X^\red$.

Note that $X^\red$ and $\D_X^\red$ are not necessarily quasi-smooth. However, as in \cite{STV15, AKLPR22P, ParkNotes25} (see Example \ref{eg:origin-red-POT}), we can overcome this difficulty and obtain the following theorem.

\begin{theorem}[Reduced Virtual/Boundary Cycles, see Corollary \ref{cor:red-POT}, Remark \ref{rmk:X-red-being-qsmth}, Corollary \ref{cor:red-divisor-POT}, and Remark \ref{rmk:what-does-log-cosection-surj-imply}]
\label{thm:intro-red-virt-or-bound-cycle}
    Let $w_{\alpha}: X \to L_{\cA}[-1]$ be a $(-1)$-shifted log function on a separated quasi-smooth derived log Deligne--Mumford stack $(X, \alpha_X)$. If the log cosection $\sigma^\log: \Ob^{\log}_{X_\cl} \to \cO_{X_\cl}(\D_{X_\cl})$ associated with $w_{\alpha}$ is surjective along $(\D_{X_\cl})_\cl = (\D_{X})_\cl$,\footnote{$\D_{X_\cl}$ is not necessarily classical with $X_\cl$ being classical; see the beginning of Section \ref{sec:3}.} then $X^\red$ and $\D_X^\red$ are quasi-smooth, and therefore induce perfect obstruction theories and the associated virtual cycles.
\end{theorem}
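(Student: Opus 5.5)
Quasi-smoothness is local and amounts to the cotangent complex having Tor-amplitude in $[-1,1]$ (for Deligne--Mumford stacks, in $[-1,0]$). The plan is to compute $\bL_{X^\red}$ and $\bL_{\D_X^\red}$ from the homotopy Cartesian squares in the diagram and to check amplitude pointwise at closed points of $X^\red_\cl$ and $(\D_X^\red)_\cl$.

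For $X^\red = X\times_{\cL_\cA[-1]}\bA^1_\cA[-1]$, base change of cotangent complexes gives a cofiber sequence
$$w^{\red*}_\alpha\bL_{\bA^1_\cA[-1]/\cL_\cA[-1]} \to \bL_{X^\red/X}\to 0,$$
so $\bL_{X^\red/X}$ is the pullback of $\bL_{\tilde s}$. The map $\tilde s$ is the $(-1)$-shift of multiplication by the coordinate, $\cO_\cA\to L_\cA$. Its relative cotangent complex is therefore a shift of the cone of $\cO(-\D)\to\cO$, namely $\cO_{\D}$ placed in degree $-2$ after the shift. Consequently
$$\bL_X|_{X^\red}\to\bL_{X^\red}\to \cO_{\D}\otimes(\text{line bundle})[2],$$
and $X^\red$ agrees with $X$ away from $\D_X$. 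This yields a possible $H^{-2}$ contribution, and possibly an $H^{-3}$ contribution after passing through the torsion $\cO_\D$ resolution, supported along the boundary.

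The key step is to show that this extra cohomology cancels. The connecting map $H^{-1}$-dual is exactly the dual of the log cosection $\sigma^\log$ restricted to $\D$. Dually, on tangent complexes it is the composite $\bT_{X}[1]\to \cO_\D(\D)$, i.e. $\Ob^\log\to\cO(\D)|_\D$.

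Surjectivity of $\sigma^\log$ along $(\D_{X_\cl})_\cl$ then makes the induced map on $H^1$ of tangent complexes surjective at every boundary point. The cone therefore has no cohomology in degree $2$. Equivalently, $\bL_{X^\red}$ has no $H^{-2}$, and at closed points the fiber has Tor-amplitude $[-1,0]$.

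I expect the main obstacle to be making this identification precise. One must compute the derivative of $w_\alpha$ through $\tilde s$ and match the resulting boundary map with $\sigma^\log$ as defined via $dw_\alpha$ and $\bLa$. One must also handle the twist by $\cO(\D)$, using the $\cA$-relative (log) cotangent complex, since $\bLL_X=\bL_{X/\cA}$ and $\cA$ is smooth of dimension $0$.

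I would first work in $\LogDStk$ relative to $\cA$. There $\bL_{X^\red/\cA}$ sits in the sequence with $\bLL_X$ and a rank-one term, and the boundary map is literally $dw_\alpha$ twisted. Pointwise surjectivity is then a Nakayama-type argument on derived fibers.

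For $\D_X^\red$, I would argue the same way along the two further squares. $X^\tred = X^\red\times_{\bA^1_\cA[-1]}\cA$ imposes the $(-1)$-shifted function $w^\red_\alpha=0$. Its cotangent complex adds $\cO[1]\cdot dw^\red_\alpha$, which again only raises amplitude to $[-2,\ldots]$. This is cancelled where the reduced cosection $\sigma^\red$ is surjective, and that surjectivity follows on $\D$ from surjectivity of $\sigma^\log$.

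Finally, $\D_X^\red = X^\tred\times_\cA B\bG_m$ is a base change of the quasi-smooth closed immersion $B\bG_m\hookrightarrow\cA$ (a Cartier divisor), which preserves quasi-smoothness. Separatedness and the Deligne--Mumford property pass to these fiber products. The STV construction then turns $X^\red_\cl\hookrightarrow X^\red$ into a perfect obstruction theory and virtual cycle in the sense of \cite{BeFa97}, and likewise for $\D_X^\red$.
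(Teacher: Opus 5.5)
Your plan is the paper's argument, phrased on the cotangent side and pointwise. The paper starts from the cofiber sequence $\bTT_{X^{\red}}|_{X_\cl}\to\bTT_X|_{X_\cl}\to\bF$ of Proposition~\ref{prop:red-POT-log-GLSM}. The fiber square of tangent complexes attached to the Cartesian square defining $X^{\red}$ (Lemma~\ref{lem:cotangent-complex-homotopy-cartesian-diagram}) shows that $\Ob^{\log}_{X_\cl}\to H^1(\bF)$ is $\sigma^{\log}$ followed by the surjection $\cO_{X_\cl}(\D_{X_\cl})\to H^1(\bF)$; this is exactly the identification you flag as the main obstacle. The paper then concludes that $H^2(\bTT_{X^{\red}}|_{X_\cl})=\coker\bigl(\Ob^{\log}_{X_\cl}\to H^1(\bF)\bigr)=0$.

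Your degree bookkeeping needs repair. The complex $\bL_{\tilde s}$ is the pullback of $\cofib\bigl(s_\cA[1]\colon\cO_\cA(-B\bG_m)[1]\to\cO_\cA[1]\bigr)$. Hence $\bL_{X^{\red}/X}$ is the pullback of $[\cO_X(-\D_X)\xrightarrow{s_X}\cO_X]$ placed in degrees $[-2,-1]$; dually, $\bT_\theta[1]|_{X_\cl}\simeq\bF$ in degrees $[0,1]$. So the $H^{-3}$ contribution you raise, and never dispose of, does not occur, and the only new term sits in degree $-2$. Likewise $\bL_{X^{\tred}/X^{\red}}\simeq\cO_{X^{\tred}}[2]$, not $\cO[1]$.

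Your ``equivalently'' step is also wrong. Vanishing of the sheaf $H^{-2}(\bL_{X^{\red}}|_{X_\cl})$ does not imply quasi-smoothness, because the lowest cohomology of a perfect complex does not commute with base change. For example, take $w_\alpha=0$ on a smooth $X$ with a nonempty smooth divisor. Then $H^{-2}(\bL_{X^{\red}}|_{X_\cl})=\ker s_X=0$, yet $H^2(\bTT_{X^{\red}}|_{X_\cl})=H^1(\bF)\neq 0$. Keep your fiberwise formulation, or use vanishing of the top cohomology $H^2$ of the tangent complex, as the paper does; top cohomology does commute with base change.

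For the boundary, your detour through $X^{\red}$ and $\sigma^{\red}$ works, but two points are missing. First, $X^{\tred}$ is not quasi-smooth globally: $H^2(\bTT_{X^{\tred}}|_{X_\cl})=\coker\sigma^{\log}$, which is nonzero on $X_\cl(\sigma^{\log})$. So the base change along $B\bG_m\hookrightarrow\cA$ must be applied to the open neighborhood of $(\D_X)_\cl$ where the cosection is surjective. This is harmless, since $\D^{\red}_X$ has underlying space $(\D_X)_\cl$. Second, surjectivity of $\sigma^{\red}$ along the boundary needs proof. The paper gets it from the exact sequence $0\to\Ob^{\red}_{X_\cl}\to\Ob^{\log}_{X_\cl}\oplus\cO_{X_\cl}\to\cO_{X_\cl}(\D_{X_\cl})\to 0$ in Proposition~\ref{thm:rel-red-cosection-surj-along-divisor}(3).

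The paper in fact bypasses $\sigma^{\red}$ for this part. Since $X^{\tred}=X\times^h_{L_\cA[-1]}\cA$ directly, Proposition~\ref{prop:tred-POT-log-GLSM} gives $\bTT_{X^{\tred}}|_{X_\cl}\to\bTT_X|_{X_\cl}\to\cO_{X_\cl}(\D_{X_\cl})[-1]$, whose $H^1$-map is $\sigma^{\log}$ itself. Restricting to the boundary via $\bTT_{X^{\tred}}|_{\D^{\red}_X}\simeq\bTs_{\D^{\red}_X}$ then shows that $\D^{\red}_X\to B\bG_m$ is quasi-smooth straight from the hypothesis.
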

The perfect obstruction theory induced by $X^\red$ is called the reduced perfect obstruction theory on $X_\cl$, and the associated virtual cycle, denoted by $[X_\cl]^\red$, is called the reduced virtual cycle. Likewise, the perfect obstruction theory induced by $\D_X^\red$ is called the reduced perfect obstruction theory on $(\D_{X_\cl})_\cl = (\D_{X})_\cl$, and the associated virtual cycle, denoted by $[(\D_{X_\cl})_\cl]^\red$ or $[(\D_X)_\cl]^\red$, is called the reduced boundary cycle. 

See Appendix \ref{append:red-POT-in-general} for a more general construction of reduced perfect obstruction theories, which includes both the reduced perfect obstruction theory discuss above and the reduced perfect obstruction theory as considered in \cite{OkPa10, MaPa13, MPT10} as examples.

Combining the results related to log cosections, log-meromorphic cosections, and reduced cosections, we prove the following theorem, which may be viewed as a derived-geometric version of \cite[Theorem~6.5]{CJR21}.
\begin{theorem}[Decomposition Theorem for Virtual Cycles, see Theorem \ref{thm:decompose-virt-cycle}]
\label{thm:intro-decompose-virt-cycle}
    Let $w_{\alpha}: X \to L_{\cA}[-1]$ be a $(-1)$-shifted log function on a separated quasi-smooth derived log Deligne--Mumford stack $(X, \alpha_X)$. If the log cosection $\sigma^\log: \Ob^{\log}_{X_\cl} \to \cO_{X_\cl}(\D_{X_\cl})$ associated with $w_{\alpha}$ is surjective along $(\D_{X_\cl})_\cl = (\D_{X})_\cl$, then we have a relation of virtual cycles
$$
[X_\cl]^\vir = [X_\cl]^\red + i_*[(\D_{X_\cl})_\cl]^\red.
$$
    Here $i: (\D_{X_\cl})_\cl \hookrightarrow X_\cl$ is the natural closed embedding.
\end{theorem}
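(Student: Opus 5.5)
The plan is to realize all three cycles as Gysin-type specializations of a single family, in the spirit of \cite[Theorem~6.5]{CJR21}, using the diagram defining $X^\red$, $X^\tred$ and $\D_X^\red$. The key observation is that $X^\red$ is the homotopy fibre product of $X$ with $\bA^1_{\cA}[-1]$ over $L_{\cA}[-1]$ along $\tilde s$. Moreover, $\tilde s$ is multiplication by the tautological section $s$ of $L_\cA$. So $\tilde s$ is an isomorphism away from the boundary $B\bG_m\subset\cA$ and degenerates to the zero map along it. Concretely, the cofibre of $\cO_\cA\xrightarrow{s}L_\cA$ is $L_\cA|_{B\bG_m}$. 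This identifies the difference between $\bL_{X}$ and $\bL_{X^\red}$ pulled back to $X^\red_\cl=X_\cl$ with a two-term boundary complex supported on $\D_X$, shifted appropriately. I will first record this as a cofibre sequence of cotangent complexes on $X_\cl$. Under the surjectivity hypothesis on $\sigma^\log$ along $(\D_X)_\cl$, this sequence relates the canonical and reduced perfect obstruction theories (Corollary~\ref{cor:red-POT}, Remark~\ref{rmk:X-red-being-qsmth}).

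Next I compare the intrinsic normal cones. Let $\fE$ and $\fE^\red$ be the vector bundle stacks of the two obstruction theories. Away from $\D_X$ the map $\tilde s$ is an isomorphism, so the two theories agree there and the cycles $[X_\cl]^\vir$ and $[X_\cl]^\red$ agree on the complement. Hence their difference is a class supported on $(\D_X)_\cl$, and the task is to identify it with $[(\D_X)_\cl]^\red$. To do this I will use a deformation to the normal cone along the family parametrized by the section $s$. Replace $\tilde s$ by $t\cdot\tilde s$ over $\bA^1_t$, or better, work directly with the exact sequence
\[
0\to \cO_{X_\cl}\xrightarrow{s} \cO_{X_\cl}(\D_{X_\cl})\to \cO_{(\D_{X})_\cl}(\D_{X_\cl})\to 0 .
\]
Twisting the cosection by this sequence expresses the obstruction bundle of the canonical theory as an extension of the reduced one by a boundary piece. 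The intrinsic normal cone $\fC_{X_\cl}\subset \fE$ then decomposes, in the cycle group of $\fE$, into the part lying in $\fE^\red$ plus a part lying over $(\D_X)_\cl$. This mirrors the argument of \cite{CJR21}, which uses Kiem--Li's cone reduction lemma. The cosection $\widetilde{\sigma^\log}$ (Proposition~\ref{prop:log-meromorphic-cosection-localized-virt-cycle}) provides the cone reduction off the boundary. On the boundary, the intermediate stack $X^\tred$ identifies the boundary contribution with the cone of $\D_X^\red$: it is the base change of $X^\red$ to the zero section, i.e.\ the zero locus of the $(-1)$-shifted function $w^\red_\alpha$. Its further base change to $B\bG_m$ is $\D_X^\red$, whose obstruction theory is perfect by Corollary~\ref{cor:red-divisor-POT}.

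Applying the Gysin map $0^!_{\fE}$ to both pieces then gives $[X_\cl]^\vir=[X_\cl]^\red+i_*[(\D_{X_\cl})_\cl]^\red$. The first piece uses $0^!_\fE=0^!_{\fE^\red}$ on cycles supported in $\fE^\red$, which follows from the compatible triangle of obstruction theories (functoriality of virtual pullbacks). The second piece uses the excess/compatibility of $0^!$ with the boundary triangle.

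The main obstacle is the cone decomposition step: showing that the specialization of $\fC_{X_\cl}$ along the boundary direction is exactly $i_*$ of the intrinsic normal cone of $(\D_X)_\cl$ inside $\fE^{\D,\red}$, with multiplicity one. I would attack this by degenerating $\tilde s$ via $\bA^1$ rescaling, so that the derived stacks $X$ and $X^\red$ become fibres of a family over $\bA^1$. Deformation invariance of virtual classes for quasi-smooth families then gives the equality. The boundary term appears as the fibre over $0$ restricted to $B\bG_m$, and surjectivity of $\sigma^\log$ ensures that fibre is quasi-smooth of the correct dimension (Remark~\ref{rmk:what-does-log-cosection-surj-imply}).
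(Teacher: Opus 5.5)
There is a genuine gap. The step that carries all the content is never supplied: you need to relate $0^!_{\fE}[\fC_{X_\cl}]$ to the reduced and boundary classes, and both mechanisms you sketch for this fail.

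First, the cone decomposition is not meaningful. The morphism $\fE^{\red}\to\fE$ is an isomorphism over $U_\cl$, but along $(\D_X)_\cl$ it is not the inclusion of a sub-bundle stack, since its cofibre is the two-term complex $\bF=[\cO_{X_\cl}\to\cO_{X_\cl}(\D_{X_\cl})]$ of Proposition~\ref{prop:red-POT-log-GLSM}. Moreover, $\bTT\tau$ and $\bTT\tau^{\red}$ are obstruction theories for the same map $X_\cl\to\cA$, so one and the same cone $\fC_{X_\cl}$ sits in both $\fE^{\red}$ and $\fE$. There is therefore no decomposition of $[\fC_{X_\cl}]$ into a piece ``lying in $\fE^{\red}$'' plus a boundary piece. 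Likewise ``$0^!_{\fE}=0^!_{\fE^{\red}}$ on cycles in $\fE^{\red}$'' has no content, and for an honest sub-bundle you would pick up an excess Euler class anyway. Once $\sigma^{\log}$ is surjective, Kiem--Li cone reduction places $\fC_{X_\cl}$ in $\fE^{\tred}_{X_\cl}=\ker\bigl(\fE_{X_\cl}\to\V_{X_\cl}(\cO_{X_\cl}(-\D_{X_\cl}))\bigr)$. The excess formula there produces the \emph{boundary} term, not $[X_\cl]^{\red}$.

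Second, your fallback degeneration does not do what you need. Replacing $\tilde s$ by $t\tilde s$ gives fibre $X^{\red}$ for $t\neq0$ and $X^{\tred}\times^h_{\cA}\bA^1_{\cA}[-1]$ at $t=0$; $X$ never appears as a fibre. The family is quasi-smooth only if $\sigma^{\log}$ is surjective \emph{everywhere}, because at $t=0$ one needs $\coker\sigma^{\log}=H^2(\bTT_{X^{\tred}}|_{X_\cl})=0$. Its central fibre has a trivial summand $\cO$ in its obstruction sheaf, so its virtual class vanishes. Deformation invariance thus only recovers $[X_\cl]^{\red}=0$ off $X_\cl(\sigma^{\log})$ and never touches $[X_\cl]^{\vir}$. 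Also, your sequence $0\to\cO_{X_\cl}\to\cO_{X_\cl}(\D_{X_\cl})\to\cdots$ need not be left exact, since $\D_{X_\cl}$ need not be classical.

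The missing ingredient is localization, which is how the paper argues.
\begin{enumerate}
\item Excise $X_\cl(\sigma^{\red})=X_\cl(\sigma^{\log})$. By Remark~\ref{rmk:reduction-argument}, the $\sigma^{\red}$-localized class (which pushes forward to $[X_\cl]^{\red}$) is exactly the component of $[X_\cl]^{\vir}_{\widetilde{\sigma^{\log}}}$ on $X_\cl(\sigma^{\red})$. This locus is disjoint from $(\D_X)_\cl$. So it suffices to identify the other component on $X_\cl\setminus X_\cl(\sigma^{\red})$, where $\sigma^{\log}$ is surjective everywhere and $[X_\cl]^{\red}=0$.
\item On that open set the identity regularizes $\widetilde{\sigma^{\log}}$ and cone reduction gives $\fC_{X_\cl}\subset\fE^{\tred}_{X_\cl}$. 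Kiem--Li's construction then yields $[X_\cl]^{\vir}=i_*[X_\cl]^{\vir}_{\widetilde{\sigma^{\log}}}=i_*i^![X_\cl]^{\tred}$, with $X^{\tred}$ quasi-smooth by Remark~\ref{rmk:X-tred-being-qsmth}.
\item Finally, $i^![X_\cl]^{\tred}=[(\D_{X_\cl})_\cl]^{\red}$ by functoriality of virtual pullback along the quasi-smooth closed embedding $\D_X^{\red}\hookrightarrow X^{\tred}$ \cite[Proposition~1.5]{STV15}.
\end{enumerate}
This last step removes your ``multiplicity one'' obstacle. Since $\D_X^{\red}$ is by definition the derived base change of $X^{\tred}$ along $B\bG_m\hookrightarrow\cA$, no cone specialization or deformation argument is needed.
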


The decomposition theorem for virtual cycles (Theorem \ref{thm:intro-decompose-virt-cycle}) was the starting point for this paper.
We refer the reader to Example \ref{eg:toy-model-decompose-virt-cycle} and Example \ref{eg:local-model-decompose-virt-cycle} for an illustration of the main ideas behind the theorem and related constructions.

Compared with \cite[Theorem~6.5]{CJR21}, our decomposition theorem for virtual cycles does not require the existence of an auxiliary stack (see {\cite[Remark~6.2]{CJR21}}), thereby allowing for a broader range of potential applications.
In Example \ref{eg:der-self-intersect-P1-in-P2} and Section \ref{ss-sec:log-comp-der-hodg-bund}, we give concrete decompositions of virtual cycles that arise from the theorem.

\subsection{A Prototype for Derived Logarithmic GLSM}

This work culminates with a first step toward the derived geometry behind the reduced perfect obstruction theory on the moduli of UMD log R-maps.
We will consider a locally free sheaf $\cE$ on projective space $\bP^r$ together with a regular section $s \in H^0(\cE)$ that defines a smooth complete intersection.
Let $R\cW^E$ be the corresponding derived moduli of stable maps to projective space $\bP^r$ with $p$-fields, see \cite{KMMP25, OhTh24, KiPa25}.
Instead of constructing a derived version of the moduli of UMD R-maps compactification, we construct a simpler logarithmic compactification via a \emph{derived projective completion}.
This allows us to remain within the framework of Deligne--Faltings of rank-$1$ log geometry --- a similar approach is taken in the work of Oh \cite{Oh22}.

Following \cite{Hekking21} and \cite{Jiang25}, we show in Proposition \ref{prop:log-compact-derived-proj-completion} that derived projective completions are natural log compactifications. As a consequence, we obtain the following:
\begin{theorem}[A Log Compactification for a Special Class of Derived Weil Restrictions, see Corollary~\ref{cor:log-compact-derived-weil-restriction}]
\label{thm:intro-log-compact-derived-weil-restriction}
Assume that $p: C \to M$ is a flat proper quasi-smooth morphism of derived $1$-Artin stacks that is representable by derived schemes. Let $\cE^{\vee}$ be a perfect complex on $C$ of Tor-amplitude in $[-n,0]$ for some $n \in \bZ_{\ge 0}$, and $E := \Spec_{C} \Sym_{C} (\cE^{\vee})$ be a derived vector bundle over $C$. Then the derived Weil restriction $p_*E$ is equivalent to the derived vector bundle $\Spec_{M} \Sym_{M} \left( \left( \RR p_* \cE \right)^{\vee} \right)$, which admits a log compactification $$\bP_M(p_*E \oplus 1) := \Proj_{M} \Sym_{M} \left( \left( \RR p_* \cE \right)^{\vee} \oplus \cO_{M} \right).$$
\end{theorem}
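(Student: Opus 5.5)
The proof splits into two parts: identifying the derived Weil restriction $p_*E$ with a linear derived stack over $M$, and then recognizing the projective completion of that linear stack as a log compactification via Proposition~\ref{prop:log-compact-derived-proj-completion}.

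\textbf{Step 1: identify $p_*E$ by its functor of points.} For a derived $M$-stack $f: T \to M$, set $C_T := C \times_M T$ with projections $p_T$ and $f'$. A $T$-point of $p_*E$ is an $M$-morphism $T \to p_*E$, equivalently a section of $E$ over $C_T$. Since $E = \Spec_C \Sym_C(\cE^\vee)$ is the linear stack of $\cE^\vee$, such a section corresponds to a morphism $f'^*\cE^\vee \to \cO_{C_T}$, i.e.\ to
\[
\map\bigl(\cO_{C_T}, f'^*\cE\bigr).
\]
Adjunction identifies this with $\map\bigl(\cO_T, \RR p_{T*} f'^*\cE\bigr)$. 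Flat base change for the proper representable morphism $p$ then gives $\RR p_{T*} f'^*\cE \simeq f^*\RR p_*\cE$.

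\textbf{Step 2: check that $\RR p_*\cE$ is perfect and dualizable.} We need $\RR p_*\cE$ to be perfect, so that
\[
\map\bigl(\cO_T, f^*\RR p_*\cE\bigr) \simeq \map\bigl(f^*(\RR p_*\cE)^\vee, \cO_T\bigr).
\]
Perfectness follows because $p$ is proper, flat and quasi-smooth, hence of finite Tor-dimension, and pushforward along such a morphism preserves perfect complexes. Its Tor-amplitude is bounded, lying in $[0,n+1]$ since the fibres are curves, so its dual has Tor-amplitude in $[-n-1,0]$. The right-hand side above is exactly the $T$-points of $\Spec_M \Sym_M\bigl((\RR p_*\cE)^\vee\bigr)$, giving the equivalence of the first claim. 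Naturality in $T$ comes from the functoriality of base change.

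\textbf{Step 3: apply the projective completion result.} Set $\cF := (\RR p_*\cE)^\vee$, a perfect, connective complex on $M$, so that $p_*E \simeq \V$-type linear stack $\Spec_M\Sym_M\cF$. Proposition~\ref{prop:log-compact-derived-proj-completion} states that for such a derived vector bundle $V = \Spec_M \Sym_M \cF$, the derived projective completion $\Proj_M \Sym_M(\cF \oplus \cO_M)$ is a log compactification. Its log structure $\bP_M(V\oplus 1) \to \cA$ comes from the section $\cO_M \to \cO(1)$, its boundary is the hyperplane at infinity $\bP_M(V)$, and its complement is $V$. Substituting $\cF = (\RR p_*\cE)^\vee$ gives the corollary.

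\textbf{Main obstacle.} The hard step is Step~2: checking that $\RR p_*\cE$ lies in exactly the Tor-amplitude range that Proposition~\ref{prop:log-compact-derived-proj-completion} requires, and that base change holds in the derived, Artin-stack setting. I would first reduce, by smooth descent, to the case where $M$ is affine. There $p$ is a flat proper morphism of derived schemes, and perfectness and base change follow from the standard derived results in the manner of Lurie and Toën. If the proposition demands connective $\cF$ rather than only bounded Tor-amplitude, I would check that it only uses perfectness, since $\Proj$ of $\Sym$ of a perfect complex makes sense in the Hekking--Jiang framework.
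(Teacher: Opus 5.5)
Your proposal is correct and follows essentially the paper's route. The paper gets perfectness of $\RR p_*\cE$ from \cite[Theorem~6.1.3.2]{LurieSAG} and \cite[Lemma~1.15]{Khan22}, obtains $p_*E\simeq\Spec_M\Sym_M\bigl((\RR p_*\cE)^\vee\bigr)$ by following the functor-of-points proof of \cite[Proposition~1.2.3]{KMMP25} (your Steps~1--2), and then applies Proposition~\ref{prop:log-compact-derived-proj-completion}; it also checks algebraicity of $p_*E$ via \cite{HLP23}, which your direct identification makes redundant.

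One slip: the hypotheses do not say the fibres of $p$ are curves, so the bound should read $[0,n+d]$ with $d$ the (local) relative dimension of $p$. Only the lower bound $0$ is actually needed, and it comes from flatness of $p$ together with left t-exactness of $\RR p_*$. That lower bound already makes $(\RR p_*\cE)^\vee$ connective, so your fallback about $\Sym$ of a non-connective perfect complex is unnecessary. It would also not work, since both the identification of $\V_M$ with $\Spec_M\Sym_M$ and the Proposition genuinely require Tor-amplitude $\le 0$.
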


As discussed in Section~\ref{sec:4}, such derived projective completions lead to examples of $(-1)$-shifted log functions and the decomposition theorem for virtual cycles.
In particular, the derived moduli $R\cW^E$ of stable maps with $p$-fields to $\bP^r$ is a derived Weil restriction satisfying the assumptions of Theorem~\ref{thm:intro-log-compact-derived-weil-restriction}.
Let $\overline{R\cW^E}^\log$ be the resulting derived projective completion.
The next theorem then provides a suitable $(-1)$-shifted log function:
\begin{theorem}[A Prototype for Derived Logarithmic GLSM, see Section \ref{sec:log-compact-der-moduli-p-fields}]
\label{thm:intro-prototype-der-log-GLSM}
There exists a relative $(-1)$-shifted log function
$$
\begin{tikzcd}
{\overline{R\cW^E}^\log} \arrow[rr, "\ell_{/\fMgn}^\log"] \arrow[rd] &    & {L_\cA[-1] \times^h \fMgn} \arrow[ld] \\
                         & {\cA \times^h \fMgn} &,
\end{tikzcd}
$$
associated to the section $s$.
Furthermore, the restriction $\ell_{/\fMgn}^\log\big|_{R\cW^E}$ is a relative $(-1)$-shifted function
$$
\begin{tikzcd}
{R\cW^E} \arrow[rr, "\ell_{/\fMgn}"] \arrow[rd] &    & {\bA^1[-1] \times^h \fMgn} \arrow[ld] \\
                         & {\fMgn} &              
\end{tikzcd}
$$
on the derived moduli $R\cW^E$ of stable maps with $p$-fields that agrees with the $(-1)$-shifted function in \cite{OhTh24}.
In particular, it induces precisely the (relative) cosection $\rho$ constructed in \cite[Section~4.4]{CJW21}.
\end{theorem}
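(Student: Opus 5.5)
The plan is to build $\ell^\log_{/\fMgn}$ by exploiting that $R\cW^E$ is a derived Weil restriction. First fix the data on the universal curve $p\colon \fC_{g,n}\to\fMgn$ (or on the universal curve over the derived mapping stack to $\bP^r$). Let $f$ denote the universal map, $\cL=f^*\cO(1)$, and let $\cE$ be the pulled-back bundle. The $p$-field lives in $E:=\V(\cE^\vee\otimes\omega)$, the total space of $f^*\cE^\vee\otimes\omega_{C/M}$. The section $s$ gives a pairing $E\to \bA^1_C$, namely $(x,p)\mapsto\langle p, s(f(x))\rangle$, which lands in $\omega_{C/M}$ fibrewise. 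This pairing is linear in the $p$-field, so it is a cosection $\cE^\vee\otimes\omega\to\omega$. Pushing forward along $p$ and composing with the trace $\RR p_*\omega_{C/M}\to\cO_M[-1]$ (Grothendieck–Serre duality for the flat proper quasi-smooth $p$ of relative dimension one) gives a morphism
\[
\RR p_*(f^*\cE^\vee\otimes\omega)\longrightarrow \cO_M[-1].
\]
Equivalently, this is a linear $(-1)$-shifted function $\ell_{/\fMgn}\colon R\cW^E=\V_M((\RR p_*\cE')^\vee)\to \bA^1[-1]\times^h\fMgn$, using the identification of $p_*E$ with a derived vector bundle in Theorem~\ref{thm:intro-log-compact-derived-weil-restriction}. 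Here $\cE'$ denotes $f^*\cE^\vee\otimes\omega$.

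Second, extend $\ell_{/\fMgn}$ to the projective completion $\overline{R\cW^E}^\log=\bP_M(p_*E\oplus 1)$. The log structure is the boundary divisor at infinity, $\D=\bP_M(p_*E)$, cut out by the section $x_0$ of $\cO(1)$ coming from the summand $\cO_M$. This gives $\alpha\colon\overline{R\cW^E}^\log\to\cA$ with $L_\cA$ pulling back to $\cO(1)=\cO(\D)$. A linear function $\lambda$ on $p_*E$, valued in $\cO[-1]$, corresponds to a degree-one element of $\Sym((\RR p_*\cE)^\vee\oplus\cO)[-1]$, that is, a section of $\cO(1)[-1]$ on $\Proj$. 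Concretely, $\lambda$ is homogenized using $x_0$. This section is exactly a map to the total space of $L_\cA[-1]$ over $\cA$, i.e. a relative $(-1)$-shifted log function in the sense of Definition~\ref{def:shift-log-function-diff-form}.

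To make this rigorous, I would use the universal property of derived $\Proj$ from \cite{Hekking21,Jiang25}. Maps to $\bP_M(V\oplus 1)$ correspond to a line bundle together with a surjection, so the functor-of-points description identifies the homogenized section. Compatibility over $\cA\times^h\fMgn$ holds by construction.

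Third, check the restriction statement. On the complement of $\D$ we have $x_0$ invertible, so dehomogenizing the section recovers $\lambda=\ell_{/\fMgn}$, and $L_\cA$ trivializes there (Remark~\ref{rmk:generic-fib-shift-univ-line-bund}). Comparing with \cite{OhTh24} amounts to matching the pairing-plus-trace construction, which is how their function is defined. The induced cosection is $d\ell$, and since $\ell$ is linear in the fibre direction, $d\ell$ is $\ell$ plus the derivative of $s$ in the map direction. Taking $H^1$ of the dual tangent complex then gives the formula for $\rho$ in \cite[Section~4.4]{CJW21}, namely $(\dot f,\dot p)\mapsto \langle \dot p, s\rangle+\langle p, ds(\dot f)\rangle$ integrated via the trace.

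I expect the main obstacle to be the second step: producing the homogenized section coherently at the $\infty$-categorical level, as a map to $L_\cA[-1]$ over $\cA$ rather than merely a cohomology class. My first approach would be to realize the shifted function as the morphism of quasi-coherent sheaves $\cO(-1)\to\cO[-1]$ on $\Proj$, obtained from the tautological map $\cO(-1)\to \pi^*(V\oplus\cO)$ composed with $(\lambda,0)$. This should avoid any explicit cocycle manipulation.
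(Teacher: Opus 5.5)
Your proposal is correct, and its overall structure matches the paper's. Your linear $(-1)$-shifted function is the one of Section~\ref{ss-sec:shift-function-on-der-moduli-p-fields}: apply $1\boxtimes s^\vee$, take the derived Weil restriction (pushforward), then use $\RR\pi_*\W\to H^1(\RR\pi_*\W)[-1]=\cO[-1]$, which is your trace. You build it directly over $R\cW$, as in Remark~\ref{rmk:rel-shift-function-on-RcWE-over-RcW}, rather than over the full Weil restriction over $R\fW$ followed by restriction to the open $R\cW^E$. Your comparisons with \cite{OhTh24} and \cite{CJW21} are at the same level of detail as the paper's.

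The genuine difference is how you extend over the boundary. In Section~\ref{ss-sec:shift-log-function-on-log-compact-der-moduli-p-fields}, the paper uses that a morphism of derived vector bundles is $\bG_m$-equivariant \cite{Monier21}. It descends this to $[R\cW^E\times^h_{R\cW}\bA^1_{R\cW}/\bG_{m,R\cW}]\to[\bA^1_{R\cW}[-1]\times^h_{R\cW}\bA^1_{R\cW}/\bG_{m,R\cW}]\cong L_\cA[-1]\times^h R\cW$. It then restricts along the open embedding of $\Proj$ into the quotient stack from Remark~\ref{rmk:proj-sym-being-empty}.

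You instead homogenize. Set $V:=\RR p_*\cE'$ and let $\lambda\colon\cO[1]\to V^\vee$ be the linear function. The composite of $\lambda$ with $V^\vee\to V^\vee\oplus\cO\to\cO(1)$ (the tautological quotient) is a section of $\cO(1)[-1]$, and Remark~\ref{rmk:understanding-of-shift-log-function} turns this into a map to $L_\cA[-1]$ over $\cA$. This needs one check: the virtual Cartier divisor $\bP(V)\hookrightarrow\bP(V\oplus 1)$ of Proposition~\ref{prop:log-compact-derived-proj-completion}(3) must be the generalized Cartier divisor $x_0\colon\cO(-1)\to\cO$, so that $\alpha^*\cL_\cA\simeq\cO(1)=\cO(\D)$. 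The paper uses the same fact implicitly when it treats its composite to $\cA$ as the anchor.

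The two routes produce the same morphism. Yours avoids the equivariance input and makes compatibility with the anchor explicit. Your functor-of-points suggestion also makes it coherent at the $\infty$-categorical level at no extra cost. Send a $T$-point $(L,\,q\colon(V^\vee\oplus\cO)|_T\twoheadrightarrow L)$ of $\bP(V\oplus 1)$ to the $T$-point $(L,\,q|_{\cO},\,q\circ(\lambda,0)\colon\cO_T[1]\to L)$ of $L_\cA[-1]$. This defines the map and its compatibility over $\cA\times^h\fMgn$ at once, and dehomogenizing where $x_0$ is invertible gives the restriction statement immediately.
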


\smallskip

We conclude by listing a few properties we expect to hold for this prototype for derived log GLSM:
\begin{enumerate}
    \item The classical degeneracy locus of the log cosection associated with $\ell_{/\fMgn}^\log$ is $\overline{\mathcal{M}}_{g,n}(Z(s)_\cl,\beta)$. In particular, the log cosection is surjective along the boundary, and hence $(\overline{R\cW^E}^\log)^\red$ is quasi-smooth.

    \item The virtual cycle associated with the reduced perfect obstruction theory induced by $(\overline{R\cW^E}^\log)^\red$ recovers the cosection localized virtual cycle associated with $\ell_{/\fMgn}$, and hence (up to a sign) recovers the virtual cycle associated with the perfect obstruction theory induced by $R\overline{\mathcal{M}}_{g,n}(Z(s)_\cl,\beta)$, which is the virtual cycle used in defining the Gromov-Witten invariants of $Z(s)_\cl$.

    \item Moreover, the reduced perfect obstruction theory induced by $(\overline{R\cW^E}^\log)^\red$ is $\bG_m$-equivariant, and hence virtual (torus) localization applies. This would lead to a virtual (torus) localization formula for the virtual cycle used in defining Gromov-Witten invariants of $Z(s)_\cl$.
\end{enumerate}
    
It would be interesting to extend the prototype from the stable maps with $p$-fields setting to FJRW theory and more general hybrid model GLSMs.

\subsection{Plan of the Paper}

In Section \ref{sec:1}, we collect preliminary material used throughout the paper. Readers familiar with this material may skip this section.
In Section \ref{sec:2}, we develop a version of derived log geometry centered around shifted log functions that will be used in Sections \ref{sec:3} and \ref{sec:4}. In particular, we prove Theorem \ref{thm:intro-log-lag-intersect} and Theorem \ref{thm:intro-log-compact-derived-weil-restriction}.
In Section \ref{sec:3}, we introduce log cosections and use them for the construction of reduced virtual cycles and reduced boundary cycles (Theorem \ref{thm:intro-red-virt-or-bound-cycle}) and the decomposition of virtual cycles (Theorem \ref{thm:intro-decompose-virt-cycle}).
In Section \ref{sec:4}, we first construct a $(-1)$-shifted log function on log-compactified derived Hodge bundles, and then construct a $(-1)$-shifted log function on log-compactified derived moduli of stable maps with $p$-fields (Theorem \ref{thm:intro-prototype-der-log-GLSM}).

There are two appendices.
In Appendix \ref{append:1}, we give a general construction of reduced perfect obstruction theories, while
in Appendix \ref{append:2}, we compare the duals of log obstruction sheaves and the duals of obstruction sheaves.

\subsection*{Conventions and Notations}

We refer to \cite{Fulton98, Vistoli89, Kresch99} for background on intersection theory, and assume that all Chow groups have rational coefficients.
We refer to \cite{LurieThesis04, Toen09, ToenSurvey14, KhanNotes23} for background on derived algebraic geometry.

We work over the field $k = \bC$ of complex numbers. For simplicity, we write $\pt = \Spec k$, and let $\mathcal{S}$ denote the $\infty$-category of $\infty$-groupoids.
Since we are working over $\bC$, connective commutative differential graded algebras, simplicial commutative rings (or equivalently animated commutative rings), and connective $\mathbb{E}_{\infty}$-rings are all equivalent. We will refer to them simply as CDGAs, and denote the $\infty$-category of CDGAs by $\DRing$.
We are using cohomological conventions unless otherwise stated. In particular, we are using cohomological conventions for Tor-amplitude, following {\cite[Definition~1.3]{STV15}}.

\paragraph{}
A derived pre-stack is an $\infty$-functor from the $\infty$-category of CDGAs to the $\infty$-category of $\infty$-groupoids. A derived stack is a derived pre-stack satisfying etale descent. The $\infty$-category of derived pre-stacks is denoted by $\DPStk$, and the $\infty$-category of derived stacks is denoted by $\DStk$.

As in \cite{PTVV13}, all derived Artin stacks\footnote{See \cite[Definition~4.4]{Toen09}.} are assumed to be (homotopically) locally of finite presentation.
A derived Deligne--Mumford stack is a derived Artin stack whose classical truncation is a classical Deligne--Mumford stack.
A derived scheme is a derived Artin stack whose classical truncation is a classical scheme.

By a classical ($1$-)Artin stack, we mean a classical ($1$-)stack in the sense of \cite{Artin74}. Under this convention, the classical truncation of a derived Artin stack is not necessarily a classical (1-)Artin stack. We denote the smooth classical ($1$-)Artin stack $[\bA^1/\bG_m]$ by $\cA$.

Every smooth derived scheme is a smooth classical scheme (see \cite[Lemma~2.1.2(b)]{ArGa15}). We will refer to it as a smooth (classical) scheme.

\paragraph{}
Let $X$ be a derived Artin stack. Its classical truncation is denoted by $X_\cl$. The stable $\infty$-category of quasi-coherent complexes on $X$ is denoted by $\QCoh(X)$; the cotangent complex of $X$ is denoted by $\bL_X \in \QCoh(X)$. Since we assume that derived Artin stacks are (homotopically) locally of finite presentation, $\bL_X$ is a perfect complex, i.e. a dualizable object in $\QCoh(X)$. The dual $\bT_X := \bL_X^\vee$ of $\bL_X$ is called the tangent complex of $X$.

For a perfect complex $\cE$ on $X$, there is a derived Artin stack
\begin{align*}
    \V_X(\cE) : \DSchm_{/X}^{\op} &\longrightarrow \mathcal{S} \\
    (u: T \to X) &\longmapsto \map_{\QCoh(T)}\left(\LL u^* \cE, \cO_{T} \right)
\end{align*}
over $X$. $\V_X(\cE)$ is called the linear derived stack over $X$ associated with $\cE$. For simplicity, we use $E$ to denote $\V_X(\cE^\vee)$. Note that we have $\bL_{E/X} \simeq \cE^\vee|_{E}$. Given a section $s: \cO_X \to \cE$, there is a morphism $X \to E$, also denoted by $s$. The derived zero locus $X(s)$ of the section $s: \cO_X \to \cE$ is defined via the following homotopy Cartesian diagram.
$$
\begin{tikzcd}
{X(s)} \arrow[r] \arrow[d] \ar[dr, phantom, "\lrcorner_{\text{\tiny h}}", very near start] & {X} \arrow[d,"s"] \\
{X} \arrow[r, "0"]           & {E}          
\end{tikzcd}
$$
See Appendix \ref{append:cotang-complex-der-zero-loci} for the description of $\bL_{X(s)}$.
If $\cE^\vee$ is moreover Tor-amplitude $\le 0$, then there is an equivalence
$
E := \V_X(\cE^\vee) \cong \Spec_X \Sym_X \left( \cE^\vee \right).
$

\paragraph{}
Let $X$, $Y$, and $Z$ be derived stacks. We denote the homotopy fibered product of $X$ and $Y$ over $Z$ by $X \times^h_Z Y$. Its classical truncation is the classical fibered product $X_\cl \times_{Z_\cl} Y_\cl$ of $X_\cl$ and $Y_\cl$ over $Z_\cl$. 

In general, the three fibered products $$X \times^h_Z Y, \quad X_\cl \times_{Z_\cl} Y_\cl, \quad X_\cl \times^h_{Z_\cl} Y_\cl$$ are different. Note that the classical truncation of $X_\cl \times^h_{Z_\cl} Y_\cl$ is also $X_\cl \times_{Z_\cl} Y_\cl$. If the morphism $Y_\cl \to Z_\cl$ is moreover flat, then the natural morphism $X_\cl \times_{Z_\cl} Y_\cl \to X_\cl \times^h_{Z_\cl} Y_\cl$ is an equivalence, i.e. these two fibered products agree.

\subsection*{Acknowledgments}

The authors thank Nachiketa Adhikari, Dima Arinkin, Younghan Bae, Qile Chen, Jeremiah Heller, Leo Herr, Sheldon Katz, John Pardon, James Pascaleff, and Rachel Webb for helpful discussions. The second author thanks Timmy Mengzhudong Feng for many discussions on derived algebraic geometry, he thanks Zijian Han help suggestions on an earlier draft of Sections~\ref{sec:basic-setup-der-log}--\ref{sec:shift-log-symp}, and he thanks Zhiyu Liu and Nikolay Sheshko for pointing out useful references. He also thanks Damien Calaque and Pavel Safronov for addressing his questions about \cite{CS24}.

The authors thank the Simons Center for Geometry and Physics for its hospitality, where part of this work was conducted.

The authors were partially supported by NSF grant DMS-2422291.
This work was supported in part by a grant of access to OpenAI models through the ChatGPT for Academic Researchers program.

\paragraph{AI disclosure}

The authors used the generative AI tools, ChatGPT and Gemini, for English-language polishing, literature discovery, and to perform a final proof-read of the paper before posting. The authors take full responsibility for the mathematical content and claims in this work.

\section{Preliminaries}\label{sec:1}

This section collects preliminary material used throughout the paper, which should be known to experts.

Section \ref{sec:POT} discusses the relation between perfect obstruction theories and quasi-smoothness (in particular, Remark \ref{re:functor-of-POT} and Example \ref{eg:local-model-POT}). Such a relation is the foundation of the entire paper.
Section \ref{sec:shift-fun-shift-diff-form} concerns shifted functions and exact shifted differential $1$-forms. By analogy, we will introduce shifted log functions and exact shifted log differential $1$-forms in Section \ref{sec:shift-log-fun-shift-log-diff-form}. This motivates us to study shifted log symplectic structure in Section \ref{sec:shift-log-symp}.
Section \ref{sec:cosection-localization} is related to Section \ref{sec:3}. For instance, the definitions of log cosections and their degeneracy loci in Section \ref{sec:3} are inspired by Definition \ref{def:cosection} and Definition \ref{def:degeneracy-loci}, respectively; Example \ref{eg:local-model-decompose-virt-cycle} builds on Example \ref{eg:local-model-cosection-localization}.
Section \ref{sec:der-weil-res} reviews the basic theory of derived Weil restrictions needed for Corollary \ref{cor:log-compact-derived-weil-restriction} and Section \ref{sec:4}.

We hope this section convinces the reader that some things that are technical in the classical setting turn out to be quite natural in the derived geometry. For instance, see Proposition \ref{prop:rel-qsmth-rel-POT} and Proposition \ref{prop:rel-cosection-rel-POT}.

We refer the reader to \cite{OhNotes25, ParkNotes25} for general discussions of modern viewpoints and recent developments in virtual techniques in enumerative geometry (also known as virtual geometry).

\subsection{Perfect Obstruction Theories}\label{sec:POT}

In short, this subsection is to make precise the slogan that perfect obstruction theories are shadows of quasi-smoothness.

We refer the reader to \cite[Section~2]{ArGa15}\cite[Section~2]{KhRy25} for more discussions on quasi-smoothness, and to \cite[Proposition~3.2.18]{LurieThesis04}\cite[Section~8.7]{KhanNotes23} for more discussions on homotopical $n$-smoothness.

\begin{proposition}[{\cite[Proposition~1.2]{STV15}}]
\label{prop:cotangent-complex-obstruction-theory}

    Let $X$ be a derived geometric stack\footnote{\cite[Corollary~2.2.3.3]{ToVe08} guarantees the existence of cotangent complexes of derived geometric stacks.}, and $X_{\cl}$ be the classical truncation of $X$. Then the natural closed immersion $\tau: X_{\cl} \hookrightarrow X$ induces a morphism $\bL\tau: \bL_X|_{X_{\cl}} \to \bL_{X_{\cl}}$,
    whose cofiber $\bL_{X_{\cl}/X}$ has vanishing cohomologies in degrees $\ge -1$.
\end{proposition}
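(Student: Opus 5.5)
The plan is to use the transitivity cofiber sequence for the composite $X_\cl \xrightarrow{\tau} X \to \pt$,
$$
\bL_X|_{X_\cl} \xrightarrow{\bL\tau} \bL_{X_\cl} \longrightarrow \bL_{X_\cl/X},
$$
and reduce the claim to a connectivity statement about the relative cotangent complex of the closed immersion $\tau$. Vanishing of $\bL_{X_\cl/X}$ in cohomological degrees $\ge -1$ means that $\bL_{X_\cl/X}$ is $2$-connective in homological terms. This is a local statement on $X_\cl$.

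Both $\bL_X$ and $\bL_{X_\cl}$ satisfy smooth (indeed étale) base change, and classical truncation commutes with smooth atlases. We may therefore choose a smooth atlas $U \to X$ with $U = \Spec A$ affine. Then $U_\cl = \Spec \pi_0 A$, and we have $U_\cl \simeq X_\cl \times^h_X U$, because $U \to X$ is flat. Hence $\bL_{U_\cl/U} \simeq \bL_{X_\cl/X}|_{U_\cl}$. Since cohomology vanishing can be checked after pullback along a smooth surjection, it suffices to treat the case $X = \Spec A$ for a connective CDGA $A$, with $\tau$ corresponding to $A \to \pi_0 A$.

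The core step is the standard connectivity estimate: if $f\colon A \to B$ is a map of connective CDGAs whose fiber is $n$-connective (homologically), then $\bL_{B/A}$ is $(n+1)$-connective. Moreover, the lowest homotopy group satisfies $\pi_{n+1}\bL_{B/A} \cong \pi_0 B \otimes_{\pi_0 A} \pi_n(\fib f)$. This is \cite[Corollary~7.4.3.2]{LurieHA}-type Hurewicz statement; alternatively, one proves it by a cell-attachment argument. Build $B$ from $A$ by attaching cells of dimension $\ge n+1$, so that $\bL_{B/A}$ is generated by the corresponding free generators. For $A \to \pi_0 A$, the map is an isomorphism on $\pi_0$, so its fiber is $1$-connective, with $\pi_1(\fib) \cong \pi_1 A$. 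Thus $\bL_{\pi_0 A/A}$ is $2$-connective, with $\pi_2 \bL_{\pi_0A/A} \cong \pi_1 A$. In cohomological degrees this says $H^{i}(\bL_{X_\cl/X}) = 0$ for $i \ge -1$, and gives the bonus identification $H^{-2}(\bL_{X_\cl/X}) \cong \pi_1(\cO_X)$.

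The main obstacle is the connectivity estimate itself. Everything else is formal descent. I would prove it concretely: kill $\pi_1 A$ by attaching $2$-cells (free generators in homological degree $2$), then kill the higher homotopy by attaching higher cells. This presents $\pi_0 A$ as a colimit of cell attachments $A \to A' \to \cdots$, each of which contributes to the cotangent complex only free modules in homological degree $\ge 2$. One can then conclude using the cofiber sequences for cotangent complexes together with the fact that $\bL$ commutes with filtered colimits.
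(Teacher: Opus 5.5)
Your proof is correct and is the standard argument behind the cited result \cite[Proposition~1.2]{STV15}; the paper gives no proof of its own beyond that citation. You reduce along a smooth atlas to $X=\Spec A$, using $U_\cl\simeq X_\cl\times^h_X U$ and flat base change of the relative cotangent complex, and then use that $A\to\pi_0A$ has $1$-connective fiber, so $\bL_{\pi_0A/A}$ is $2$-connective. One small correction (not load-bearing): absolute cotangent complexes do not satisfy smooth base change, since $\bL_X|_U\not\simeq\bL_U$ for smooth non-\'etale $U\to X$, but your argument only uses base change of $\bL_{X_\cl/X}$ along the flat map $U\to X$, which is valid.
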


\begin{definition}[{\cite[Definition~8.7.3]{KhanNotes23}}]

A morphism $f: X \to Y$ between derived stacks is \emph{homotopically $n$-smooth} if its classical truncation $f_\cl: X_\cl \to Y_\cl$ is (classically) locally of finite presentation and its cotangent complex $\bL_f$ is a perfect complex of Tor-amplitude\footnote{Recall that we are using cohomological conventions.} $\ge -n$ for some $n \in \bZ_{\ge 0}$. A morphism $f: X \to Y$ between derived stacks is \emph{homotopically smooth} if it is homotopically $n$-smooth for some $n \in \bZ_{\ge 0}$. A morphism $f: X \to Y$ between derived stacks is \emph{quasi-smooth} if it is homotopically $1$-smooth.

A derived stack $X$ is \emph{homotopically $n$-smooth} if the morphism $X \to \pt$ is homotopically $n$-smooth. A derived stack $X$ is \emph{homotopically smooth} if it is homotopically $n$-smooth for some $n \in \bZ_{\ge0}$. A derived stack $X$ is \emph{quasi-smooth} if it is homotopically $1$-smooth.
\end{definition}

\begin{re}
    A derived Artin stack that is (homotopically) of finite presentation is homotopically smooth.
\end{re}

\begin{re}\label{re:property-of-n-smth}
    Homotopically $n$-smooth morphisms are stable under base change and composition.
\end{re}

\begin{re}
    A morphism between homotopically $n$-smooth derived Deligne--Mumford stacks is homotopically $(n+1)$-smooth.
\end{re}

\begin{corollary}[{\cite[Corollary~1.4]{STV15}}]
\label{cor:qsmooth-POT}
    
    Let $X$ be a quasi-smooth derived Deligne--Mumford stack, and $X_{\cl}$ be the classical truncation of $X$. Then the natural closed embedding $\tau: X_{\cl} \hookrightarrow X$ induces a perfect obstruction theory $\bL\tau : \bL_{X}|_{X_{\cl}} \to \bL_{X_{\cl}}$ on $X_{\cl}$ in the sense of Behrend--Fantechi \cite{BeFa97}.
\end{corollary}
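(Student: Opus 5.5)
We verify the two Behrend--Fantechi conditions for $\bL\tau: \bL_X|_{X_\cl} \to \bL_{X_\cl}$: perfect amplitude in $[-1,0]$, and obstruction-theory behaviour ($h^0$ an isomorphism, $h^{-1}$ surjective). Both come from quasi-smoothness together with Proposition~\ref{prop:cotangent-complex-obstruction-theory}.

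\emph{Amplitude.} Quasi-smoothness says $\bL_X$ is perfect of Tor-amplitude $\ge -1$. Pullback along $\tau$ preserves perfectness and Tor-amplitude, so $\bL_X|_{X_\cl}$ is perfect of Tor-amplitude $\ge -1$. For the upper bound we use that $X$ is Deligne--Mumford. Étale locally $X$ is an affine derived scheme, so $\bL_X$ is connective, i.e.\ has cohomology only in degrees $\le 0$. A perfect connective complex of Tor-amplitude $\ge -1$ has Tor-amplitude in $[-1,0]$. Hence, étale locally on $X_\cl$, it is quasi-isomorphic to a two-term complex $E^{-1}\to E^0$ of vector bundles. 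Concretely, on a local chart $X=\Spec A$ with $A = k[x_1,\dots,x_n]\otimes \mathrm{Kos}(f_1,\dots,f_m)$, as in Example~\ref{eg:local-model-POT}, this complex is $\cO^m \xrightarrow{(df_j)} \Omega$.

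\emph{Obstruction theory.} Take the cofiber sequence $\bL_X|_{X_\cl} \to \bL_{X_\cl} \to \bL_{X_\cl/X}$. By Proposition~\ref{prop:cotangent-complex-obstruction-theory}, $h^i(\bL_{X_\cl/X})=0$ for $i\ge -1$. The long exact sequence then gives that $h^0(\bL\tau)$ is an isomorphism and $h^{-1}(\bL\tau)$ is surjective. This is exactly the Behrend--Fantechi condition.

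\emph{Globalization.} Behrend--Fantechi require a global map in the derived category of $X_\cl$ whose source is étale locally of perfect amplitude $[-1,0]$, with $\bL_{X_\cl}$ understood through its truncation $\tau^{\ge -1}$. Here $\bL\tau$ is already a global morphism in $\QCoh(X_\cl)$, and composing with $\bL_{X_\cl}\to\tau^{\ge -1}\bL_{X_\cl}$ keeps the $h^0,h^{-1}$ properties. So no gluing is needed.

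The main obstacle I expect is comparing the $\infty$-categorical cotangent complex of $X_\cl$ with the classical Illusie cotangent complex used by Behrend--Fantechi. I would handle this by appealing to the standard fact that, for classical DM stacks, the derived cotangent complex agrees with Illusie's, at least after truncation to $\tau^{\ge -1}$, which is all the BF formalism uses.
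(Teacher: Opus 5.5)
Your proposal is correct and is essentially the argument behind the paper's citation of \cite[Corollary~1.4]{STV15}. Quasi-smoothness together with connectivity of $\bL_X$ for a derived Deligne--Mumford stack gives perfect amplitude in $[-1,0]$, and Proposition~\ref{prop:cotangent-complex-obstruction-theory} applied to the cofiber sequence of $\tau$ shows that $h^0(\bL\tau)$ is an isomorphism and $h^{-1}(\bL\tau)$ is surjective, which is exactly how the paper implicitly derives the statement from that proposition.
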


\begin{re}\label{re:functor-of-POT}
    The takeaway of Corollary \ref{cor:qsmooth-POT} is that the functoriality of perfect obstruction theories follows directly from the functoriality of cotangent complexes (see \cite[Definition~5.8]{BeFa97} and \cite[Section~1.2]{STV15}).
\end{re}

\begin{re}\label{rmk:et-local-q-smth-derived-DM-stack}
    A derived stack $X$ is a derived Deligne--Mumford stack if and only if there exist an etale surjective morphism $W \to X$ with $W$ being a derived scheme. If $X$ is moreover quasi-smooth, then $W$ is also quasi-smooth. Hence, by \cite[Corollary~2.1.6]{ArGa15} (see also {\cite[Proposition~2.3.14]{KhRy25}}), $W$ is Zariski-locally a derived zero locus $Z(s)$ of a section $s:=(f_1, \dots, f_m): \cO_{\bA^n} \to \cO_{\bA^n}^{\oplus m}$, defined via the following homotopy Cartesian diagram. 
$$
\begin{tikzcd}
{Z(s)} \ar[dr, phantom, "\lrcorner_{\text{\tiny h}}", very near start] \arrow[r] \arrow[d] & {\bA^n} \arrow[d, shift left, "s"] \\
{\bA^n} \arrow[r,"0"]  & {\bA^m_{\bA^n} = \bA^{n+m}}        
\end{tikzcd}
$$
    To sum up, if $X$ is a quasi-smooth derived Deligne--Mumford stack, then $X$ is etale-locally a derived zero locus $Z(s)$ of a section $s:=(f_1, \dots, f_m): \cO_{\bA^n} \to \cO_{\bA^n}^{\oplus m}$.
\end{re}

\begin{example}[Local Model for Perfect Obstruction Theory, {\cite[Section~2]{GrPa99}}]
\label{eg:local-model-POT}
    
    Let $X$ be a smooth (classical) scheme of dimension $d$, and $\cE$ be a locally free sheaf of rank $r$ on $X$. Let $s$ denote a section $\cO_X \to \cE$.
    Abusing notation, we also use $s$ to denote the morphism $X \to E:=\Spec_X \Sym_X (\cE^{\vee})$ induced by $s: \cO_X \to \cE$.
    
    The derived zero locus $X(s)$ is defined via the homotopy Cartesian diagram
    $$
    \begin{tikzcd}
{X(s)} \arrow[r] \arrow[d] \ar[dr, phantom, "\lrcorner_{\text{\tiny h}}", very near start] & {X} \arrow[d,"s"] \\
{X} \arrow[r, "0"']           & {E}          .
    \end{tikzcd}
    $$
    
    By Proposition \ref{prop:cotangent-complex-derived-zero-loci}, there is a quasi-isomorphism
    $$\bL_{X(s)} \simeq [d\bar{s}: \cE^{\vee}|_{X(s)} \to \Omega^1_X|_{X(s)}].$$
    Note that $\bL_{X(s)}$ is a perfect complex of Tor-amplitude in $[-1,0]$, so $X(s)$ is quasi-smooth.
    
    Let $X(s)_{\cl}$ denote the classical truncation of $X(s)$. There is a natural closed embedding $\tau: X(s)_{\cl} \hookrightarrow X(s)$. Applying the cotangent complex functor to $\tau$ induces a perfect obstruction theory
    $$\bL\tau: \bL_{X(s)}|_{X(s)_{\cl}} \simeq [d\bar{s}|_{X(s)_{\cl}}: \cE^{\vee}|_{X(s)_{\cl}} \to \Omega^1_X|_{X(s)_{\cl}}] \to \bL_{X(s)_{\cl}}$$ on $X(s)_{\cl}$ due to Corollary \ref{cor:qsmooth-POT}.
    
     Moreover, one can see that the virtual cycle $[X(s)_{\cl}]^{\text{vir}}$ induced by the perfect obstruction theory $\bL\tau$ coincides with both the refined Euler class (also known as localized top Chern class in \cite{Fulton98}) $e(E, s)$ of $E$ with respect to $s$, and the intersection product $[X \cdot_{E} X]$ arising from the classical Cartesian diagram
    $$
    \begin{tikzcd}
{X(s)_{\cl}} \arrow[r] \arrow[d] \ar[dr, phantom, "\lrcorner", very near start] & {X} \arrow[d,"s"] \\
{X} \arrow[r, "0"']           & {E}       .   
    \end{tikzcd}
    $$
    Note that $\dim\ [X(s)_{\cl}]^{\text{vir}} = \vdim\ X(s) = d-r$. In particular, if $s$ is regular, then $\tau: X(s)_{\cl} \overset{\cong}{\longrightarrow} X(s)$ is an equivalence of derived schemes. $\blacksquare$
    
\end{example}

\begin{proposition}[Passing from Relative to Absolute, see {\cite[Proposition~A.1(1)]{BrLe00}}]
\label{prop:rel-qsmth-rel-POT}

    Let $f: X \to Y$ be a quasi-smooth morphism from a derived Deligne--Mumford stack $X$ to a smooth classical $1$-Artin stack $Y$, and $X_{\cl}$ be the classical truncation of $X$. Then 
    
    $(1)$ $X$ is quasi-smooth;
    
    $(2)$ the natural closed embedding $\tau: X_{\cl} \hookrightarrow X$ induces a (relative) perfect obstruction theory $$\bL\tau_{/Y}: \bL_{X/Y}|_{X_{\cl}} \to \bL_{X_{\cl}/Y}$$ on $X_{\cl}$ in the sense of Behrend--Fantechi \cite[Section~7]{BeFa97};

    $(3)$ the virtual cycles associated with the (absolute) perfect obstruction theory $\bL\tau : \bL_{X}|_{X_{\cl}} \to \bL_{X_{\cl}}$ and the (relative) perfect obstruction theory $\bL\tau_{/Y}: \bL_{X/Y}|_{X_{\cl}} \to \bL_{X_{\cl}/Y}$ coincide.

\end{proposition}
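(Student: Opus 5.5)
For (1), I would use the exact triangle of cotangent complexes for $X \xrightarrow{f} Y \to \pt$,
$$\LL f^*\bL_Y \to \bL_X \to \bL_{X/Y} \to \LL f^*\bL_Y[1].$$
Since $Y$ is a smooth classical $1$-Artin stack, $\bL_Y$ is perfect of Tor-amplitude in $[0,1]$: smooth atlas $U \to Y$ gives $\bL_Y \simeq [\Omega_U \to \Omega_{U/Y}]$ in degrees $0,1$. By hypothesis $\bL_{X/Y}$ is perfect of Tor-amplitude $\ge -1$, and since $X$ is Deligne--Mumford one checks $\bL_X$ has Tor-amplitude $\le 0$. The triangle then shows $\bL_X$ is perfect of Tor-amplitude $\ge -1$. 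Local finite presentation of $X_\cl$ over $\pt$ follows from that of $f_\cl$ and of $Y$. Hence $X$ is quasi-smooth, i.e.\ homotopically $1$-smooth.

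For (2), I would argue as in Proposition~\ref{prop:cotangent-complex-obstruction-theory} and Corollary~\ref{cor:qsmooth-POT}, but relative to $Y$. The composite $X_\cl \hookrightarrow X \to Y$ gives a triangle
$$\bL_{X/Y}|_{X_\cl} \to \bL_{X_\cl/Y} \to \bL_{X_\cl/X}.$$
By Proposition~\ref{prop:cotangent-complex-obstruction-theory}, $\bL_{X_\cl/X}$ has vanishing cohomology in degrees $\ge -1$. So $h^0$ is an isomorphism and $h^{-1}$ is surjective, which is exactly the Behrend--Fantechi condition. It remains to see that $\bL_{X/Y}|_{X_\cl}$ has amplitude $[-1,0]$. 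Amplitude $\ge -1$ is the hypothesis. Amplitude $\le 0$ holds because $f$ is DM-type relative to the Artin stack $Y$; étale-locally on $X$ one can also reduce to a relative version of the local model of Remark~\ref{rmk:et-local-q-smth-derived-DM-stack}, a derived zero locus in a smooth $Y$-scheme. This gives a relative perfect obstruction theory in the sense of \cite[Section~7]{BeFa97}.

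For (3), I would follow \cite[Proposition~A.1(1)]{BrLe00}, the step I expect to be the main obstacle. The morphism of triangles from (1) and the analogous truncated triangle $\bL_Y|_{X_\cl} \to \bL_{X_\cl} \to \bL_{X_\cl/Y}$ give a compatibility datum between the absolute and relative obstruction theories over the smooth base $Y$. Behrend--Fantechi functoriality (\cite[Proposition~7.2]{BeFa97} and Remark~\ref{re:functor-of-POT}) then identifies the intrinsic normal cones. Concretely:
\begin{itemize}
\item The relative cone $\fC_{X_\cl/Y}$ is the pullback of the absolute cone $\fC_{X_\cl}$ along the smooth morphism of cone stacks induced by $T_Y$, i.e.\ a quotient by the bundle stack of $f^*T_Y$.
\item The vector bundle stacks $h^1/h^0$ of the two obstruction theories are related by the same quotient.
\item The intersections with the zero section therefore agree, since smooth pullback commutes with the Gysin map (\cite[Prop.~3]{Kresch99}).
\end{itemize}
The delicate points are working with cone stacks rather than cones when $Y$ is Artin, and checking that the derived triangle really induces the needed compatibility. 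The derived setting supplies the latter for free.
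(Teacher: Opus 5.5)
Your proposal is correct and follows the paper's proof. Part (1) is the Tor-amplitude bookkeeping behind stability of homotopical smoothness under composition (Remark~\ref{re:property-of-n-smth}); part (2) is Proposition~\ref{prop:cotangent-complex-obstruction-theory} applied to the triangle for $X_\cl \hookrightarrow X \to Y$; and part (3) feeds the morphism of cotangent triangles, with the identity on $\bL_Y|_{X_\cl}$, into \cite[Proposition~A.1(1)]{BrLe00}, whose cone-stack argument you sketch where the paper simply cites it.
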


\begin{proof}
    $(1)$ is due to Remark \ref{re:property-of-n-smth}. $(2)$ is due to Proposition \ref{prop:cotangent-complex-obstruction-theory}. 

    Now we proceed to prove $(3)$. For the morphism $f: X \to Y$, there is a commutative diagram as follows.
$$
\begin{tikzcd}
{X_\cl} \arrow[r, "f_\cl"] \arrow[d, hook, "\tau"'] & {Y_\cl = Y} \arrow[d, "\id"] \\
{X} \arrow[r, "f"]           & {Y}        
\end{tikzcd}
$$
    Applying the cotangent complex functor to the above diagram, we obtain the following commutative diagram, where rows are (co)fiber sequences.
$$
\begin{tikzcd}
{\bL_{Y}|_{X_\cl}} \arrow[r] \arrow[d, "\simeq"] & {\bL_X|_{X_\cl}} \arrow[d, "\bL\tau"] \arrow[r] & {\bL_{X/Y}|_{X_\cl}} \arrow[r, "+1"] \arrow[d, "\bL\tau_{/Y}"] & {} \\
{\bL_{Y}|_{X_\cl}} \arrow[r]           & {\bL_{X_\cl}} \arrow[r]           & {\bL_{X_\cl/Y}} \arrow[r, "+1"]           & {}          
\end{tikzcd}
$$
    As a result, $Y$ being smooth implies that two perfect obstruction theories $\bL\tau$ and $\bL\tau_{/Y}$ induce the same virtual cycle by {\cite[Proposition~A.1(1)]{BrLe00}}. 
\end{proof}

\begin{re}
    Proposition \ref{prop:rel-qsmth-rel-POT} is likely to be insufficient for applications of derived log geometry to enumerative geometry, since the base stacks are usually only equidimensional (or log smooth) rather than smooth. However, it suffices for the purpose of this paper, because for the derived log geometry used here (see Section \ref{sec:2}), the base stack $\cA:= [\bA^1/\bG_m]$ is smooth.
\end{re}

\begin{re}
    See \cite[Section~5]{BeFa97} for the original construction of virtual cycles via absolute perfect obstruction theories; see \cite[Section~7]{BeFa97} for the original construction of virtual cycles via relative perfect obstruction theories. Other relevant references include \cite{Manolache12,KiPa21,Khan19,AKLPR25}.
\end{re}

\subsection{Shifted Functions and Exact Shifted Differential 1-Forms}\label{sec:shift-fun-shift-diff-form}

We refer the reader to \cite[Section~3.3]{ToVe25} and \cite{OhTh24, ParkNotes25, KiPa25} for other discussions on shifted functions and shifted differential $1$-forms.

\begin{definition}
    Let $n \in \bZ_{\ge 0}$. $k[n]$ is the complex concentrated in degree $-n$ with $k$ being the only non-zero term. The \emph{$(-n)$-shifted affine line $\bA^1[-n]$} is defined to be $\Spec \Sym (k[n])$.
\end{definition}

\begin{re}
    When $n=0$, $\bA^1[0]$ is just the affine line $\bA^1$. When $n \ge 1$, $\bA^1[-n]$ fits into the homotopy Cartesian diagram
    $$
\begin{tikzcd}
{\bA^1[-n]} \ar[dr, phantom, "\lrcorner_{\text{\tiny h}}", very near start] \arrow[r] \arrow[d] & {\pt} \arrow[d] \\
{\pt } \arrow[r]  & {\bA^1[-n+1]}   .
\end{tikzcd}
    $$
    In particular, $\bA^1[-1]$ fits into the homotopy Cartesian diagram
$$
\begin{tikzcd}
{\bA^1[-1]} \ar[dr, phantom, "\lrcorner_{\text{\tiny h}}", very near start] \arrow[r] \arrow[d] & {\pt} \arrow[d, "0"'] \\
{\pt } \arrow[r,"0"]  & {\bA^1}   .
\end{tikzcd}
$$
    In other words, $\bA^1[-1]$ is the derived self-intersection of the origin in $\bA^1$.
\end{re}

\begin{re}
    Note that when $n \ge 1$, the classical truncation of $\bA^1[-n]$ is $\pt$.
\end{re}

\begin{definition}\label{def:shift-fun-shift-diff}
    For a derived Artin stack $X$, a \emph{$(-n)$-shifted function on $X$} is a morphism $w: X \to \bA^1[-n]$. Applying the cotangent complex functor to $w$ induces a morphism $$\bL w: \bL_{\bA^1[-n]}|_X \simeq \cO_X[n] \to \bL_X.$$ We then define the \emph{exact $(-n)$-shifted differential $1$-form associated with $w$} to be $$dw:= \bL w[-n]: \cO_X \to \bL_X[-n].$$
\end{definition}

\begin{re}\label{rmk:shifted-function-and-homotopy}
    Note that, for $n \ge 1$, a $(-n)$-shifted function encodes subtle homotopical data that does not appear in classical algebraic geometry. For simplicity, we consider a $(-1)$-shifted function $w: X \to \bA^1[-1]$ on a derived scheme $X$. Algebraically speaking, there are equivalences of $\infty$-groupoids
\begin{align*}
    &\map_{\DStk}(X,\bA^1[-1]) \cong \map_{\DRing}(\Sym(k[1]),\Gamma(X, \cO_X)) \\
    \cong& \map_{\QCoh(\pt)}(k[1],\Gamma(X, \cO_X)) \cong \map_{\QCoh(X)}(\cO_X[1],\cO_X),
\end{align*}
    so that $w \in \pi_0\left( \map_{\DStk}(X,\bA^1[-1]) \right)$ corresponds to an element in $\pi_{1}(\Gamma(X, \cO_X))$\footnote{We are using homological conventions here. If using cohomological conventions, we will write $H^{-1}(\Gamma(X, \cO_X))$ instead.}. Geometrically speaking, $w: X \to \bA^1[-1]$ is determined by a homotopy $\zeta$ appearing in the following commutative diagram, where the square diagram is a homotopy Cartesian diagram.
$$
\begin{tikzcd}
	X \\
	& \bA^1[-1] & \pt \\
	& \pt & \bA^1
	\arrow[""{name=0, anchor=center, inner sep=0}, shift right=-2, bend right=-10, from=1-1, to=2-3]
	\arrow[""{name=1, anchor=center, inner sep=0}, shift right=2, bend right=10, from=1-1, to=3-2]
	\arrow[from=2-2, to=2-3]
	\arrow[from=2-2, to=3-2]
	\arrow["\lrcorner_{\text{\tiny h}}"{anchor=center, pos=0.125}, draw=none, from=2-2, to=3-3]
	\arrow[from=2-3, to=3-3]
	\arrow[from=3-2, to=3-3]
	\arrow[shorten <=10pt, shorten >=5pt, shift right=3, Rightarrow, from=0, to=1, "\zeta"'{pos=0.6}]
\end{tikzcd}
$$
\end{re}

\begin{definition}\label{rmk:rel-shifted-function}
    Let $n \in \bZ_{\ge 0}$, and $S$ be a derived Artin stack. The 
    \emph{relative $(-n)$-shifted affine line $\bA^1_S[-n]$ over $S$} is defined to be $\Spec_S \Sym_S (\cO_S[n])$. For a derived Artin stack $X$ over $S$, a \emph{relative $(-n)$-shifted function on $X$ over $S$} is a morphism $\widetilde{w} :X \to \bA^1_S[-n]$ over $S$, i.e. a commutative diagram
$$
\begin{tikzcd}
{X} \arrow[rr, "\widetilde{w}"] \arrow[rd] &    & {\bA^1_S[-n]} \arrow[ld] \\
                         & {S} &              .
\end{tikzcd}
$$
    Applying the relative cotangent complex functor to $\widetilde{w}$ induces a morphism $$\bL\widetilde{w}_{/S}: \bL_{\bA_S^1[-n]/S}|_X \simeq \cO_X[n] \to \bL_{X/S}.$$
    We then define the \emph{relative exact $(-n)$-shifted differential $1$-form associated with $\widetilde{w}$} to be $$d\widetilde{w}:= \bL \widetilde{w}_{/S}[-n]: \cO_X \to \bL_{X/S}[-n].$$
\end{definition}

We conclude with an example of an affine derived scheme whose $0$- and $(-1)$-shifted functions may be described explicitly in terms of algebraic data.

\begin{example}[A Specific Affine Derived Scheme]
\label{eg:aff-derived-zero-locus}

    Let $\bA^3 := \Spec k[x_1,x_2,x_3]$, and $s := (f_1,f_2) \in k[x_1,x_2,x_3]^{\oplus 2}$. Note that $s$ is equivalent to a section $\cO_{\bA^3} \to \cO_{\bA^3}^{\oplus 2}$, also denoted as $s$. The derived zero locus $Z(s)$ of the section $s: \cO_{\bA^3} \to \cO_{\bA^3}^{\oplus 2}$ is defined via the following homotopy Cartesian diagram.
$$
\begin{tikzcd}
{Z(s)} \ar[dr, phantom, "\lrcorner_{\text{\tiny h}}", very near start] \arrow[r] \arrow[d] & {\bA^3} \arrow[d, "s"] \\
{\bA^3 } \arrow[r,"0"]  & {\bA^2_{\bA^3}}        
\end{tikzcd}
$$
    
    By Proposition \ref{prop:cotangent-complex-derived-zero-loci}, there is a quasi-isomorphism
    $$\bL_{Z(s)} \simeq \cofib \left(d\bar{s}: \cO_{Z(s)}^{\oplus2} \to \cO_{Z(s)}^{\oplus3} \right).$$
    Note that $\bL_{Z(s)}$ has Tor-amplitude in $[-1,0]$.

    Using the Koszul resolution, we conclude that $\cO_{Z(s)}$ is the CDGA $k[x_1,x_2,x_3;\nu_1,\nu_2]$, whose underlying complex is the Koszul complex\footnote{See \cite[Section~2.2]{Vezzosi20}.}
$$k[x_1,x_2,x_3]\langle \nu_1\nu_2\rangle \to k[x_1,x_2,x_3]\langle \nu_1, \nu_2\rangle \to k[x_1,x_2,x_3], $$
    where $\nu_1 \mapsto f_1$,  $\nu_2 \mapsto f_2$, and $\nu_1\nu_2 \mapsto f_1\nu_2 - \nu_1f_2$ (determined by the Leibniz rule). Note the cohomological degrees $|x_1|=|x_2|=|x_3|=0$ and $|\nu_1|=|\nu_2|=-1$. Note that the classical truncation $Z(s)_\cl$ of $Z(s)$ is the classical zero locus of functions $s := (f_1,f_2)$ on $\bA^3$, since $$H^0(k[x_1,x_2,x_3;\nu_1,\nu_2]) = k[x_1,x_2,x_3]/(f_1,f_2).$$

    Thus, $\bL_{Z(s)}$ is quasi-isomorphic to the total complex of the bi-complex
$$
\cO_{Z(s)}\langle d\nu_1,d\nu_2\rangle \overset{d\bar{s}}{\longrightarrow} \cO_{Z(s)}\langle dx_1,dx_2,dx_3\rangle,
$$
    which can be explicitly written as
\[\begin{tikzcd}[column sep=6em]
	{k[x_1,x_2,x_3]\langle d\nu_1,d\nu_2\rangle} & {k[x_1,x_2,x_3]\langle dx_1,dx_2,dx_3\rangle} \\
	{k[x_1,x_2,x_3]\langle \nu_1d\nu_1,\nu_1d\nu_2,\nu_2d\nu_1,\nu_2d\nu_2\rangle} & {k[x_1,x_2,x_3]\langle \nu_1dx_1,\nu_1dx_2,\nu_1dx_3,\nu_2dx_1,\nu_2dx_2,\nu_2dx_3\rangle} \\
	{k[x_1,x_2,x_3]\langle \nu_1\nu_2d\nu_1,\nu_1\nu_2d\nu_2\rangle} & {k[x_1,x_2,x_3]\langle \nu_1\nu_2dx_1,\nu_1\nu_2dx_2,\nu_1\nu_2dx_3\rangle} .
    \arrow[from=1-1, to=1-2, "d\nu_j \mapsto df_j"]
	\arrow[from=2-1, to=1-1, "\nu_id\nu_j \mapsto f_id\nu_j"']
	\arrow[from=2-2, to=1-2, "\nu_idx_j \mapsto f_idx_j"']
	\arrow[from=2-1, to=2-2, "\nu_id\nu_j \mapsto \nu_idf_j"]
	\arrow[from=3-1, to=2-1, "\nu_1\nu_2d\nu_j \mapsto (f_1\nu_2-\nu_1f_2)d\nu_j"']
	\arrow[from=3-1, to=3-2, "\nu_1\nu_2d\nu_j \mapsto \nu_1\nu_2df_j"]
	\arrow[from=3-2, to=2-2, "\nu_1\nu_2dx_j \mapsto (f_1\nu_2-\nu_1f_2)dx_j"']
\end{tikzcd}\]
    Note the cohomological degrees $|dx_1| = |dx_2| = |dx_3| = 0$ and $|d\nu_1| = |d\nu_2| = -1$.

    Now consider a function $g: Z(s) \to \bA^1 = \Spec k[z_0]$ with $|z_0|=0$. This is equivalent to a morphism of CDGAs
\[\begin{tikzcd}
	{k[z_0]} & {k[x_1,x_2,x_3]} \\
	0 & {k[x_1,x_2,x_3]\langle \nu_1,\nu_2 \rangle} \\
	0 & {k[x_1,x_2,x_3]\langle \nu_1\nu_2 \rangle} .
	\arrow[from=1-1, to=1-2]
	\arrow[from=2-1, to=1-1]
	\arrow[from=2-2, to=1-2]
	\arrow[from=2-1, to=2-2]
	\arrow[from=3-1, to=2-1]
	\arrow[from=3-1, to=3-2]
	\arrow[from=3-2, to=2-2]
\end{tikzcd}\]
    In other words, $g: Z(s) \to \bA^1$ is determined by the image of $z_0$.
    Abusing notation, let $g$ denote the image of $z_0$. Note that $g$ is a representative of an element of $H^{0}\left(\cO_{Z(s)}\right) = \cO_{Z(s)_\cl}$.

    Applying the cotangent complex to $g: Z(s) \to \bA^1$, we obtain an exact differential $1$-form $$\bL_{\bA^1}|_{Z(s)} \simeq \cO_{Z(s)} \to \bL_{Z(s)}$$ that is determined by
\[
1 \mapsto dg=\sum_{i=1}^3 \frac{\partial g}{\partial x_i} \, dx_i \in k[x_1,x_2,x_3]\langle dx_1,dx_2,dx_3\rangle .
\]
    Abusing notation, let $dg$ denote the exact differential $1$-form $\cO_{Z(s)} \to \bL_{Z(s)}$. Note that $dg$ is a representative of an element in $H^0(\bL_{Z(s)}) = \Omega^1_{Z(s)_\cl}$. $\blacksquare$
    
    Next consider a $(-1)$-shifted function $w: Z(s) \to \bA^1[-1] \cong \Spec k[z_{-1}]$ with $|z_{-1}| = -1$. This is equivalent to a morphism of CDGAs
\[\begin{tikzcd}
	{k} & {k[x_1,x_2,x_3]} \\
	k\langle z_{-1} \rangle & {k[x_1,x_2,x_3]\langle \nu_1,\nu_2 \rangle} \\
	0 & {k[x_1,x_2,x_3]\langle \nu_1\nu_2 \rangle} .
	\arrow[from=1-1, to=1-2]
	\arrow[from=2-1, to=1-1, "0"]
	\arrow[from=2-2, to=1-2]
	\arrow[from=2-1, to=2-2]
	\arrow[from=3-1, to=2-1]
	\arrow[from=3-1, to=3-2]
	\arrow[from=3-2, to=2-2]
\end{tikzcd}\]
    In other words, $w: Z(s) \to \bA^1[-1]$ is determined by the image of $z_{-1}$.
    Abusing notation, let $w = h_1\nu_1 + h_2\nu_2 \in k[x_1,x_2,x_3]\langle \nu_1,\nu_2 \rangle$ denote the image of $z_{-1}$, where $h_1,h_2 \in k[x_1,x_2,x_3]$. Note that $w$ is a representative of an element in $H^{-1}\left(\cO_{Z(s)}\right)$, because $h_1f_1 + h_2f_2 = 0$.

    Applying the cotangent complex to $w: Z(s) \to \bA^1[-1]$, we obtain a morphism $$\bL_{\bA^1[-1]}|_{Z(s)} \simeq \cO_{Z(s)}[1] \to \bL_{Z(s)},$$
    which is equivalent to an exact $(-1)$-shifted differential $1$-form
    $$\cO_{Z(s)} \to \bL_{Z(s)}[-1]$$
    determined by
\begin{align*}
    1 \mapsto dw&=d(h_1\nu_1) + d(h_2\nu_2) \\ 
    &=\nu_1(dh_1) + h_1d\nu_1 + \nu_2(dh_2) + h_2d\nu_2 \\
    &=\nu_1\left(\sum_{i=1}^3 \frac{\partial h_1}{\partial x_i} \, dx_i\right) + \nu_2\left(\sum_{i=1}^3 \frac{\partial h_2}{\partial x_i} \, dx_i\right) + h_1d\nu_1 + h_2d\nu_2 \\
    & \in k[x_1,x_2,x_3]\langle \nu_1dx_1,\nu_1dx_2,\nu_1dx_3,\nu_2dx_1,\nu_2dx_2,\nu_2dx_3\rangle \oplus k[x_1,x_2,x_3]\langle d\nu_1,d\nu_2\rangle .
\end{align*}
    Abusing notation, let $dw$ denote the exact $(-1)$-shifted differential $1$-form $\cO_{Z(s)} \to \bL_{Z(s)}[-1]$. Note that $dw$ is a representative of an element in $H^{-1}(\bL_{Z(s)})$, because $d(h_1f_1 + h_2f_2) = 0$. $\blacksquare$

\end{example}

\begin{re}
    $Z(s)$ also fits into the following homotopy Cartesian diagram.
$$
\begin{tikzcd}
{Z(s)} \ar[dr, phantom, "\lrcorner_{\text{\tiny h}}", very near start] \arrow[r] \arrow[d] & {\bA^3} \arrow[d, "s"] \\
{\pt } \arrow[r,"0"]  & {\bA^2}        
\end{tikzcd}
$$
We then immediately see that
$$
\bL_{Z(s)} \simeq \bL_{\bA^3/\bA^2}|_{Z(s)} \simeq \cofib \left(\bL s: \bL_{\bA^2}|_{Z(s)} \simeq \cO_{Z(s)}^{\oplus2} \to \bL_{\bA^3}|_{Z(s)} \simeq \cO_{Z(s)}^{\oplus3} \right).
$$
\end{re}

\begin{re}
    If $f_1,f_2$ form a regular sequence in $k[x_1,x_2,x_3]$, then $Z(s) \cong Z(s)_\cl$ is automatically a classical scheme and, in particular, an l.c.i.
\end{re}

\subsection{Cosections and Cosection Localizations}\label{sec:cosection-localization}

See also \cite[Section~6]{AKLPR22P}\footnote{Note that we refer to a specific arXiv version, not the latest or published version.} and \cite{OhTh24, ParkNotes25, KiPa25} for discussions of cosections and cosection localizations from the viewpoint of derived geometry.

In this subsection, $X$ is assumed to be a quasi-smooth derived Deligne--Mumford stack.

\begin{definition}\label{def:cosection}
    Let $w : X \to \bA^1[-1]$ be a $(-1)$-shifted function on $X$. The induced morphism
    $$
    \bT w|_{X_\cl}: \bT_{X}|_{X_\cl} \to  \bT_{\bA^1[-1]}|_{X_\cl} \simeq \cO_{X_\cl}[-1]
    $$
    is a \emph{cosection of the perfect obstruction theory $\bL_{X}|_{X_\cl} \to \bL_{X_\cl}$} in the sense of \cite[Appendix~B]{CJW21}. Moreover, $H^1\left(\bT_X|_{X_{\cl}}\right)$ is the \emph{obstruction sheaf $\Ob_{X_\cl}$} in the sense of \cite{KiLi13}, and the morphism
    $$
    \sigma:= H^1\left(\bT w|_{X_{\cl}}\right): \Ob_{X_\cl} = H^1(\bT_X|_{X_{\cl}}) \to \cO_{X_{\cl}}
    $$
    is a \emph{cosection of the obstruction sheaf $\Ob_{X_\cl}$} in the sense of \cite{KiLi13}.
\end{definition}

\begin{re}
    $T^*[-1]X := \Spec_X \Sym_X \left(\bT_X[1]\right) \cong \Spec_X \Sym_X \left(\left(\bL_X[-1]\right)^{\vee}\right)$ is called the \emph{$(-1)$-shifted cotangent bundle on $X$}. Note that the classical truncation $(T^*[-1]X)_\cl$ of $T^*[-1]X$ is $$\Spec_{X_\cl} \Sym_{X_\cl} \left(H^0\left(\bT_X[1]|_{X_\cl}\right)\right) = \Spec_{X_\cl} \Sym_{X_\cl} \left(H^1\left(\bT_X|_{X_\cl}\right)\right) = \Spec_{X_\cl} \Sym_{X_\cl} \left( \Ob_{X_\cl} \right).$$
\end{re}

\begin{re}\label{rmk:class-shift-diff-form-is-cosection}
    Recall that $dw:= \bL w[-1]: \cO_X \to \bL_X[-1]$ is the exact $(-1)$-shifted differential $1$-form associated with $w$. Note that the morphism
    $$
    dw^\vee|_{X_\cl} : \bT_X|_{X_\cl}[1] \to \cO_{X_\cl}
    $$
    induces the cosection $\bT w|_{X_\cl}: \bT_{X}|_{X_\cl} \to  \cO_{X_\cl}[-1]$ after applying the shift functor $[-1]$. Moreover, the induced morphism
    $$
    H^0\left(dw^\vee|_{X_\cl}\right): \Ob_{X_\cl} = H^1(\bT_X|_{X_{\cl}}) \to \cO_{X_{\cl}}
    $$
    is precisely the cosection $\sigma: \Ob_{X_\cl} \to \cO_{X_{\cl}}$. Abusing notation, we also use $dw$ to denote the morphism
    $$
    X \to T^*[-1]X
    $$
    induced by $dw: \cO_X \to \bL_X[-1]$, and use $\sigma^\vee$ to denote the morphism
    $$
    X_\cl \to \Spec_{X_\cl} \Sym_{X_\cl} \left( \Ob_{X_\cl} \right)
    $$
    induced by the cosection $\sigma: \Ob_{X_\cl} \to \cO_{X_\cl}$. Then 
    $
    H^0\left(dw^\vee|_{X_\cl}\right) = \sigma: \Ob_{X_\cl} \to \cO_{X_{\cl}}
    $
    implies that 
    $$
    (dw)_\cl = \sigma^\vee: X_\cl \to (T^*[-1]X)_\cl = \Spec_{X_\cl} \Sym_{X_\cl} \left( \Ob_{X_\cl} \right).
    $$
\end{re}

\begin{definition}\label{def:degeneracy-loci}
    Let $w : X \to \bA^1[-1]$ be a $(-1)$-shifted function on $X$. Following the notation in Remark \ref{rmk:class-shift-diff-form-is-cosection}, we define the \emph{derived degeneracy locus of the cosection $\sigma: \Ob_{X_\cl} \to \cO_{X_{\cl}}$ associated with $w$} to be the derived zero locus $X(dw)$ of $dw: \cO_X \to \bL_X[-1]$, which is defined via the following homotopy Cartesian diagram.
$$
    \begin{tikzcd}
{X(dw)} \arrow[r] \arrow[d] \ar[dr, phantom, "\lrcorner_{\text{\tiny h}}", very near start] & {X} \arrow[d,"dw"] \\
{X} \arrow[r, "0"]           & {T^*[-1]X}          
    \end{tikzcd}
$$
    Then the classical truncation $X(dw)_\cl$ of $X(dw)$ is the \emph{classical degeneracy locus of the cosection $\sigma: \Ob_{X_\cl} \to \cO_{X_{\cl}}$} in the sense of \cite{KiLi13}, which fits into the following classical Cartesian diagram.
$$
    \begin{tikzcd}
{X(dw)_\cl} \arrow[r] \arrow[d] \ar[dr, phantom, "\lrcorner", very near start] & {X_\cl} \arrow[d,"(dw)_\cl = \sigma^\vee"] \\
{X_\cl} \arrow[r, "0"]           & {(T^*[-1]X)_\cl}          
    \end{tikzcd}
$$
\end{definition}

\begin{re}
    $X(dw)$ is the derived zero locus of the section $dw: \cO_X \to \bL_X[-1]$, which is equivalent to the derived zero locus $X(dw^\vee)$ of the cosection $dw^\vee: \bT_X[1] \to \cO_X$ (see \cite[Definition~4.13]{Jiang25}).
\end{re}

\begin{re}\label{rmk:virt-Lag-cycle-cosection-localized-virt-cycle}
    $X(dw)$ is also called the \emph{derived critical locus of the $(-1)$-shifted function $w: X \to \bA^1[-1]$}. By the Lagrangian intersection theorem (see \cite[Section~2.2]{PTVV13}), $X(dw)$ carries a $(-2)$-shifted symplectic structure\footnote{In particular, $X(dw)$ is not quasi-smooth in general.}. However, \cite{KiPa25} shows that the virtual Lagrangian cycle induced by the $(-2)$-shifted symplectic structure on $X(dw)$ recovers the cosection localized virtual cycle induced by the cosection $\sigma: \Ob_{X_{\cl}} \to \cO_{X_\cl}$.
\end{re}

\begin{re}\label{rmk:nachiketa}
  We refer the reader to \cite[Section~2]{AdSh24} for explicit computation of the underlying complexes of affine derived critical loci, and to \cite[Section~2]{OhTh24} for explicit computation of the classical truncation of derived critical loci.
\end{re}

\begin{example}[Local Model for Cosection Localization, {\cite[Section~2]{KiLi13}}{\cite[Section~3.4.1]{OhNotes25}}; {\cite[Section~4.2.1]{ParkNotes25}}]
\label{eg:local-model-cosection-localization}

Consider the following diagram.\footnote{Within this example, vector bundles and locally free sheaves share the same notation, e.g. $E := \Spec_X \Sym_X (E^\vee)$.}
$$
\begin{tikzcd}
                   & {E} \arrow[r, "\sigma"] \arrow[d]      & {\cO_X} \\
{Z(s)_\cl} \arrow[r, hook] & {X} \arrow[u, bend right, "s"'] &   
\end{tikzcd}
$$
Here, $X$ is a connected smooth (classical) scheme of dimension $d$; $E$ is a vector bundle of rank $r$; $Z(s)_\cl$ is the classical zero locus of a section $s: X \to E$. We assume that $\sigma \circ s = 0$ and $\sigma \ne 0$.

By Example \ref{eg:local-model-POT}, we have
$$
[Z(s)_\cl]^\vir = e(E,s) = [X \cdot_{E} X] \in A_{d-r}\left( Z(s)_\cl\right).
$$
Here, $[Z(s)_\cl]^\vir$ is the virtual cycle; $e(E,s)$ is the refined Euler class of $E$ with respect to $s$; $[X \cdot_{E} X]$ is the intersection product.

We define $Z(\sigma)_\cl$ via the following classical Cartesian diagram.
$$
\begin{tikzcd}
{Z(\sigma)_\cl} \arrow[r] \arrow[d] \ar[dr, phantom, "\lrcorner", very near start] & {X} \arrow[d,"\sigma^\vee"] \\
{X} \arrow[r, "0"']           & {E^\vee}          
\end{tikzcd}
$$
Then we take the blow-up $b: \widetilde{X} := Bl_{Z(\sigma)_\cl}X \to X$ of $X$ along $Z(\sigma)_\cl$ satisfying the classical Cartesian diagram
$$
\begin{tikzcd}
{D} \arrow[r, hook] \arrow[d] \ar[dr, phantom, "\lrcorner", very near start] & {\widetilde{X}} \arrow[d,"b"] \\
{Z(\sigma)_\cl} \arrow[r, hook]           & {X}          
\end{tikzcd}
$$
with $D$ being the exceptional divisor on $\widetilde{X}$.
Note that $b^*\sigma: b^*E \to \cO_{\widetilde{X}}$ vanishes along $D$. Hence, $b^*\sigma$ admits a factorization
$$
b^*E \overset{\overline{\sigma}}{\longrightarrow} \cO_{\widetilde{X}}(-D) \longhookrightarrow \cO_{\widetilde{X}}
$$
such that $\overline{\sigma}: b^*E \to \cO_{\widetilde{X}}(-D)$ is surjective. Moreover, $\sigma \circ s = 0$ implies that $\overline{\sigma} \circ b^*s = 0$.
Thus, there is a commutative diagram below, where the row is a short exact sequence of locally free sheaves.
$$
\begin{tikzcd}
{0} \arrow[r] & {K} \arrow[r] & {b^*E} \arrow[r, "\overline{\sigma}"]                               & {\cO_{\widetilde{X}}(-D)} \arrow[r] & {0} \\
             &              & {\cO_{\widetilde{X}}} \arrow[lu, dashed, "b^*s"] \arrow[u, "b^*s"'] \arrow[ru, "0"'] &              &   
\end{tikzcd}
$$
We define the cosection localized virtual cycle to be $$[Z(s)_\cl]^\vir_{\sigma} := b_*\left( -[D] \cap e(K,b^*s) \right) \in A_{d-r}\left( Z(s)_\cl \cap Z(\sigma)_\cl \right).$$

For now, let us assume that $\sigma: E \to \cO_X$ is surjective, and hence there is a short exact sequence of locally free sheaves as follows.
$$
0 \longrightarrow E(\sigma) \longrightarrow E \overset{\sigma}{\longrightarrow} \cO_X \to 0
$$

Thanks to MacPherson's graph construction (see \cite[Remark~5.1.1]{Fulton98} and \cite[Remark~4.1.7]{ParkNotes25}), the assumption $\sigma \circ s = 0$ implies that
$$
0: C_{Z(s)_\cl/X} \hookrightarrow E|_{Z(s)_\cl} \hookrightarrow E \overset{\sigma}{\to} \bA^1,
$$
where $C_{Z(s)_\cl/X}$ is the normal cone for $Z(s)_\cl \hookrightarrow X$. As a result, there is a closed embedding $C_{Z(s)_\cl/X} \hookrightarrow E(\sigma)|_{Z(s)_\cl}$. Consequently, we have
\begin{align*}
    [Z(s)_\cl]^\vir &= [X \cdot_{E} X] := 0^!_{E|_{Z(s)_\cl}}(C_{Z(s)_\cl/X}) \\
    &=e(\cO_X|_{Z(s)_\cl}) \cap 0^!_{E(\sigma)|_{Z(s)_\cl}}(C_{Z(s)_\cl/X}) \\
    &=0 .
\end{align*}
Note that the second line follows from the classical Cartesian diagram below and the excess intersection formula \cite[Theorem~6.3]{Fulton98}; the third line follows from the fact that the Euler class of a trivial vector bundle is trivial.
$$
\begin{tikzcd}
{Z(s)_\cl} \ar[drr, phantom, "\lrcorner", very near start] \arrow[rr] \arrow[d] & & {C_{Z(s)_\cl/X}} \arrow[d, hook] \\
{Z(s)_\cl} \ar[drr, phantom, "\lrcorner", very near start] \arrow[rr, "0_{E(\sigma)|_{Z(s)_\cl}}"] \arrow[d] & & {E(\sigma)|_{Z(s)_\cl}} \arrow[d, hook] \\
{Z(s)_\cl} \arrow[rr, "0_{E|_{Z(s)_\cl}}"]      &     & {E|_{Z(s)_\cl}}          
\end{tikzcd}
$$
To summarize, if $\sigma: E \to \cO_X$ is surjective, i.e. $Z(\sigma)_\cl = \emptyset$, then we have $[Z(s)_\cl]^\vir = 0$. This shows that $[Z(s)_\cl]^\vir$ vanishes away from $Z(s)_\cl \cap Z(\sigma)_\cl$ (or equivalently, $[Z(s)_\cl]^\vir$ is supported on $Z(s)_\cl \cap Z(\sigma)_\cl$).

In general (dropping the assumption that $\sigma: E \to \cO_X$ is surjective), there is a right exact sequence
$$
A_{d-r}\left( Z(s)_\cl \cap Z(\sigma)_\cl \right) \overset{i_*}{\longrightarrow} A_{d-r}\left( Z(s)_\cl\right) \overset{j^*}{\longrightarrow} A_{d-r}\left( Z(s)_\cl \backslash Z(\sigma)_\cl \right) \longrightarrow 0
$$
such that $j^*[Z(s)_\cl]^\vir = 0$. Here, $i: Z(s)_\cl \cap Z(\sigma)_\cl \hookrightarrow Z(s)_\cl$ is the closed embedding; $j: Z(s)_\cl \backslash Z(\sigma)_\cl \hookrightarrow Z(s)_\cl$ is the open embedding. Thus, there exists a cycle in $A_{d-r}\left( Z(s)_\cl \cap Z(\sigma)_\cl \right)$ that pushes forward to $[Z(s)_\cl]^\vir \in A_{d-r}\left( Z(s)_\cl\right)$. Moreover, the computation below shows that the cosection localized virtual cycle $[Z(s)_\cl]^\vir_{\sigma}$ is such a cycle.
\begin{align*}
    i_*[Z(s)_\cl]^\vir_{\sigma} &= i_*b_*\left( -[D] \cap e(K,b^*s) \right) \\
    &= b_*\overline{i}_*\left( -[D] \cap e(K,b^*s) \right) \\
    &= b_*\left( e(b^*E,b^*s) \right) \\
    &= e(E,s) \\
    &= [Z(s)_\cl]^\vir
\end{align*}
Note that the second line follows from the classical Cartesian diagram below;
$$
\begin{tikzcd}
{D \cap Z(b^*s)_\cl} \arrow[r, hook, "\overline{i}"] \arrow[d,"b"] \ar[dr, phantom, "\lrcorner", very near start] & {Z(b^*s)_\cl} \arrow[d,"b"] \\
{Z(\sigma)_\cl \cap Z(s)_\cl} \arrow[r, hook, "i"]           & {Z(s)_\cl}          
\end{tikzcd}
$$
the third line follows from MacPherson's graph construction (see \cite[Remark~5.1.1]{Fulton98} and \cite[Remark~4.1.7]{ParkNotes25}), the classical Cartesian diagram below, and the excess intersection formula \cite[Theorem~6.3]{Fulton98};
$$
\begin{tikzcd}
{Z(b^*s)_\cl} \ar[drr, phantom, "\lrcorner", very near start] \arrow[rr] \arrow[d] & & {C_{Z(b^*s)_\cl/\widetilde{X}}} \arrow[d, hook] \\
{Z(b^*s)_\cl} \ar[drr, phantom, "\lrcorner", very near start] \arrow[rr] \arrow[d] & & {K|_{Z(b^*s)_\cl}} \arrow[d, hook] \\
{Z(b^*s)_\cl} \arrow[rr]      &     & {b^*E|_{Z(b^*s)_\cl}}          
\end{tikzcd}
$$
the fourth line follows from \cite[Proposition~14.1(d)(ii)]{Fulton98}.
$\blacksquare$
\end{example}

\begin{re}
    Intuitively, $\sigma: E \to \cO_X$ plays the role of a cosection of the obstruction sheaf.
\end{re}

\begin{proposition}[Passing from Relative to Absolute, see {\cite[Proposition~B.0.1]{CJW21}}]
\label{prop:rel-cosection-rel-POT}

    Let $f: X \to Y$ be a quasi-smooth morphism from a separated\footnote{A derived Deligne--Mumford stack is separated if its classical truncation is separated.} derived Deligne--Mumford stack $X$ to a smooth classical $1$-Artin stack $Y$ with a separated diagonal, and $X_{\cl}$ be the classical truncation of $X$. We further assume that there is a relative $(-1)$-shifted function 
$$
\begin{tikzcd}
{X} \arrow[rr, "\widetilde{w}"] \arrow[rd, "f"'] &    & {\bA^1_Y[-1]} \arrow[ld] \\
                         & {Y} &              
\end{tikzcd}
$$
    on $X$ over $Y$, which naturally induces a $(-1)$-shifted function $w: X \to \bA^1[-1]$ on $X$. Then we have the following results.
    
    $(1)$ The relative $(-1)$-shifted function $\widetilde{w}$ induces a (relative) cosection $$\bT\widetilde{w}_{/Y}|_{X_\cl}: \bT_{X/Y}|_{X_\cl} \to  \bT_{\bA^1_Y[-1]/Y}|_{X_\cl} \simeq \cO_{X_\cl}[-1]$$ of the relative perfect obstruction theory $\bL_{X/Y}|_{X_\cl} \to \bL_{X_\cl/Y}$ in the sense of \cite[Appendix~B]{CJW21}. Moreover, $H^1(\bT_{X/Y}|_{X_\cl})$ is a relative obstruction sheaf $\Ob_{X_\cl/Y}$ in the sense of \cite{KiLi13}, and the morphism
$$
\widetilde{\sigma}:= H^1(\bT\widetilde{w}_{/Y}|_{X_\cl}): \Ob_{X_\cl/Y} = H^1(\bT_{X/Y}|_{X_\cl}) \to \cO_{X_\cl}
$$
    is a (relative) cosection of the relative obstruction sheaf $\Ob_{X_\cl/Y}$ in the sense of \cite[Section~4]{CJW21}.
    
    $(2)$ Recall that $d\widetilde{w}:= \bL\widetilde{w}_{/Y}[-1]: \bL_{\bA^1_Y[-1]/Y}|_X[-1] \simeq \cO_X \to \bL_{X/Y}[-1]$ is the relative exact $(-1)$-shifted differential $1$-form associated with $\widetilde{w}$. The classical truncation $X(dw)_\cl$ of the derived zero locus $X(d\widetilde{w})$ of $d\widetilde{w}: \cO_X \to \bL_{X/Y}[-1]$ is the classical degeneracy locus of the (relative) cosection $\widetilde{\sigma}: \Ob_{X_\cl/Y} \to \cO_{X_\cl}$ in the sense of \cite[Section~4]{CJW21}.
    
    $(3)$ Following the notation in Definition \ref{def:degeneracy-loci}, the classical degeneracy locus of the (absolute) cosection $\sigma$ associated with the $(-1)$-shifted function $w$ and the degeneracy locus of the (relative) cosection $\widetilde{\sigma}$ associated with the relative $(-1)$-shifted function $\widetilde{w}$ coincide.
    
    $(4)$ The cosection localized virtual cycle induced by the (absolute) cosection $\sigma$ associated with the $(-1)$-shifted function $w$ and the cosection localized virtual cycle induced by the (relative) cosection $\widetilde{\sigma}$ associated with the relative $(-1)$-shifted function $\widetilde{w}$ coincide.
\end{proposition}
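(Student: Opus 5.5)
The plan is to mirror the proof of Proposition \ref{prop:rel-qsmth-rel-POT}: everything is read off from the morphism of cofiber sequences of cotangent complexes attached to the composite $X \to \bA^1_Y[-1] \to Y$. The one genuinely non-formal input, the comparison of localized cycles in $(4)$, will be quoted from \cite[Proposition~B.0.1]{CJW21}.

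\textbf{Step 1: parts $(1)$ and $(2)$.}
By Proposition \ref{prop:rel-qsmth-rel-POT}(2), $\bL_{X/Y}|_{X_\cl} \to \bL_{X_\cl/Y}$ is a relative perfect obstruction theory. Dualizing $\bL\widetilde{w}_{/Y}$ and restricting to $X_\cl$ gives $\bT\widetilde{w}_{/Y}|_{X_\cl}$. Here we use $\bL_{\bA^1_Y[-1]/Y} \simeq \cO[1]$, which holds because $\bA^1_Y[-1] = \Spec_Y \Sym_Y(\cO_Y[1])$.
\begin{itemize}
\item Since $f$ is quasi-smooth, $\bT_{X/Y}|_{X_\cl}$ has amplitude $[0,1]$, so $H^1(\bT_{X/Y}|_{X_\cl})$ is the relative obstruction sheaf in the sense of \cite{KiLi13}.
\item Taking $H^1$ of $\bT\widetilde{w}_{/Y}|_{X_\cl}$ gives $\widetilde{\sigma}$. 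This proves $(1)$.
\item For $(2)$, repeat Remark \ref{rmk:class-shift-diff-form-is-cosection} verbatim in the relative setting. The classical truncation of $T^*[-1](X/Y) := \V_X(\bL_{X/Y}[-1]^\vee)$ is $\Spec_{X_\cl}\Sym\,\Ob_{X_\cl/Y}$, and $(d\widetilde{w})_\cl = \widetilde{\sigma}^\vee$.
\item Hence the classical truncation of $X(d\widetilde{w})$ is the zero locus of $\widetilde{\sigma}^\vee$, which is the degeneracy locus of \cite[Section~4]{CJW21}.
\end{itemize}

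\textbf{Step 2: part $(3)$.}
The absolute function is $w = \pi \circ \widetilde{w}$, where $\pi: \bA^1_Y[-1] = \bA^1[-1]\times Y \to \bA^1[-1]$ is the projection. The transitivity triangle for $X \to Y$ gives the fiber sequence
$$\bT_{X}|_{X_\cl} \to \bT_{X/Y}|_{X_\cl}[0]\text{-twisted}\;\ldots$$
more precisely
$$\bT_{X/Y}|_{X_\cl} \to \bT_X|_{X_\cl} \to f^*\bT_Y|_{X_\cl}.$$
\begin{itemize}
\item Since $Y$ is smooth, $\bT_Y$ has amplitude $[-1,0]$: it is the tangent complex of a smooth Artin stack.
\item Taking cohomology therefore yields a surjection $\Ob_{X_\cl/Y} \twoheadrightarrow \Ob_{X_\cl}$, whose kernel is a quotient of $H^0(f^*\bT_Y)$.
\item Compatibility of cotangent complexes along $X \to \bA^1_Y[-1] \to \bA^1[-1]$ gives $\widetilde{\sigma} = \sigma \circ (\Ob_{X_\cl/Y} \twoheadrightarrow \Ob_{X_\cl})$. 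This uses $\bL_{\bA^1_Y[-1]/Y} \simeq \pi^*\bL_{\bA^1[-1]}$.
\item Because the map is surjective, $\widetilde{\sigma}$ and $\sigma$ vanish at exactly the same points, so the two degeneracy loci coincide.
\item If one wants this scheme-theoretically, the zero ideal of $\widetilde{\sigma}^\vee$ is the image of $\widetilde{\sigma}$, which equals the image of $\sigma$.
\end{itemize}

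\textbf{Step 3: part $(4)$, the main obstacle.}
This is the comparison of cosection-localized virtual cycles, which does not follow formally from the above. I would reduce it directly to \cite[Proposition~B.0.1]{CJW21}. That result states exactly that, for a relative perfect obstruction theory over a smooth Artin stack with separated diagonal and a relative cosection compatible with the absolute one, the relative and absolute localized cycles agree.

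What must be checked is that our data satisfy its hypotheses:
\begin{itemize}
\item $X_\cl$ is separated Deligne--Mumford and $Y$ is smooth with separated diagonal, both by assumption.
\item The absolute obstruction theory $\bL_X|_{X_\cl}$ and the relative one $\bL_{X/Y}|_{X_\cl}$ fit into the compatible triangle displayed in the proof of Proposition \ref{prop:rel-qsmth-rel-POT}.
\item The cosections are compatible. This is the identity $\widetilde{\sigma} = \sigma \circ (\text{surjection})$ from Step 2, now upgraded to the chain level: $\bT\widetilde{w}_{/Y}$ is the restriction of $\bT w$ along $\bT_{X/Y} \to \bT_X$.
\end{itemize}

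The chain-level compatibility is the delicate point, because the CJW21 formalism uses cosections of complexes. I would obtain it from functoriality of tangent complexes applied to the commutative square formed by $\widetilde{w}$, $w$, $f$ and $\pi$. Identifying $\bT_{\bA^1_Y[-1]/Y} \simeq \cO[-1] \simeq \pi^*\bT_{\bA^1[-1]}$ then finishes the argument.
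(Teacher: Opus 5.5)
Your overall route matches the paper's. You read off (1)--(3) from the transitivity triangle for $X \to Y$ and the factorization $w=\pi\circ\widetilde{w}$. For (4) you assemble the ladder of tangent complexes relating $\bT\tau$, $\bT\tau_{/Y}$ and the two cosections, then run the argument of \cite[Proposition~B.0.1]{CJW21}. Your Steps 1--2 are fine. In particular, the surjection $\Ob_{X_\cl/Y}\twoheadrightarrow\Ob_{X_\cl}$ and the chain-level identity $\bT\widetilde{w}_{/Y}\simeq\bT w\circ(\bT_{X/Y}\to\bT_X)$ give the upper squares of the ladder the paper writes down.

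However, your checklist for (4), and your one-line proof of (1), omits the one non-formal input the paper supplies before invoking CJW21. You never show that $\bT w|_{X_\cl}$ and $\bT\widetilde{w}_{/Y}|_{X_\cl}$ are cosections \emph{of the perfect obstruction theories}, i.e.\ that the composites
\[
\bT_{X_\cl}\xrightarrow{\bT\tau}\bT_X|_{X_\cl}\xrightarrow{\bT w}\cO_{X_\cl}[-1],\qquad
\bT_{X_\cl/Y}\xrightarrow{\bT\tau_{/Y}}\bT_{X/Y}|_{X_\cl}\xrightarrow{\bT\widetilde{w}_{/Y}}\cO_{X_\cl}[-1]
\]
are null-homotopic. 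Compatibility between the relative and absolute cosections says nothing about this. This null-homotopy (equivalently, by \cite[Lemma~2.2]{BeFa97}, vanishing of $h^1/h^0$ of the composite) is what forces the intrinsic normal cone into the kernel cone stack of the cosection, i.e.\ Kiem--Li's cone reduction. Without it neither localized cycle is defined, so the CJW21 argument has nothing to start from.

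The fix is short and is exactly what the paper does. The maps $w\circ\tau$ and $\widetilde{w}\circ\tau$ are $(-1)$-shifted functions (absolute, resp.\ relative over $Y$) on the \emph{classical} stack $X_\cl$. Such functions are classified by $H^{-1}(\Gamma(X_\cl,\cO_{X_\cl}))$, which vanishes. Hence $w\circ\tau$ is homotopic to the zero function factoring through $\pt$, and $\widetilde{w}\circ\tau$ to the zero section factoring through $Y$. Applying the (relative) tangent complex functor to these factorizations gives the two null-homotopies.

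With this added you have the full diagram: rows the transitivity triangles, top row $\cO_{X_\cl}[-1]\xleftarrow{\simeq}\cO_{X_\cl}[-1]\leftarrow 0$, vertical composites null-homotopic. That diagram is the input on which the proof of \cite[Proposition~B.0.1]{CJW21} runs.
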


\begin{proof}

    We omit the proof of $(1)$, $(2)$ and $(3)$, but leave a few comments. $(1)$ and $(2)$ rely on Proposition \ref{prop:rel-qsmth-rel-POT}. $(3)$ essentially follows from definitions. We now proceed to prove $(4)$.
    
    There is a natural closed embedding $\tau: X_{\cl} \hookrightarrow X$. As in the proof of Proposition \ref{prop:rel-qsmth-rel-POT}, we obtain the following commutative diagram, where rows are (co)fiber sequences.
$$
\begin{tikzcd}
{} &{\bT_X|_{X_\cl}} \arrow[l, "+1"']   & {\bT_{X/Y}|_{X_\cl}} \arrow[l]  & {\bT_{Y}|_{X_\cl}[-1]} \arrow[l] \\
{} &{\bT_{X_\cl}} \arrow[u, "\bT\tau"]  \arrow[l, "+1"']          & {\bT_{X_\cl/Y}}  \arrow[l]   \arrow[u, "\bT\tau_{/Y}"]        & {\bT_{Y}|_{X_\cl}[-1]} \arrow[l] \arrow[u, "\simeq"]  
\end{tikzcd}
$$

    Under the additional assumption that a relative $(-1)$-shifted function exists, we have the following commutative diagram, where the square diagram is a homotopy Cartesian diagram.
$$
\begin{tikzcd}
{X_\cl} \arrow[r, hook, "\tau"] \arrow[rrd, "f_\cl"'] & {X} \arrow[r, "\widetilde{w}"] \arrow[rd, "f"] \arrow[rr, bend left, shift left, "w"'] & {\bA_Y^1[-1]} \ar[dr, phantom, "\lrcorner_{\text{\tiny h}}", very near start] \arrow[r] \arrow[d] & {\bA^1[-1]} \arrow[d] \\
                   &                                               & {Y} \arrow[r]           & {\pt}          
\end{tikzcd}
$$
    Note that $w \circ \tau: X_\cl \to \bA^1[-1]$ is a zero $(-1)$-shifted function on $X_\cl$, and $\widetilde{w} \circ \tau: X_\cl \to \bA_Y^1[-1]$ is a zero relative $(-1)$-shifted function on $X_\cl$ over $Y$. Hence, we see that 
\begin{align*}
    \bT w \circ \bT\tau &: \bT_{X_\cl} \to \bT_X|_{X_\cl} \to (\cO_{X}[-1])|_{X_\cl} \simeq \cO_{X_\cl}[-1] \\
    \bT\widetilde{w}_{/Y} \circ \bT\tau_{/Y} &: \bT_{X_\cl/Y} \to \bT_{X/Y}|_{X_\cl} \to (\cO_{X}[-1])|_{X_\cl} \simeq \cO_{X_\cl}[-1]
\end{align*}
    are null-homotopic.

    To sum up, we have the following commutative diagram, where rows are (co)fiber sequences, and vertical compositions are null-homotopic.
$$
\begin{tikzcd}
{} & {\cO_{X_\cl}[-1]} \arrow[l, "+1"']           & {\cO_{X_\cl}[-1]} \arrow[l, "\simeq"']           & 0 \arrow[l]            \\
{} & {\bT_X|_{X_\cl}} \arrow[u, "\bT w"] \arrow[l, "+1"'] & {\bT_{X/Y}|_{X_\cl}} \arrow[u, "\bT\widetilde{w}_{/Y}"] \arrow[l] & {\bT_{Y}|_{X_\cl}[-1]} \arrow[u] \arrow[l] \\
{} & {\bT_{X_\cl}} \arrow[l, "+1"'] \arrow[u, "\bT\tau"] & {\bT_{X_\cl/Y}} \arrow[l] \arrow[u, "\bT\tau_{/Y}"] & {\bT_{Y}|_{X_\cl}[-1]} \arrow[l] \arrow[u, "\simeq"]
\end{tikzcd}
$$

    The rest of the proof is exactly the same as in the proof of \cite[Proposition~B.0.1]{CJW21} starting at \cite[Page~47]{CJW21}.
\end{proof}

\begin{re}
    Under the assumptions of the proposition, $f: X \to Y$ is automatically a separated morphism by the cancellation law.
\end{re}

\begin{re}
    By \cite[Lemma~2.2]{BeFa97}, $\bT w \circ \bT\tau$ is null-homotopic if and only if $h^1/h^0\left(\bT w \circ \bT\tau\right)=0$.
\end{re}

\begin{re}
    We call $X(d\widetilde{w})$ the \emph{derived degeneracy locus of the (relative) cosection $\widetilde{\sigma}: \Ob_{X_\cl/Y} \to \cO_{X_\cl}$}.
\end{re}

\begin{re}
    See \cite{KiLi13} for the construction of cosection localized virtual cycles via absolute cosections; see \cite[Appendix~B]{CJW21}\cite[Section~6]{AKLPR22P}\footnote{Note that we refer to a specific arXiv version, not the latest or published version.}\cite{KiPa25} for the construction of cosection localized virtual cycles via relative cosections. Other relevant references include \cite{ChKiLi17}, \cite{KiPa21}, and \cite{KiPa23}.
\end{re}

\subsection{Derived Weil Restrictions}\label{sec:der-weil-res}

This subsection is only needed for Corollary \ref{cor:log-compact-derived-weil-restriction} and Section \ref{sec:4}.

We assume the reader has basic knowledge of (derived) mapping (pre-)stacks.

We refer to \cite{KMMP25}\cite{CS24}\cite{KiPa25}\cite{HKR25} for recent works on derived Weil restrictions.

Let $X \longrightarrow Y \overset{p}{\longrightarrow} Z$ be morphisms of derived pre-stacks. The derived Weil restriction $\RW_Z(X/Y)$ (or $p_*X$ for simplicity) of $X$ along $p: Y \to Z$ is the derived pre-stack satisfying the universal property
$$
\map_{\DPStk_{/Z}}\left( T, p_*X \right) \cong \map_{\DPStk_{/Y}}\left( p^*T, X \right),
$$
where $p^*T := T \times^h_Z Y$. Moreover, the universal family of $p_*X$ is the commutative diagram as follows.
$$
\begin{tikzcd}
{p_*X} \arrow[d] &{p_*X \times^h_Z Y} \ar[dl, phantom, "_{\text{\tiny h}}\llcorner", very near start] \arrow[r, "\ev"] \arrow[d] \arrow[l, "\pi"'] & {X} \ar[dl] \\
{Z} &{Y} \arrow[l, "p"]                    &   
\end{tikzcd}
$$

\begin{re}\label{rmk:Weil-Restriction-is-homotopy-fib-prod}
    By definition, the derived Weil restriction $\RW_Z(X/Y)$ fits into the following homotopy Cartesian diagram
$$
\begin{tikzcd}
{\RW_Z(X/Y)} \ar[dr, phantom, "\lrcorner_{\text{\tiny h}}", very near start] \arrow[r] \arrow[d] & {Z} \arrow[d, "\{\id_{Y}\}"] \\
{\RH_{Z}\left( Y, X \right)} \arrow[r, "q"]  & {\RH_{Z}\left( Y, Y \right)} .
\end{tikzcd}
$$
\end{re}

\begin{re}\label{rmk:Weil-Restriction-is-algebraic}(Algebraicity of Derived Weil Restrictions) 
To determine the algebraicity of derived Weil restrictions, it suffices to determine the algebraicity of the derived mapping pre-stacks appearing in the homotopy Cartesian diagram in Remark \ref{rmk:Weil-Restriction-is-homotopy-fib-prod}. See \cite[Theorem~2.2.6.11]{ToVe08}\cite{OlssonHomStk06}\cite{HLP23}\cite{HaRy19}\cite{HKR25} for more discussions.
\end{re}
    
\begin{re}(\cite[Section~2.1]{ToVe25})
\label{rmk:alg-weil-res}
    By taking $p: Y \to Z$ to be the universal curve $\fCgn \to \fMgn$ over the classical $1$-Artin stack $\fMgn$ of pre-stable curves, $p$ induces an adjunction between $\infty$-categories
$$
p^*: \DStk_{/\fMgn} \;\rightleftarrows\;  \DStk_{/\fCgn}:p_*,
$$
where $p^*$ is defined by the homotopy fibered product.
\end{re}

\begin{example}(Derived Mapping Pre-Stacks are Derived Weil Restrictions)
\label{eg:der-map-stk-is-weil-res}

Let $p: Y \to Z$ be a morphism of derived pre-stacks, $W$ be a derived pre-stack.
By definition, we have the following homotopy Cartesian diagram
$$
\begin{tikzcd}
{\RH_{Z}\left( Y, Z \times^h W \right)} \ar[dr, phantom, "\lrcorner_{\text{\tiny h}}", very near start] \arrow[r] \arrow[d] & {Z} \arrow[d, "\{\id_{Y}\}"] \\
{\RH_{Z}\left( Y, Y \times^h W \right)} \arrow[r]  & {\RH_{Z}\left( Y, Y \right)} .
\end{tikzcd}
$$
By Remark \ref{rmk:Weil-Restriction-is-homotopy-fib-prod} and setting $X := Y \times^h W$, we see that $$p_*\left(Y \times^h W\right) := \RW_{Z} \left( Y \times^h W / Y \right) \cong \RH_{Z}\left( Y, Z \times^h W \right).$$ More generally, if $V$ is a derived pre-stack over $Z$, then we have $$p_*p^*V \cong \RH_{Z}(Y,V).$$
$\blacksquare$
\end{example}

\begin{re}\label{rmk:class-truncation-Weil-Restriction}(Classical Truncations of Derived Mapping Pre-Stacks and Derived Weil Restrictions) If the morphism $p: Y \to Z$ is flat, then the classical truncations of derived mapping pre-stacks agree with classical mapping pre-stacks in the following sense:
$$
\left(\RH_{Z}(Y,V)\right)_\cl = \Hom_{Z_\cl}(Y_\cl,V_\cl).
$$
Consequently, if the morphism $p: Y \to Z$ is flat, then by Remark \ref{rmk:Weil-Restriction-is-homotopy-fib-prod}, the classical truncations of derived Weil restrictions agree with classical Weil restrictions in the following sense:
$$
\left(\RW_Z(X/Y)\right)_\cl = \Weil_{Z_\cl}(X_\cl/Y_\cl).
$$
\end{re}

\begin{proposition}[{\cite[Remark~1.2.6]{KMMP25}}{\cite[Lemma~2.3]{HKR25}}; {\cite[Lemma~A.1.3]{CJW21}}]
\label{prop:weil-res-base-change}

    Let $p^\prime: Y^\prime \to Z^\prime$ be the base change of $p: Y \to Z$ along a morphism $Z^\prime \to Z$. Then there is an equivalence of derived pre-stacks
    $$
    \RW_{Z}\left(X / Y\right) \times^h_{Z} Z^\prime\cong \RW_{Z^\prime}\left(X \times^h_Y Y^\prime / Y^\prime\right).
    $$
\end{proposition}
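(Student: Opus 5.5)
The plan is to verify the universal property directly: we show that both sides corepresent the same functor on derived pre-stacks over $Z'$, and then conclude by the Yoneda lemma in $\DPStk_{/Z'}$.

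Fix a test object $T \to Z'$. First, the universal property of the homotopy fibered product gives
$$\map_{\DPStk_{/Z'}}\left(T, \RW_Z(X/Y)\times^h_Z Z'\right) \cong \map_{\DPStk_{/Z}}\left(T, \RW_Z(X/Y)\right),$$
where $T$ is viewed over $Z$ via the composite $T \to Z' \to Z$. Second, the defining universal property of $p_*X$ identifies this with $\map_{\DPStk_{/Y}}(T\times^h_Z Y, X)$. Third, by pasting of homotopy Cartesian squares we have
$$T \times^h_Z Y \simeq T\times^h_{Z'}(Z'\times^h_Z Y) = T \times^h_{Z'} Y' = p'^*T,$$
and this equivalence is compatible with the structure maps to $Y$. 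These maps are $p'^*T \to Y' \to Y$ on one side and the projection on the other.

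Next we trade maps over $Y$ for maps over $Y'$. Since $Y' = Y\times^h_Z Z'$ and $p'^*T$ comes equipped with its map to $Y'$, a morphism $p'^*T \to X$ over $Y$ is the same as a morphism $p'^*T \to X\times^h_Y Y'$ over $Y'$; this is again the universal property of the fibered product, now over $Y$. Finally, the universal property of $\RW_{Z'}(X\times^h_Y Y'/Y')$ identifies the latter mapping space with $\map_{\DPStk_{/Z'}}(T, \RW_{Z'}(X\times^h_Y Y'/Y'))$.

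Each identification above is natural in $T$. Composing them therefore gives a natural equivalence of corepresented functors on $\DPStk_{/Z'}$, and Yoneda yields the claimed equivalence of derived pre-stacks.

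The only delicate point is working $\infty$-categorically, so that "natural in $T$" is meaningful. I would handle this by phrasing each step as an adjunction. The base change functor $(-)\times^h_Z Z'$ is right adjoint to the forgetful functor $\DPStk_{/Z'}\to\DPStk_{/Z}$, and the same holds over $Y$. The pullback $p^*$ is left adjoint to $p_*$ by definition. The composite-of-adjoints identity then reduces to the equivalence of left adjoints
$$(\text{forget}_{Y'\to Y})\circ p'^* \simeq p^*\circ(\text{forget}_{Z'\to Z}),$$
which is exactly the pasting lemma for homotopy pullbacks. By uniqueness of adjoints, the right adjoints $(-)\times^h_Z Z' \circ p_*$ and $p'_*\circ(-)\times^h_Y Y'$ agree, which is the statement.
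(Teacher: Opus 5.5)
Your argument is correct: the chain $\map_{\DPStk_{/Z'}}(T, \RW_Z(X/Y)\times^h_Z Z') \cong \map_{\DPStk_{/Y}}(T\times^h_{Z'} Y', X) \cong \map_{\DPStk_{/Z'}}(T, \RW_{Z'}(X\times^h_Y Y'/Y'))$ is exactly the required verification, and your comparison of left adjoints via pasting of homotopy pullbacks makes it natural in $T$. The paper gives no proof of its own and only cites \cite{KMMP25, HKR25, CJW21}, where the result follows from the defining universal property in the same way; the one slip is terminological, since $T \mapsto \map_{\DPStk_{/Z'}}(T,W)$ is the functor \emph{represented} (not corepresented) by $W$.
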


\begin{proposition}[{\cite[Proposition~1.2.3]{KMMP25}}{\cite[Lemma~A.3]{KiPa25}}{\cite[Proposition~2.19]{HKR25}}; {\cite[Proposition~2.2]{ChLi12}}{\cite[Lemma~3.9]{Herr25}}]
\label{prop:weil-res-vect-bundle}

    Let $p: Y \to Z$ be the universal curve $\fCgn \to \fMgn$ over the classical $1$-Artin stack $\fMgn$ of pre-stable curves. Let $E^{\vee}$ be a perfect complex on $Y$ that is of Tor-amplitude in $[-n,0]$ for some $n \in \bZ_{\ge 0}$, and $\cE := \Spec_{Y} \Sym_{Y} (E^{\vee}) \cong \V_Y(E^\vee)$ be the derived vector bundle over $Y$ associated to $E$. Then we have an equivalence of derived stacks
    $$
    p_*\cE := \RW_Z(\cE/Y) \cong \Spec_{Z} \Sym_{Z} \left( \left( \RR p_* E \right)^{\vee} \right).
    $$
    
\end{proposition}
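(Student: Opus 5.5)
We verify the universal property of $\Spec_{Z} \Sym_{Z}\left(\left(\RR p_* E\right)^\vee\right)$ directly against that of $p_*\cE$ as functors on $\DPStk_{/Z}$. It suffices to test on affine derived schemes $u: T \to Z$, since both sides are derived stacks. By the defining adjunction,
$$
\map_{/Z}(T, p_*\cE) \cong \map_{/Y}(T\times^h_Z Y, \cE).
$$
Write $p_T: Y_T := T\times^h_Z Y \to T$ and $u_Y: Y_T \to Y$ for the projections. Since $\cE \cong \V_Y(E^\vee)$, the right-hand side is
$$
\map_{\QCoh(Y_T)}(\LL u_Y^* E^\vee, \cO_{Y_T}) \cong \map_{\QCoh(Y_T)}(\cO_{Y_T}, \LL u_Y^*E).
$$
This uses that $E^\vee$ is perfect, so $\LL u_Y^*$ commutes with dualization. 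Adjunction $p_T^* \dashv \RR p_{T*}$ then gives $\map_{\QCoh(T)}(\cO_T, \RR p_{T*}\LL u_Y^* E)$.

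Next we apply base change. The map $p$ is the universal curve over $\fMgn$, so it is flat, proper and representable, and its fibers have dimension $1$. Base change for quasi-coherent pushforward along the homotopy pullback square gives
$$
\RR p_{T*}\LL u_Y^* E \simeq \LL u^*\RR p_* E.
$$
Moreover $\RR p_*E$ is perfect. Indeed, $p$ is proper, flat and of finite presentation, so $\RR p_*$ preserves perfect complexes. Its Tor-amplitude lies in $[0, n+1]$, because $E$ has Tor-amplitude in $[0,n]$ and $p$ has cohomological dimension $1$. Dualizing again identifies the mapping space with $\map_{\QCoh(T)}(\LL u^*(\RR p_*E)^\vee, \cO_T)$, which is exactly the $T$-points of $\V_Z((\RR p_*E)^\vee)$.

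All these equivalences are natural in $T$, so $p_*\cE \cong \V_Z((\RR p_*E)^\vee)$.

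The last step identifies $\V_Z((\RR p_*E)^\vee)$ with $\Spec_Z\Sym_Z((\RR p_*E)^\vee)$. By the convention recalled in the paper's notation section, this requires $(\RR p_*E)^\vee$ to have Tor-amplitude $\le 0$, i.e. $\RR p_*E$ to have Tor-amplitude $\ge 0$. That holds by the bound above: $\RR p_*$ of a complex with Tor-amplitude $\ge 0$ along a flat proper map has Tor-amplitude $\ge 0$, which can be checked fiberwise after base change to points.

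I expect the main obstacle to be this technical input on $\RR p_*$: perfectness, base change, and the amplitude bounds. This is delicate because $\fMgn$ is an Artin stack that is not quasi-compact. I would handle it by working smooth-locally on $Z$, pulling back to an affine atlas $V \to \fMgn$ and using Proposition~\ref{prop:weil-res-base-change} to reduce to $p_V: Y_V \to V$, a flat projective family of curves over an affine scheme. There the standard results for proper flat morphisms of finite Tor-dimension apply, and descent glues the local equivalences, since everything is natural in the test object.
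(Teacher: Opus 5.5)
Your argument is correct. It is essentially the standard proof of \cite[Proposition~1.2.3]{KMMP25}, which the paper cites rather than reproves: adjunction for the Weil restriction, evaluation of $\V_Y(E^\vee)$ on the non-affine $Y_T$ via descent for $\QCoh$, the adjunction $p_T^*\dashv \RR p_{T*}$, base change, and the Tor-amplitude bound $[0,n+1]$ on $\RR p_*E$ to pass from $\V_Z$ to $\Spec_Z\Sym_Z$. As a small bonus, your functor-of-points identification already shows that $p_*\cE$ is affine over $Z$, so you need no separate algebraicity input, such as the mapping-stack result the paper invokes in the proof of Corollary~\ref{cor:log-compact-derived-weil-restriction}.
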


\begin{re}\label{rmk:pushforward-preserve-perf-complex}
    To the best of our knowledge, there are two major results that address when the derived pushforward preserves perfect complexes. The first is {\cite[Theorem~0.3]{Toen12}} (or \cite{LiNe07}), where $p: Y \to Z$ is assumed to be a morphism of classical schemes that is proper and l.c.i. The second is \cite[Theorem~6.1.3.2]{LurieSAG}, where $p: Y \to Z$ is assumed to be a morphism of spectral Deligne--Mumford stacks that is proper, (homotopically) locally almost of finite presentation, and of finite Tor-amplitude. In particular, by \cite[Lemma~1.15]{Khan22}, if $p: Y \to Z$ is a proper quasi-smooth morphism of derived Deligne--Mumford stacks, then $\RR p_*$ preserves perfect complexes.
\end{re}

\begin{re}
    A log compactification of $p_*\cE$ is discussed in Corollary \ref{cor:log-compact-derived-weil-restriction}, which is important in Section \ref{sec:4}.
\end{re}

\begin{proposition}[{\cite[Proposition~1.1.5]{KMMP25}}{\cite[Lemma~A.2]{KiPa25}}; {\cite[Lemma~A.1.2]{CJW21}}]
\label{prop:two-weil-res-equiv}

    Consider a sequence of morphisms $X^\prime \longrightarrow X \longrightarrow Y \overset{p}{\longrightarrow} Z$ of derived Artin stacks, where $p: Y \to Z$ is the universal curve $\fCgn \to \fMgn$ over the classical $1$-Artin stack $\fMgn$ of pre-stable curves, and $X^\prime \to Y$ and $X \to Y$ are smooth morphisms. Then we have an equivalence of derived stacks
    $$
    \RW_Z\left(X^{\prime}/Y\right) \cong \RW_{\RW_Z\left(X/Y\right)}\left(  X^{\prime} \times^h_X  \left( \RW_Z\left(X/Y\right) \times^h_Z Y \right)  \Big/  \RW_Z\left(X/Y\right) \times^h_Z Y  \right).
    $$
    Note that the morphism $\RW_Z\left(X/Y\right) \times^h_Z Y \to X$ used in this equivalence is the universal evaluation map $$\ev: \RW_Z\left(X/Y\right) \times^h_Z Y \to X.$$
\end{proposition}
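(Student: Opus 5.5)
The plan is to reduce everything to universal properties and compare functors of points over $Z$. Write $W := \RW_Z(X/Y)$ and $W_Y := W \times^h_Z Y$, and let $\ev: W_Y \to X$ be the universal evaluation map. The right-hand side is then $\RW_W(X' \times^h_X W_Y / W_Y)$, viewed as a derived stack over $Z$ via $W \to Z$. I would show that for every derived affine (or pre-stack) $T \to Z$ the two mapping spaces into the two sides are naturally equivalent. The conclusion then follows by Yoneda in $\DPStk_{/Z}$. The smoothness hypotheses and the choice of $p$ as the universal curve only enter afterwards, to guarantee that both sides are derived Artin stacks (Remark \ref{rmk:Weil-Restriction-is-algebraic}); they are not needed for the equivalence itself.

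The computation runs in three steps.

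\textbf{Step 1.} A map $T \to \mathrm{RHS}$ over $Z$ is the same as a pair $(g, \phi)$. Here $g: T \to W$ is a map over $Z$, and $\phi$ is a map over $W$ from $T \to \mathrm{RHS}$ to $\RW_W(\cdots)$. By the universal property of $\RW_W$, $\phi$ is a map over $W_Y$
$$T \times^h_W W_Y \longrightarrow X' \times^h_X W_Y .$$

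\textbf{Step 2.} Simplify the source and target of $\phi$. We have $T \times^h_W W_Y \simeq T \times^h_Z Y =: p^*T$. Also, by the universal property of the homotopy fibered product, a map over $W_Y$ into $X' \times^h_X W_Y$ is the same as a map $p^*T \to X'$ together with a homotopy identifying its composite to $X$ with the composite $p^*T \to W_Y \xrightarrow{\ev} X$.

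\textbf{Step 3.} Apply the universal property of $W$: $g$ corresponds exactly to a map $\tilde g: p^*T \to X$ over $Y$, and $\tilde g$ is the composite $p^*T \to W_Y \xrightarrow{\ev} X$. Hence the pair $(g, \phi)$ is the space of pairs consisting of a map $\tilde g: p^*T \to X$ over $Y$ and a lift of $\tilde g$ to $X'$. This is the total space of the fibration
$$\map_{/Y}(p^*T, X') \to \map_{/Y}(p^*T, X),$$
taken over all points of the base. That total space is just $\map_{/Y}(p^*T, X')$, which is $\map_{/Z}(T, \RW_Z(X'/Y))$.

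Finally, I would check naturality in $T$. This comes from the functoriality of the adjunction $p^* \dashv p_*$ of Remark \ref{rmk:alg-weil-res} and of Proposition \ref{prop:weil-res-base-change}.

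The main obstacle is Step 2, done coherently. One has to identify $T \times^h_W W_Y$ with $p^*T$ compatibly with $\ev$, and verify that the identification of the fiber of $\phi$ over $g$ is an equivalence of $\infty$-groupoids, not merely a bijection on $\pi_0$. I would handle this cleanly by phrasing both sides as fibers of maps of mapping spaces and using the pasting law for homotopy Cartesian squares. The coherence data are then supplied automatically by the $\infty$-categorical adjunction, rather than checked by hand.
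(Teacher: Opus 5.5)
Your proposal is correct. The paper gives no proof of its own and simply cites \cite{KMMP25, KiPa25, CJW21}, where the statement comes from the same kind of universal-property computation (adjunction $p^*\dashv p_*$ plus base change) that you give. As you say, the smoothness and universal-curve hypotheses only serve to make both sides algebraic.

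The coherence worry in your Steps 2--3 disappears if you run the Yoneda argument in $\DPStk_{/W}$ instead of $\DPStk_{/Z}$. For $g\colon T \to W$, the space $\map_{/W}(T, \RW_Z(X'/Y))$ is the fiber of $\map_{/Y}(p^*T,X') \to \map_{/Y}(p^*T,X)$ over the adjoint $\tilde g = \ev\circ p^*(g)$. That fiber is $\map_{/X}(p^*T, X') \simeq \map_{/W_Y}(T\times^h_W W_Y,\, X'\times^h_X W_Y)$, naturally in $T$, so no gluing of fibers over $\map_{/Z}(T,W)$ is needed.
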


\begin{re}
    The construction of $(-1)$-shifted (log) functions on (log compactified) derived moduli of stable maps with $p$-fields in Section \ref{sec:4} relies on Proposition \ref{prop:two-weil-res-equiv}.
\end{re}

\begin{proposition}[{\cite[Proposition~19.1.4.3]{LurieSAG}}{\cite[Proposition~B.3.7]{Rozenblyum21}}{\cite[Theorem~3.3.1]{KMMP22}}\footnote{Note that we refer to a specific arXiv version, not the published version.}{\cite[Appendix~A.(112)]{KiPa25}}{\cite[Section~2.8]{HKR25}}; {\cite[Theorem~1.1]{Webb22}}{\cite[Lemma~3.10]{Herr25}}]
\label{prop:weil-res-tangent-complex}

    Assume that $\RW_Z\left(X/Y\right)$ is a derived Artin stack. Then there is a quasi-isomorphism
    $$
    \bT_{\RW_Z\left(X/Y\right)/Z} \simeq \RR\pi_* \LL \ev^* \bT_{X/Y},
    $$
    where $\ev: \RW_Z\left(X/Y\right) \times^h_Z Y \to X$ is the universal evaluation map and $\pi: \RW_Z\left(X/Y\right) \times^h_Z Y \to \RW_Z\left(X/Y\right)$ is the natural projection map.
\end{proposition}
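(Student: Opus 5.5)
We derive the tangent complex of the Weil restriction from its functor of points and deformation theory, working relative to $Z$ throughout. Set $W := \RW_Z(X/Y)$, with projection $\pi: W \times^h_Z Y \to W$ and universal evaluation $\ev$. For a $Z$-point $t: T \to W$ (equivalently, by the universal property, a $Y$-map $e_t: T\times^h_Z Y \to X$) and a connective $M \in \QCoh(T)$, a lift of $t$ to the square-zero extension $T[M] := \Spec_T(\cO_T \oplus M)$ over $Z$ corresponds to a lift of $e_t$ to $p^*(T[M]) = (T\times^h_Z Y)[\pi_T^*M]$ over $Y$. Here we use that base change commutes with trivial square-zero extensions, since $\LL\pi_T^*$ is exact.

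\textbf{Step 1: compare derivations.} By the defining property of the relative cotangent complex, these lifts form the space
$$\map_{\QCoh(T\times^h_Z Y)}\bigl(\LL e_t^*\bL_{X/Y}, \LL\pi_T^* M\bigr).$$
On the $W$ side the corresponding space is $\map_{\QCoh(T)}(\LL t^*\bL_{W/Z}, M)$. Hence we get a natural equivalence
$$\map(\LL t^*\bL_{W/Z}, M) \simeq \map(\LL e_t^*\bL_{X/Y}, \LL \pi_T^*M),$$
natural in $T$ and $M$.

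\textbf{Step 2: dualize.} Since $W$ is assumed to be a derived Artin stack, and the paper's standing conventions make it locally of finite presentation, $\bL_{W/Z}$ is perfect. We also use that $\bL_{X/Y}$ is perfect, which holds in the intended applications. Then the left side becomes $\Gamma(T, \LL t^*\bT_{W/Z}\otimes M)$. For the right side we need $\pi$ to be proper and of finite Tor-amplitude (quasi-smooth suffices, as in Remark \ref{rmk:pushforward-preserve-perf-complex}), so that the projection formula and base change give
$$\Gamma\bigl(T\times_Z Y, \LL e_t^*\bT_{X/Y} \otimes \pi_T^*M\bigr) \simeq \Gamma\bigl(T, \RR\pi_{T*}\LL e_t^*\bT_{X/Y} \otimes M\bigr),$$
and $\RR\pi_{T*}\LL e_t^* \bT_{X/Y} \simeq \LL t^*\RR\pi_*\LL\ev^*\bT_{X/Y}$. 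A Yoneda argument over all $(T,M)$, first with $T$ affine and then by descent, identifies $\bT_{W/Z}$ with $\RR\pi_*\LL\ev^*\bT_{X/Y}$. In the intended case $p:\fCgn\to\fMgn$ is flat, proper, and of relative dimension one, so $\RR\pi_*$ commutes with base change.

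\textbf{Main obstacle.} The hard part is making the deformation equivalence in Step 1 functorial at the $\infty$-categorical level, and ensuring that $M$ ranges over enough modules (for instance, testing against all $M$ connective may need a shift argument) to conclude an equivalence of perfect complexes. Here we would lean on Lurie's general result on cotangent complexes of Weil restrictions, i.e. \cite[Proposition~19.1.4.3]{LurieSAG}. Perfectness of $\RR\pi_*\LL\ev^*\bT_{X/Y}$ follows from properness and finite Tor-amplitude of $\pi$, as above.
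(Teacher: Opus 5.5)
Your proposal is correct and is essentially the argument behind the sources the paper relies on. The paper gives no proof of its own and defers to \cite[Proposition~19.1.4.3]{LurieSAG} and related references, whose argument is this same comparison of derivations along $T[M]$ and $(T\times^h_Z Y)[\LL\pi_T^*M]$, under the hypotheses you correctly make explicit ($p$ proper of finite Tor-amplitude, $\bL_{X/Y}$ perfect) and which the paper's statement leaves implicit. The coherence issue you flag goes away if you first define the global map $\bT_{\RW_Z(X/Y)/Z}\to\RR\pi_*\LL\ev^*\bT_{X/Y}$ as the adjoint of $\LL\pi^*\bT_{\RW_Z(X/Y)/Z}\simeq\bT_{\RW_Z(X/Y)\times^h_Z Y/Y}\xrightarrow{\bT\ev}\LL\ev^*\bT_{X/Y}$, and then use your pointwise computation only to check that it becomes an equivalence after pullback to affine $T$.
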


\begin{re}
    Note that $$\bL_{\RW_Z\left(X/Y\right)/Z} \simeq \bT_{\RW_Z\left(X/Y\right)/Z}^{\vee} \simeq (\RR\pi_* \LL \ev^* \bT_{X/Y})^{\vee} \not\simeq \RR\pi_* \LL \ev^* \bL_{X/Y}.$$
\end{re}

\begin{re}\label{rmk:tangent-complex-der-map-stk}
In particular, for a derived mapping stack $\RH_{Z}(Y,V)$, there is a quasi-isomorphism
$$
\bT_{\RH_{Z}(Y,V)/Z} \simeq \RR\pi_* \LL \ev^* \bT_{V/Z},
$$
where $\ev: \RH_{Z}(Y,V) \times^h_Z Y \to V$ is the universal evaluation map and $\pi: \RH_{Z}(Y,V) \times^h_Z Y \to \RH_{Z}(Y,V)$ is the natural projection map. See {\cite[Proposition~4.3.1]{MaRo18}} for the computation of tangent complexes of derived mapping stacks. See {\cite[Proposition~19.1.4.1]{LurieSAG}}{\cite[Proposition~5.1.10]{HLP23}}{\cite[Proposition~B.3.5]{Rozenblyum21}}{\cite[Theorem~10.1.6]{KhanNotes23}} for the computation of cotangent complexes of the derived mapping (pre-)stacks.
\end{re}

\begin{re}
    See \cite[Lemma~1.15]{PTVV13}{\cite[Theorem~3.1]{BMS19}} for proof of (co)tangent complexes satisfying descent.
\end{re}

\begin{example}[Gromov--Witten Theory and Derived Moduli of Stable Maps, \cite{Behrend97}; \cite{STV15}]
\label{eg:Gromov-Witten}
    
    Let $X$ be a smooth projective variety. Let $\fMgn$ denote the smooth classical $1$-Artin stack of pre-stable curves of genus $g$ with $n$ marked points, and $\fCgn \to \fMgn$ denote the universal curve over $\fMgn$.

    By Remark \ref{rmk:alg-weil-res} and Remark \ref{eg:der-map-stk-is-weil-res}, $\RH_{\fMgn}\left( \fCgn, X \times \fMgn  \right)$ is a derived mapping stack. Since $\fCgn \to \fMgn$ is flat, the classical truncation $\left(\RH_{\fMgn}\left( \fCgn, X \times \fMgn  \right)\right)_\cl$ of $\RH_{\fMgn}\left( \fCgn, X \times \fMgn  \right)$ is the classical mapping stack $\Hom_{\fMgn}\left( \fCgn, X \times \fMgn  \right)$ by Remark \ref{rmk:class-truncation-Weil-Restriction}. Moreover, as the stability condition is an open condition, by {\cite[Proposition~2.1]{STV15}}, there exists a derived moduli $\RM$ of stable maps to $X$ such that it is an open sub-stack of $\RH_{\fMgn}\left( \fCgn, X \times \fMgn  \right)$.

    Note that, by construction, the classical truncation $\left(\RM\right)_{\cl}$ of $\RM$ is the classical Deligne--Mumford stack $\overline{\mathcal{M}}_{g,n}(X,\beta)$ of stable maps to $X$. Hence, $\RM$ is a derived Deligne--Mumford stack.

    Consider the following commutative diagram, where two square diagrams are homotopy Cartesian diagrams.
$$
\begin{tikzcd}
{\RM \times^h_{\fMgn}\fCgn}  \ar[dr, phantom, "\lrcorner_{\text{\tiny h}}", very near start] \arrow[rrr, shift left=1, bend left=15, "f^\st"'] \arrow[r, hook, "j \times id"] \arrow[d, "\widetilde{\pi}^\st"] &{\RH_{\fMgn}\left( \fCgn, X \times \fMgn  \right) \times^h_{\fMgn} \fCgn}  \arrow[r, "\ev"] \arrow[d, "\widetilde{\pi}"] & { X \times \fMgn} \arrow[r] \ar[dr, phantom, "\lrcorner_{\text{\tiny h}}", very near start] \arrow[d] & { X } \arrow[d] \\
{\RM} \arrow[r, hook, "j"] &{\RH_{\fMgn}\left( \fCgn, X \times \fMgn  \right)}                     & { \fMgn} \arrow[r]           & {\pt }      
\end{tikzcd}
$$
    Here, $j$ is an open embedding.
    Then we have quasi-isomorphisms
\begin{align*}
    \bT_{\RM/\fMgn} \simeq& \LL j^*\bT_{\RH_{\fMgn}\left( \fCgn, X \times \fMgn  \right)/\fMgn}\\
    \simeq&\LL j^*\RR\widetilde{\pi}_*\LL \ev^*(\bT_{X \times \fMgn / \fMgn}) \\
    \simeq&\RR\widetilde{\pi}^\st_*\LL(j \times id)^*\LL \ev^*\left(\bT_X|_{X \times \fMgn}\right) \\
    \simeq&\RR\widetilde{\pi}^\st_*\LL (f^\st)^*\bT_X.
\end{align*}
    The second quasi-isomorphism is due to Remark \ref{rmk:tangent-complex-der-map-stk}; the third quasi-isomorphism is due to the base change formula.

    Since $\fCgn \to \fMgn$ is proper l.c.i, we see that $\widetilde{\pi}^\st_*$ is proper quasi-smooth, and hence the morphism $\RR\widetilde{\pi}^\st_*$ preserves perfect complexes by Remark \ref{rmk:pushforward-preserve-perf-complex}. Thus, $\bT_{\RM/\fMgn}$ is a perfect complex.

    Since $\fCgn \to \fMgn$ is a proper morphism of relative dimension $1$, $$(\widetilde{\pi}^\st)_\cl: \overline{\mathcal{M}}_{g,n}(X,\beta) \times_{\fMgn}\fCgn \to \overline{\mathcal{M}}_{g,n}(X,\beta)$$ is a proper morphism of relative dimension $1$, and hence is finite cohomological dimension at most $1$. Thus, by {\cite[Definition~A.1.4]{HLP23}}, $$\widetilde{\pi}^\st: \RM \times^h_{\fMgn}\fCgn \to \RM$$ has finite cohomological dimension at most $1$. Therefore, we can conclude that $$\bL_{\RM/\fMgn} \simeq \left(\RR\bar{\pi}_*\LL f^*T_X\right)^{\vee}$$ is a perfect complex of Tor-amplitude in $[-1,0]$ by {\cite[Lemma~A.1.6]{HLP23}}. Consequently, $\RM$ is quasi-smooth over $\fMgn$. Since $\fMgn$ is smooth, we see that $\RM$ is quasi-smooth.
    
     Finally, by Proposition \ref{prop:rel-qsmth-rel-POT}, the natural closed embedding
    $$\tau:\overline{\mathcal{M}}_{g,n}(X,\beta) \hookrightarrow \RM$$
    induces a (relative) perfect obstruction theory 
    $$\bL\tau_{/\fMgn}: \bL_{\RM/\fMgn}|_{\overline{\mathcal{M}}_{g,n}(X,\beta)} \to \bL_{\overline{\mathcal{M}}_{g,n}(X,\beta)/\fMgn}.$$ 
    
    To check that $\bL\tau_{/\fMgn}$ is indeed the desired (relative) perfect obstruction theory for Gromov-Witten theory as in \cite{Behrend97}, it suffices to check that the commutative diagram
$$
\begin{tikzcd}
{\overline{\mathcal{M}}_{g,n}(X,\beta) \times_{\fMgn}\fCgn} \ar[dr, phantom, "\lrcorner_{\text{\tiny h}}", very near start] \arrow[r, hook] \arrow[d] & {\RM \times^h_{\fMgn}\fCgn} \arrow[d, ""] \\
{\overline{\mathcal{M}}_{g,n}(X,\beta)} \arrow[r, hook]  & {\RM}   
\end{tikzcd}
$$
    is a homotopy Cartesian diagram. This follows from the fact that $\fCgn \to \fMgn$ is a flat morphism.
    $\blacksquare$
    
\end{example}

\section{Deligne--Faltings Logarithmic Derived Geometry}\label{sec:2}

The derived log geometry developed here is motivated by the twisted cosections in Log GLSM \cite{CJR21} (see Remark \ref{rmk:shift-log-function}). It follows the spirit of \cite{Olssonlogstk03} and \cite{Pridham13}, but it is technically much simpler. In our view, it is probably the simplest workable framework for derived log geometry that still captures certain interesting geometric features. See \cite{Rognes09}, \cite{Bhatt12}, \cite{Pridham13}, \cite{STG16}, and \cite{zhang26} for other (more sophisticated) approaches to derived log geometry.

\subsection{Basic Setup}\label{sec:basic-setup-der-log}

In this subsection, we set up the derived log geometry used throughout the paper. We introduce Deligne--Faltings log structures (of rank $1$) on derived stacks via $\cA := [\bA^1 / \bG_m]$, inspired by \cite{Olssonlogstk03}. We then define the $\infty$-category $\LogDStk$ of derived log stacks to be the $\infty$-category $\DStk_{/\cA}$ of derived stacks over $\cA$. In this way, our version of derived log geometry is embedded into the usual framework of derived algebraic geometry.

Deligne--Faltings log structures (of rank $1$) are naturally associated to virtual Cartier divisors (or equivalently generalized Cartier divisors) on derived stacks. We refer to \cite{KhRy25} for references on virtual Cartier divisors (or equivalently, generalized Cartier divisors) in derived algebraic geometry.

Throughout the paper, $\cA := [\bA^1 / \bG_m]$ is the derived moduli of generalized Cartier divisors, or equivalently the derived moduli of virtual Cartier divisors (see \cite{KhRy25}); the natural closed embedding $B\bG_m \hookrightarrow \cA$ is the universal Cartier divisor on $\cA$; the natural morphism $s_{\cA}: \cO_{\cA}(-B\bG_m) \hookrightarrow \cO_{\cA}$ is the universal section.

\begin{definition}
    A \emph{Deligne--Faltings logarithmic structure of rank $1$ on a derived stack $X$} is a morphism $\alpha_X: X \to \cA$. We refer to a pair $(X,\alpha_X)$ as a \emph{derived log stack}\footnote{We only consider Deligne--Faltings logarithmic structure of rank $1$ in this paper.}, $X$ as the \emph{underlying derived stack}, and $\alpha_X: X \to \cA$ as the \emph{anchor morphism}.
\end{definition}

\begin{re}\label{rmk:der-log-structure-rk-n}
  Similarly, we can define a \emph{Deligne--Faltings logarithmic structure of rank $m$ on a derived stack $X$} to be a morphism $X \to \cA^m$.
\end{re}

\begin{re}(\cite{KhRy25})
    For a derived log stack $(X, \alpha_X)$, its virtual Cartier divisor $\D_X \hookrightarrow X$ is the pullback of $B\bG_m \hookrightarrow \cA$ along its anchor morphism $\alpha_X$; its generalized Cartier divisor $\cO_X(-\D_X) \to \cO_X$ is the pullback of $\cO_{\cA}(-B\bG_m) \hookrightarrow \cO_{\cA}$ along its anchor morphism $\alpha_X$.
\end{re}

\begin{re}\label{rmk:triv-log-der-stk}
    For a derived stack $X$, there is always a morphism $X \to \cA$ factoring through the open embedding $\pt = [\bG_m/\bG_m] \hookrightarrow \cA$. We denote this morphism $X \to \cA$ by $1_X$, and the corresponding derived log stack by $(X, 1_X)$. Note that $\D_X$ is empty in this case.
\end{re}

\begin{definition}
    A \emph{derived log Artin stack} $(X, \alpha_X)$ is a derived log stack whose underlying derived stack $X$ is a derived Artin stack. A \emph{derived log Deligne--Mumford stack} $(X, \alpha_X)$ is a derived log stack whose underlying derived stack $X$ is a derived Deligne--Mumford stack. A \emph{derived log scheme} $(X, \alpha_X)$ is a derived log stack whose underlying derived stack $X$ is a derived scheme. 
\end{definition}

\begin{definition}
    A derived log stack $(X, \alpha_X)$ is \emph{log quasi-smooth} if the anchor morphism $\alpha_X: X \to \cA$ is quasi-smooth. A derived log stack $(X, \alpha_X)$ is \emph{log smooth} if the anchor morphism $\alpha_X: X \to \cA$ is smooth. 
\end{definition}

\begin{re}
    Since $\cA$ is smooth, a derived log Deligne--Mumford stack $(X, \alpha_X)$ is log quasi-smooth if and only if its underlying derived Deligne--Mumford stack $X$ is quasi-smooth. For simplicity, we will refer to a log quasi-smooth derived log Deligne--Mumford stack simply as a \emph{quasi-smooth derived log Deligne--Mumford stack}.
\end{re}

\begin{re} Assuming $\pt$ carries the trivial log structure, then a classical log scheme of Deligne--Faltings type of rank $1$ is log smooth (i.e. its underlying scheme is smooth and the associated divisor is smooth) if and only if it admits a smooth morphism to $\cA$.
\end{re}

\begin{definition}
    A \emph{morphism $f: (X,\alpha_X) \to (Y, \alpha_Y)$ of derived log stacks} is a morphism of derived stacks over $\cA$. In other words, $f: (X,\alpha_X) \to (Y, \alpha_Y)$ is a commutative diagram of derived stacks
$$
\begin{tikzcd}
{X} \arrow[rd, "\alpha_X"'] \arrow[rr] &    & {Y} \arrow[ld, "\alpha_Y"] \\
                         & {\cA} &              .
\end{tikzcd}
$$
Here, the morphism $X \to Y$ that fits into the above commutative diagram is referred to as the \emph{underlying morphism of $f: (X,\alpha_X) \to (Y, \alpha_Y)$}. Abusing notation, we also use $f$ to denote the underlying morphism $X \to Y$.
\end{definition}

\begin{re}\label{re:strict-morph}
    The morphism of derived log stacks defined above should be referred to as a \emph{strict morphism}, because its pullback preserves the log structure. However, we do not consider non-strict morphisms in the paper.
\end{re}

\begin{definition}\label{def:log-cotangent-complex}
    We define the \emph{log cotangent complex $\bLL_X$ of a derived log Artin stack $(X, \alpha)$} to be the relative cotangent complex $\bL_{\alpha_X} = \bL_{X/\cA}$ of the anchor morphism $\alpha_X$. Such a definition forces us to define the \emph{relative log cotangent complex $\bLL_f$ of a morphism $f:  (X,\alpha_X) \to (Y, \alpha_Y)$ of derived log Artin stacks} to be the relative cotangent complex $\bL_{f} = \bL_{X/Y}$ of the underlying morphism $f: X \to Y$, i.e. $\bLL_f = \bL_f$.
\end{definition}

\begin{re}
    For a morphism $f: (X,\alpha_X) \to (Y, \alpha_Y)$ of derived log Artin stacks, there is a (co)fiber sequence
    $$
    \LL f^*\bLL_Y \longrightarrow \bLL_{X} \longrightarrow \bLL_{f} \overset{+1}{\longrightarrow}   .
    $$
\end{re}

\begin{re} The definition of the (relative) log cotangent complex presented here is standard in the sense of Olsson's log cotangent complexes, although not very conventional. In the classical setting, there exists a classical algebraic stack $\mathcal{L}og_S$ that parametrizes classical log schemes over a classical log scheme $S$ (see \cite{Olssonlogstk03}). In other words, a classical log scheme $X$ over $S$ is equivalent to a (representable) morphism $X \to \mathcal{L}og_S$ of classical algebraic stacks. Moreover, the Olsson's log cotangent complex of a classical log scheme $X$ is defined as the relative cotangent complex of the associated morphism $X \to \mathcal{L}og_S$ of classical algebraic stacks (see \cite{Olssonlogcotangent05}). In particular, if $X$ is a classical log scheme of Deligne--Faltings type of rank $1$ over $\pt$, assuming $\pt$ is equipped with a trivial log structure, then the associated morphism $X \to \mathcal{L}og_{\pt}$ of classical algebraic stacks admits a factorization
$$
\begin{tikzcd}
{X} \arrow[rr] \arrow[rd] &    & {\cA} \arrow[ld, hook] \\
                         & {\mathcal{L}og_{\pt}} &       .       
\end{tikzcd}
$$
Note that $\cA \hookrightarrow \mathcal{L}og_{\pt}$ is an open embedding so that there is a quasi-isomorphism $$\bLL_X:=\bL_{X/\mathcal{L}og_\pt} \overset{\simeq}{\longrightarrow} \bL_{X/\cA} .$$
\end{re}

\begin{re}
    There are four natural properties (see {\cite[Section~7]{Olssonlogcotangent05}}) one would expect log cotangent complexes to satisfy. But by Bauer's arguments in {\cite[Section~7]{Olssonlogcotangent05}}, it is impossible to satisfy all four simultaneously in the classical setting. Fortunately, our (relative) log cotangent complexes satisfy all four properties by definition, at the cost of not considering non-strict morphisms (see Remark \ref{re:strict-morph}). 
\end{re}

\begin{re}\label{rmk:compare-triv-log-der-stk}
    For a derived log stack $(X, 1_X)$, there is a quasi-isomorphism $\bLL_X \simeq \bL_X$, where $\bLL_X$ is the log cotangent complex of $(X, 1_X)$ and $\bL_X$ is the cotangent complex of the underlying derived stack $X$.
\end{re}

\begin{re}
    Under our conventions, cotangent complexes of derived Artin stacks are perfect complexes. Hence, relative cotangent complexes of morphisms between derived Artin stacks are perfect complexes. Consequently, log cotangent complexes of derived log Artin stacks and relative log cotangent complexes of morphisms of derived log Artin stacks are also perfect complexes. As a result, we define the \emph{log tangent complex $\bTT_X$ of a derived log Artin stack $(X,\alpha_X)$} to be $(\bLL_X)^{\vee}$, and the \emph{relative log tangent complex} $\bTT_f$ of a morphism $f: (X,\alpha_X) \to (Y, \alpha_Y)$ of derived log Artin stacks to be $(\bLL_f)^{\vee}$.
\end{re}

\begin{example}[Justification of the Definition of Log Cotangent Complexes]
\label{eg:log-cotangent-complex-aff-derived-log-schm}

This is a continuation of the Example \ref{eg:aff-derived-zero-locus}. 

There is a virtual Cartier divisor $\D_{Z(s)} \hookrightarrow Z(s)$ on $Z(s)$ (see \cite{KhRy25}) with $$\D_{Z(s)} := \Spec k[x_1,x_2,x_3;\nu_1,\nu_2,\nu_3].$$ The underlying complex of the CDGA $k[x_1,x_2,x_3;\nu_1,\nu_2,\nu_3]$ is 
$$k[x_1,x_2,x_3]\langle \nu_1\nu_2\nu_3 \rangle \to k[x_1,x_2,x_3]\langle \nu_1\nu_2, \nu_1\nu_3, \nu_2\nu_3\rangle \to k[x_1,x_2,x_3]\langle \nu_1, \nu_2,\nu_3\rangle \to k[x_1,x_2,x_3],$$
where $\nu_1 \mapsto f_1$,  $\nu_2 \mapsto f_2$, $\nu_3 \mapsto x_3$, and the rest of the maps are extended by the Leibniz rule. Note the cohomological degrees $|x_1|=|x_2|=|x_3|=0$ and $|\nu_1|=|\nu_2|=|\nu_3|=-1$. Note that $$H^0(k[x_1,x_2,x_3;\nu_1,\nu_2, \nu_3]) \simeq k[x_1,x_2,x_3]/(f_1,f_2, x_3).$$ 

As a result, we see that $\left( Z(s), \alpha_{Z(s)} \right)$ is a derived log scheme, where $\alpha_{Z(s)}: Z(s) \to \cA$ is the morphism associated to the virtual Cartier divisor $\D_{Z(s)} \hookrightarrow Z(s)$ on $Z(s)$. Note that we have the following commutative diagrams.
$$
\begin{tikzcd}
{\D_{Z(s)}} \arrow[r] \arrow[d, hook] \ar[dr, phantom, "\lrcorner_{\text{\tiny h}}", very near start] & {\pt} \arrow[r] \arrow[d, hook] \ar[dr, phantom, "\lrcorner_{\text{\tiny h}}", very near start] & {B\bG_m} \arrow[d, hook] \\
{Z(s)} \arrow[r, "x_3"]  \arrow[rr, bend right=20, "\alpha_{Z(s)}"']         & {\bA^1} \arrow[r]           & {\cA}          
\end{tikzcd}
$$

Applying the cotangent complex functor to $\alpha_{Z(s)}: Z(s) \to \cA$, we obtain a (co)fiber sequence
$$\bL_{\cA}|_{Z(s)} \overset{\bL \alpha_{Z(s)}}{\longrightarrow} \bL_{Z(s)} \longrightarrow \bLL_{Z(s)} \overset{+1}\longrightarrow .$$
In other words, we have $\bLL_{Z(s)}  \simeq \cofib\left(\bL\alpha_{Z(s)}: \bL_{\cA}|_{Z(s)} \to \bL_{Z(s)}\right)$. By Remark \ref{rmk:cotangent-complex-cA}, $$\bL_{\cA} \simeq [\cO_{\cA}\langle d\mu \rangle \overset{d\mu \mapsto \mu d\lambda}{\longrightarrow} \cO_{\cA}\langle d\lambda \rangle]$$ is a perfect complex of Tor-amplitude in $[0,1]$, with $|d\mu|=0$ and $|d\lambda|=1$. As a result, $\bL_{\cA}|_{Z(s)} \overset{\bL \alpha_{Z(s)}}{\longrightarrow} \bL_{Z(s)}$ can be explicitly written as
\begin{equation}
\begin{tikzcd}\label{diag:log-cotangent-complex}
{\cO_{Z(s)}\langle d\lambda \rangle} \arrow[r]           & {0}           \\
{\cO_{Z(s)}\langle d\mu \rangle} \arrow[u, "d\mu \mapsto x_3d\lambda"] \arrow[r, "d\mu \mapsto dx_3"] & {\cO_{Z(s)}\langle dx_1,dx_2,dx_3\rangle} \arrow[u] \\
{0} \arrow[u] \arrow[r] & {\cO_{Z(s)}\langle d\nu_1,d\nu_2\rangle} \arrow[u,"d\bar{s}"]  .
\end{tikzcd}
\end{equation}
We then observe the relation $x_3 d\lambda \sim dx_3$ in $\bLL_{Z(s)}$, which implies that $d\lambda$ corresponds to $d\log (x_3)$ in $\bLL_{Z(s)}$. This is exactly how $d \log$ terms arise in the classical log cotangent bundle, justifying the definition of log cotangent complexes.

Then consider the following commutative diagram, where square diagrams are homotopy Cartesian diagrams.
\[\begin{tikzcd}
	\D_{Z(s)_{cl}}  & \D_{Z(s)}  & B\bG_m  \\
	Z(s)_{\cl}  & Z(s)  & \cA
	\arrow[from=1-1, to=1-2, hook]
	\arrow[from=1-1, to=2-1, hook]
	\arrow[phantom, "\lrcorner_{\text{\tiny h}}", very near start, from=1-1, to=2-2]
	\arrow[from=1-2, to=1-3]
	\arrow[from=1-2, to=2-2, hook]
	\arrow[phantom, "\lrcorner_{\text{\tiny h}}", very near start, from=1-2, to=2-3]
	\arrow[from=1-3, to=2-3, hook]
	\arrow[from=2-1, to=2-2, hook, "\tau"]
	\arrow[from=2-2, to=2-3, "\alpha_{Z(s)}"]
\end{tikzcd}\]
    Note that $\D_{Z(s)_{cl}}$ is not necessarily a classical scheme, although $Z(s)_{cl}$ is the classical truncation of $Z(s)$.
    
    We additionally assume that $x_3$ is a non-zero-divisor in $\cO_{Z(s)_\cl} = k[x_1,x_2,x_3]/(f_1,f_2)$, and hence $\D_{Z(s)_{cl}}$ is necessarily a classical scheme $\Spec k[x_1,x_2,x_3]/(f_1,f_2, x_3)$.
    Under this additional assumption, we may conclude that $\bLL_{Z(s)}$ is quasi-isomorphic to (the total complex of the bi-complex)
$$
\cO_{Z(s)}\langle d\nu_1,d\nu_2\rangle \overset{d\bar{s}}{\longrightarrow}  \cO_{Z(s)}\langle dx_1,dx_2,d\log (x_3)\rangle .
$$
    See Appendix \ref{append:2} for related discussions.
$\blacksquare$

\end{example}

\begin{re}\label{rmk:cotangent-complex-cA}
    The cotangent complex $\bL_{\cA}$ of $\cA$ is quasi-isomorphic to
    $$ \left[s_{\cA}: \cO_{\cA}(-B\bG_m) \hookrightarrow \cO_{\cA} \right] \simeq [\cO_{\cA}\langle d\mu \rangle \overset{d\mu \mapsto \mu d\lambda}{\longrightarrow} \cO_{\cA}\langle d\lambda \rangle],$$
    which is a perfect complex of Tor-amplitude in $[0,1]$.

    Set $\bA^1:= \Spec k[x]$. Consider the action morphism
\begin{align*}
    \psi: \bG_m \times \bA^1 &\longrightarrow \bA^1 \\
    (t,x) &\longmapsto tx .
\end{align*}
    Note that $\psi$ admits a section $(e,\id_{\bA^1}): \bA^1 \to \bG_m \times \bA^1$, where $e: \pt \to \bG_m$ is the identity section of $\bG_m$. Let $\mathfrak{g}$ denote $\LL e^* (\bT_{\bG_m})$, which is the tangent space of $\bG_m$ at $e$.

    Applying the cotangent complex functor to $\psi$ and then pulling back to $\bA^1$ along $(e,\id_{\bA^1})$, we obtain a morphism
$$
d\psi|_{\bA^1}:=\LL(e,\id_{\bA^1})^*\left(\bL\psi\right): \bL_{\bA^1} \simeq \Omega^1_{\bA^1} \to \bL_{\bG_m \times \bA^1}|_{\bA^1} \simeq \left( \mathfrak{g}^{\vee} \boxtimes \cO_{\bA^1} \right) \oplus \Omega^1_{\bA^1} ,
$$
    which can be explicitly written as
\begin{align*}
        d\psi|_{\bA^1}: \cO_{\bA^1}\langle dx \rangle &\longrightarrow \cO_{\bA^1}\langle dt \rangle \oplus \cO_{\bA^1}\langle dx \rangle \\
        dx &\longmapsto \left( d\left( tx \right) \right)|_{t=1} = \left( xdt + tdx \right)|_{t=1} =xdt + dx  .
\end{align*}
    Hence, we see that $d\psi|_{\bA^1} \simeq d\bar{\psi} \oplus \id$, where $d\bar{\psi} : \cO_{\bA^1}\langle dx \rangle \to \cO_{\bA^1}\langle dt \rangle : dx \mapsto xdt$. Following Remark \ref{rmk:cotangent-complx-der-quot-stk}, we conclude that
\begin{align*}
    \bL_{\cA}|_{\bA^1} &\simeq [d\bar{\psi}: \cO_{\bA^1}\langle dx \rangle \overset{dx \mapsto xdt}{\longrightarrow} \cO_{\bA^1}\langle dt \rangle] \\
    &\simeq [\cO_{\bA^1} \overset{\cdot x}{\longrightarrow} \cO_{\bA^1}],
\end{align*}
    which is a perfect complex of Tor-amplitude in $[0,1]$.
    Note that $d\bar{\psi}$ is a $\bG_m$-equivariant morphism, hence $\bL_{\cA}|_{\bA^1} \in \QCoh^{\bG_m}(\bA^1)$. By descent, we have $\QCoh^{\bG_m}(\bA^1) \cong \QCoh(\cA)$. Therefore, we conclude that
\begin{align*}
    \bL_{\cA} &\simeq [\cO_{\cA}\langle d\mu \rangle \overset{d\mu \mapsto \mu d\lambda}{\longrightarrow} \cO_{\cA}\langle d\lambda \rangle] \\
    &\simeq [s_{\cA}: \cO_{\cA}(-B\bG_m) \hookrightarrow \cO_{\cA}].
\end{align*}
    
\end{re}

\begin{re}
    Diagram (\ref{diag:log-cotangent-complex}) is in fact a triple complex. See \href{https://stacks.math.columbia.edu/tag/08BI}{Remark 08BI} for a quick discussion.
\end{re}

We summarize the terminology introduced so far in this subsection. The \emph{$\infty$-category $\LogDStk$ of derived log stacks} is defined to be the $\infty$-category $\DStk_{/\cA}$ of derived stacks over $\cA$. Thus, a derived log stack is an object in $\LogDStk$, and a morphism of derived log stacks is a morphism in $\LogDStk$. Moreover, the log cotangent complex functor $\bLL_{-}$ is simply the relative cotangent complex functor $\bL_{-/\cA}$.

We conclude this subsection by generalizing the preceding terminology to the relative setting.

\begin{definition}\label{def:rel-log-der-stk}
    Let $S$ be a derived Artin stack. We define the \emph{$\infty$-category $\LogDStk_{/S}$ of relative derived log stacks over $S$} to be the $\infty$-category $\DStk_{/\cA \times^h S}$ of derived stacks over $\cA \times^h S$. Thus, a \emph{relative derived log stack over $S$} is an object in $\LogDStk_{/S}$, and a \emph{morphism of relative derived log stacks over $S$} is a morphism in $\LogDStk_{/S}$. Moreover, the \emph{relative log cotangent complex functor $\bL^{S-\log}_{-}$ over $S$} is simply the relative cotangent complex functor $\bL_{-/\cA \times^h S}$.
\end{definition}

\begin{re}
    Let $(X, X \to \cA \times^h S)$ be a relative derived log stack over $S$. If $\bL^{S-\log}_{X}$ is a perfect complex, then we define the \emph{relative log tangent complex $\bT^{S-\log}_{X}$ of $(X, X \to \cA \times^h S)$} to be $\left(\bL^{S-\log}_{X}\right)^\vee$.
\end{re}

\begin{re}
    The \emph{relative virtual Cartier divisor $\D_{X/S}$ on a relative derived log stack $(X, X \to \cA \times^h S)$ over $S$} is defined via the following homotopy Cartesian diagram.
    $$
\begin{tikzcd}
{\D_{X/S}} \ar[dr, phantom, "\lrcorner_{\text{\tiny h}}", very near start] \arrow[r, hook] \arrow[d] & {X} \arrow[d] \\
{B\bG_m \times^h S } \arrow[r, hook]  & {\cA \times^h S}   .
\end{tikzcd}
    $$
\end{re}

\subsection{Shifted Logarithmic Functions and Exact Shifted Logarithmic Differential $1$-Forms}\label{sec:shift-log-fun-shift-log-diff-form}

Thanks to the basic setup in Section \ref{sec:basic-setup-der-log}, we are able to introduce shifted log functions, (twisted) log (co)tangent complexes, shifted log cotangent bundles, and derived log critical loci in this section without much effort. Moreover, the construction of derived log critical loci motivates us to find the definition of shifted log symplectic structures and the proof of the log Lagrangian intersection theorem (see Theorem \ref{thm:log-lag-intersect}), which are developed in Section \ref{sec:shift-log-symp}.

\begin{definition}
    Set $\cLA := \left(\cO_{\cA}(-B\bG_m)\right)^{\vee}$ and hence $\cLA^{\vee} := \cO_{\cA}(-B\bG_m)$. Let $n \in \bZ_{\ge 0}$. The \emph{$(-n)$-shifted universal line bundle $L_\cA[-n]$ over $\cA$} is defined to be $\Spec_{\cA} \Sym_{\cA} \left(\left(\cLA[-n]\right)^{\vee}\right)$.
\end{definition}

\begin{re}
    When $n=0$, $L_{\cA} := L_{\cA}[0]$ is literally the universal line bundle over $\cA$. When $n \ge 1$, $L_{\cA}[-n]$ fits into the homotopy Cartesian diagram
$$
\begin{tikzcd}
{L_{\cA}[-n]} \ar[dr, phantom, "\lrcorner_{\text{\tiny h}}", very near start] \arrow[r] \arrow[d] & {\cA} \arrow[d] \\
{\cA } \arrow[r]  & {L_{\cA}[-n+1]}   .
\end{tikzcd}
$$
    In particular, $L_{\cA}[-1]$ fits into the homotopy Cartesian diagram
$$
\begin{tikzcd}
{L_{\cA}[-1]} \ar[dr, phantom, "\lrcorner_{\text{\tiny h}}", very near start] \arrow[r] \arrow[d] & {\cA} \arrow[d,"0"'] \\
{\cA } \arrow[r, "0"]  & {L_{\cA}}   .
\end{tikzcd}
$$    
    In other words, $L_{\cA}[-1]$ is the derived self-intersection of the zero section in $L_{\cA}$.
\end{re}

\begin{re}
    Note that when $n \ge 1$, the classical truncation $\left(L_{\cA}[-n]\right)_{\cl}$ of $L_{\cA}[-n]$ is $\cA$.
\end{re}

\begin{re}
    There is a natural morphism $L_{\cA}[-n] \to \cA$, which gives $L_{\cA}[-n]$ a log structure. For simplicity, we denote this derived log stack by $\left(L_\cA[-n], L_\cA[-n] \to \cA\right)$.
\end{re}

\begin{definition}\label{def:shift-log-function-diff-form}
    Let $n \in \bZ_{\ge 0}$. For a derived log Artin stack $(X, \alpha_X)$, a \emph{$(-n)$-shifted log function on $(X, \alpha_X)$} is a morphism $$w_\alpha: (X, \alpha_X) \to \left(L_\cA[-n], L_\cA[-n] \to \cA\right)$$ of derived log stacks, i.e. a commutative diagram of derived stacks
$$
\begin{tikzcd}
{X} \arrow[rd, "\alpha_X"'] \arrow[rr, "w_\alpha"] &    & {L_\cA[-n]} \arrow[ld] \\
                         & {\cA} &              .
\end{tikzcd}
$$
    Applying the log cotangent complex functor to $w_\alpha$ induces a morphism $$\bLL w_\alpha: \bLL_{L_\cA[-n]}|_X \simeq \cO_X(-\D_X)[n] \to \bLL_X.$$ We then define the \emph{exact $(-n)$-shifted log differential $1$-form associated with $w_\alpha$} to be $$dw_\alpha:= \bLL w_\alpha \otimes^{\LL}_{\cO_{X}} \cO_X(\D_X)[-n]: \cO_X \to \bLL_X \otimes^{\LL}_{\cO_{X}} \cO_X(\D_X) [-n].$$
\end{definition}

\begin{re}
    Based on Remark \ref{rmk:der-log-structure-rk-n}, there is a natural definition of a $(-n)$-shifted log function on a derived stack equipped with a Deligne--Faltings logarithmic structure of rank $m$. We leave the precise definition to the reader.
\end{re}

\begin{re}\label{rmk:understanding-of-shift-log-function}
    Let us consider a $(-1)$-shifted log function $w_\alpha: (X, \alpha_X) \to \left(L_\cA[-1], L_\cA[-1] \to \cA\right)$ on a derived log scheme $(X, \alpha_X)$. There are equivalences of $\infty$-groupoids
\begin{align*}
    &\map_{\LogDStk}(X,L_\cA[-1]) \cong \map_{\DStk_{/\cA}}(X,L_\cA[-1]) \\
    \cong& \map_{\QCoh(X)}(\LL\alpha_X^*(\cL_{\cA}^{\vee}[1]),\cO_X) \\
    \cong& \map_{\QCoh(X)}(\cO_X(-\D_X)[1],\cO_X) \cong \map_{\QCoh(X)}(\cO_X,\cO_X(\D_X)[-1]).
\end{align*}
    Here, the second line follows from Definition \ref{def:def-linear-der-stk} and Remark \ref{rmk:der-vect-bund}. Hence, $w_\alpha \in \pi_0\left( \map_{\LogDStk}(X,L_\cA[-1]) \right)$ corresponds to an element in $\pi_{1}(\Gamma(X, \cO_X(\D_X)))$\footnote{We are using homological conventions here. If using cohomological conventions, we will write $H^{-1}(\Gamma(X, \cO_X(\D_X)))$ instead.}. Intuitively, we may roughly regard a $(-1)$-shifted log function on $(X, \alpha_X)$ as a $(-1)$-shifted function on $X$ with a pole along $\D_X$. Alternatively, a $(-1)$-shifted log function $w_\alpha$ is determined by a homotopy $\zeta$ appearing in the following commutative diagram, where the square diagram is a homotopy Cartesian diagram.
$$
\begin{tikzcd}
	X \\
	& L_\cA[-1] & \cA \\
	& \cA & L_\cA
	\arrow[""{name=0, anchor=center, inner sep=0}, shift right=-2, bend right=-10, from=1-1, to=2-3, "\alpha_X"]
	\arrow[""{name=1, anchor=center, inner sep=0}, shift right=2, bend right=10, from=1-1, to=3-2, "\alpha_X"']
	\arrow[from=2-2, to=2-3]
	\arrow[from=2-2, to=3-2]
	\arrow["\lrcorner_{\text{\tiny h}}"{anchor=center, pos=0.125}, draw=none, from=2-2, to=3-3]
	\arrow[from=2-3, to=3-3, "0"]
	\arrow[from=3-2, to=3-3, "0"]
	\arrow[shorten <=10pt, shorten >=5pt, shift right=3, Rightarrow, from=0, to=1, "\zeta"'{pos=0.6}]
\end{tikzcd}
$$
\end{re}

\begin{re}\label{rmk:shift-log-function}
    Inspired by Section \ref{sec:cosection-localization}, we can use $(-1)$-shifted log functions on quasi-smooth derived log Deligne--Mumford stacks to construct log cosections (Definition \ref{def:log-cosection}), which are motivated by the twisted cosections in Log GLSM \cite{CJR21}. This construction is the starting point for the derived log geometry developed here. 
\end{re}

\begin{definition}\label{def:tw-log-cotangent-complex}
    Inspired by Definition \ref{def:shift-log-function-diff-form} and \cite{CS24}, we define the \emph{twisted log cotangent complex $\bLa_X$ of a derived log Artin stack $(X, \alpha_X)$} to be $\bL_{\alpha_X} \otimes^{\LL}_{\cO_{X}} \LL\alpha_X^* \cL_{\cA} \simeq \bLL_X \otimes^{\LL}_{\cO_{X}} \cO_X(\D_X)$; the \emph{twisted relative log cotangent complex $\bLa_f$ of a morphism $f: (X, \alpha_X) \to (Y, \alpha_Y)$ of derived log Artin stacks} to be $\bLL_f \otimes^{\LL}_{\cO_X} \cO_X(\D_X)$. The \emph{twisted log tangent complex $\bTa_X$ of a derived log Artin stack $(X, \alpha_X)$} is defined to be $(\bLa_X)^{\vee}$; the \emph{twisted relative log tangent complex $\bTa_f$ of a morphism $f: (X, \alpha_X) \to (Y, \alpha_Y)$ of derived log stacks} is defined to be $(\bLa_f)^{\vee}$.
\end{definition}

\begin{re}
    The twisted log (co)tangent complex is functorial by definition. In particular, for a morphism $f: (X,\alpha_X) \to (Y, \alpha_Y)$ of derived log Artin stacks, there is a (co)fiber sequence
    $$
    \LL f^*\bLa_Y \longrightarrow \bLa_{X} \longrightarrow \bLa_{f} \overset{+1}{\longrightarrow}   .
    $$
\end{re}

\begin{re}
    For a derived log stack $(X, 1_X)$, there is a quasi-isomorphism $\bLa_X \simeq \bL_X$, where $\bLa_X$ is the twisted log cotangent complex of $(X, 1_X)$ and $\bL_X$ is the cotangent complex of the underlying derived stack $X$.
\end{re}

\begin{definition}\label{def:shift-log-co-tangent-bund}
    The \emph{$(-n)$-shifted log cotangent bundle $\Ta[-n]X$ over a derived log Artin stack $(X, \alpha_X)$} is defined to be $\V_X\left((\bLa_X [-n])^{\vee}\right)$ (see Definition \ref{def:def-linear-der-stk} for definition of $\V_X$); the \emph{$(-n)$-shifted log tangent bundle $\Tad[-n]X$ over a derived log Artin stack $(X, \alpha_X)$} is defined to be $\V_X\left((\bTa_X [-n])^{\vee}\right)$.
\end{definition}

\begin{re}\label{rmk:shift-log-diff-form-is-section-tw-log-cotangent-bund}
    The exact $(-n)$-shifted log differential $1$-form
    $dw_\alpha$ associated with a $(-n)$-shifted log function $w_\alpha: (X, \alpha_X) \to \left(L_\cA[-n], L_\cA[-n] \to \cA\right)$ is precisely the section
    $$
    \bLa w_\alpha [-n]: \cO_X \to \bLa_X[-n].
    $$
    Abusing notation, we also use $dw_\alpha$ to denote the morphism
    $$
    X \to \Ta[-n]X
    $$
    induced by $dw_\alpha: \cO_X \to \bLa_X[-n]$. Moreover, we have the following commutative diagram.
$$
\begin{tikzcd}
{X} \arrow[rr, "dw_\alpha"] \arrow[rd, "\alpha_X"'] \arrow[rrd, "\id"'] &    & {\Ta[-n]X} \arrow[d]             \\
                         & {\cA} & {X} \arrow[l, "\alpha_X"] 
\end{tikzcd}
$$
    This implies that $(\Ta[-n]X, \Ta[-n]X \longrightarrow X \overset{\alpha_X}{\longrightarrow} X)$ is a derived log stack, and $$dw_\alpha: (X, \alpha_X) \to (\Ta[-n]X, \Ta[-n]X \longrightarrow X \overset{\alpha_X}{\longrightarrow} \cA)$$ is a morphism of derived log stacks.
\end{re}

\begin{definition}\label{def:der-log-crit-locus}
    Let $w_\alpha: (X, \alpha_X) \to \left(L_\cA[-n], L_\cA[-n] \to \cA\right)$ be a $(-n)$-shifted log function on a derived log Artin stack $(X, \alpha_X)$. Following the notation in Remark \ref{rmk:shift-log-diff-form-is-section-tw-log-cotangent-bund}, we define the \emph{derived log critical locus of a $(-n)$-shifted log function $w_\alpha$ on $(X, \alpha_X)$} to be the derived zero locus $X(dw_\alpha)$ of $dw_\alpha: \cO_X \to \bLa_X[-n]$, which is defined via the following homotopy Cartesian diagram in $\LogDStk$.
$$
    \begin{tikzcd}
{X(dw_\alpha)} \arrow[r] \arrow[d] \ar[dr, phantom, "\lrcorner_{\text{\tiny h}}", very near start] & {X} \arrow[d,"dw_\alpha"] \\
{X} \arrow[r, "0"]           & {\Ta[-n]X}          
    \end{tikzcd}
$$
\end{definition}

\begin{re}
    $\left( X(dw_\alpha), X(dw_\alpha) \longrightarrow X \overset{\alpha_X}{\longrightarrow} \cA \right)$ is a derived log stack.
\end{re}

\begin{re}
    If $dw_\alpha \simeq 0 : \cO_X \to \bLa_X[-n]$, then $X(dw_\alpha) \cong \Ta[-n-1]X$.
\end{re}

\begin{re}\label{rmk:log-cotangent-complex-der-log-crit-loci}
    By Proposition \ref{prop:cotangent-complex-derived-zero-loci}, there is a (co)fiber sequence
\begin{equation}\label{align:cotangent-complex-der-log-crit-loci}
    \bLL_X|_{X(dw_\alpha)} \longrightarrow \bLL_{X(dw_\alpha)} \longrightarrow (\bTa_X[n+1])|_{X(dw_\alpha)} \overset{+1}{\longrightarrow }
\end{equation}
Taking dual of Sequence (\ref{align:cotangent-complex-der-log-crit-loci}), we obtain a (co)fiber sequence
$$
(\bLa_X[-n-1])|_{X(dw_\alpha)} \longrightarrow \bTT_{X(dw_\alpha)} \longrightarrow \bTT_X|_{X(dw_\alpha)} \overset{+1}{\longrightarrow}   ;
$$
applying $- \otimes_{\cO_X}^{\LL} \cO_X(\D_X)[-n-1]$ to Sequence (\ref{align:cotangent-complex-der-log-crit-loci}), we obtain another (co)fiber sequence
$$
(\bLa_X[-n-1])|_{X(dw_\alpha)} \longrightarrow \bLa_{X(dw_\alpha)}[-n-1] \longrightarrow \bTT_X|_{X(dw_\alpha)} \overset{+1}{\longrightarrow}.
$$
Thus, we expect that there is a quasi-isomorphism $$\bTT_{X(dw_\alpha)} \simeq \bLa_{X(dw_\alpha)}[-n-1].$$
Such an expectation is what motivates us to find the definition of shifted log symplectic structures and the proof of the log Lagrangian intersection theorem, so that $\left( X(dw_\alpha), X(dw_\alpha) \longrightarrow X \overset{\alpha_X}{\longrightarrow} \cA \right)$ carries a $(-n-1)$-shifted log symplectic structure. 
\end{re}

\begin{example}[The Classical Truncation of Derived Log Critical Loci of $(-1)$-Shifted Log Functions on a Specific Derived Log Affine Scheme]
\label{eg:classical-truncation-affine-derived-log-critical-locus}

This is a continuation of Example \ref{eg:log-cotangent-complex-aff-derived-log-schm}.

we continue to assume that $x_3$ is a non-zero-divisor in $\cO_{Z(s)_\cl} = k[x_1,x_2,x_3]/(f_1,f_2)$, and hence $\D_{Z(s)_{cl}}$ is necessarily a classical scheme $\Spec k[x_1,x_2,x_3]/(f_1,f_2, x_3)$. Moreover, $\bLL_{Z(s)}$ is quasi-isomorphic to (the total complex of the bi-complex)
$$
\cO_{Z(s)}\langle d\nu_1,d\nu_2\rangle \overset{d\bar{s}}{\longrightarrow}  \cO_{Z(s)}\langle dx_1,dx_2,d\log (x_3)\rangle .
$$

Note that $$\cO_{Z(s)}(\D_{Z(s)}) \simeq \Big[k[x_1,x_2,x_3]\langle \frac{\nu_1\nu_2}{x_3} \rangle \to k[x_1,x_2,x_3]\langle \frac{\nu_1}{x_3}, \frac{\nu_2}{x_3} \rangle \to k[x_1,x_2,x_3]\langle \frac{1}{x_3} \rangle \Big].$$
Set $w_\alpha := h_1\frac{\nu_1}{x_3} + h_2\frac{\nu_2}{x_3} \in k[x_1,x_2,x_3]\langle \frac{\nu_1}{x_3}, \frac{\nu_2}{x_3} \rangle$ with $h_1, h_2 \in k[x_1,x_2,x_3]$ such that $$h_1\frac{\nu_1}{x_3} + h_2\frac{\nu_2}{x_3} \mapsto h_1\frac{f_1}{x_3} + h_2\frac{f_2}{x_3}  = 0 \in k[x_1,x_2,x_3]\langle \frac{1}{x_3} \rangle.$$ In other words, $w_\alpha$ is a representative of an element in $H^{-1}\left(\cO_{Z(s)}(\D_{Z(s)})\right)$. Hence, by Remark \ref{rmk:understanding-of-shift-log-function}, $w_\alpha$ corresponds to a $(-1)$-shifted log function $\left( Z(s), \alpha_{Z(s)} \right) \to (L_\cA[-1], L_\cA[-1] \to \cA)$, also denoted by $w_\alpha$.

Then the exact $(-1)$-shifted log differential $1$-form
$$dw_\alpha: \cO_{Z(s)} \to \bLa_{Z(s)}[-1]$$
is determined by 
\begin{align*}
    1 \mapsto dw_\alpha =& d \left( h_1\frac{\nu_1}{x_3} \right) + d \left( h_2\frac{\nu_2}{x_3} \right) = \frac{x_3d(h_1\nu_1)-h_1\nu_1dx_3}{x^2_3} + \frac{x_3d(h_2\nu_2)-h_2\nu_2dx_3}{x^2_3} \\
    =& \frac{x_3h_1d\nu_1 + x_3\nu_1dh_1 - h_1\nu_1dx_3}{x^2_3} + \frac{x_3h_2d\nu_2 + x_3\nu_2dh_2 - h_2\nu_2dx_3}{x^2_3} \\
    =& h_1\frac{d\nu_1}{x_3} + h_2\frac{d\nu_2}{x_3} + \frac{\nu_1}{x_3}dh_1 + \frac{\nu_2}{x_3}dh_2 - h_1\frac{\nu_1d\log (x_3)}{x_3} - h_2\frac{\nu_2d\log (x_3)}{x_3}.
\end{align*}
Note that $dw_\alpha$ is a representative of $H^{-1}\left( \bLa_{Z(s)} \right)$. This is due to the equation $h_1f_1 + h_2f_2 =0 $ and the fact that $\bLa_{Z(s)}$ is quasi-isomorphic to (the total complex of the bi-complex)
$$
\cO_{Z(s)}(\D_{Z(s)})\langle d\nu_1,d\nu_2\rangle \overset{\frac{d\bar{s}}{x_3}}{\longrightarrow}  \cO_{Z(s)}(\D_{Z(s)})\langle dx_1,dx_2,d\log (x_3)\rangle ,
$$
which can be explicitly written as
\[\begin{tikzcd}[column sep=6em]
	{k[x_1,x_2,x_3]\langle \frac{d\nu_1}{x_3},\frac{d\nu_2}{x_3}\rangle} & {k[x_1,x_2,x_3]\langle \frac{dx_1}{x_3},\frac{dx_2}{x_3},\frac{d \log (x_3)}{x_3}\rangle} \\
	{k[x_1,x_2,x_3]\langle \frac{\nu_1d\nu_1}{x_3},\frac{\nu_1d\nu_2}{x_3},\frac{\nu_2d\nu_1}{x_3},\frac{\nu_2d\nu_2}{x_3}\rangle} & {k[x_1,x_2,x_3]\langle \frac{\nu_1dx_1}{x_3},\frac{\nu_1dx_2}{x_3},\frac{\nu_1d\log (x_3)}{x_3},\frac{\nu_2dx_1}{x_3},\frac{\nu_2dx_2}{x_3},\frac{\nu_2d\log (x_3)}{x_3} \rangle} \\
	{k[x_1,x_2,x_3]\langle \frac{\nu_1\nu_2d\nu_1}{x_3},\frac{\nu_1\nu_2d\nu_2}{x_3} \rangle} & {k[x_1,x_2,x_3]\langle \frac{\nu_1\nu_2dx_1}{x_3},\frac{\nu_1\nu_2dx_2}{x_3},\frac{\nu_1\nu_2d\log (x_3)}{x_3} \rangle} .
    \arrow[from=1-1, to=1-2, "\frac{d\nu_j}{x_3} \mapsto \frac{df_j}{x_3}"]
	\arrow[from=2-1, to=1-1, "\frac{\nu_id\nu_j}{x_3} \mapsto \frac{f_id\nu_j}{x_3}"']
	\arrow[from=2-2, to=1-2, "\frac{\nu_idx_j}{x_3} \mapsto \frac{f_idx_j}{x_3}"']
	\arrow[from=2-1, to=2-2, "\frac{\nu_id\nu_j}{x_3} \mapsto \frac{\nu_idf_j}{x_3}"]
	\arrow[from=3-1, to=2-1, "\frac{\nu_1\nu_2d\nu_j}{x_3} \mapsto \frac{(f_1\nu_2-\nu_1f_2)d\nu_j}{x_3}"']
	\arrow[from=3-1, to=3-2, "\frac{\nu_1\nu_2d\nu_j}{x_3} \mapsto \frac{\nu_1\nu_2df_j}{x_3}"]
	\arrow[from=3-2, to=2-2, "\frac{\nu_1\nu_2dx_j}{x_3} \mapsto \frac{(f_1\nu_2-\nu_1f_2)dx_j}{x_3}"']
\end{tikzcd}\]

Abusing notation, we use $dw_\alpha$ to denote the morphism
$$Z(s) \to \Ta[-1]Z(s) \cong \Spec_{Z(s)} \Sym_{Z(s)} \left( \bTa_{Z(s)}[1] \right)$$
induced by $dw_\alpha: \cO_{Z(s)} \to \bLa_{Z(s)}[-1]$. Then the classical truncation $(dw_\alpha)_{\cl}$ is the morphism
$$
Z(s)_\cl \to \left(\Ta[-1]Z(s)\right)_{\cl} = \Spec_{Z(s)_\cl} \Sym_{Z(s)_\cl} \left( H^0\left( \bTa_{Z(s)}[1]|_{Z(s)_\cl} \right) \right)
$$
induced by $h_1\frac{d\nu_1}{x_3}+h_2\frac{d\nu_2}{x_3}$ (see \cite[Section~2]{OhTh24}).

As a result, the classical truncation $(Z(s)(dw_\alpha))_{\cl}$ of the derived log critical locus $Z(s)(dw_\alpha)$ fits into the following classical Cartesian diagram.
$$
\begin{tikzcd}
{\left(Z(s)(dw_\alpha)\right)_{\cl}} \ar[dr, phantom, "\lrcorner", very near start] \arrow[r] \arrow[d] & {Z(s)_{\cl}} \arrow[d, "(dw_\alpha)_{\cl}"] \\
{Z(s)_{\cl}} \arrow[r, "0"]  & {\left(\Ta[-1]Z(s)\right)_{\cl}}
\end{tikzcd}
$$
This implies that $\left(Z(s)(dw_\alpha)\right)_{\cl}$ is isomorphic to $\left(Z(s)_{\cl}(h_1,h_2)\right)_\cl = \Spec k[x_1,x_2,x_3]/(f_1, f_2, h_1,h_2)$, the classical zero locus of the functions $h_1$ and $h_2$ on $Z(s)_{\cl} = \Spec k[x_1,x_2,x_3]/(f_1, f_2)$. $\blacksquare$
\end{example}

\begin{re}(Continuation of the Remark \ref{rmk:shift-log-function})
    The classical truncation of the derived log critical locus $X(dw_\alpha)$ of a $(-1)$-shifted log function $w_\alpha$ on a quasi-smooth derived log Deligne--Mumford stack $(X, \alpha_X)$ is the degeneracy locus of the log cosection induced by $w_\alpha$. See Section \ref{sec:log-cosection} for details.
\end{re}

\begin{definition}
    Let $n \in \bZ_{\ge 0}$. For a relative derived log Artin stack $(X, X \to \cA \times^h S)$ over $S$, a \emph{relative $(-n)$-shifted log function on $(X, X \to \cA \times^h S)$ over $S$} is a morphism $$w_\alpha: (X, X \to \cA \times^h S) \to \left(L_\cA[-n] \times^h S, L_\cA[-n] \times^h S \to \cA \times^h S \right)$$ of relative derived log stacks over $S$, i.e. a commutative diagram of derived stacks
$$
\begin{tikzcd}
{X} \arrow[rd] \arrow[rr] &    & {L_\cA[-n] \times^h S} \arrow[ld] \\
                         & {\cA \times^h S} &              .
\end{tikzcd}
$$
\end{definition}

\begin{re}
    Note that $\bL^{S-\log}_{L_\cA[-n] \times^h S} \simeq \bL_{L_\cA[-n] \times^h S/\cA \times^h S} \simeq \bL_{L_\cA[-n] /\cA}|_{L_\cA[-n] \times^h S} \simeq \bLL_{L_\cA[-n]}|_{L_\cA[-n] \times^h S}$.
\end{re}

The remaining terminologies generalize to the relative setting immediately.

\subsection{Shifted Logarithmic Symplectic Structures}\label{sec:shift-log-symp}

In this subsection, we introduce shifted log symplectic structures by specializing twisted symplectic structures in \cite{CS24} to the case where the base stack is $\cA = [\bA^1/\bG_m]$ and the invertible sheaf is $\cL_{\cA} := \cO_{\cA}(B\bG_m)$. Most of the results in this subsection follow directly as corollaries of corresponding results in \cite{CS24}, except for the logarithmic Lagrangian intersection theorem (Theorem \ref{thm:log-lag-intersect}).

One may view this subsection as a derived analogue of $b^m$-symplectic geometry (see \cite{BDMOP19}).

We refer to \cite{PTVV13}, \cite{Calaque19}, and \cite{PaYo25} for references on shifted symplectic structures. We refer to \cite{CS24} for references on twisted symplectic structures. We also note that {\cite[Appendix~A]{KiPa25}} contains some discussions of twisted symplectic structures.

For simplicity, if there is no confusion involved, we denote $X$ as a derived log Artin stack $(X, \alpha_X)$, and $f: X \to Y$ as a morphism $f: (X, \alpha_X) \to (Y, \alpha_Y)$ of derived log Artin stacks throughout this subsection.

\begin{definition}
    We follow the notations given in {\cite[Section~2]{CS24}}.
The \emph{space of $(-n)$-shifted log differential $p$-forms on $X$} is defined as $$\fA^p(X,-n):= \map_{QCoh(\cA)^{gr}}\left( \cO_{\cA}(p)[-p] , \DR_{\cA}(X)^{\not\epsilon} \otimes^{\LL}_{\cO_{\cA}} L_{\cA}[-n] \right).$$
The \emph{space of closed $(-n)$-shifted log differential $p$-forms on $X$} is defined as $$\fA^{p,cl}(X,-n):= \map_{QCoh(\cA)^{gr,\epsilon}}\left( \cO_{\cA}(p)[-p] , \DR_{\cA}(X) \otimes^{\LL}_{\cO_{\cA}} L_{\cA}[-n] \right).$$
\end{definition}

\begin{re}
    \begin{align*}
        \fA^{p}(-n)&: \LogDStk^{op}:=\DStk_{\cA}^{op} \to \mathcal{S}: X \mapsto \fA^{p}(X,-n) \\
        \fA^{p,cl}(-n)&: \LogDStk^{op}:=\DStk_{\cA}^{op} \to \mathcal{S}: X \mapsto \fA^{p, cl}(X,-n)
    \end{align*}
     are derived prestacks over $\cA$.
\end{re}

\begin{re}\label{rmk:shift-diff-form}
Thanks to {\cite[lemma~1.2]{CS24}} and {\cite[Theorem~2.7]{CS24}}, we have
    \begin{align*}
        \fA^p(X,-n) :=& \map_{QCoh(\cA)^{gr}}\left( \cO_{\cA}(p)[-p] , \DR_{\cA}(X)^{\not\epsilon} \otimes^{\LL}_{\cO_{\cA}} L_{\cA}[-n] \right) \\
        \cong& \map_{QCoh(\cA)^{gr}}\left( \cO_{\cA}(p)[-p] , \left( \RR\alpha_{X_*} \Sym_X \left(\bLL_X[-1]\right) \right) \otimes^{\LL}_{\cO_{\cA}} L_{\cA}[-n] \right) \\
        \cong& \map_{QCoh(\cA)^{gr}}\left( \cO_{\cA}(p)[-p] ,  \RR\alpha_{X_*} \left( \Sym_X \left(\bLL_X[-1]\right) \otimes^{\LL}_{\cO_{X}} \LL\alpha_X^* L_{\cA}[-n] \right) \right) \\
        \cong& \map_{QCoh(X)^{gr}}\left( \cO_{X}(p)[-p] , \Sym_X \left(\bLL_X[-1]\right) \otimes^{\LL}_{\cO_X} \cO_X(\D_X)[-n] \right) \\
        \cong& \map_{QCoh(X)}\left( \cO_{X} , \wedge^p \bLL_X \otimes^{\LL}_{\cO_X} \cO_X(\D_X)[-n] \right)
    \end{align*}
\end{re}

\begin{definition}
    A \emph{$(-n)$-shifted log differential $p$-forms on $X$} is a morphism $X \to \fA^{p}(-n)$ of derived prestacks over $\cA$. A \emph{closed $(-n)$-shifted log differential $p$-forms on $X$} is a morphism $X \to \fA^{p,cl}(-n)$ of derived prestacks over $\cA$.
\end{definition}

\begin{re}
    An exact $(-n)$-shifted log differential $1$-form is a closed $(-n)$-shifted log differential $1$-form for $n \in \bZ_{\ge 0}$.
\end{re}

\begin{re}
    There is a forgetful morphism $\fA^{p,cl}(-n) \to \fA^{p}(-n)$ induced by the forgetful morphism $\DR_{\cA}(-) \to \DR_{\cA}(-)^{\not \epsilon}$. As a result, every closed $(-n)$-shifted log differential $p$-form induces a $(-n)$-shifted log differential $p$-form.
\end{re}

Let $\omega_X$ denote a closed $(-n)$-shifted log differential $2$-form on $X$. Its induced (underlying) $(-n)$-shifted log differential $2$-form is denoted as $\uw_X$, which is equivalent to a morphism $$\cO_X \to \wedge^2 \bLL_X \otimes^{\LL}_{\cO_X} \cO_X(\D_X) [-n].$$ 
Taking the dual, we obtain the morphism $$\wedge^2 \bTT_X \otimes^{\LL}_{\cO_X} \cO_X(-\D_X) [n] \to \cO_X.$$ 
Then by adjunction, we obtain the morphism $$\uw_X^{\flat}: \bTT_X \to \bLL_X \otimes^{\LL}_{\cO_X} \cO_X(\D_X) [-n] \simeq \bLa_X[-n].$$

\begin{definition}
A closed $(-n)$-shifted log differential $2$-form $\omega_X$ on $X$ is \emph{non-degenerate} if its induced morphism $\uw_X^{\flat}: \bTT_X \to \bLa_X[-n]$ is a quasi-isomorphism. A \emph{$(-n)$-shifted log symplectic form on $X$} is a non-degenerate closed $(-n)$-shifted log differential $2$-form $\omega_X$. 

A derived log stack $X$ is \emph{$(-n)$-shifted log pre-symplectic} if it is equipped with a closed $(-n)$-shifted log differential $2$-form $\omega_X$. A derived log stack $X$ is \emph{$(-n)$-shifted log symplectic} if it is equipped with a $(-n)$-shifted log symplectic form $\omega_X$.
\end{definition}

Thanks to {\cite[Corollary~2.36(1)]{CS24}}, we have the following proposition.

\begin{proposition}\label{prop:log-AKSZ}(Logarithmic AKSZ Construction)

    $(1)$ Let $X$ be a $(-n)$-shifted log symplectic derived log stack, and $Y$ be a derived log stack equipped with an $\LL\alpha_Y^* (\cO_{\cA})[d]$-orientation (see \cite{CS24}). Assuming that $\RH_{\cA}(Y, X)$ is a derived stack, then $\RH_{\cA}(Y, X)$ is a $(-n-d)$-shifted log symplectic derived log stack.

    $(2)$ Let $X$ be a derived log stack equipped with a relative $(-n)$-shifted symplectic structure over $\cA$, and $Y$ be a derived log stack equipped with an $\LL\alpha_Y^* (\cL_{\cA}^\vee)[d]$-orientation (see \cite{CS24}). Assuming that $\RH_{\cA}(Y, X)$ is a derived stack, then $\RH_{\cA}(Y, X)$ is a $(-n-d)$-shifted log symplectic derived log stack.
\end{proposition}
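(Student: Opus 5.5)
The plan is to deduce both parts directly from \cite[Corollary~2.36(1)]{CS24}, specialized to the base stack $\cA$ with twisting line bundle $\cL_{\cA}$, by matching the paper's log notions with the twisted notions of \cite{CS24}. The first step is a dictionary. By Definition \ref{def:log-cotangent-complex}, the log cotangent complex $\bLL_X$ is the relative cotangent complex $\bL_{X/\cA}$. Remark \ref{rmk:shift-diff-form} then identifies the space $\fA^p(X,-n)$ of $(-n)$-shifted log $p$-forms with the space of $\LL\alpha_X^*\cL_{\cA}[-n]$-valued relative $p$-forms over $\cA$ in the sense of \cite[Section~2]{CS24}. The same manipulation applies to the closed forms $\fA^{p,cl}(X,-n)$, now carrying the mixed structure $\epsilon$ through the projection formula for $\alpha_X$. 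Under these identifications, the non-degeneracy condition $\uw_X^\flat\colon \bTT_X \xrightarrow{\simeq} \bLa_X[-n]$ is exactly the non-degeneracy condition in \cite{CS24}, since $\bLa_X=\bL_{X/\cA}\otimes \LL\alpha_X^*\cL_{\cA}$. The conclusion is that a $(-n)$-shifted log symplectic structure on $X$ is precisely a $\cL_{\cA}$-twisted $(-n)$-shifted symplectic structure on $X\to\cA$.

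For part $(1)$, I would feed $X$, with its twisted symplectic form, together with $Y\to\cA$ carrying an $\LL\alpha_Y^*(\cO_\cA)[d]$-orientation, into the twisted AKSZ statement of \cite[Corollary~2.36(1)]{CS24}. The mechanism is to pull the form back along $\ev\colon \RH_\cA(Y,X)\times^h_\cA Y\to X$ and then integrate along $Y$ using the orientation. Since the orientation takes values in the untwisted bundle $\cO_\cA$, the twist $\cL_\cA$ carried by $X$ passes through unchanged, so the result is $\cL_\cA$-twisted. Because the degree drops by $d$, the output is a $(-n-d)$-shifted form. Non-degeneracy follows from the formula $\bT_{\RH_\cA(Y,X)/\cA}\simeq \RR\pi_*\LL\ev^*\bT_{X/\cA}$ of Remark \ref{rmk:tangent-complex-der-map-stk}, combined with the duality $(\RR\pi_*F)^\vee\simeq \RR\pi_*(F^\vee\otimes \cO[d])$ that the orientation provides. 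Translating back through the dictionary gives the log symplectic structure on the mapping stack, which is itself a derived log stack over $\cA$.

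For part $(2)$, the roles of the twists are exchanged. Now $X$ carries an untwisted relative $(-n)$-shifted symplectic form over $\cA$. The orientation on $Y$ takes values in $\LL\alpha_Y^*\cL_\cA^\vee[d]$, so integration along $Y$ introduces the twist $\cL_\cA$, with the dualization convention of \cite{CS24} to be checked. Again this is the case covered by \cite[Corollary~2.36(1)]{CS24}, and the output is an $\cL_\cA$-twisted, that is log, $(-n-d)$-shifted symplectic form.

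The main obstacle is bookkeeping rather than new mathematics. One must confirm that the conventions of \cite{CS24}, namely the direction of the twist, the sign of the shift, and whether the orientation line is $\cL$ or $\cL^\vee$, match the definitions here with $\cL_\cA=\cO_\cA(B\bG_m)$. One must also confirm that the standing hypothesis that $\RH_\cA(Y,X)$ is a derived stack matches whatever algebraicity or finiteness hypothesis \cite{CS24} requires, so that the integration map exists.
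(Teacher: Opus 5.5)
Your proposal is correct and follows the paper's approach: the paper proves both parts simply by invoking \cite[Corollary~2.36(1)]{CS24} with base stack $\cA$ and twisting line bundle $\cL_{\cA}$. Your dictionary between log forms and $\cL_{\cA}$-twisted relative forms over $\cA$ is exactly what makes that citation apply, and the paper builds it into its definitions (Remark~\ref{rmk:shift-diff-form}).
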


Thanks to {\cite[Corollary~2.36(1)]{CS24}}, {\cite[Proposition~5.1]{CS24}} and {\cite[Corollary~6.3]{HKR25}}, we have the following proposition.

\begin{proposition}(A Special AKSZ Construction)

    Assume $X$ is a $(-n)$-shifted log symplectic derived log stack. Then $\RH_{\cA}(B\bG_m, X)$ is a derived stack equipped with a relative $(-n+1)$-shifted symplectic structure over $\cA$.
\end{proposition}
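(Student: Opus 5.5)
The plan is to obtain the statement as a specialization of the AKSZ-type results of \cite{CS24}, in the same way Proposition \ref{prop:log-AKSZ} was obtained, with $B\bG_m \hookrightarrow \cA$ playing the role of the source. Throughout, $B\bG_m$ is regarded as a derived log stack via the universal Cartier divisor $B\bG_m \hookrightarrow \cA$, and all mapping stacks are taken relative to $\cA$.

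The first step is to show that $B\bG_m$ carries an orientation of degree $-1$ relative to $\cA$, twisted by $\LL\alpha^*\cL_\cA$. Since $B\bG_m \hookrightarrow \cA$ is the zero locus of the universal section $s_\cA: \cO_\cA(-B\bG_m) \to \cO_\cA$, its cotangent complex is $\bL_{B\bG_m/\cA} \simeq \cO_\cA(-B\bG_m)|_{B\bG_m}[1]$. Grothendieck duality for this virtual Cartier divisor gives a trace
\[
\RR\alpha_*\cO_{B\bG_m} \longrightarrow \cL_\cA|_{\cA}[-1],
\]
namely the boundary map of the cofiber sequence $\cO_\cA(-B\bG_m) \to \cO_\cA \to \cO_{B\bG_m}$, twisted. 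I would check that this trace is nondegenerate in the sense of \cite{CS24}; this is essentially \cite[Corollary~6.3]{HKR25}, which I take as the input giving the relative orientation.

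The second step is to pair this orientation with the $\cO_X(\D_X)$-twisted symplectic form on $X$. The log symplectic form takes values in $\cL_\cA[-n]$, and integrating against the $\cL_\cA^\vee[-1]$-type fundamental class of $B\bG_m$ cancels the twist. The result is an \emph{untwisted} relative form of degree $-n+1$ on $\RH_\cA(B\bG_m, X)$ over $\cA$. This is exactly the twist bookkeeping in \cite[Proposition~5.1]{CS24}, and \cite[Corollary~2.36(1)]{CS24} then supplies the closed $2$-form on the mapping stack.

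The third step is nondegeneracy. The tangent complex of the mapping stack is $\RR\pi_*\ev^*\bTT_X$, by Remark \ref{rmk:tangent-complex-der-map-stk}. The form $\uw_X^\flat$ induces an equivalence onto $\RR\pi_*\ev^*\bLa_X[-n]$, and relative duality for $\pi$ identifies this with the dual of the tangent complex shifted by $-n+1$. Representability of $\RH_\cA(B\bG_m, X)$ as a derived stack is assumed or follows from \cite{HKR25}.

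The main obstacle is the first step. One has to verify that the boundary-map trace is a genuine orientation in the sense of \cite{CS24}, meaning the perfect pairing condition holds after arbitrary base change $T \to \cA$. The sign of the degree shift also has to be tracked: the degree must come out as $+1$ (giving $-n+1$) rather than $-1$. To settle this I would first compute locally on $\bA^1$, where $B\bG_m$ pulls back to the derived zero locus $\{x=0\}$ and the pairing is the Koszul pairing on $[\cO \xrightarrow{x} \cO]$.
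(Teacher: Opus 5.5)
Your plan is essentially the paper's proof. The paper obtains the statement in one line by specializing the twisted AKSZ construction \cite[Corollary~2.36(1)]{CS24} to the source $B\bG_m\hookrightarrow\cA$, with \cite[Proposition~5.1]{CS24} and \cite[Corollary~6.3]{HKR25} supplying the remaining inputs. There is, however, a concrete bookkeeping error in your first step, and it sits exactly at the point you leave open as the ``main obstacle''.

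Your label for the orientation (degree $-1$, twisted by $\cL_\cA$) is right. For the virtual Cartier divisor $i\colon B\bG_m\hookrightarrow\cA$ one has $i^!\cO_\cA\simeq i^*\cO_\cA(B\bG_m)[-1]=i^*\cL_\cA[-1]$. But the trace then lands in the dual,
\[
\RR i_*\cO_{B\bG_m}\longrightarrow\cO_\cA(-B\bG_m)[1]=\cL_\cA^\vee[1],
\]
not in $\cL_\cA[-1]$. This map is the connecting map of $\cO_\cA(-B\bG_m)\xrightarrow{s_\cA}\cO_\cA\to\RR i_*\cO_{B\bG_m}$, with no further twisting. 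Your proposed local check confirms it: on $\bA^1$ it is the projection of $[\cO\xrightarrow{x}\cO]$ onto its degree $-1$ term, which descends to $\cO_\cA(-B\bG_m)$.

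With this trace, pairing with the $\cL_\cA[-n]$-valued form gives values in $\cL_\cA[-n]\otimes\cL_\cA^\vee[1]\simeq\cO[-n+1]$. So the twist cancels and the shift rises by one at the same time; in the convention of Proposition~\ref{prop:log-AKSZ} this is an $\cL_\cA[-1]$-orientation with $d=-1$. With the target $\cL_\cA[-1]$ as you wrote it, you would instead land in $\cL_\cA^{\otimes 2}[-n-1]$, so your second step does not follow from your first.

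The base-changed formula also makes your third step work. Set $M:=\RH_\cA(B\bG_m,X)$ with structure map $\alpha_M$, and let $\pi\colon M\times^h_\cA B\bG_m\to M$. Then $\pi^!\cO_M\simeq\pi^*\alpha_M^*\cL_\cA[-1]$, and
\[
\bL_{M/\cA}\simeq\bigl(\RR\pi_*\ev^*\bTT_X\bigr)^\vee\simeq\RR\pi_*\bigl(\ev^*\bLL_X\otimes\pi^!\cO_M\bigr)\simeq\RR\pi_*\ev^*\bLa_X[-1],
\]
so $\uw_X^\flat$ gives $\bT_{M/\cA}\simeq\RR\pi_*\ev^*\bLa_X[-n]\simeq\bL_{M/\cA}[-n+1]$.
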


\begin{re}
    $B\bG_m$ is naturally a derived log stack by setting its anchor morphism to be the natural closed embedding $B\bG_m \hookrightarrow \cA$.
\end{re}

\begin{re}
     Note that any morphisms $B\bG_m \to X$ of derived log stacks factor through the virtual Cartier divisor $\D_X \hookrightarrow X$ on $X$. In particular, we have the following commutative diagram.
    $$
\begin{tikzcd}
{\D_{\RH_{\cA}\left( B\bG_m, X \right)}:= \RH_{\cA}\left( B\bG_m, X \right) \times^h_{\cA} B\bG_m} \arrow[d, hook] \arrow[dr] \arrow[r, "
\ev"] &{X}               \\
{\RH_{\cA}\left( B\bG_m, X \right)}                                 &{\D_X} \arrow[u, hook]             
\end{tikzcd}
    $$
\end{re}

\begin{re}
    $\RH_{\cA}(B\bG_m, X)$ is a derived (log) stack which turns out to be shifted symplectic instead of shifted log symplectic, and moreover its shift increases instead of decreases (most constructions we know decrease the shift for shifted symplectic structures\footnote{See the paragraph above \cite[Notations~and~conventions]{PTVV13}}). 
\end{re}

\

For the rest of this subsection, $X$ is assumed to be $(-n)$-shifted log pre-symplectic equipped with a closed $(-n)$-shifted log differential $2$-form $\omega_X$, whose underlying $(-n)$-shifted log differential $2$-form is denoted as $\uw_X$.

\begin{definition}
A \emph{log isotropic structure on a morphism $l: L \to X$ of derived log Artin stacks} is a commutative diagram
$$
\begin{tikzcd}
{L} \arrow[r, "l"] \arrow[d] & {X} \arrow[d, "\omega_X"] \\
{\cA} \arrow[r,"0"]           & {\fA^{2,cl}(-n)}        ,  
\end{tikzcd}
$$
or equivalently a path $\gamma : 0 \to \omega_X|_L $ in $ \fA^{2,cl}(L,-n)$, which induces a commutative diagram
$$
\begin{tikzcd}
{\bTT_L} \arrow[r] \arrow[rd] & {\bTT_X|_L} \arrow[r,"\uw_X^{\flat}|_L"]  & {\bLa_X[-n]|_L} \arrow[rd] &    \\
                        & {0} \arrow[rr] &               & {\bLa_L[-n]} .
\end{tikzcd}
$$
If $X$ is moreover $(-n)$-shifted log symplectic, i.e. $\uw_X^\flat: \bTT_X \to \bLa_X[-n]$ is a quasi-isomorphism, then a log isotropic structure $\gamma$ on a morphism $l: L \to X$ of derived log Artin stacks is \emph{non-degenerate} if the induced commutative diagram
$$
\begin{tikzcd}
{\bTT_L} \arrow[r] \arrow[d] & {\bTT_X|_L \simeq \bLa_X[-n]|_L} \arrow[d] \\
{0} \arrow[r]           & {\bLa_L[-n]}          
\end{tikzcd}
$$
is (co)Cartesian. A morphism $l: L \to X$ of derived log Artin stacks is a \emph{log Lagrangian} if it is equipped with a non-degenerate isotropic structure $\gamma$.
\end{definition}

\begin{re}
    For a non-degenerate isotropic structure $\gamma$, it induces a quasi-isomorphism $$\underline{\gamma}^{\flat}: \bTT_L[1] \overset{\simeq}{\longrightarrow} \bLa_l[-n].$$
\end{re}

\begin{definition}
We follow the notations given in {\cite[Section~2]{CS24}}.
The space of $(-n)$-shifted relative log differential $p$-forms on a morphism $\pi: X \to B$ of derived log Artin stacks is defined as $$\fA^{p}(X/B,-n):= \map_{QCoh(B)^{gr}}\left( \cO_{B}(p)[-p] , \DR_{B}(X)^{\not \epsilon} \otimes^{\LL}_{\cO_{B}} \cO_B(\D_B)[-n] \right).$$
The space of closed $(-n)$-shifted relative log differential $p$-forms on a morphism $\pi: X \to B$ of derived log Artin stacks is defined as $$\fA^{p,cl}(X/B,-n):= \map_{QCoh(B)^{gr,\epsilon}}\left( \cO_{B}(p)[-p] , \DR_{B}(X) \otimes^{\LL}_{\cO_{B}} \cO_B(\D_B)[-n] \right).$$
\end{definition}

\begin{re}
    Thanks to {\cite[Lemma~1.2]{CS24}} and {\cite[Theorem~2.7]{CS24}}, by similar arguments in Remark \ref{rmk:shift-diff-form}, we have
    $$ \fA^{p}(X/B,-n) \cong \map_{QCoh(X)}\left( \cO_{X} , \wedge^p \bLL_{\pi} \otimes^{\LL}_{\cO_X} \cO_X(\D_X)[-n] \right).$$
    Recall that $\bLL_{\pi} \simeq \bL_{\pi}$.
\end{re}

\begin{re}
    There is a natural morphism $\fA^{p,cl}(X,-n) \to \fA^{p,cl}(X/B,-n)$ induced by the natural morphism $\bLL_X \to \bLL_{\pi}$. 
\end{re}

\begin{definition}
Let $\omega_{X/B}$ denote the image of $\omega_X$ under the natural morphism $$\fA^{2,cl}(X,-n) \to \fA^{2,cl}(X/B,-n).$$
A \emph{log isotropic fibration structure on a morphism ${\pi}: X \to B$ of derived log Artin stacks} is a path $\eta : 0 \to \omega_{X/B}$ in $\fA^{2,cl}(X/B,-n)$, which induces a commutative diagram
$$
\begin{tikzcd}
{\bTT_{\pi}} \arrow[r] \arrow[rd] & {\bTT_X} \arrow[r,"\uw_X^{\flat}"]  & {\bLa_X[-n]} \arrow[rd] &    \\
                        & {0} \arrow[rr] &               & {\bLa_{\pi}[-n]} .
\end{tikzcd}
$$
A log isotropic fibration structure $\eta$ on a morphism ${\pi}: X \to B$ of derived log Artin stack is \emph{non-degenerate} if the induced commutative diagram
$$
\begin{tikzcd}
{\bTT_{\pi}} \arrow[r] \arrow[d] & {\bTT_X} \arrow[d] \\
{0} \arrow[r]           & {\bLa_{\pi}[-n]}          
\end{tikzcd}
$$
is (co)Cartesian. A morphism ${\pi}: X \to B$ of derived log Artin stacks is a \emph{log Lagrangian fibration} if it is equipped with a non-degenerate isotropic fibration structure $\eta$.
\end{definition}

\begin{re}
    If a log isotropic fibration structure $\eta$ is non-degenerate, then the closed $(-n)$-shifted log differential $2$-form $\omega_X$ is also non-degenerate.
\end{re}

Thanks to {\cite[Theorem~3.6]{CS24}}, we have the following proposition.

\begin{proposition}\label{prop:shifted-log-symp-lag}Let $X$ be a derived log Artin stack.

$1)$ $\Ta[-n]X$ (see Definition \ref{def:shift-log-co-tangent-bund}) is $(-n)$-shifted log symplectic.

$2)$ The natural projection morphism $\pi:\ \Ta[-n]X \to X $ is a log Lagrangian fibration.
\end{proposition}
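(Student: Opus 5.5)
The plan is to obtain both statements by specializing \cite[Theorem~3.6]{CS24} to the base $B=\cA$ with invertible sheaf $\cLA$. First I will check that the paper's objects match the inputs of that theorem. By Definition~\ref{def:shift-log-co-tangent-bund} and Definition~\ref{def:tw-log-cotangent-complex},
$$\Ta[-n]X=\V_X\big((\bL_{X/\cA}\otimes^{\LL}_{\cO_X}\LL\alpha_X^*\cLA[-n])^\vee\big).$$
This is exactly the $\cLA[-n]$-twisted shifted cotangent bundle of the morphism $\alpha_X: X\to\cA$ appearing in \cite{CS24}. Likewise, Remark~\ref{rmk:shift-diff-form} identifies the spaces $\fA^{p}(-,-n)$ and $\fA^{p,cl}(-,-n)$ with the twisted (closed) forms of \cite[Section~2]{CS24} for $B=\cA$. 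With these identifications, the Calaque--Safronov twisted symplectic form on $\Ta[-n]X$ is a closed $(-n)$-shifted log $2$-form $\omega$, and its twisted Lagrangian fibration structure on $\pi$ is a path $\eta:0\to\omega_{\Ta[-n]X/X}$ in $\fA^{2,cl}(\Ta[-n]X/X,-n)$.

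For concreteness I will also describe $\omega$ directly. The tautological section of $\pi^*\bLa_X[-n]$ pulls back along $\pi^*\bLL_X\to\bLL_{\Ta[-n]X}$ to a log Liouville $1$-form $\lambda$, and $\omega=d\lambda$ computed in $\DR_\cA$ after twisting by $\cLA[-n]$. Closedness then comes for free from exactness.

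Next come non-degeneracy and the Lagrangian fibration. The relative cotangent complex of the linear stack is
$$\bLL_\pi=\bL_\pi\simeq \pi^*(\bLa_X[-n])^{\vee\vee}\simeq\pi^*\bTa_X[n].$$
Twisting by $\cO(\D)$ gives $\bLa_\pi[-n]\simeq\pi^*\bTT_X$. Consider the fiber sequence
$$\bTT_\pi\to\bTT_{\Ta[-n]X}\to\pi^*\bTT_X.$$
Here $\bTT_\pi\simeq\pi^*\bLa_X[-n]$, and $\pi^*\bLa_X[-n]\to\bLa_{\Ta[-n]X}[-n]$ is the pullback map. Since $\lambda$ restricts to zero on vertical directions, the relative form $\omega_{/X}$ vanishes canonically, and this gives $\eta$. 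The induced square
$$\bTT_\pi\to\bTT_{\Ta[-n]X}\to\bLa_{\Ta[-n]X}[-n]\to\bLa_\pi[-n]$$
then identifies, via $\uw^\flat$, the outer terms with the standard identifications above.

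The main obstacle is showing that the square is coCartesian, i.e.\ that the map $\bTT_\pi\to\mathrm{fib}(\bLa_{\Ta[-n]X}[-n]\to\bLa_\pi[-n])=\pi^*\bLa_X[-n]$ is the identity up to sign. This is a local coordinate check of $d\lambda$, and it is precisely what \cite[Theorem~3.6]{CS24} provides in the twisted setting. I will not recompute it. Instead I will verify the hypotheses of that theorem: $X\to\cA$ is a morphism of derived Artin stacks with perfect relative cotangent complex, and $\cLA$ is a line bundle on $\cA$. Once the fibration is non-degenerate, the remark preceding the proposition gives non-degeneracy of $\omega$, so 1) follows from 2).
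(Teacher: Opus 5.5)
Your proposal is correct and follows the paper's route: the paper also obtains both statements directly by specializing \cite[Theorem~3.6]{CS24} to the base $\cA$ with line bundle $\cLA$. Your description of $\lambda$, $\omega=d_{dR}\lambda$ and $\eta$ matches Remark~\ref{rmk:taut-form-on-shift-log-cotang-bund}; the only slip is that the middle term should be $\pi^*\big((\bLa_X[-n])^{\vee}\big)$ rather than $\pi^*(\bLa_X[-n])^{\vee\vee}$, though your final identification $\bLL_\pi\simeq\pi^*\bTa_X[n]$ is right.
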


\begin{re}\label{rmk:taut-form-on-shift-log-cotang-bund}

    Assume there is a morphism $f: Y\to X$, then we have
    $$
    \map_{\DStk_{/X}}\left( Y,  \Ta[-n]X\right) \cong \map_{\QCoh(Y)}\left( \LL f^*\bTa_X[n] , \cO_Y \right) \cong \map_{\QCoh(Y)}\left( \cO_Y,\LL f^*\bLa_X[-n]  \right).
    $$
    In particular, for the natural projection morphism $\pi:  \Ta[-n]X \to X$, we have
    $$
    \map_{\DStk_{/X}}\left( \Ta[-n]X,  \Ta[-n]X\right) \cong \map_{\QCoh(\Ta[-n]X)}\left( \cO_{\Ta[-n]X}, \pi^*\bLa_X[-n]  \right) .
    $$
Hence, $\id_{\Ta[-n]X}$ induces a tautological morphism $\theta_{\Ta[-n]X}:\cO_{\Ta[-n]X} \to \pi^*\bLa_X[-n]$, which induces a $(-n)$-shifted log differential $1$-form 
$$
    \lambda_{\Ta[-n]X}: \cO_{\Ta[-n]X} \overset{\theta_{\Ta[-n]X}}{\longrightarrow} \LL \pi^*\bLa_X[-n] \to \bLa_{\Ta[-n]X}[-n]
$$
on $\Ta[-n]X$. As a result, there is a canonical closed $(-n)$-shifted log differential $2$-form $\omega_{\Ta[-n]X} := d_{dR} \lambda_{\Ta[-n]X}$ on $\Ta[-n]X$. For Proposition \ref{prop:shifted-log-symp-lag}, the statement that $\Ta[-n]X$ is $(-n)$-shifted log symplectic means precisely that $\omega_{\Ta[-n]X}$ is a $(-n)$-shifted log symplectic form.

    Note that there is a (co)fiber sequence
$$
\LL \pi^*\bLa_X[-n] \longrightarrow \bLa_{\Ta[-n]X}[-n] \longrightarrow \bLa_{\pi}[-n] \overset{+1}{\longrightarrow},
$$
under which  $$\theta_{\Ta[-n]X} \mapsto \lambda_{\Ta[-n]X} \mapsto \im(\lambda_{\Ta[-n]X})$$ and $\im(\lambda_{\Ta[-n]X})$ is homotopic to zero. This implies that there is a log isotropic fibration
$$
\eta: 0 \to \omega_{\Ta[-n]X/X}
$$on the natural projection morphism $\pi: \Ta[-n]X \to X$. For Proposition \ref{prop:shifted-log-symp-lag}, the statement that $\pi: \Ta[-n]X \to X$ is a log Lagrangian fibration means precisely that $\eta$ is non-degenerate.
\end{re}

\begin{theorem}[Logarithmic Lagrangian Intersection Theorem]
\label{thm:log-lag-intersect}

    Let $w_\alpha: X \to L_{\cA}[-n]$ be a $(-n)$-shifted log function (see Definition \ref{def:shift-log-function-diff-form}) on a derived log Artin stack $X$.
    Then $dw_\alpha: X \to \Ta[-n]X$ (see Remark \ref{rmk:shift-log-diff-form-is-section-tw-log-cotangent-bund}) is a log Lagrangian, and the derived log critical locus $X(dw_\alpha)$ (see Definition \ref{def:der-log-crit-locus}) is $(-n-1)$-shifted log symplectic.
\end{theorem}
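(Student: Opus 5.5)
We follow the classical proof of the Lagrangian intersection theorem \cite[Theorem~2.9, Corollary~2.11]{PTVV13} (see also \cite[Theorem~2.22]{Calaque19}), carried out in the twisted setting of \cite{CS24} with base $\cA$ and line bundle $\cL_\cA$. The argument has two steps: first show that the zero section and $dw_\alpha$ are log Lagrangians in $\Ta[-n]X$, then apply a twisted Lagrangian intersection statement to the homotopy fiber product defining $X(dw_\alpha)$.

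\textbf{Step 1: the zero section.} We first treat the zero section $0: X \to \Ta[-n]X$. By Remark~\ref{rmk:taut-form-on-shift-log-cotang-bund}, the symplectic form is exact: $\omega_{\Ta[-n]X} = d_{dR}\lambda_{\Ta[-n]X}$, where $\lambda$ is built from the tautological section $\theta$. Pulling back along $0$ gives $0^*\theta \simeq 0$, since $0$ corresponds to the zero section of $\bLa_X[-n]$. Hence $0^*\lambda$ is null-homotopic, and $d_{dR}$ of this null-homotopy is a log isotropic structure $\gamma_0$ on $0$.

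For non-degeneracy, we use that $0$ is a section of the log Lagrangian fibration $\pi$ of Proposition~\ref{prop:shifted-log-symp-lag}. Restricting the cofiber sequence $\bTT_\pi \to \bTT_{\Ta[-n]X} \to \LL\pi^*\bTT_X$ to $X$, and using $\bTT_\pi|_X \simeq \bLa_X[-n]$ (the fiber is the linear stack), we identify the induced map $\bTT_X[1] \to \bLa_0[-n]$ with the identification coming from $\bLa_0 \simeq \bTT_\pi^\vee|_X[1] \otimes \cO_X(\D_X)$. Checking this identification is a diagram chase parallel to the unshifted case.

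\textbf{Step 2: the graph $dw_\alpha$.} Next we treat $dw_\alpha$ by translation. Fiberwise addition of the section $dw_\alpha$ gives an automorphism $\tau_{dw_\alpha}$ of $\Ta[-n]X$ over $X$, with $dw_\alpha = \tau_{dw_\alpha}\circ 0$. We then show
$$\tau_{dw_\alpha}^*\lambda \simeq \lambda + \pi^* dw_\alpha,$$
by linearity of the tautological section. Consequently,
$$\tau_{dw_\alpha}^*\omega \simeq \omega + \pi^* d_{dR}(dw_\alpha),$$
and $d_{dR}(dw_\alpha)$ is canonically null-homotopic because $dw_\alpha = \bLa w_\alpha[-n]$ is exact. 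The cleanest way to see this exactness is as the pullback along $w_\alpha$ of the tautological exact twisted $1$-form on $L_\cA[-n]$ relative to $\cA$.

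So $\tau_{dw_\alpha}$ is a log symplectomorphism up to a specified homotopy, and it carries the log Lagrangian $0$ to the log Lagrangian $dw_\alpha$. Non-degeneracy is preserved, because $\tau_{dw_\alpha}$ induces equivalences on $\bTT$ and on $\bLa$.

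\textbf{Step 3: the intersection.} Finally, $X(dw_\alpha) = X \times^h_{\Ta[-n]X} X$ is an intersection of two log Lagrangians in $\LogDStk$. All stacks here live over $\cA$, and both twisted forms take values in the same pulled-back line bundle $\cL_\cA$. We can therefore invoke the twisted Lagrangian intersection theorem, \cite{CS24} with $B=\cA$ and $\cL=\cL_\cA$; if it is not stated there, we reprove it by copying \cite[Theorem~2.9]{PTVV13}. The two isotropic structures give two paths $0 \to \omega|_{X(dw_\alpha)}$, whose concatenation is a loop, i.e. a closed $(-n-1)$-shifted log $2$-form.

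Non-degeneracy of this form follows from the two cocartesian squares, which give $\bTT_{X(dw_\alpha)} \simeq \bLa_{X(dw_\alpha)}[-n-1]$. This is exactly the identification anticipated in Remark~\ref{rmk:log-cotangent-complex-der-log-crit-loci}, which follows from the cofiber sequences there.

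\textbf{Main obstacle.} I expect the main obstacle to be Step 2, namely making the translation formula and the exactness of $dw_\alpha$ hold coherently as closed forms twisted by $\cO_X(\D_X)$, rather than only at the level of underlying forms. If this becomes unwieldy, the fallback is the route of \cite[Theorem~2.22]{Calaque19}. There one views the $(-n)$-shifted log function as a relative Lagrangian to $L_\cA[-n]$ with the zero twisted $(-n+1)$-form, and obtains $dw_\alpha$ as the Lagrangian correspondence induced by $w_\alpha$, avoiding explicit translations.
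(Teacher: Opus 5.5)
Your proof is correct. Step 3 is the paper's own argument, but you prove that $dw_\alpha$ is a log Lagrangian by a different route.

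The paper writes down the isotropic structure directly,
$$\gamma_{dw_\alpha}\colon 0\to d_{dR}dw_\alpha\to d_{dR}(dw_\alpha)^*\lambda\to(dw_\alpha)^*\omega_{\Ta[-n]X},$$
using the tautological identity $(dw_\alpha)^*\lambda\simeq dw_\alpha$. It then checks non-degeneracy through a comparison map $\phi^{\log}_{X,dw_\alpha}\colon\bTT_X\to\bTT_X$ that is compatible with pullback. An induction over atlases (as in \cite[Lemma~2.6, Corollary~2.7]{Calaque19}) reduces this to a local computation on derived log affine schemes, and the zero section appears afterwards as the case $w_\alpha\simeq 0$.

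You instead treat the zero section first, globally, via the log Lagrangian fibration $\pi$, and then transport along the translation $\tau_{dw_\alpha}$. This gives a global, formal argument with no chart-by-chart computation. The price is the translation identity at the level of closed forms. That identity is harmless: pullback and $d_{dR}$ are linear maps of spaces, and the null-homotopy of $d_{dR}(dw_\alpha)$ you need is the same one that forms the first leg of $\gamma_{dw_\alpha}$. Indeed, pulling $\tau^*\lambda\simeq\lambda+\pi^*dw_\alpha$ back along $0$ gives the paper's $(dw_\alpha)^*\lambda\simeq dw_\alpha$, so your transported isotropic structure agrees with the paper's up to homotopy.

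Your Step 1 never actually uses that the section is zero. For any section $\sigma$ of $\pi$ carrying an isotropic structure, fibration non-degeneracy makes the composite $\sigma^*\bTT_\pi\to\sigma^*\bTT_{\Ta[-n]X}\to\bLa_X[-n]$ an equivalence. Since $d\pi\circ d\sigma\simeq\id$, the induced map $\bTT_X\to\fib\bigl(\sigma^*\bTT_{\Ta[-n]X}\to\bLa_X[-n]\bigr)$ is then an equivalence as well. Applied to $\sigma=dw_\alpha$ with the paper's $\gamma_{dw_\alpha}$, this removes the need for both your translation step and the paper's local-to-global reduction.

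One small precision in Step 3: non-degeneracy comes from $\underline{\omega}^\flat_{X(dw_\alpha)}$ sitting in a map of cofiber sequences whose other components are $\LL i_{dw_\alpha}^*(\underline{\gamma}_0^\flat)\oplus\LL i_0^*(\underline{\gamma}_{dw_\alpha}^\flat)$ and $\LL q^*(\underline{\omega}^\flat_{\Ta[-n]X})$. It does not follow from the two abstract cofiber sequences of Remark~\ref{rmk:log-cotangent-complex-der-log-crit-loci} alone.
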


\begin{proof}
Following Remark \ref{rmk:taut-form-on-shift-log-cotang-bund}, let $\lambda_{\Ta[-n]X}$ denote the tautological $(-n)$-shifted log differential $1$-form on $\Ta[-n]X$, and $\omega_{\Ta[-n]X}$ denote  the canonical $(-n)$-shifted log symplectic form on $\Ta[-n]X$. Let $\pi: \Ta[-n]X \to X$ denote the natural projection morphism.

First, we prove that $dw_\alpha: X \to \Ta[-n]X$ is a log Lagrangian, which is basically a modification of the proof of {\cite[Theorem~2.4(2)]{Calaque19}} and {\cite[Theorem~2.22]{Calaque19}}.

By definition, $dw_\alpha: X \to \Ta[-n]X$ corresponds to an exact $(-n)$-shifted log differential $1$-form 
$$
    dw_\alpha: \cO_{X} \to \bLa_X[-n]
$$
on $X$.  Tautologically, we see that
$$
(dw_\alpha)^* \lambda_{\Ta[-n]X} : \cO_{X} \to \bLa_X[-n] \to (dw_\alpha)^*\bLa_{\Ta[-n]X}[-n] \simeq \bLa_X[-n] \oplus \bTT_X
$$
is homotopic to 
$$
\cO_{X} \overset{dw_\alpha}{\longrightarrow} \bLa_X[-n] \to \bLa_X[-n] \oplus \bTT_X   .
$$
As a result, there is a log isotropic structure
$$
\gamma_{dw_\alpha}: 0 \to d_{dR}dw_\alpha \to d_{dR} (dw_\alpha)^*  \lambda_{\Ta[-n]X} \to (dw_\alpha)^* d_{dR} \lambda_{\Ta[-n]X} =: (dw_\alpha)^* \omega_{\Ta[-n]X}
$$
on the morphism $dw_\alpha: X \to \Ta[-n]X$. This log isotropic structure $\gamma_{dw_\alpha}:0 \to (dw_\alpha)^* \omega_{\Ta[-n]X}$ induces a commutative diagram
\begin{equation}\label{diag:1-prf-log-lag}
\begin{tikzcd}
{\bTT_X} \arrow[r] \arrow[d] & {(dw_\alpha)^*\bTT_{\Ta[-n]X} \simeq (dw_\alpha)^*\bLa_{\Ta[-n]X}[-n]} \arrow[d] &    \\
                         {0} \arrow[r]    & {\bLa_X[-n]} .
\end{tikzcd}
\end{equation}
Note that the quasi-isomorphism $(dw_\alpha)^*\bTT_{\Ta[-n]X} \simeq (dw_\alpha)^*\bLa_{\Ta[-n]X}[-n]$ follows from Proposition \ref{prop:shifted-log-symp-lag}.

On the other hand, applying the log tangent complex functor to $dw_\alpha: X \to \Ta[-n]X$, we obtain a (co)fiber sequence
$$
\bTT_X \to (dw_\alpha)^*\bTT_{\Ta[-n]X} \to \bTT_{dw_\alpha}[1] \simeq (dw_\alpha)^*\bTT_{\pi} \simeq \bLa_X[-n] \overset{+1}{\longrightarrow} ,
$$
which is equivalent to a homotopy (co)Cartesian diagram
$$
\begin{tikzcd}
{\bTT_X} \arrow[r] \arrow[d] & {(dw_\alpha)^*\bTT_{\Ta[-n]X}} \arrow[d] &    \\
                         {0} \arrow[r]    & {\bLa_X[-n]} .
\end{tikzcd}
$$
By the universal property of homotopy (co)Cartesian diagram and Diagram (\ref{diag:1-prf-log-lag}), there is a commutative diagram
$$
\begin{tikzcd}
{\bTT_X} \arrow[rd, dashed, "\phi^{log}_{X, dw_\alpha}"] \arrow[rdd, bend right=15] \arrow[rrd, bend left=17] &                        &              \\
                                              & {\bTT_X} \arrow[r] \arrow[d] & {(dw_\alpha)^*\bTT_{\Ta[-n]X}} \arrow[d] \\
                                              & {0} \arrow[r]           & {\bLa_X[-n]}          ,
\end{tikzcd}
$$
where the square diagram is a homotopy (co)Cartesian diagram. Thus, we can see that if $\phi^{log}_{X, dw_\alpha}: \bTT_X \to \bTT_X$ is a quasi-isomorphism, then $\gamma_{dw_\alpha}$ is non-degenerate by definition, and hence $dw_\alpha: X \to \Ta[-n]X$ is a log Lagrangian.

By arguments similar to the proof of \cite[Lemma~2.6]{Calaque19}, given a morphism $f: Y \to X$ of derived log Artin stacks and a $(-n)$-shifted log function $w_\alpha$ on $X$, there is a commutative diagram
$$
\begin{tikzcd}[column sep=6em]
{\bTT_Y} \arrow[r, "\phi^{log}_{Y, \LL f^*(dw_\alpha)}"] \arrow[d] & {\bTT_Y} \arrow[d] \\
{\LL f^*\bTT_X} \arrow[r, "\LL f^*(\phi^{log}_{X, dw_\alpha})"]           & {\LL f^*\bTT_X}         . 
\end{tikzcd}
$$
Consequently, by inductive arguments similar to the proof of {\cite[Corollary~2.7]{Calaque19}}, to show that $\phi^{log}_{X, dw_\alpha}: \bTT_X \to \bTT_X$ is a quasi-isomorphism, it suffices to show the base case that $\phi^{log}_{X, dw_\alpha}: \bTT_X \to \bTT_X$ is a quasi-isomorphism when $X$ is a derived log affine scheme (homotopically) locally of finite presentation. The base case is true by standard computation, so we conclude that $dw_\alpha: X \to \Ta[-n]X$ is a log Lagrangian when $X$ is a derived log Artin stack.

Note that if $w_\alpha \simeq 0$ and hence $dw_\alpha \cong 0: X \to \Ta[-n]X$, there is a log isotropic structure $\gamma_0: 0 \to 0^*\omega_{\Ta[-n]X}$ that is non-degenerate. In other words, $0: X \to \Ta[-n]X$ is a log Lagrangian.

Next, we prove that $X(dw_\alpha)$ is $(-n-1)$-shifted log symplectic, which basically follows from the proof of {\cite[Theorem~2.9]{PTVV13}}.

Consider the homotopy Cartesian diagram
\begin{equation}\label{diag:prf-log-lag-intersect}
\begin{tikzcd}
{X(dw_\alpha)} \ar[dr, phantom, "\lrcorner_{\text{\tiny h}}", very near start] \arrow[r, "i_0"] \arrow[d, "i_{dw_\alpha}"'] & {X} \arrow[d, "dw_\alpha"'] \\
{X} \arrow[r, "0"]  & {\Ta[-n]X}           .
\end{tikzcd}
\end{equation}

The two morphisms
\begin{align*}
    &q_1: X(dw_\alpha) \overset{i_0}{\longrightarrow} X \overset{dw_\alpha}{\longrightarrow} \Ta[-n]X \\
    &q_2: X(dw_\alpha) \overset{i_{dw_\alpha}}{\longrightarrow} X \overset{0}{\longrightarrow} \Ta[-n]X
\end{align*}
come equipped with a natural homotopy $u: q_1 \Rightarrow q_2$. This $u$ gives rise to a homotopy between the induced morphisms on the spaces of closed $(-n)$-shifted log differential $2$-forms
$$
u^*: q_1^* \to q_2^*: \fA^{2,cl}(\Ta[-n]X,-n) \to \fA^{2,cl}(X(dw_\alpha),-n).
$$
Moreover, $\gamma_{dw_\alpha}$ (respectively, $\gamma_0$) defines a path 
$$
h: 0 \to q_1^*(\omega_{\Ta[-n]X}) \quad \left(k: 0 \to q_2^*(\omega_{\Ta[-n]X}) \right)
$$
in the space $\fA^{2,cl}(X(dw_\alpha),-n)$. By concatenation of 
\begin{align*}
    h:&\  0 \to q_1^*(\omega_{\Ta[-n]X}) \\
    u^*(\omega_{\Ta[-n]X}):&\  q_1^*(\omega_{\Ta[-n]X}) \to q_2^*(\omega_{\Ta[-n]X}) \\
    k^{-1}:&\  q_2^*(\omega_{\Ta[-n]X}) \to 0
\end{align*}
we get a loop at $0$ in the space $\fA^{2,cl}(X(dw_\alpha),-n)$, therefore a well-defined element
$$
\omega_{X(dw_\alpha)}\in \pi_1\left(\fA^{2,cl}\left(X(dw_\alpha), -n\right)\right)\footnote{We are using homological conventions here.} \simeq \pi_0\left(\fA^{2,cl}\left(X(dw_\alpha), -n-1\right)\right)
$$
It remains to show that the closed $(-n-1)$-shifted log differential $2$-form $\omega_{X(dw_\alpha)}$ is non-degenerate. 

Set $q:= q_1: X(dw_\alpha) \to \Ta[-n]X$ (or equivalently $q:= q_2$). 

By Lemma \ref{lem:cotangent-complex-homotopy-cartesian-diagram}, applying the log cotangent complex functor to Diagram (\ref{diag:prf-log-lag-intersect}), we obtain a homotopy (co)Cartesian diagram
$$
\begin{tikzcd}
{\bTT_{X(dw_\alpha)}} \arrow[r] \arrow[d] & {\LL i_{0}^*(\bTT_X)} \arrow[d] \\
{\LL i_{dw_\alpha}^*(\bTT_X)} \arrow[r]           & {\LL q^*(\bTT_{\Ta[-n]X})}          ,
\end{tikzcd}
$$
which is equivalent to a (co)fiber sequence
$$
\bTT_{X(dw_\alpha)} \longrightarrow \LL i_{dw_\alpha}^*(\bTT_X) \oplus \LL i_{0}^*(\bTT_X) \longrightarrow \LL q^*(\bTT_{\Ta[-n]X}) \overset{+1}{\longrightarrow}.
$$

Applying the log tangent complex to $q: X(dw_\alpha) \to \Ta[-n]X$, we obtain a (co)fiber sequence
$$
\LL q^*(\bLL_{\Ta[-n]X}) \longrightarrow \bLL_{X(dw_\alpha)} \longrightarrow \bLL_q \simeq \LL i_{dw_\alpha}^*(\bLL_0) \oplus \LL i_{0}^*(\bLL_{dw_\alpha}) \overset{+1}{\longrightarrow},
$$
which induces a (co)fiber sequence
$$
\bLa_{X(dw_\alpha)}[-n-1]  \longrightarrow \LL i_{dw_\alpha}^*(\bLa_0)[-n-1] \oplus \LL i_{0}^*(\bLa_{dw_\alpha})[-n-1] \to \LL q^*(\bLa_{\Ta[-n]X})[-n]   .
$$

By construction, there is a commutative diagram in $\QCoh\left(X(dw_\alpha)\right)$ with rows being (co)fiber sequences
$$
\begin{tikzcd}
{\bTT_{X(dw_\alpha)}} \arrow[r] \arrow[d, "\underline{\omega}_{X(dw_\alpha)}^{\flat}"'] & {\LL i_{dw_\alpha}^*(\bTT_X) \oplus \LL i_{0}^*(\bTT_X)} \arrow[d, "\LL i_{dw_\alpha}^* (\underline{\gamma}_{0}^{\flat}) \oplus \LL i_0^* (\underline{\gamma}_{dw_\alpha}^{\flat}) "'] \arrow[r] & {\LL q^*(\bTT_{\Ta[-n]X})} \arrow[r, "+1"] \arrow[d, "\LL q^* (\underline{\omega}_{\Ta[-n]X}^{\flat})"'] & {} \\
{\bLa_{X(dw_\alpha)}[-n-1]} \arrow[r]           & {\LL i_{dw_\alpha}^*(\bLa_0)[-n-1] \oplus \LL i_{0}^*(\bLa_{dw_\alpha})[-n-1]} \arrow[r]           & {\LL q^*(\bLa_{\Ta[-n]X})[-n]} \arrow[r, "+1"]           & {}        .
\end{tikzcd}
$$
Note that $\underline{\gamma}_{dw_\alpha}^{\flat}$ and $\underline{\gamma}_{0}^{\flat}$ are quasi-isomorphisms because $\gamma_{dw_\alpha}$ and $\gamma_0$ are non-degenerate, which follows from the fact that $dw_\alpha: X \to \Ta[-n]X$ and $0: X \to \Ta[-n]X$ are log Lagrangians; $\underline{\omega}_{\Ta[-n]X}^{\flat}$ is a quasi-isomorphism because $\omega_{\Ta[-n]X}$ is non-degenerate, which follows from Proposition \ref{prop:shifted-log-symp-lag}. Thus, we conclude that $\underline{\omega}_{X(dw_\alpha)}^{\flat}$ is a quasi-isomorphism, and hence $\omega_{X(dw_\alpha)}$ is a $(-n-1)$-shifted log symplectic form on $X(dw_\alpha)$.

\end{proof}

We conclude this subsection by proposing a series of questions. What are the relationships among shifted symplectic structures \cite{PTVV13}, twisted symplectic structures \cite{CS24}, and shifted log symplectic structures? For example, the open complement of the virtual Cartier divisor of a $(-n)$-shifted log symplectic derived log stack is a $(-n)$-shifted symplectic derived stack. Conversely, given a shifted symplectic derived stack, can one construct a log compactification that carries a shifted log symplectic structure? Given that shifted log symplectic structures are special cases of twisted symplectic structures and the base stack for shifted log structures is $\cA:= [\bA^1 / \bG_m]$, are shifted log symplectic structures in some sense universal among twisted symplectic structures?

\subsection{Derived Projective Completions and Logarithmic Compactifications}\label{sec:der-proj-complete-log-compact}

In this subsection, we show that derived projective completions (following \cite{Hekking21} and \cite{Jiang25}) are naturally log compactifications (Proposition \ref{prop:log-compact-derived-proj-completion}). As a consequence, we obtain a log compactification for a special class of derived Weil restrictions (Corollary \ref{cor:log-compact-derived-weil-restriction}), which lays the foundation for a prototype for derived Logarithmic GLSM (see Section \ref{sec:4}).

In addition to {\cite[Appendix~A]{AHPS23}}, we refer to \cite{Hekking21} for a detailed discussion of derived projective spectra and to {\cite[Section~4]{Jiang25}} for an alternative treatment.

Let $X$ be a derived stack, and $\cE^{\vee}$ be a perfect complex on $X$ of Tor-amplitude in $[-n,0]$ for some $n \in \bZ_{\ge 0}$. Then the natural morphism $$E := \Spec_X \Sym_X (\cE^{\vee}) \to X$$ is representable by derived schemes and  homotopically $n$-smooth (see \cite{ToVa07} and \cite{AnGe14}). $E$ is called a derived vector bundle.

Set $\cB := \Sym_X(\cE^{\vee})$ and $\cB_n:= \Sym_X^n (\cE^{\vee})$. In particular, $\cB_1 = \cE^{\vee}$. $\cB = \Sym_X(\cB_1)$ is a quasi-coherent $\cO_X$-graded algebra generated in grading\footnote{We use the term grading instead of degree.} $1$  (see {\cite[Appendix~A]{AHPS23}}). We define $\cB_+ := \bigoplus_{n > 0} \cB_n$ as the irrelevant ideal of $\cB$, which comes with a (co)fiber sequence
$$
\cB_+ \longrightarrow \cB \longrightarrow \cB_0 \overset{+1}{\longrightarrow}.
$$
The closed embedding $\Spec_X \cB_0 \to \Spec_X \cB$ is written as $V(\cB_+) \to \Spec_X \cB$. Note that the $\bG_{m,X}$-action on $\Spec_X \cB$ restricts to a $\bG_{m,X}$-action on $(\Spec_X \cB) \setminus V(\cB_+)$. The \emph{relative projective spectrum $\Proj_X \cB$ over $X$} is defined as the derived quotient stack $$\left[ \left( \left(\Spec_X \cB\right) \setminus V\left(\cB_+\right) \right) / \bG_{m,X} \right]$$ over $X$. The natural morphism $$\bP_X(E):= \Proj_X \cB = \Proj_X \Sym_X (\cE^{\vee}) \to X$$ is representable by derived schemes due to {\cite[Proposition~A.6.(iii)]{AHPS23}} and {\cite[Proposition~A.6.(iv)]{AHPS23}}.

\begin{re}
\label{rmk:proj-sym-being-empty}

    By definition, there is a natural open embedding $\bP_X(E) \hookrightarrow \left[E/\bG_{m,X}\right]$.
\end{re}

Let $U$ denote $(\Spec_X \cB) \setminus V(\cB_+)$ for simplicity. There is a commutative diagram as follows.
$$
\begin{tikzcd}
{U} \arrow[r] \arrow[d, hook] & {\bP_X(E)} \arrow[d, "\bar{\pi}"] \\
{E} \arrow[r, "\pi"']           & {X}          
\end{tikzcd}
$$

\begin{theorem}[Euler (co)fiber sequence, {\cite[Theorem~4.27]{Jiang25}}]
\label{thm:derived-Euler-seq}
    
    Under the assumptions above, there is a (co)fiber sequence
    $$
    \bL_{\bP_X(E)/X} \longrightarrow \LL \bar{\pi}^* (\cE^{\vee}) \otimes^{\LL}_{\cO_{\bP_X(E)}} \cO_{\bP_X(E)}(-1) \longrightarrow \cO_{\bP_X(E)} \overset{+1}{\longrightarrow}
    $$
    in $\QCoh\left(\bP_X(E)\right)$.
\end{theorem}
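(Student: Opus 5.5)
The plan is to compute $\bL_{\bP_X(E)/X}$ through the quotient presentation $\bP_X(E) = [U/\bG_{m,X}]$, first on the $\bG_m$-torsor $U$ and then by descent. Write $p: U \to \bP_X(E)$ for the quotient map and $\pi_U: U \to X$ for the composite $U \hookrightarrow E \to X$. Since $U \hookrightarrow E$ is an open embedding and $E = \Spec_X \Sym_X(\cE^\vee)$, we have
$$
\bL_{U/X} \simeq \bL_{E/X}|_U \simeq \LL\pi_U^*\cE^\vee ,
$$
using $\bL_{\V_X(\cE^\vee)/X}\simeq \cE^\vee|_{\V_X(\cE^\vee)}$ from the conventions. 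Because $p$ is a $\bG_m$-torsor, it is smooth, and $\bL_{U/\bP_X(E)} \simeq \mathfrak{g}^\vee\otimes \cO_U \simeq \cO_U$. Applying the transitivity sequence to $U \to \bP_X(E) \to X$ gives a cofiber sequence
$$
\LL p^*\bL_{\bP_X(E)/X} \longrightarrow \LL\pi_U^*\cE^\vee \longrightarrow \cO_U \overset{+1}{\longrightarrow}.
$$

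Next, identify the connecting map $\LL\pi_U^*\cE^\vee \to \cO_U$. It is the restriction to $U$ of the dual of the infinitesimal action $\mathfrak{g}\otimes\cO_E \to \bT_{E/X}$. This dual is the Euler map $\cE^\vee\otimes\cO_E \to \cO_E$, which sends a degree-one generator to itself, i.e. the tautological map $\cB_1 \otimes \cB \to \cB$. This is the same computation as in Remark~\ref{rmk:cotangent-complex-cA}, where $dx \mapsto x\,dt$, now carried out in families. I would verify it locally: $X$ may be taken affine and, since the statement is local, $\cE^\vee$ may be represented by a finite cell complex. The Euler vector field on $\Spec \Sym$ is then functorial in the generators, so the identification is compatible with the differentials.

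Finally, descend along $p$ using $\QCoh(\bP_X(E)) \simeq \QCoh^{\bG_m}(U)$. Every term in the sequence above carries a natural $\bG_m$-equivariant structure: $\LL p^*\bL_{\bP_X(E)/X}$ has weight $0$, $\cO_U$ has weight $0$, and $\LL\pi_U^*\cE^\vee$ has weight $1$ because the coordinates have weight one. The weight-one twist descends to $\LL\bar\pi^*\cE^\vee\otimes\cO(-1)$, and the Euler map is equivariant for these weights. Descending the equivariant cofiber sequence therefore yields
$$
\bL_{\bP_X(E)/X}\to \LL\bar\pi^*(\cE^\vee)\otimes\cO_{\bP_X(E)}(-1)\to\cO_{\bP_X(E)}.
$$

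The main obstacle is this equivariance bookkeeping. One must check that the transitivity sequence for $U \to [U/\bG_m] \to X$ is a sequence of $\bG_m$-equivariant objects with exactly these weights, and that the resulting weight-one twist matches the paper's convention for $\cO(-1)$, as opposed to $\cO(1)$. I would settle this by writing $\bP_X(E)$ as the fiber product of the quotient $[E/\bG_{m,X}]$, restricted to the open locus, over $B\bG_m$. Then I would compute $\bL_{[E/\bG_m]/X}$ as the two-term complex $[\cE^\vee(-1)\to\cO]$, following Remark~\ref{rmk:cotangent-complx-der-quot-stk}, and restrict it along the open embedding of Remark~\ref{rmk:proj-sym-being-empty}.
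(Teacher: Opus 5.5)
Your proposal is correct and is essentially the route the paper itself indicates in Remark~\ref{rmk:Jeremiah} and carries out in Example~\ref{eg:cotangent-cmplx-der-proj-space}; beyond that, the paper only cites \cite[Theorem~4.27]{Jiang25}. The route is to present $\bP_X(E)$ as $[U/\bG_{m,X}]$, identify $\bL_{[U/\bG_{m,X}]/X}|_U$ with the fiber of the dual infinitesimal action $\cE^\vee|_U\to\cO_U$, and descend using that all generators have grading one. You do not need the paper's induction on Tor-amplitude, since $\bL_{E/X}\simeq\cE^\vee|_E$ holds uniformly, and the equivariance bookkeeping you flag is settled most cleanly by the transitivity sequence for $\bP_X(E)\to X\times B\bG_m\to X$, using $\bL_{X\times B\bG_m/X}\simeq\cO[-1]$ and $\bL_{\bP_X(E)/X\times B\bG_m}\simeq\LL\bar{\pi}^*(\cE^\vee)\otimes\cO_{\bP_X(E)}(-1)$ (restriction of the linear stack $[E/\bG_{m,X}]$ over $X\times B\bG_m$).
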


\begin{re}\label{rmk:Jeremiah}
    One can give an alternative proof of Theorem \ref{thm:derived-Euler-seq} using Remark \ref{rmk:cotangent-complx-der-quot-stk} (see Example \ref{eg:cotangent-cmplx-der-proj-space}) and induction on the Tor-amplitude of a perfect complex.
\end{re}

\begin{example}
\label{eg:cotangent-cmplx-der-proj-space}

    Set $\cE^{\vee} := \cO_X^{\oplus 2}\oplus (\cO_X[1])$. Then $E \cong (\bA^2 \times^h \bA^1[-1]) \times^h X$, whose classical truncation $E_\cl$ is $\bA^2 \times X_{\cl}$. Hence, the classical truncation $(\bP_X(E))_\cl$ of $\bP_X(E)$ is $\bP^1 \times X_{\cl}$. 

    We write $\cE^{\vee} = \cO_X \langle x_1 \rangle \oplus \cO_X \langle x_2 \rangle \oplus \cO_X \langle \nu \rangle = \cO_X \langle x_1, x_2; \nu \rangle$ with cohomological degrees $|x_1|=|x_2|=0$ and $|\nu|=-1$. For the $\cO_X$-graded algebra $\cB := \Sym_X\left(\cE^{\vee}\right) = \cO_X[x_1,x_2 ; \nu]$, the generators $x_1, x_2$ and $\nu$ all have grading $1$. 
    
    Consider the action morphism\footnote{Note that $\bG_m$ is a linearly reductive group scheme and its higher group cohomology vanishes.}
    \begin{align*}
        \psi_X: \mathbb{G}_{m,X} \times^h_X U &\longrightarrow U \\
        \left(t,\left(x_1, x_2, \nu \right)\right) &\longmapsto \left(tx_1, tx_2, t\nu \right) .
    \end{align*}
    Note that $\psi_X$ admits a section $(e_X, \id_U): U \cong X \times^h_X U \to \mathbb{G}_{m,X} \times^h_X U$, where $e_X: X \to \bG_{m,X}$ is the identity section of $\bG_{m,X}$. Let $\mathfrak{g}$ denote the tangent space of $\bG_m$ at the identity.
    
    Applying the relative cotangent complex functor to $\psi_X$ and then pulling back to $U$ along $(e_X, \id_U)$, we obtain a morphism
    $$
    d\psi_X|_{U}:=\LL(e_X,\id_{U})^*\left(\bL\psi_X\right): \bL_{U/X} \simeq \cE^{\vee}|_U \to \bL_{\bG_{m,X} \times^h_X U/X}|_{U} \simeq \left( \mathfrak{g}^{\vee} \boxtimes \cO_{U} \right) \oplus \left(\cE^{\vee}|_U \right) ,
    $$
    which can be explicitly written as
    \begin{align*}
        d\psi_X|_{U}: \cO_{U}\langle dx_1, dx_2; d\nu \rangle &\longrightarrow \cO_{U}\langle dt \rangle \oplus \cO_{U}\langle dx_1, dx_2, d\nu \rangle \\
        dx_j &\longmapsto \left( d\left(tx_j\right) \right)|_{t=1} = \left( x_jdt + tdx_j \right)|_{t=1} =x_jdt + dx_j \\
        d\nu &\longmapsto \left( d\left(t\nu\right) \right)|_{t=1} = \left( \nu dt + td\nu \right)|_{t=1} =\nu dt + d\nu.
\end{align*}
    Thus, by setting
\begin{align*}
    d\bar{\psi}_X : \cO_{U}\langle dx_1, dx_2; d\nu \rangle \longrightarrow& \cO_{U}\langle dt \rangle \\
    dx_j \longmapsto& x_jdt \\
    d\nu \longmapsto& \nu dt ,
\end{align*}
    we have $d\psi_X|_{U} \simeq d\bar{\psi}_X \oplus \id$. Following Remark \ref{rmk:cotangent-complx-der-quot-stk}, we conclude that
\begin{align*}
    \bL_{[U/\bG_{m,X}]/X}|_{U} &\simeq \fib \left( d\bar{\psi}_X: \cO_{U}\langle dx_1, dx_2; d\nu \rangle \longrightarrow \cO_{U}\langle dt \rangle \right)\\
    &\simeq \fib \left( \cO_U \oplus \cO_U \oplus \left(\cO_U [1]\right) \overset{\cdot x_1 \oplus \cdot x_2 \oplus \cdot \nu}{\longrightarrow} \cO_{U}  \right),
\end{align*}
    which is a perfect complex of Tor-amplitude in $[-1,1]$ on $U$.  Note that the morphism $\cO_{U}[1] \overset{\cdot \nu}{\longrightarrow} \cO_{U}$ is well-defined, because $\nu$ is literally a $(-1)$-shifted function on $U$ (See Remark \ref{rmk:shifted-function-and-homotopy} and Example \ref{eg:aff-derived-zero-locus}).

    Since $x_1, x_2$ and $\nu$ all have grading $1$, we conclude that
$$
    \bL_{\bP_X(E)/X} \simeq \fib \left( \cO_{\bP_X(E)}(-1) \oplus \cO_{\bP_X(E)}(-1) \oplus \left(\cO_{\bP_X(E)}(-1)[1]\right) \to \cO_{\bP_X(E)} \right),
$$
    which is a perfect complex of Tor-amplitude in $[-1,1]$ on $\bP_X(E)$. By {\cite[Proposition~A.6(iii)]{AHPS23}} and {\cite[Proposition~A.6(iv)]{AHPS23}}, the natural morphism $\bP_X(E) \to X$ is representable by derived schemes. This forces $\bL_{\bP_X(E)/X}$ to be a perfect complex of Tor-amplitude in $[-1,0]$ on $\bP_X(E)$.

    Further assume that $X := \pt$. Then there is a natural closed embedding $$\tau: \bP^1 \cong \left(\bP_X(E)\right)_{\cl} \hookrightarrow \bP_X(E)$$ that induces a (co)fiber sequence
$$
    \bL_{\bP_X(E)}|_{\bP^1} \simeq \fib \left( \cO_{\bP^1}(-1)^{\oplus 2} \oplus \left(\cO_{\bP^1}(-1)[1]\right) \to \cO_{\bP^1} \right) \overset{\bL\tau}{\longrightarrow} \bL_{\bP^1} \simeq \Omega^1_{\bP^1} \longrightarrow \bL_{\bP^1/\bP_X(E)} \overset{+1}{\longrightarrow}.
$$
    By Proposition \ref{prop:cotangent-complex-obstruction-theory} (or Corollary \ref{cor:qsmooth-POT}), $\bL \tau$ induces a perfect obstruction theory on $\bP^1$. In particular, there is an isomorphism
$$
H^0(\bL\tau): H^0(\bL_{\bP_X(E)}|_{\bP^1}) \overset{=}{\longrightarrow} H^0(\bL_{\bP^1}) = \Omega^1_{\bP^1}.
$$
    Note that the morphism $\cO_{\bP^1}(-1)^{\oplus 2} \oplus \left(\cO_{\bP^1}(-1)[1]\right) \to \cO_{\bP^1}$ can be explicitly written as
$$
\begin{tikzcd}
{\cO_{\bP^1}(-1)^{\oplus 2}} \arrow[r, "\cdot x_1 \oplus \cdot x_2"]           & {\cO_{\bP^1}}           \\
{\cO_{\bP^1}(-1)} \arrow[u, "0"] \arrow[r] & {0} \arrow[u] .
\end{tikzcd}
$$
    Hence, $\bL_{\bP_X(E)}|_{\bP^1}$ is quasi-isomorphic to the complex
$
    \big[ \cO_{\bP^1}(-1) \overset{0}{\longrightarrow} \cO_{\bP^1}(-1)^{\oplus 2} \overset{\cdot x_1 \oplus \cdot x_2}{\longrightarrow} \cO_{\bP^1} \big].
$
    Note that $\cO_{\bP^1}(-1)^{\oplus 2} \overset{\cdot x_1 \oplus \cdot x_2}{\longrightarrow} \cO_{\bP^1}$ is surjective, so we have
$$
H^1(\bL_{\bP_X(E)}|_{\bP^1}) = \coker \left( \cO_{\bP^1}(-1)^{\oplus 2} \overset{\cdot x_1 \oplus \cdot x_2}{\longrightarrow} \cO_{\bP^1} \right) = 0.
$$
    This forces $\bL_{\bP_X(E)}|_{\bP^1}$ to be a perfect complex of Tor-amplitude in $[-1,0]$, which agrees with what we discussed above. Moreover, we have
$$
\Omega^1_{\bP^1} =H^0(\bL_{\bP_X(E)}|_{\bP^1}) = \ker \left( \cO_{\bP^1}(-1)^{\oplus 2} \to \cO_{\bP^1} \right),
$$
    recovering the usual Euler sequence for $\bP^1$. $\blacksquare$
\end{example}

\begin{corollary}\label{cor:der-proj-vect-bund}

    Let $X$ be a derived stack, and $E^{\vee}$ be a perfect complex on $X$ of Tor-amplitude in $[-n,0]$ for some $n \in \bZ_{\ge 0}$. Then the natural morphism $$\bP_X(E) := \Proj_X \Sym_X (\cE^{\vee}) \to X$$ is representable by derived schemes and homotopically $n$-smooth.
\end{corollary}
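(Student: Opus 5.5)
Representability by derived schemes has already been noted just before Remark~\ref{rmk:proj-sym-being-empty}, via {\cite[Proposition~A.6]{AHPS23}} applied to the open substack $U = (\Spec_X \cB)\setminus V(\cB_+)$ and the quotient by $\bG_{m,X}$. So the substantive claim is homotopical $n$-smoothness of $\bar\pi: \bP_X(E) \to X$. I would prove it in two parts: first that the classical truncation of $\bar\pi$ is locally of finite presentation, then that $\bL_{\bP_X(E)/X}$ is perfect of Tor-amplitude $\ge -n$.

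For the classical part, I would note that $(\bP_X(E))_\cl = \Proj_{X_\cl}\Sym_{X_\cl}(H^0(\cE^\vee|_{X_\cl}))$. Since $\cE^\vee$ is perfect, $H^0(\cE^\vee|_{X_\cl})$ is a finitely presented sheaf. Hence this is a classical projective bundle of a finitely presented sheaf, which is of finite presentation over $X_\cl$.

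For the cotangent complex, I would use the Euler (co)fiber sequence of Theorem~\ref{thm:derived-Euler-seq}:
$$\bL_{\bP_X(E)/X} \simeq \fib\left(\LL\bar\pi^*(\cE^\vee)\otimes^{\LL}\cO_{\bP_X(E)}(-1) \to \cO_{\bP_X(E)}\right).$$
Both terms are perfect, so the fiber is perfect. The first term has Tor-amplitude in $[-n,0]$, because pullback and twisting by a line bundle preserve Tor-amplitude, and $\cO$ has amplitude $[0,0]$. Taking the fiber lowers the amplitude of the target by one, so a priori the fiber has Tor-amplitude in $[-n,1]$. This already gives the lower bound $\ge -n$, which is all that homotopical $n$-smoothness requires; the stricter upper bound is optional.

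To get the sharper statement (amplitude in $[-n,0]$), as in Example~\ref{eg:cotangent-cmplx-der-proj-space}, I would check that the map to $\cO$ is surjective on $H^0$ after base change to any point of $\bP_X(E)$. It suffices to test fibers at field-valued points, by the Tor-amplitude criterion of \cite[Definition~1.3]{STV15}. At such a point $H^0$ of the map is evaluation of the degree-one coordinates, and these coordinates do not all vanish away from $V(\cB_+)$, so the map is surjective. Alternatively, representability by derived schemes implies that the relative cotangent complex is connective, which forces amplitude $\le 0$.

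The main obstacle I expect is the bookkeeping of Tor-amplitude conventions, which the paper takes to be cohomological following \cite{STV15}. In particular I would need to confirm that the fiber of a map from amplitude $[-n,0]$ to amplitude $[0,0]$ indeed lands in $[-n,1]$, and that the lower bound $\ge -n$ survives. The lower bound is clear, since $\fib$ only shifts the target term.
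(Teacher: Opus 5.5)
Your proposal is correct and follows the paper's own proof, which obtains representability from \cite[Proposition~A.6(iii),(iv)]{AHPS23} and the Tor-amplitude bound $\ge -n$ from the Euler (co)fiber sequence of Theorem~\ref{thm:derived-Euler-seq}. The paper leaves two points implicit that you spell out: finite presentation of the classical truncation, and the optional upper bound $\le 0$ via connectivity of the relative cotangent complex of a morphism representable by derived schemes, which is the same argument used in Example~\ref{eg:cotangent-cmplx-der-proj-space}.
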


\begin{proof}
    It directly follows from \cite[Proposition~A.6(iii)]{AHPS23}, \cite[Proposition~A.6(iv)]{AHPS23}, and the Euler (co)fiber sequence (Theorem \ref{thm:derived-Euler-seq}).
\end{proof}

\begin{definition}[\cite{Jiang25}]\label{def:der-proj-complete}

    $\bP_X(E) := \Proj_X \Sym_X (\cE^{\vee})$ is called the \emph{derived projectivization of a derived vector bundle $E := \Spec_X \Sym_X (\cE^{\vee})$ over $X$}. 
    $\bP_X(E \oplus 1) := \Proj_X \Sym_X \left( \cE^{\vee} \oplus \cO_X \right)$ is called the \emph{derived projective completion of a derived vector bundle $E := \Spec_X \Sym_X (\cE^{\vee})$ over $X$}.
\end{definition}

\begin{lemma}\label{lem:1-prop-log-compact-proj-completion}
    The natural morphism $\bP(\cE \oplus 1) \to X$ is proper.
\end{lemma}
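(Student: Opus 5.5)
Properness of a morphism of derived stacks is detected on classical truncations: a morphism $f: Y \to X$ of derived stacks is proper if and only if $f_\cl: Y_\cl \to X_\cl$ is proper, since being separated, of finite type and universally closed depends only on the underlying classical (topological) data. We will therefore reduce to showing that $(\bP_X(E\oplus 1))_\cl \to X_\cl$ is proper. We already know from Corollary \ref{cor:der-proj-vect-bund} (applied to the perfect complex $\cE^\vee \oplus \cO_X$, still of Tor-amplitude in $[-n,0]$) that $\bP_X(E\oplus 1) \to X$ is representable by derived schemes and homotopically $n$-smooth, hence locally of finite presentation. Since properness is étale/smooth local on the target for representable morphisms, we may also reduce to the case $X = \Spec A$ is a derived affine scheme.

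Next we identify the classical truncation. Classical truncation commutes with $\Spec_X\Sym_X$ in the sense that $(\Spec_X \Sym_X \cF)_\cl = \Spec_{X_\cl}\Sym_{X_\cl} H^0(\cF|_{X_\cl})$ for $\cF$ connective, and the open subset $U$ removing $V(\cB_+)$ and the quotient by $\bG_m$ are compatible with truncation (the $\bG_m$-quotient is formed by a smooth action, and the removed closed locus is defined on the underlying classical space). Hence
$$(\bP_X(E\oplus 1))_\cl \cong \Proj_{X_\cl}\Sym_{X_\cl}\left(H^0(\cE^\vee|_{X_\cl}) \oplus \cO_{X_\cl}\right),$$
exactly as in Example \ref{eg:cotangent-cmplx-der-proj-space}, where the truncation of $\bP_X(E)$ was $\bP^1\times X_\cl$. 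Now $\cM := H^0(\cE^\vee|_{X_\cl})\oplus\cO_{X_\cl}$ is a finitely generated quasi-coherent sheaf on $X_\cl$ (as $\cE^\vee$ is perfect), so $\Sym\cM$ is a quasi-coherent graded algebra generated in degree $1$ by a finite type module, and the classical $\Proj$ of such an algebra is projective, hence proper, over $X_\cl$ (e.g.\ locally it is a closed subscheme of $\bP^N_{X_\cl}$ via a surjection $\cO^{N+1}\twoheadrightarrow \cM$).

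The step I expect to need the most care is the identification of the classical truncation of the derived $\Proj$, i.e.\ that truncation commutes with removing $V(\cB_+)$ and with the $\bG_m$-quotient; I would justify this using the explicit description in {\cite[Appendix~A]{AHPS23}} or \cite{Hekking21}, where $\Proj$ is glued from the affine opens $\Spec (\cB[f^{-1}])_0$ for $f\in \cB_1$, and note that $\pi_0$ of these degree-zero localizations is the corresponding classical localization of $\pi_0\cB = \Sym H^0(\cE^\vee)$. Combining with the fact that properness is checked on truncations finishes the proof.
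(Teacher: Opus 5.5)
Your proof is correct and follows essentially the same route as the paper, which cites \cite[Proposition~A.6(v)]{AHPS23} to identify the classical truncation of the derived $\Proj$ with the classical $\Proj$ of $\pi_0$ of the graded algebra, and \cite[Remark~5.1.2.2]{LurieSAG} to say that properness is detected on classical truncations. These are precisely your two steps; your separate finite-presentation check and your reduction to affine $X$ are harmless extra care.
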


\begin{proof}
    It directly follows from \cite[Proposition~A.6(v)]{AHPS23} and \cite[Remark~5.1.2.2]{LurieSAG}.
\end{proof}

\begin{lemma}\label{lem:2-prop-log-compact-proj-completion}
    The natural morphism $E \hookrightarrow \bP(E \oplus 1)$ is an open embedding.
\end{lemma}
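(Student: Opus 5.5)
The plan is to identify $E$ with the complement of the "hyperplane at infinity" inside $\bP_X(E \oplus 1)$, i.e.\ with the non-vanishing locus of the grading-$1$ generator $1 \in \cO_X \subset \cE^{\vee} \oplus \cO_X$. Write $\cB' := \Sym_X(\cE^{\vee} \oplus \cO_X) \simeq \Sym_X(\cE^{\vee}) \otimes_{\cO_X} \cO_X[y]$, where $y$ denotes the generator corresponding to $\cO_X$, in grading $1$ and cohomological degree $0$. Then
\[
\Spec_X \cB' \simeq E \times^h \bA^1 = E \times^h_X \bA^1_X,
\]
with $\bG_{m,X}$ acting diagonally by scaling both factors.

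Consider the open substack $W := E \times^h_X \bG_{m,X} \subset \Spec_X \cB'$ where $y$ is invertible. This is open because it is the preimage of the open $\bG_m \subset \bA^1$ under the projection to the $\bA^1$-factor, and open embeddings are stable under homotopy base change. I would first check that $W$ is disjoint from $V(\cB'_+)$, so that $W \subset U' := (\Spec_X \cB') \setminus V(\cB'_+)$. Since $V(\cB'_+)$ is the zero section, and the classical truncation of $V(\cB'_+) \cap W$ is empty because $y$ vanishes on $V(\cB'_+)$, the intersection is empty; openness only depends on the classical truncation. Moreover $W$ is $\bG_{m,X}$-stable, so $[W/\bG_{m,X}] \hookrightarrow [U'/\bG_{m,X}] = \bP_X(E \oplus 1)$ is an open embedding. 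Here I use that open immersions descend along the smooth atlas $U' \to [U'/\bG_{m,X}]$, since $W$ is the full preimage of its image.

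It remains to identify $[W/\bG_{m,X}] \simeq E$. The action on $W = E \times^h_X \bG_{m,X}$ is $t \cdot (v, y) = (tv, ty)$. The map
\[
E \times^h_X \bG_{m,X} \to E \times^h_X \bG_{m,X}, \qquad (v,y) \mapsto (y^{-1}v, y),
\]
is an equivalence, with inverse $(v,y) \mapsto (yv, y)$. It intertwines the diagonal action with the action that is trivial on $E$ and by translation on $\bG_{m,X}$. Hence
\[
[W/\bG_{m,X}] \simeq E \times^h_X [\bG_{m,X}/\bG_{m,X}] \simeq E.
\]
I would phrase this at the level of graded algebras: the degree-zero part of the localization $\cB'[y^{-1}]$ is $\Sym_X(\cE^{\vee})$, via $e \mapsto e/y$ for $e \in \cE^{\vee}$. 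This is the derived analogue of the standard chart of Proj, and I expect it to be available from \cite[Appendix~A]{AHPS23} or \cite{Hekking21}, where $\Proj$ is covered by such $D_+(f)$ charts. Finally, I would check that the composite $E \simeq [W/\bG_{m,X}] \to \bP_X(E\oplus 1)$ agrees with the natural morphism $v \mapsto [v : 1]$, which is immediate from the formula.

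The main obstacle is the rigorous derived justification of the chart identification $[W/\bG_{m,X}] \simeq \Spec_X (\cB'[y^{-1}])_0$ when $\cE^{\vee}$ has nonzero Tor-amplitude, since the generators in negative cohomological degree make the naive coordinate formula only formal. I would handle this either by citing the standard-open-cover statement for derived $\Proj$ in \cite{Hekking21}/\cite[Appendix~A]{AHPS23}, or by working étale-locally on $X$ where $\cE^{\vee}$ is a finite complex of free modules, and using that all constructions commute with base change in $X$ (Proposition-level base change for $\Spec_X\Sym_X$ and $\Proj_X$).
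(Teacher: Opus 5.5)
Your proposal is correct and follows essentially the paper's route. The paper also identifies $E$ with the chart where $t_{\text{\tiny $\infty$}}$ (your $y$) is invertible: it shows that $t_{\text{\tiny $\infty$}}\mapsto 1$ induces an equivalence $\left(\Sym_X(\cE^{\vee}\oplus\cO_X)\right)_{(t_{\text{\tiny $\infty$}})}\simeq\Sym_X(\cE^{\vee})$, verifies this on cohomology sheaves, and implicitly uses that this chart is open in $\Proj$ \cite{Hekking21}.

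You differ only in execution. You derive openness directly from the quotient-stack definition of $\Proj$, and you get the equivalence from the shear map $(v,y)\mapsto(y^{-1}v,y)$. That map comes from the graded-algebra isomorphism $e\mapsto e/y$, so it is already rigorous in the derived setting. Hence the obstacle you flag needs neither an \'etale-local reduction nor an external citation.
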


\begin{proof}
    Let $t_{\text{\tiny $\infty$}}$ denote the coordinate associated with $\cO_X$ in $\Sym_X \left( \cE^{\vee} \oplus \cO_X \right)$. Note that $t_{\text{\tiny $\infty$}}$ is a grading $1$ homogeneous element, and has cohomological degree $|t_{\text{\tiny $\infty$}}|=0$. Let $\left( \Sym_X \left( \cE^{\vee} \oplus \cO_X \right) \right)_{(t_{\text{\tiny $\infty$}})}$ denote the grading-zero part of the homogeneous localization\footnote{It is called degree-zero part of the homogeneous localization in \cite{Hekking21}.} $\left(\Sym_X \left( \cE^{\vee} \oplus \cO_X \right)\right)_{t_{\text{\tiny $\infty$}}}$ of $\Sym_X \left( \cE^{\vee} \oplus \cO_X \right)$ at $t_{\text{\tiny $\infty$}}$. 

    There is a morphism $\Sym_X \left( \cE^{\vee} \oplus \cO_X \right) \to \Sym_X \left( \cE^{\vee}\right)$ that sends $t_{\text{\tiny $\infty$}}$ to $1$. Hence, we obtain a morphism $$\left( \Sym_X \left( \cE^{\vee} \oplus \cO_X \right) \right)_{(t_{\text{\tiny $\infty$}})} \to \left(\Sym_X \left( \cE^{\vee} \oplus \cO_X \right)\right)_{t_{\text{\tiny $\infty$}}} \to \Sym_X \left( \cE^{\vee}\right)$$ by the universal property of homogeneous localization {\cite[Definition~2.7.2]{Hekking21}}. We claim that the morphism $\left( \Sym_X \left( \cE^{\vee} \oplus \cO_X \right) \right)_{(t_{\text{\tiny $\infty$}})} \to \Sym_X \left( \cE^{\vee}\right)$ is an equivalence of $\cO_X$-algebras, and hence proving the lemma.
    
    To prove the claim, it suffices to show that the morphism $\left( \Sym_X \left( \cE^{\vee} \oplus \cO_X \right) \right)_{(t_{\text{\tiny $\infty$}})} \to \Sym_X \left( \cE^{\vee}\right)$ of underlying complexes is a quasi-isomorphism. Thus, it suffices to show isomorphisms
    $$H^{-n}\left(\left( \Sym_X \left( \cE^{\vee} \oplus \cO_X \right) \right)_{(t_{\text{\tiny $\infty$}})}\right) =  H^{-n}\left(\Sym_X \left( \cE^{\vee}\right)\right)$$ for $n \in \bZ_{\ge 0}$. By explicitly writing out the underlying complex of $\left( \Sym_X \left( \cE^{\vee} \oplus \cO_X \right) \right)_{(t_{\text{\tiny $\infty$}})}$, one verifies that these isomorphisms hold, completing the proof.
\end{proof}

\begin{re}
    For a graded algebra $\cB:=\Sym_X \left( E^{\vee} \oplus \cO_X \right)$,  we have isomorphisms
    $$ H^{-n}\left( \cB_{(t_{\text{\tiny $\infty$}})} \right) := H^{-n} \left(\left( \cB_{t_{\text{\tiny $\infty$}}}\right)_0\right) = \left(H^{-n} \left(\cB_{t_{\text{\tiny $\infty$}}}\right)\right)_0 = \left(\left(H^{-n} \left( \cB \right)\right)_{t_{\text{\tiny $\infty$}}}\right)_0 $$
    for $n \in \bZ_{\ge 0}$ by {\cite[Lemma~2.5.2]{Hekking21}} and \cite[Proposition~4.1.18]{LurieDAGV}.
\end{re}

\begin{lemma}\label{lem:3-prop-log-compact-proj-completion}
    The natural morphism $\bP_X(E) \hookrightarrow \bP_X(E \oplus 1)$ is a quasi-smooth closed embedding of virtual codimension $1$, and hence a virtual Cartier divisor.
\end{lemma}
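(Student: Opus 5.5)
The plan is to identify $\bP_X(E) \hookrightarrow \bP_X(E \oplus 1)$ with the derived zero locus of the tautological section $t_{\text{\tiny $\infty$}}$ of $\cO_{\bP_X(E\oplus 1)}(1)$. Being the derived zero locus of a section of a line bundle is exactly a virtual Cartier divisor in the sense of \cite{KhRy25}; quasi-smoothness and virtual codimension $1$ then follow.

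\textbf{Step 1.} On $\bP_X(E\oplus 1)$, the coordinate $t_{\text{\tiny $\infty$}}$ from the proof of Lemma \ref{lem:2-prop-log-compact-proj-completion} is homogeneous of grading $1$ and cohomological degree $0$. It therefore defines a section $t_{\text{\tiny $\infty$}}: \cO \to \cO(1)$, equivalently a generalized Cartier divisor $\cO(-1) \to \cO$. Form the derived zero locus $Z(t_{\text{\tiny $\infty$}})$ via the homotopy Cartesian square with the zero section of the total space of $\cO(1)$.

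\textbf{Step 2.} Show that $Z(t_{\text{\tiny $\infty$}}) \simeq \bP_X(E)$. Upstairs, on $\Spec_X \Sym_X(\cE^\vee \oplus \cO_X) = E \times^h \bA^1$, the derived zero locus of the coordinate $t_{\text{\tiny $\infty$}}$ is $E \times \{0\}$, i.e.\ $\Spec_X$ of the cofiber of multiplication by $t_{\text{\tiny $\infty$}}$ on $\Sym_X(\cE^\vee)[t_{\text{\tiny $\infty$}}]$, which is $\Sym_X(\cE^\vee)$. This identification is $\bG_{m,X}$-equivariant and is compatible with the irrelevant loci: $V(\cB_+)$ for $\Sym(\cE^\vee\oplus\cO)$ intersects $\{t_{\text{\tiny $\infty$}}=0\}$ in $V(\Sym(\cE^\vee)_+)$. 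Removing the irrelevant loci (open, so this commutes with homotopy fiber products) and taking quotients by $\bG_{m,X}$ (fiber products commute with the quotient by descent) gives $Z(t_{\text{\tiny $\infty$}}) \simeq \Proj_X \Sym_X(\cE^\vee) = \bP_X(E)$. The resulting map agrees with the natural morphism induced by the graded surjection $\Sym(\cE^\vee\oplus\cO) \to \Sym(\cE^\vee)$ sending $t_{\text{\tiny $\infty$}}\mapsto 0$; this can be cross-checked against \cite[Appendix~A]{AHPS23} or \cite{Hekking21}.

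\textbf{Step 3.} Since the zero section of a line bundle is a quasi-smooth closed embedding of virtual codimension $1$ (its relative cotangent complex is $\cO(-1)[1]$), base change (Remark \ref{re:property-of-n-smth}) shows that $\bP_X(E) \to \bP_X(E\oplus1)$ is a quasi-smooth closed embedding with $\bL \simeq \cO_{\bP_X(E)}(-1)[1]$. Hence it has virtual codimension $1$ and is a virtual Cartier divisor with ideal $\cO(-1)$. As a consistency check, this matches the difference of the two Euler sequences from Theorem \ref{thm:derived-Euler-seq}.

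The main obstacle is Step 2: making the commutation of the derived $\Proj$ construction (removal of $V(\cB_+)$ and the $\bG_m$-quotient) with the derived zero locus precise. I would handle it by working $\bG_m$-equivariantly on the affine cones and invoking descent $\QCoh^{\bG_m}$, as in Remark \ref{rmk:cotangent-complex-cA}.
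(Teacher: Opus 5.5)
Your argument is correct, but it does not follow the paper's route. The paper simply invokes \cite[Corollary~4.32]{Jiang25} and attributes it to the Euler cofiber sequence (Theorem~\ref{thm:derived-Euler-seq}), i.e.\ to comparing $\bL_{\bP_X(E)/X}$ with $\bL_{\bP_X(E\oplus 1)/X}|_{\bP_X(E)}$ to obtain $\bL_{\bP_X(E)/\bP_X(E\oplus 1)}\simeq \cO(-1)[1]$.

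You instead identify $\bP_X(E)$ with the derived zero locus of $t_{\infty}\in\Gamma(\cO(1))$. You work on the affine cone $E\times^h\bA^1$ and descend using flat base change along the open complement of the irrelevant locus, together with $[A/\bG_m]\times^h_{[C/\bG_m]}[B/\bG_m]\simeq[A\times^h_C B/\bG_m]$ (as in Corollary~\ref{cor:fund-canonical-resursive-structure}). Everything then follows by base change from the zero section of a line bundle.

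\textbf{What the cotangent-complex route gives.} It controls only the relative cotangent complex, and hence quasi-smoothness and virtual codimension. The closed-immersion property must be checked separately, for instance on classical truncations. It also determines only the conormal line bundle $\cO(-1)|_{\bP_X(E)}$ on the divisor, not the pair $(\cO(1),t_{\infty})$ on all of $\bP_X(E\oplus 1)$.

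\textbf{What your route gives.} You get closed immersion, quasi-smoothness and virtual codimension $1$ at once. More usefully, you identify the associated generalized Cartier divisor as $t_{\infty}\colon \cO(-1)\to\cO$. Equivalently, the anchor map $\bP_X(E\oplus 1)\to\cA\times^h X$ of Proposition~\ref{prop:log-compact-derived-proj-completion} is the composite of the open embedding $\bP_X(E\oplus 1)\hookrightarrow[E\times^h\bA^1/\bG_{m,X}]$ (Remark~\ref{rmk:proj-sym-being-empty}) with the projection to $[\bA^1/\bG_m]$. This compatibility is used tacitly in Section~\ref{sec:4}, where $\kappa^{\log}$ and $\ell^{\log}_{/\fMgn}$ are constructed through such quotients. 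It is also used in Example~\ref{eg:der-self-intersect-P1-in-P2}, where $t_{\infty}$ serves as the generalized Cartier divisor.

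\textbf{The cost of your route.} You must check the compatibility of the derived $\Proj$ construction with the derived zero locus in your Step~2. You flagged this correctly, and it goes through exactly as you describe.
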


\begin{proof}
    It directly follows from \cite[Corollary~4.32]{Jiang25}, which essentially follows from the Euler (co)fiber sequence (Theorem \ref{thm:derived-Euler-seq}).
\end{proof}

\begin{proposition}[Derived Projective Completions Are Log Compactifications]
\label{prop:log-compact-derived-proj-completion}

Let $X$ be a derived stack, and $\cE^{\vee}$ be a perfect complex on $X$ of Tor-amplitude in $[-n,0]$ for some $n \in \bZ_{\ge 0}$. Then the derived vector bundle $E := \Spec_X \Sym_X (\cE^{\vee})$ over $X$ admits a log compactification $\bP_X(E \oplus 1) := \Proj_X \Sym_X \left( \cE^{\vee} \oplus \cO_X \right)$ in the following sense:

$(1)$ the natural morphism $\bP_X(E \oplus 1) \to X$ is proper;

$(2)$ the natural morphism $E \hookrightarrow \bP_X(E \oplus 1)$ is an open embedding;

$(3)$ the natural morphism $\bP_X(E) := \Proj_X \Sym_X \left( \cE^{\vee} \right) \hookrightarrow \bP_X(E \oplus 1)$ is a quasi-smooth closed embedding of virtual codimension $1$, and hence a virtual Cartier divisor.

In particular, $(3)$ implies that there is a morphism $$\bP_X(E \oplus 1) \to \cA \times^h X$$ such that $\left( \bP_X(E \oplus 1), \bP_X(E \oplus 1) \to \cA \times^h X \right)$ is a relative derived log stack over $X$.
\end{proposition}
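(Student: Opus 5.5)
The plan is to assemble the three preceding lemmas, and then to convert the virtual Cartier divisor of part $(3)$ into a morphism to $\cA$ using the moduli interpretation of $\cA$ recalled in Section~\ref{sec:basic-setup-der-log}.

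Part $(1)$ is Lemma~\ref{lem:1-prop-log-compact-proj-completion}. Part $(2)$ is Lemma~\ref{lem:2-prop-log-compact-proj-completion}, which identifies $E$ with the open locus $\{t_{\infty}\neq 0\}$ of $\bP_X(E\oplus 1)$. Part $(3)$ is Lemma~\ref{lem:3-prop-log-compact-proj-completion}. To make $(3)$ transparent, I would view $t_{\infty}$ as a global section of $\cO_{\bP_X(E\oplus1)}(1)$. This gives a generalized Cartier divisor
$$t_{\infty}: \cO_{\bP_X(E\oplus 1)}(-1)\to \cO_{\bP_X(E\oplus1)}.$$
I would then check that its derived zero locus is $\Proj_X$ of the quotient $\Sym_X(\cE^\vee\oplus\cO_X)/(t_{\infty})\simeq \Sym_X(\cE^\vee)$, that is, $\bP_X(E)$. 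The required equivalence of graded algebras should follow from the cofiber sequence obtained by multiplying by $t_{\infty}$, since $t_{\infty}$ is a free polynomial variable of cohomological degree $0$.

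For the final assertion, I would use the fact from \cite{KhRy25} that $\cA$ represents generalized Cartier divisors, equivalently virtual Cartier divisors. The pair $(\cO(1), t_{\infty})$, or equivalently the virtual Cartier divisor $\bP_X(E)\hookrightarrow \bP_X(E\oplus1)$, then determines an anchor morphism $\alpha: \bP_X(E\oplus1)\to\cA$. By construction $\alpha$ satisfies
$$\alpha^{-1}(B\bG_m)\simeq \bP_X(E).$$
Pairing $\alpha$ with the structure morphism to $X$ gives the morphism to $\cA\times^h X$. By Definition~\ref{def:rel-log-der-stk}, this is exactly the data of a relative derived log stack over $X$, and its relative virtual Cartier divisor is $\bP_X(E)$.

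The only step needing care is identifying the divisor cut out by $t_\infty$ with $\bP_X(E)$ at the level of derived $\Proj$, in particular its compatibility with removing the irrelevant locus. Since the irrelevant ideal of $\Sym_X(\cE^\vee)$ is the image of that of $\Sym_X(\cE^\vee\oplus\cO_X)$, this reduces to the homogeneous localization arguments of \cite{Hekking21}. Those arguments are already packaged in \cite[Corollary~4.32]{Jiang25}, which Lemma~\ref{lem:3-prop-log-compact-proj-completion} cites, so I would simply invoke it.
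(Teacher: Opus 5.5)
Your proposal is correct and matches the paper's proof. The paper simply cites Lemmas~\ref{lem:1-prop-log-compact-proj-completion}, \ref{lem:2-prop-log-compact-proj-completion}, and \ref{lem:3-prop-log-compact-proj-completion} (the last resting on \cite[Corollary~4.32]{Jiang25}), and leaves the final assertion to the identification of $\cA$ with the moduli of virtual, equivalently generalized, Cartier divisors from \cite{KhRy25}, exactly as you do; your description of $\bP_X(E)$ as the derived zero locus of $t_\infty \in \Gamma(\cO(1))$ is a correct gloss on that citation rather than a different route.
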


\begin{proof}
    $(1)$ is proved by Lemma \ref{lem:1-prop-log-compact-proj-completion}; $(2)$ is proved by Lemma \ref{lem:2-prop-log-compact-proj-completion}; $(3)$ is proved by Lemma \ref{lem:3-prop-log-compact-proj-completion}.    
\end{proof}

\begin{re}
    Note that in Proposition \ref{prop:log-compact-derived-proj-completion} we do not assume that $X$ is equipped with a log structure.
\end{re}

\begin{re}
     Suppose now $X$ is additionally equipped with a log structure, i.e. an anchor morphism $X \to \cA$. Then the derived vector bundle $E$ over $X$ is naturally equipped with a log structure via the composition $E \to X \to \cA$. Similarly, $\bP_X(E \oplus 1)$ is naturally equipped with a log structure via the composition $\bP_X(E \oplus 1) \to X \to \cA$, which is, in general, different from the log structure given in Proposition \ref{prop:log-compact-derived-proj-completion}.
\end{re}

\begin{example}[{\cite{Hekking21, Jiang25}}]
Let $Y \hookrightarrow X$ be a quasi-smooth closed embedding of derived stacks (e.g. a virtual Cartier divisor or a regular closed embedding). Then $\bL_{Y/X}$ is a perfect complex of Tor-amplitude in $[-1,-1]$ (see \cite{KhRy25}). This implies that both $\bL_{Y/X}[-1]$ and $\bT_{Y/X}[1]$ are perfect complexes of Tor-amplitude $[0,0]$. Thus, the normal bundle for $Y \hookrightarrow X$ is the derived vector bundle $$N_{Y/X}:= \Spec_{Y} \Sym_{Y} \left( \bL_{Y/X}[-1] \right)$$ over $Y$; the conormal bundle for $Y \hookrightarrow X$ is the derived vector bundle $$N^*_{Y/X}:= \Spec_{Y} \Sym_{Y} \left( \bT_{Y/X}[1] \right)$$ over $Y$. Consequently, the derived projective completion of the normal bundle $N_{Y/X}$ is defined as $$\bP\left( N_{Y/X}\oplus 1 \right) := \Proj_{Y} \Sym_{Y} \left( \bL_{Y/X}[-1] \oplus \cO_{Y} \right);$$ the derived projective completion of the conormal bundle is defined as $$\bP\left( N^*_{Y/X}\oplus 1 \right) := \Proj_{Y} \Sym_{Y} \left( \bT_{Y/X}[1] \oplus \cO_{Y} \right).$$ Both constructions give log compactifications by Proposition \ref{prop:log-compact-derived-proj-completion}. $\blacksquare$
    
\end{example}

\begin{corollary}[A Log Compactification for a Special Class of Derived Weil Restrictions]
\label{cor:log-compact-derived-weil-restriction}

Assume that $p: C \to M$ is a flat proper quasi-smooth morphism of derived $1$-Artin stacks that is representable by derived schemes. Let $\cE^{\vee}$ be a perfect complex on $C$ of Tor-amplitude in $[-n,0]$ for some $n \in \bZ_{\ge 0}$, and $E := \Spec_{C} \Sym_{C} (\cE^{\vee})$ be a derived vector bundle over $C$. Then the derived Weil restriction $p_*E$ is equivalent to the derived vector bundle $\Spec_{M} \Sym_{M} \left( \left( \RR p_* \cE \right)^{\vee} \right)$, which admits a log compactification $$\bP_M(p_*E \oplus 1) := \Proj_{M} \Sym_{M} \left( \left( \RR p_* \cE \right)^{\vee} \oplus \cO_{M} \right).$$
\end{corollary}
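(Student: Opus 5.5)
The corollary has two parts. First, $p_*E$ is a derived vector bundle over $M$. Second, that bundle has a log compactification. For the second part, once the first is known, I would simply apply Proposition~\ref{prop:log-compact-derived-proj-completion} with $X := M$ and with the perfect complex $(\RR p_*\cE)^\vee$. So the plan reduces to identifying $p_*E$ and checking that $(\RR p_*\cE)^\vee$ is perfect of Tor-amplitude in some range $[-m,0]$ with $m\ge 0$.

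For the identification I would follow the argument of Proposition~\ref{prop:weil-res-vect-bundle}. That proposition is stated for the universal curve, but its proof only uses the universal property of the Weil restriction and base change.

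Fix $u\colon T \to M$ with $T$ a derived affine scheme, and let $p_T\colon C_T := T\times^h_M C \to T$ be the base change. Then
$$\map_{/M}(T,p_*E) \cong \map_{/C}(C_T,E) \cong \map_{\QCoh(C_T)}(\cE^\vee|_{C_T},\cO_{C_T}) \cong \map_{\QCoh(C_T)}(\cO_{C_T},\cE|_{C_T}).$$
Since $\cE^\vee$ is perfect, $\cE|_{C_T}$ is its dual and the last step is legitimate.

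Next, the adjunction $p_T^*\dashv \RR p_{T*}$ together with base change gives
$$\map(\cO_{C_T},\cE|_{C_T}) \cong \map_{\QCoh(T)}(\cO_T,\RR p_{T*}(\cE|_{C_T})) \cong \map_{\QCoh(T)}(\cO_T,\LL u^*\RR p_*\cE).$$
Base change applies because $p$ is flat, proper and representable by derived schemes; I would cite \cite{LurieSAG} or \cite{HLP23} for this.

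The right-hand side is the $T$-point of $\V_M(\RR p_*\cE)$. Naturality in $T$ then gives $p_*E \cong \V_M((\RR p_*\cE)^{\vee\vee})$, provided $\RR p_*\cE$ is perfect.

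The main obstacle is the finiteness and amplitude of $\RR p_*\cE$. I would argue in three steps.
\begin{itemize}
\item Perfectness: by Remark~\ref{rmk:pushforward-preserve-perf-complex}, $\RR p_*$ preserves perfect complexes, since $p$ is proper and quasi-smooth. Perfectness can be checked smooth-locally on $M$, so the fact that $M$ is only a $1$-Artin stack causes no trouble.
\item Amplitude: $\cE$ has Tor-amplitude in $[0,n]$, and $p$ has finite cohomological dimension. I expect $p$ to have relative dimension at most $1$, but if that is not given I would use a general bound $d$. By \cite[Lemma~A.1.6]{HLP23}, $\RR p_*\cE$ then has Tor-amplitude in $[0,n+d]$.
\item Dualizing: $(\RR p_*\cE)^\vee$ has Tor-amplitude in $[-(n+d),0]$.
\end{itemize}

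With this amplitude, $\V_M((\RR p_*\cE)^{\vee\vee}) \cong \Spec_M\Sym_M((\RR p_*\cE)^\vee)$ by the convention recalled in the notation section. This finishes the identification. Proposition~\ref{prop:log-compact-derived-proj-completion}, applied with $X=M$, then says that $\bP_M(p_*E\oplus 1)$ is proper over $M$, contains $p_*E$ as an open substack, and has boundary the virtual Cartier divisor $\bP_M(p_*E)$. This is the claimed log compactification.
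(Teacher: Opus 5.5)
Your proposal is correct, and its skeleton matches the paper's proof. In both, $\RR p_*\cE$ is shown to be perfect via Remark~\ref{rmk:pushforward-preserve-perf-complex}. Then $p_*E$ is identified with $\Spec_M\Sym_M((\RR p_*\cE)^\vee)$ by the argument of \cite[Proposition~1.2.3]{KMMP25}, which you reconstruct explicitly from the universal property, the adjunction $p_T^*\dashv \RR p_{T*}$, and base change. Finally, Proposition~\ref{prop:log-compact-derived-proj-completion} is applied with $X=M$.

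The differences are in the surrounding checks.

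The paper first proves that $p_*E$ is a derived $1$-Artin stack. It uses formal properness of $p$, \cite[Theorem~5.1.1]{HLP23} for $\RH_M(C,E)$ and $\RH_M(C,C)$, and the fiber-product description of Weil restrictions. Your functor-of-points identification makes this step superfluous: $\Spec_M\Sym_M$ of a connective perfect complex is affine over $M$, hence automatically algebraic.

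Conversely, you verify explicitly that $(\RR p_*\cE)^\vee$ has Tor-amplitude in $[-(n+d),0]$, using the cohomological-dimension bound as in Example~\ref{eg:Gromov-Witten}. This is needed both for the $\Spec\Sym$ description and for the hypothesis of Proposition~\ref{prop:log-compact-derived-proj-completion}. The paper's proof leaves it implicit. It is flatness that this bound actually uses, since it keeps $p^*N$ discrete for discrete $N$. Derived base change for $p$, by contrast, holds without flatness.

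You also correctly note that perfectness over an Artin base reduces, by base change to a smooth atlas, to the derived-scheme case covered by the Remark.

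One notational slip: with the paper's convention $\V_X(\cF)(T)=\map(\LL u^*\cF,\cO_T)$, the $T$-points $\map(\cO_T,\LL u^*\RR p_*\cE)$ are those of $\V_M((\RR p_*\cE)^\vee)$. They are not those of $\V_M(\RR p_*\cE)$ or $\V_M((\RR p_*\cE)^{\vee\vee})$. Your final formula $\Spec_M\Sym_M((\RR p_*\cE)^\vee)$ is nonetheless the right one.
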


\begin{proof}

    Since $p: C \to M$ is a proper morphism that is representable by derived schemes, $p: C \to M$ is formally proper by \cite[Corollary~2.3.5]{HLP23} and \cite[Corollary~4.3.2]{HLP23}. Moreover, $p: C \to M$ is flat, so $p: C \to M$ is of Tor-amplitude $0$. Thus, $\RH_{M}( C, E )$ and $\RH_{M}( C, C )$ are derived $1$-Artin stacks over $M$ by {\cite[Theorem~5.1.1]{HLP23}}. Therefore, the derived Weil restriction $p_*E$ is a derived $1$-Artin stack over $M$ by Remark \ref{rmk:Weil-Restriction-is-homotopy-fib-prod} and Remark \ref{rmk:Weil-Restriction-is-algebraic}.

    Since $p: C \to M$ is a proper quasi-smooth morphism that is representable by derived schemes, $\RR p_* \cE$ is a perfect complex on $M$ by \cite[Theorem~6.1.3.2]{LurieSAG} and \cite[Lemma~1.15]{Khan22} (see Remark \ref{rmk:pushforward-preserve-perf-complex}). Then, $p_*E \cong \Spec_{M} \Sym_{M} \left( \left( \RR p_* \cE \right)^{\vee} \right)$ is a derived vector bundle over $M$ following the proof of {\cite[Proposition~1.2.3]{KMMP25}}. 
    
    Now apply Proposition \ref{prop:log-compact-derived-proj-completion} and the proof is complete.
\end{proof}

\subsection{The Canonical Recursive Structure on Derived Logarithmic Stacks}\label{sec:can-recurs}

This subsection is independent of the remainder of the paper. We include it to show an interesting feature of the derived log geometry developed here: every derived log stack carries the canonical recursive structure due to derived self-intersections. An interesting byproduct of such canonical recursive structure is that it allows us to produce an infinite sequence of perfect obstruction theories from a single perfect obstruction theory (see Remark \ref{rmk:infin-seq-POT-case-0} and Remark \ref{rmk:infin-seq-POT-case--1}). Another interesting perspective provided by the canonical recursive structure is that we may now view $\pt$ equipped with non-trivial log structure in a derived way (see Remark \ref{rmk:subtlety-log-structure} and Remark \ref{rmk:subtlety-log-structure-exp}).

\begin{definition}
    Let $n, d \in \bZ_{\ge 0}$. The \emph{$(-n)$-shifted $d$-dimensional affine space $\bA^d[-n]$} is defined to be $\Spec \Sym (k^{\oplus d}[n])$.
\end{definition}

\begin{re}
    For $n \ge 1$, the classical truncation of $\bA^d[-n]$ is $\pt$.
\end{re}

\begin{proposition}\label{prop:base-case-canonical-recursive-structure}
    There is a commutative diagram as follows, where each square diagram is a homotopy Cartesian diagram.
$$
\begin{tikzcd}
{\cdots} \arrow[r, hook] \arrow[d, hook] \ar[dr, phantom, "\lrcorner_{\text{\tiny h}}", very near start] & {\bA^3[-1]} \arrow[r, hook] \arrow[d, hook] \ar[dr, phantom, "\lrcorner_{\text{\tiny h}}", very near start] & {\bA^2[-1]} \arrow[r, hook] \arrow[d, hook] \ar[dr, phantom, "\lrcorner_{\text{\tiny h}}", very near start] & {\bA^1[-1]} \arrow[r, hook] \arrow[d, hook] \ar[dr, phantom, "\lrcorner_{\text{\tiny h}}", very near start] & {\pt} \arrow[r] \arrow[d, hook, "0"] \ar[dr, phantom, "\lrcorner_{\text{\tiny h}}", very near start] & {B\bG_m} \arrow[d, hook] \\
{\cdots} \arrow[r, hook]                 & {\bA^2[-1]} \arrow[r, hook]                 & {\bA^1[-1]} \arrow[r, hook]                 & {\pt} \arrow[r, hook, "0"]                 & {\bA^1} \arrow[r]                 & {\cA}                
\end{tikzcd}
$$
\end{proposition}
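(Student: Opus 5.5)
The plan is to verify the squares one at a time, from right to left, and then to assemble the whole diagram by pasting. For every square I will compute the homotopy fibered product directly. All the objects are affine or are quotients of affines, so each computation comes down to a derived tensor product of CDGAs.

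\emph{Rightmost square.} I pull $B\bG_m \hookrightarrow \cA$ back along the atlas $\bA^1 \to \cA$. Because $\bA^1 \to \cA = [\bA^1/\bG_m]$ is a $\bG_m$-torsor, and $B\bG_m = [\{0\}/\bG_m]$, the fibre product $\bA^1 \times^h_{\cA} B\bG_m$ is equivalent to $\{0\} = \pt$, embedded in $\bA^1$ by $0$. There is a more intrinsic way to see the same thing. The universal divisor $B\bG_m \hookrightarrow \cA$ is the zero locus of the universal section $s_\cA$. Its pullback to $\bA^1$ is the zero locus of the coordinate $x$, a regular function, so no derived structure appears. Equivalently, $k[x]\otimes^{\LL}_{k[x]} k[x]/(x)$ is already discrete, since $\cA \to$ nothing is needed beyond the flatness of the torsor.

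\emph{Second square.} This is the square $\pt \times^h_{\bA^1} \pt$ with both maps equal to $0$. By the remark following the definition of $\bA^1[-n]$, it is $\bA^1[-1]$. Concretely, $k \otimes^{\LL}_{k[x]} k$ is computed by the Koszul complex $k[x]\langle \nu\rangle \to k[x]$ tensored with $k$, which is $\Sym(k[1])$.

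\emph{Remaining squares, by induction.} I claim that for each $d \ge 1$,
$$\bA^{d}[-1] \simeq \bA^{d-1}[-1] \times^h_{\bA^{d-2}[-1]} \bA^{d-1}[-1],$$
with the conventions $\bA^0[-1] = \pt$ and, in the first case, base $\pt \to \bA^1$. The maps need to be specified. Write $\bA^d[-1] = \Spec k[\nu_1,\dots,\nu_d]$ with $|\nu_i| = -1$, a free graded-commutative algebra and hence an exterior algebra. The horizontal and vertical maps $\bA^{d}[-1] \hookrightarrow \bA^{d-1}[-1]$ are the closed embeddings dual to $k[\nu_1,\dots,\nu_{d-1}] \to k[\nu_1,\dots,\nu_d]$. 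Geometrically, each such embedding is the derived self-intersection of the zero section: $\bA^{d}[-1] = \bA^{d-1}[-1]\times \bA^1[-1]$, and $\bA^1[-1] = \pt \times^h_{\bA^1}\pt$. Using $\bA^{d}[-1] \simeq \bA^{d-1}[-1] \times^h \bA^1[-1]$, which holds because $\Sym$ takes direct sums to tensor products, the general square becomes the product with $\bA^{d-2}[-1]$ of the square
$$\pt\times^h_{\bA^1[-1]}\pt\simeq \bA^1[-2]\ ?$$
At this point I need to be careful. The self-intersection of $\pt$ in $\bA^1[-1]$ is $\bA^1[-2]$, not $\bA^2[-1]$. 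So the correct reading is that the squares are pastings of the first self-intersection: each new $\bA^d[-1]$ is obtained by intersecting the zero section $\pt\to\bA^1$ once more after base change. Concretely, I take the square with corners $\bA^{d}[-1]$, $\bA^{d-1}[-1]$, $\bA^{d-1}[-1]$, $\bA^{d-2}[-1]$ to be the base change along $\bA^{d-2}[-1]\to\pt$ of the square with corners $\bA^2[-1]$, $\bA^1[-1]$, $\bA^1[-1]$, $\pt$. That square is the product of the two squares $\bA^1[-1]\to\pt$, $\pt\to\pt$ and $\pt\to\bA^1[-1]$, and each of these is trivially Cartesian, because a product of maps over $\pt$ has fibre product $\bA^1[-1]\times\bA^1[-1] = \bA^2[-1]$. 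I will verify this at the level of CDGAs:
$$k[\nu_1]\otimes^{\LL}_k k[\nu_2] = k[\nu_1,\nu_2],$$
which holds because tensor products over the field $k$ are exact.

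\emph{Main obstacle.} The main difficulty is choosing the maps in the diagram so that the squares genuinely commute and are Cartesian, as opposed to being mere derived self-intersections. I will fix the bottom-row maps $\bA^{d}[-1]\to\bA^{d-1}[-1]$ as the projections forgetting $\nu_d$ at the level of spaces, dual to the inclusions of CDGAs. I will also check that the composite with the $0$-map recovers the first two squares. Once the maps are fixed, the Cartesian property in every case reduces to the flatness of tensor products over $k$.
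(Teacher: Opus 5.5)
Your proof is correct, but it is organized differently from the paper's. The paper's proof is a single wordless grid. Its bottom row is the chain of classical coordinate embeddings $\pt\hookrightarrow\bA^1\hookrightarrow\bA^2\hookrightarrow\cdots$, and its other entries arise by repeated intersection, so every $\bA^d[-1]$ appears as an iterated derived intersection inside classical affine spaces. The diagram of the statement sits in this grid as its leftmost column of squares (the grid does not display the $B\bG_m$ square), and the Cartesian property is meant to be read off by pasting.

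You instead check each square on its own:
\begin{itemize}
\item for the $B\bG_m$ square, base change along the torsor $\bA^1\to\cA$;
\item for $\pt\times^h_{\bA^1}\pt\simeq\bA^1[-1]$, the Koszul resolution;
\item for $d\ge 2$, you realize the square with corners $\bA^d[-1],\bA^{d-1}[-1],\bA^{d-1}[-1],\bA^{d-2}[-1]$ as $\bA^{d-2}[-1]\times(-)$ applied to the product square $\bA^1[-1]\times^h_{\pt}\bA^1[-1]\simeq\bA^2[-1]$. This reduces everything to $\Sym(V\oplus W)\simeq\Sym V\otimes_k\Sym W$.
\end{itemize}
Your route is more elementary, treats the $B\bG_m$ square explicitly, and pins down the maps. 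The paper's grid carries more information: pasting it yields $\bA^d[-1]\simeq\pt\times^h_{\bA^d}\pt$. It also makes visible that each $\bA^d[-1]\hookrightarrow\bA^{d-1}[-1]$ is a base change of a linear embedding, which is the viewpoint behind the canonical recursive structure.

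In the write-up, drop the $\bA^1[-2]$ detour and the garbled sentence in the first square. Also state the maps precisely. Within each square, the top and left maps must be \emph{different} projections. Your sentence making ``the horizontal and vertical maps'' both the map dual to $k[\nu_1,\dots,\nu_{d-1}]\to k[\nu_1,\dots,\nu_d]$ would give a non-Cartesian square. Your base-change construction produces the right choice: vertical and bottom maps forget the last coordinate, and top maps forget the next-to-last one. With this choice, adjacent squares share their vertical edges, as the diagram requires.
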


\begin{proof}
$$
\begin{tikzcd}
{\cdots} \ar[dr, phantom, "\lrcorner_{\text{\tiny h}}", very near start] \arrow[d, hook] \arrow[r, hook] & {\cdots} \ar[dr, phantom, "\lrcorner_{\text{\tiny h}}", very near start] \arrow[d, hook] \arrow[r, hook] & {\cdots} \ar[dr, phantom, "\lrcorner_{\text{\tiny h}}", very near start] \arrow[d, hook] \arrow[r, hook] & {\cdots} \ar[dr, phantom, "\lrcorner_{\text{\tiny h}}", very near start] \arrow[r, hook] \arrow[d, hook] & {\cdots}  \arrow[d, hook] \\
{\bA^3[-1]} \ar[dr, phantom, "\lrcorner_{\text{\tiny h}}", very near start] \arrow[d, hook] \arrow[r, hook] & {\bA^2[-1]} \ar[dr, phantom, "\lrcorner_{\text{\tiny h}}", very near start] \arrow[d, hook] \arrow[r, hook] & {\bA^1[-1]} \ar[dr, phantom, "\lrcorner_{\text{\tiny h}}", very near start] \arrow[d, hook] \arrow[r, hook] & {\pt} \ar[dr, phantom, "\lrcorner_{\text{\tiny h}}", very near start] \arrow[d, hook, "0"] \arrow[r, hook, "0"] & {\cdots}  \arrow[d, hook] \\
{\bA^2[-1]} \ar[dr, phantom, "\lrcorner_{\text{\tiny h}}", very near start] \arrow[d, hook] \arrow[r, hook] & {\bA^1[-1]} \ar[dr, phantom, "\lrcorner_{\text{\tiny h}}", very near start] \arrow[d, hook] \arrow[r, hook] & {\pt} \ar[dr, phantom, "\lrcorner_{\text{\tiny h}}", very near start] \arrow[d, hook, "0"] \arrow[r, hook, "0"] & {\bA^1} \ar[dr, phantom, "\lrcorner_{\text{\tiny h}}", very near start] \arrow[d, hook] \arrow[r, hook] & {\cdots} \arrow[d, hook] \\
{\bA^1[-1]} \ar[dr, phantom, "\lrcorner_{\text{\tiny h}}", very near start] \arrow[r, hook] \arrow[d, hook] & {\pt} \ar[dr, phantom, "\lrcorner_{\text{\tiny h}}", very near start] \arrow[r, hook, "0"] \arrow[d, hook, "0"] & {\bA^1} \ar[dr, phantom, "\lrcorner_{\text{\tiny h}}", very near start] \arrow[r, hook] \arrow[d, hook] & {\bA^2} \ar[dr, phantom, "\lrcorner_{\text{\tiny h}}", very near start] \arrow[r, hook] \arrow[d, hook] & {\cdots} \arrow[d, hook] \\
{\pt} \arrow[r, hook, "0"]                 & {\bA^1} \arrow[r, hook]                 & {\bA^2} \arrow[r, hook]                 & {\bA^3} \arrow[r, hook]                 & {\cdots}                
\end{tikzcd}
$$
\end{proof}

\begin{re}
    By Proposition \ref{prop:base-case-canonical-recursive-structure}, there is an infinite sequence of virtual Cartier divisors
$${\cdots} \hookrightarrow \bA^3[-1] \hookrightarrow \bA^2[-1] \hookrightarrow \bA^1[-1] \hookrightarrow \pt \overset{0}{\hookrightarrow} \bA^1.$$
\end{re}

\begin{corollary}\label{cor:fund-canonical-resursive-structure}
There is a commutative diagram as follows, where each square diagram is a homotopy Cartesian diagram.
$$
\begin{tikzcd}
{\cdots} \arrow[r, hook] \arrow[d, hook] \ar[dr, phantom, "\lrcorner_{\text{\tiny h}}", very near start] & {\left[\bA^3[-1]/\bG_m\right]} \arrow[r, hook] \arrow[d, hook] \ar[dr, phantom, "\lrcorner_{\text{\tiny h}}", very near start] & {\left[\bA^2[-1]/\bG_m\right]} \arrow[r, hook] \arrow[d, hook] \ar[dr, phantom, "\lrcorner_{\text{\tiny h}}", very near start] & {\left[\bA^1[-1]/\bG_m\right]} \arrow[r, hook] \arrow[d, hook] \ar[dr, phantom, "\lrcorner_{\text{\tiny h}}", very near start] & {B\bG_m} \arrow[d, hook] \\
{\cdots} \arrow[r, hook]                 & {\left[\bA^2[-1]/\bG_m\right]} \arrow[r, hook]                 & {\left[\bA^1[-1]/\bG_m\right]} \arrow[r, hook]                 & {B\bG_m} \arrow[r, hook]                 & {\cA}             
\end{tikzcd}
$$
\end{corollary}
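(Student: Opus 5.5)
The plan is to obtain the diagram from Proposition~\ref{prop:base-case-canonical-recursive-structure} by passing to $\bG_m$-quotients. The key observation is that every object and morphism in the bottom row of that proposition, $\cdots \hookrightarrow \bA^2[-1] \hookrightarrow \bA^1[-1] \hookrightarrow \pt \overset{0}{\hookrightarrow} \bA^1$, is $\bG_m$-equivariant for the scaling action of weight one on the coordinates. Concretely, $\bA^d[-1] = \Spec \Sym(k^{\oplus d}[1])$ is given the action in which all $d$ generators have weight $1$. The origin $0:\pt \to \bA^1$ is then equivariant with $\pt$ carrying the trivial action. Each inclusion $\bA^{d}[-1] \hookrightarrow \bA^{d-1}[-1]$ is likewise equivariant, since it arises from the derived self-intersection of the origin, which is a fibre product of equivariant maps.

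The first step is to check that the homotopy Cartesian squares of Proposition~\ref{prop:base-case-canonical-recursive-structure} are squares of $\bG_m$-equivariant derived schemes. This is immediate once we know that homotopy fibre products in $\DStk$ of objects with compatible $\bG_m$-actions carry the induced action. I would phrase this as saying that the recursive diagram is constructed inside the $\infty$-category of derived stacks over $B\bG_m$ via the Borel construction $Y \mapsto [Y/\bG_m]$. For example, the square with corners $\bA^1[-1], \pt, \pt, \bA^1$ is the derived self-intersection of the equivariant zero section.

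The second step is to apply the functor $[-/\bG_m]$. Homotopy fibre products over $B\bG_m$ are computed by pulling back along the smooth atlas $\pt \to B\bG_m$, and pullback along this atlas is conservative and preserves fibre products. Hence the squares of quotient stacks are again homotopy Cartesian. This turns $\pt \overset{0}{\to} \bA^1$ into $B\bG_m \hookrightarrow \cA$, and it turns $\bA^d[-1]$ into $[\bA^d[-1]/\bG_m]$, which yields exactly the stated diagram. Equivalently, one can argue more directly: the square $[\bA^1[-1]/\bG_m] \rightrightarrows B\bG_m \to \cA$ is the derived self-intersection of $B\bG_m$ in $\cA$, and the further squares are then obtained by taking successive derived self-intersections, since $[\bA^{d}[-1]/\bG_m] \hookrightarrow [\bA^{d-1}[-1]/\bG_m]$ is again a virtual Cartier divisor with normal bundle given by the weight-one line.

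The main obstacle is making the equivariance of the shifted objects precise. In particular, one must check that the $\bG_m$-weights in the self-intersection $\pt\times^h_{\bA^1}\pt = \bA^1[-1]$ come out as weight one, matching the normal bundle $\cO(-B\bG_m)$ on the divisor. I would verify this via the Koszul model $k[\nu]$ with $\nu \mapsto x$, so that $\nu$ inherits the weight of $x$. Once this is settled, the induction on $d$ is formal.
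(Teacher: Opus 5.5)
Your proposal is correct and follows the paper's proof. The paper deduces the corollary directly from Proposition~\ref{prop:base-case-canonical-recursive-structure} together with the equivalence $[X/\bG_m]\times^h_{[Z/\bG_m]}[Y/\bG_m]\cong[X\times^h_Z Y/\bG_m]$. Your pullback-to-the-atlas argument establishes exactly this equivalence, and your Koszul weight check ($\nu\mapsto x$) supplies the equivariance that the paper leaves implicit. One small terminological point: $\nu$ corresponds to the conormal bundle $\cO(-B\bG_m)|_{B\bG_m}$, not the normal bundle, but this does not affect the argument.
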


\begin{proof}
    This is due to Proposition \ref{prop:base-case-canonical-recursive-structure} and the fact that
    $$
    \left[ X / \bG_m \right] \times^h_{\left[ Z / \bG_m \right]} \left[ Y / \bG_m \right] \cong \left[ X \times^h_Z Y / \bG_m \right].
    $$
\end{proof}

\begin{re}
By Corollary \ref{cor:fund-canonical-resursive-structure}, there is an infinite sequence of virtual Cartier divisors
$${\cdots} \hookrightarrow \left[\bA^3[-1]/\bG_m\right] \hookrightarrow \left[\bA^2[-1]/\bG_m\right] \hookrightarrow \left[\bA^1[-1]/\bG_m\right] \hookrightarrow B\bG_m \hookrightarrow \cA.$$
\end{re}

\begin{re}\label{rmk:der-log-structure-on-virt-Cart-div}
    By Corollary \ref{cor:fund-canonical-resursive-structure}, we see that the virtual Cartier divisor $$\left[\bA^1[-1]/\bG_m\right] \hookrightarrow B\bG_m$$ on $B\bG_m$ yields a derived log stack $$\left(B\bG_m, B\bG_m \hookrightarrow \cA\right),$$
    and the virtual Cartier divisor $$\left[\bA^2[-1]/\bG_m\right] \hookrightarrow \left[\bA^1[-1]/\bG_m\right]$$ on $\left[\bA^1[-1]/\bG_m\right]$ yields a derived log stack $$\left(\left[\bA^1[-1]/\bG_m\right], \left[\bA^1[-1]/\bG_m\right] \hookrightarrow B\bG_m \hookrightarrow \cA\right).$$
    For $d \ge 2$, the virtual Cartier divisor $$\left[\bA^{d+1}[-1]/\bG_m\right] \hookrightarrow \left[\bA^d[-1]/\bG_m\right]$$ on $\left[\bA^d[-1]/\bG_m\right]$ yields a derived log stack $$\left( \left[\bA^d[-1]/\bG_m\right], \left[\bA^d[-1]/\bG_m\right] \hookrightarrow \left[\bA^{d-1}[-1]/\bG_m\right]  \hookrightarrow  {\cdots}    \hookrightarrow  B\bG_m \hookrightarrow \cA \right).$$
\end{re}

\begin{re}
    For $d \ge 1$, the classical truncation of $\left[\bA^d[-1]/\bG_m\right]$ is $B\bG_m$.
\end{re}

\begin{definition}\label{def:can-recur-str}
The \emph{canonical recursive structure} on a derived log stack $(X, \alpha_X)$ is an infinite sequence of virtual Cartier divisors
$$
{\cdots} \hookrightarrow \D_X^{(3)} \hookrightarrow \D_X^{(2)} \hookrightarrow \D_X^{(1)} \hookrightarrow \D_X \hookrightarrow X
$$
obtained by the following commutative diagram, where each square diagram is a homotopy Cartesian diagram.
$$
\begin{tikzcd}
{\cdots} \arrow[r, hook] \arrow[d] \ar[dr, phantom, "\lrcorner_{\text{\tiny h}}", very near start] & {\D_X^{(3)}} \arrow[r, hook] \arrow[d] \ar[dr, phantom, "\lrcorner_{\text{\tiny h}}", very near start] & {\D_X^{(2)}} \arrow[r, hook] \arrow[d] \ar[dr, phantom, "\lrcorner_{\text{\tiny h}}", very near start] & {\D_X^{(1)}} \arrow[r, hook] \arrow[d] \ar[dr, phantom, "\lrcorner_{\text{\tiny h}}", very near start] & {\D_X} \arrow[r, hook] \arrow[d] \ar[dr, phantom, "\lrcorner_{\text{\tiny h}}", very near start] & {X} \arrow[d, "\alpha_X"] \\
{\cdots} \arrow[r, hook]                 & {\left[\bA^3[-1]/\bG_m\right]} \arrow[r, hook]                 & {\left[\bA^2[-1]/\bG_m\right]} \arrow[r, hook]                 & {\left[\bA^1[-1]/\bG_m\right]} \arrow[r, hook]                 & {B\bG_m} \arrow[r, hook]                 & {\cA}                
\end{tikzcd}
$$
For convenience, set $\D_X^{(0)} := \D_X$.
\end{definition}

\begin{re}
    For $d \ge 0$, by Corollary \ref{cor:fund-canonical-resursive-structure}, we have the following commutative diagram, where each square diagram is a homotopy Cartesian diagram.
    $$
\begin{tikzcd}
{\D_X^{(d+1)}} \ar[dr, phantom, "\lrcorner_{\text{\tiny h}}", very near start] \arrow[r] \arrow[d, hook] & {\left[\bA^{d+1}[-1]/\bG_m\right]} \ar[dr, phantom, "\lrcorner_{\text{\tiny h}}", very near start] \arrow[r, hook] \arrow[d, hook] & {\cdots} \ar[dr, phantom, "\lrcorner_{\text{\tiny h}}", very near start] \arrow[r, hook] \arrow[d, hook] & {\left[\bA^1[-1]/\bG_m\right]} \ar[dr, phantom, "\lrcorner_{\text{\tiny h}}", very near start] \arrow[r, hook] \arrow[d, hook] & {B\bG_m} \arrow[d, hook] \\
{\D_X^{(d)}} \arrow[r]           & {\left[\bA^{d}[-1]/\bG_m\right]} \arrow[r, hook]           & {\cdots} \arrow[r, hook]           & {B\bG_m} \arrow[r, hook]           & {\cA}          
\end{tikzcd}    
    $$       
    As a result, similar to Remark \ref{rmk:der-log-structure-on-virt-Cart-div}, the virtual Cartier divisor
    $$
    \D_X^{(d+1)} \hookrightarrow \D_X^{(d)}
    $$
    on $\D_X^{(d)}$ yields a derived log stack
    $$\left( \D_X^{(d)}, \D_X^{(d)} \hookrightarrow \D_X^{(d-1)}  \hookrightarrow  {\cdots}  \hookrightarrow \D_X  \hookrightarrow  X \overset{\alpha_X}{\longrightarrow} \cA \right).$$
\end{re}

\begin{re}\label{rmk:infin-seq-POT-case-0}
    For $d \ge 0$, all classical truncations $\left( \D_X^{(d)} \right)_\cl$ of $\D_X^{(d)}$ agree. Note that if $X$ is quasi-smooth, then $\D_X^{(d)}$ is also quasi-smooth. Thus, assuming $X$ is a quasi-smooth derived Deligne--Mumford stack, the family $\left\{ \D_X^{(d)} \right\}_{d \ge 0}$ induces an infinite sequence of perfect obstruction theories on $\left( \D_X \right)_\cl$.
\end{re}

\begin{re}\label{rmk:infin-seq-POT-case--1}
    If the anchor morphism $\alpha_X: X \to \cA$ factors through the closed embedding $B\bG_m \hookrightarrow \cA$, then all classical truncations $\left( \D_X^{(d)} \right)_\cl$ of $\D_X^{(d)}$ agree with the classical truncation $X_\cl$ of $X$. In this case, we set $\D_X^{(-1)} := X$. Thus, if moreover $X$ is a quasi-smooth derived Deligne--Mumford stack, then the family $\left\{ \D_X^{(d)} \right\}_{d \ge -1}$ induces an infinite sequence of perfect obstruction theories on $X_\cl$.
\end{re}

For the rest of the subsection, we aim to compute the (log) cotangent complex of $\D_X^{(d)}$ to gain a better understanding of the derived (log) structure on $\D_X^{(d)}$, for $d \ge 1$.

Applying the cotangent complex functor to the morphism $\D_X^{(d)} \to \cA$, we obtain a (co)fiber sequence
$$
\bL_{\cA}|_{\D_X^{(d)}} \longrightarrow \bL_{\D_X^{(d)}} \longrightarrow \bLL_{\D_X^{(d)}} \overset{+1}{\longrightarrow}.
$$
Note that $\bL_{\cA}$ is known (see Remark \ref{rmk:cotangent-complex-cA}). Hence, to compute $\bLL_{\D_X^{(d)}}$, we need to compute $\bL_{\D_X^{(d)}}$.

Applying the cotangent complex functor to the morphism $\D_X^{(d)} \hookrightarrow X$, we obtain a (co)fiber sequence
$$
\bL_{X}|_{\D_X^{(d)}} \longrightarrow \bL_{\D_X^{(d)}} \longrightarrow \bL_{\D_X^{(d)}/X} \overset{+1}{\longrightarrow}.
$$
Note that $\bL_{\D_X^{(d)}/X} \simeq \bL_{\left[\bA^d[-1]/\bG_m\right] / \cA}|_{\D_X^{(d)}} \simeq \bLL_{\left[\bA^d[-1]/\bG_m\right]}|_{\D_X^{(d)}}$. As a result, to compute $\bL_{\D_X^{(d)}}$, we need to compute $\bLL_{\left[\bA^d[-1]/\bG_m\right]}$.

Applying the cotangent complex functor to the morphism $\left[\bA^d[-1]/\bG_m\right] \hookrightarrow \cA$, we obtain a (co)fiber sequence
$$
\bL_{\cA}|_{\left[\bA^d[-1]/\bG_m\right]} \longrightarrow \bL_{\left[\bA^d[-1]/\bG_m\right]} \longrightarrow \bLL_{\left[\bA^d[-1]/\bG_m\right]} \overset{+1}{\longrightarrow}.
$$
Hence, to compute $\bLL_{\left[\bA^d[-1]/\bG_m\right]}$, we need to compute $\bL_{\left[\bA^d[-1]/\bG_m\right]}$, which is carried out in Example \ref{eg:1st-cotangent-complex-genuine-der-quot-stack} and Example \ref{eg:2nd-cotangent-complex-genuine-der-quot-stack}.

\begin{example}
\label{eg:1st-cotangent-complex-genuine-der-quot-stack}

Set $\bA^1[-1]:= \Spec \Sym (k[1]) \cong \Spec k[\nu] \cong \Spec \left( \left[ k \langle \nu \rangle \overset{0}{\longrightarrow} k \right] \right)$ with cohomological degree $|\nu|=-1$. Consider the action morphism\footnote{Note that $\bG_m$ is a linearly reductive group scheme, so its higher group cohomology vanishes.}
\begin{align*}
    \psi: \bG_m \times^h \bA^1[-1] &\longrightarrow \bA^1[-1] \\
    (t,\nu) &\longmapsto t\nu .
\end{align*}
Note that $\psi$ admits a section $(e,\id_{\bA^1}): \bA^1[-1] \to \bG_m \times^h \bA^1[-1]$, where $e: \pt \to \bG_m$ is the identity section of $\bG_m$. Let $\mathfrak{g}$ denote $\LL e^* (\bT_{\bG_m})$, which is the tangent space of $\bG_m$ at $e$.

Applying the cotangent complex functor to $\psi$ and then pulling back to $\bA^1[-1]$ along $(e,\id_{\bA^1[-1]})$, we obtain a morphism
$$
d\psi|_{\bA^1[-1]}:=\LL(e,\id_{\bA^1[-1]})^*\left(d\psi\right): \bL_{\bA^1[-1]} \to \left( \mathfrak{g}^{\vee} \boxtimes \cO_{\bA^1[-1]} \right) \oplus \bL_{\bA^1[-1]} ,
$$
which can be explicitly written as
\begin{align*}
        d\psi|_{\bA^1[-1]}: \cO_{\bA^1[-1]}\langle d\nu \rangle &\longrightarrow \cO_{\bA^1[-1]}\langle dt \rangle \oplus \cO_{\bA^1[-1]}\langle d\nu \rangle \\
        d\nu &\longmapsto \left(d(t\nu)\right)|_{t=1} = \left( \nu dt + td\nu \right)|_{t=1} = \nu dt + d\nu .
\end{align*}
     Hence, we see that $d\psi|_{\bA^1[-1]} \simeq d\bar{\psi} \oplus \id$, where $d\bar{\psi} : \cO_{\bA^1[-1]}\langle d\nu \rangle \to \cO_{\bA^1[-1]}\langle dt \rangle : d\nu \mapsto \nu dt$. Following Remark \ref{rmk:cotangent-complx-der-quot-stk}, we conclude that
\begin{align*}
    \bL_{[\bA^1[-1]/\bG_m]}|_{\bA^1[-1]} &\simeq  \fib \left( d\bar{\psi}: \cO_{\bA^1[-1]}\langle d\nu \rangle \overset{d\nu \mapsto \nu dt}{\longrightarrow} \cO_{\bA^1[-1]}\langle dt \rangle \right) \\
    &\simeq \fib \left( \cO_{\bA^1[-1]}[1] \overset{\cdot \nu}{\longrightarrow} \cO_{\bA^1[-1]} \right),
\end{align*}
    which is a perfect complex of Tor-amplitude in $[-1,1]$ on $\bA^1[-1]$. Note that the morphism $\cO_{\bA^1[-1]}[1] \overset{\cdot \nu}{\longrightarrow} \cO_{\bA^1[-1]}$ is well-defined, because $\nu$ is literally a $(-1)$-shifted function on $\bA^1[-1]$ (See Remark \ref{rmk:shifted-function-and-homotopy} and Example \ref{eg:aff-derived-zero-locus}).
    
    Note that $d\bar{\psi}$ is a $\bG_m$-equivariant morphism, hence $\bL_{[\bA^1[-1]/\bG_m]}|_{\bA^1[-1]} \in \QCoh^{\bG_m}(\bA^1[-1])$. By descent, we have $\QCoh^{\bG_m}(\bA^1[-1]) \cong \QCoh([\bA^1[-1]/\bG_m])$. Thus, $\bL_{[\bA^1[-1]/\bG_m]}|_{\bA^1[-1]} \in \QCoh^{\bG_m}(\bA^1[-1])$ corresponds to $\bL_{[\bA^1[-1]/\bG_m]} \in \QCoh([\bA^1[-1]/\bG_m])$. $\blacksquare$
\end{example}

\begin{re}
The morphism $\cO_{\bA^1[-1]}[1] \overset{\cdot \nu}{\longrightarrow} \cO_{\bA^1[-1]}$ can be explicitly written as
$$
\begin{tikzcd}
{0} \arrow[r]                                & {k}                                \\
{k} \arrow[u] \arrow[r, "1 \mapsto \nu"]                      & {k \langle \nu \rangle} \arrow[u]                      \\
{k \langle \nu \rangle } \arrow[u] \arrow[r]                      & {0} \arrow[u]              
\end{tikzcd}
$$
Hence, $\bL_{[\bA^1[-1]/\bG_m]}|_{\bA^1[-1]}$ is quasi-isomorphic to 
$$ k \langle \nu \rangle \overset{0}{\longrightarrow} k \overset{\cdot \nu}{\longrightarrow} k \langle \nu \rangle \overset{0}{\longrightarrow} k.$$
Moreover, pulling back $\bL_{[\bA^1[-1]/\bG_m]}|_{\bA^1[-1]}$ to $\pt$, we have
$$
\bL_{[\bA^1[-1]/\bG_m]}|_{\pt} \simeq \left[0 \overset{0}{\longrightarrow} k \overset{0}{\longrightarrow} 0 \overset{0}{\longrightarrow} k \right],
$$
which is a perfect complex of Tor-amplitude in $[-1,1]$ on $\pt$.
\end{re}

\begin{example}
\label{eg:2nd-cotangent-complex-genuine-der-quot-stack}

    Let $n,d \in \bZ_{\ge 0}$. Set $\bA^d[-n]:= \Spec \Sym (k^{\oplus d}[n]) \cong \Spec k[\nu_1, \dots, \nu_d]$ with cohomological degrees $|\nu_j|= -n$ for $1 \le j \le d$. Consider the action morphism
    \begin{align*}
    \psi: \bG_m \times \bA^d[-n] &\longrightarrow \bA^d[-n] \\
    (t,\nu_j) &\longmapsto t\nu_j .
\end{align*}
    Then, by similar arguments in Example \ref{eg:1st-cotangent-complex-genuine-der-quot-stack}, we have
\begin{align*}
    \bL_{[\bA^d[-n]/\bG_m]}|_{\bA^d[-n]} &\simeq  \fib \left( d\bar{\psi}: \bigoplus_{j=1}^d \cO_{\bA^d[-n]}\langle d\nu_j \rangle \overset{d\nu_j \mapsto \nu_jdt}{\longrightarrow} \cO_{\bA^d[-n]}\langle dt \rangle \right) \\
    &\simeq \fib \left( \bigoplus_{j=1}^d (\cO_{\bA^d[-n]}[n]) \overset{\oplus \cdot \nu_j}{\longrightarrow} \cO_{\bA^d[-n]} \right),
\end{align*}
    which is a perfect complex of Tor-amplitude in $[-n,1]$ on $\bA^d[-n]$. Note that the morphism $\cO_{\bA^d[-n]}[n] \overset{\cdot \nu_j}{\longrightarrow} \cO_{\bA^d[-n]}$ is well-defined, because each $\nu_j$ is a $(-n)$-shifted function on $\bA^d[-n]$ (See Remark \ref{rmk:shifted-function-and-homotopy}).
    
    Note that $d\bar{\psi}$ is a $\bG_m$-equivariant morphism, hence $\bL_{[\bA^d[-n]/\bG_m]}|_{\bA^d[-n]} \in \QCoh^{\bG_m}(\bA^d[-n])$. By descent, we have $\QCoh^{\bG_m}(\bA^d[-n]) \cong \QCoh([\bA^d[-n]/\bG_m])$. Thus, $\bL_{[\bA^d[-n]/\bG_m]}|_{\bA^d[-n]} \in \QCoh^{\bG_m}(\bA^d[-n])$ corresponds to $\bL_{[\bA^d[-n]/\bG_m]} \in \QCoh^{\bG_m}([\bA^d[-n]/\bG_m])$. $\blacksquare$
\end{example}

\begin{re}
    The cotangent complex $\bL_{\bA^d[-n]}$ of $\bA^d[-n]$ is quasi-isomorphic to
    $
    \bigoplus_{j=1}^d \cO_{\bA^d[-n]}\langle d\nu_j \rangle,
    $
    which is a perfect complex of Tor-amplitude in $[-n,-n]$.
\end{re}

We conclude this subsection by computing log cotangent complexes of $\D_X^{(d)}$ for $X = \bA^1$.

We may interpret Proposition \ref{prop:base-case-canonical-recursive-structure} as giving an explicit description of the canonical recursive structure on the derived log scheme $(\bA^1, \alpha_{\bA^1})$, where $\alpha_{\bA^1}: \bA^1 \to \cA$ is a smooth atlas for the smooth classical $1$-Artin stack $\cA$. Note that $(\bA^1, \alpha_{\bA^1})$ is log smooth.

Following arguments similar to Example \ref{eg:log-cotangent-complex-aff-derived-log-schm}, we obtain a quasi-isomorphism
$$
\bLL_{\bA^1} := \bL_{\alpha_{\bA^1}} \simeq \cO_{\bA^1}\langle d\log x \rangle,
$$
which is a perfect complex of Tor-amplitude in $[0,0]$.

\begin{re}
    Since $\bA^1$ is smooth, the derived log scheme $(\bA^1, \alpha_{\bA^1})$ is in fact a classical log scheme of Deligne--Faltings type of rank $1$ over $\pt$, where $\pt$ is equipped with the trivial log structure. Moreover, $\bLL_{\bA^1}$ is precisely the classical log cotangent bundle of this classical log scheme.
\end{re}

From the canonical recursive structure on $(\bA^1, \alpha_{\bA^1})$, there is a derived log scheme $(\pt, \alpha_{\pt})$, where $\alpha_{\pt}: \pt \to \cA$ is the morphism associated with the virtual Cartier divisor $i_1: \bA^1[-1] \hookrightarrow \pt$ on $\pt$. Following arguments similar to Example \ref{eg:log-cotangent-complex-aff-derived-log-schm}, there is a quasi-isomorphism
$$
\bLL_{\pt} := \bL_{\alpha_{\pt}} \simeq \bL_{\cA}|_{\pt}[1] \simeq \left[ k\langle d\mu \rangle \overset{0}{\longrightarrow} k\langle d\lambda \rangle \right],
$$
which is a perfect complex of Tor-amplitude in $[-1,0]$. Note the cohomological degrees $|d\mu|=-1$ and $|d\lambda|=0$ after applying $[1]$ to $\bL_{\cA}|_{\pt}$.

\begin{re}\label{rmk:subtlety-log-structure}
    Note the subtlety lies in the log structure on $(\pt, \alpha_{\pt})$. While the associated virtual Cartier divisor $i_1: \bA^1[-1] \hookrightarrow \pt$ on $\pt$ genuinely lives in the derived world, the derived log scheme $(\pt, \alpha_{\pt})$ is in fact a classical log scheme. The morphism $\alpha_{\pt}: \pt \to \cA$ admits a factorization
$$
\pt \to B\bG_m \hookrightarrow \cA,
$$
    which makes $\pt$ into a classical log scheme equipped with the unique non-trivial Deligne--Faltings log structure of rank $1$.
\end{re}

\begin{re}
    There is another derived log scheme $(\pt, 1_\pt)$ (see Remark \ref{rmk:triv-log-der-stk}) with $1_{\pt}: \pt \to \cA$ being the open embedding
$$
\pt = [\bG_m/\bG_m] \hookrightarrow \cA,
$$
    which makes $\pt$ into a classical log scheme equipped with the trivial log structure. Note that the virtual Cartier divisor of $(\pt, 1_{\pt})$ is $\emptyset \to \pt$. One can immediately see that
$$
\bLL_{\pt} := \bL_{1_{\pt}} \simeq 0 \simeq \bL_{\pt}.
$$
\end{re}

From the canonical recursive structure on $(\bA^1, \alpha_{\bA^1})$, there is a derived log scheme $(\bA^1[-1], \alpha_{\bA^1[-1]})$, where $\alpha_{\bA^1[-1]}: \bA^1[-1]:= \Spec \Sym (k[1]) \cong \Spec k[\nu] \to \cA$ is the morphism associated with the virtual Cartier divisor $i_2: \bA^2[-1] \hookrightarrow \bA^1[-1]$ on $\bA^1[-1]$. For morphism $\alpha_{\bA^1[-1]}: \bA^1[-1] \to \cA$, there is a (co)fiber sequence
$$
\bL_{\cA}|_{\bA^1[-1]} \overset{\bL\alpha_{\bA^1[-1]}}{\longrightarrow} \bL_{\bA^1[-1]} \longrightarrow  \bLL_{\bA^1[-1]} \overset{+1}{\longrightarrow}.
$$
By Proposition \ref{prop:base-case-canonical-recursive-structure}, the morphism $\alpha_{\bA^1[-1]}: \bA^1[-1] \to \cA$ admits a factorization
$$
\bA^1[-1] \overset{i_1}{\hookrightarrow} \pt \overset{0}{\hookrightarrow} \bA^1 \to \cA.
$$
Hence, we see that $\bL\alpha_{\bA^1[-1]} \simeq 0$. Consequently, we conclude that there is a quasi-isomorphism
$$
\bLL_{\bA^1[-1]} := \bL_{\alpha_{\bA^1[-1]}} \simeq \bL_{\cA}|_{\bA^1[-1]}[1] \oplus \bL_{\bA^1[-1]} \simeq \left[ \cO_{\bA^1[-1]}\langle d\nu \rangle \oplus \cO_{\bA^1[-1]}\langle d\mu \rangle \overset{0}{\longrightarrow} \cO_{\bA^1[-1]}\langle d\lambda \rangle \right],
$$
which is a perfect complex of Tor-amplitude $[-1,0]$. Note the cohomological degrees $|d\mu|=-1$ and $|d\lambda|=0$ after applying the shift functor $[1]$ to $\bL_{\cA}|_{\bA^1[-1]}$. Note that $|d\nu|=-1$.

Therefore, for a derived log scheme $(\bA^d[-1], \alpha_{\bA^d[-1]})$ whose log structure is induced from the canonical recursive structure on $(\bA^1, \alpha_{\bA^1})$,  we conclude, by arguments similar to above, that the log cotangent complex $\bLL_{\bA^d[-1]}$ is quasi-isomorphic to
$$
\bL_{\cA}|_{\bA^d[-1]}[1] \oplus \bL_{\bA^d[-1]} \simeq \left[ \left(\bigoplus_{j=1}^d \cO_{\bA^d[-1]}\langle d\nu_j \rangle\right) \oplus \cO_{\bA^d[-1]}\langle d\mu \rangle \overset{0}{\longrightarrow} \cO_{\bA^d[-1]}\langle d\lambda \rangle \right],
$$
which is a perfect complex of Tor-amplitude in $[-1,0]$. Note the cohomological degrees $|d\mu|=-1$ and $|d\lambda|=0$ after applying the shift functor $[1]$ to $\bL_{\cA}|_{\bA^1[-1]}$.

\begin{re}\label{rmk:subtlety-log-structure-exp}
    The composition
    $$
    \bA^d[-1] \hookrightarrow \bA^{d-1}[-1] \hookrightarrow \cdots \hookrightarrow \bA^1[-1] \hookrightarrow \pt
    $$
    in Proposition \ref{prop:base-case-canonical-recursive-structure} makes $\pt$ into a classical log scheme equipped with the unique non-trivial Deligne-Faltings log structure of rank $d$.
\end{re}

\section{Logarithmic Cosections and Virtual Cycles}\label{sec:3}

In this section, we introduce logarithmic cosections (Section \ref{sec:log-cosection}) and use them to construct reduced virtual cycles (Section \ref{sec:red-vir-cycle}) and reduced boundary cycles (Section \ref{sec:red-bound-cycle}).
Moreover, using logarithmic cosections, we prove a decomposition theorem for virtual cycles (Section \ref{sec:decompose-vir-cycle}). As mentioned in Remark \ref{rmk:shift-log-function}, the definition of logarithmic cosections is motivated by the twisted cosection in Log GLSM \cite{CJR21}.
Besides logarithmic cosections, we also introduce log-meromorphic cosections (Section \ref{sec:log-cosection}), and reduced cosections (Section \ref{sec:red-vir-cycle}).

The decomposition theorem for virtual cycles (Theorem \ref{thm:decompose-virt-cycle}) in Section \ref{sec:decompose-vir-cycle} is the starting point of the whole paper, and several potential applications we have in mind are based on it (or a more general version). The main advantage of our approach, compared with \cite[Theorem~6.5]{CJR21}, is that it removes the need for an auxiliary stack (see {\cite[Remark~6.2]{CJR21}}), thereby allowing for a broader range of potential applications. The reader might find Example \ref{eg:toy-model-decompose-virt-cycle}, Example \ref{eg:local-model-decompose-virt-cycle}, and Example \ref{eg:der-self-intersect-P1-in-P2} helpful.

This section is heavily influenced by \cite[Section~6]{CJR21}. In addition, Section \ref{sec:red-vir-cycle} and Section \ref{sec:red-bound-cycle} are inspired by \cite[Section~4]{STV15} and \cite[Section~6]{AKLPR22P}\footnote{Note that we refer to a specific arXiv version, not the latest or published version.}.

\begin{re}\label{rmk:sect-3-to-sect-4}
    Thanks to Proposition \ref{prop:rel-qsmth-rel-POT} and Proposition \ref{prop:rel-cosection-rel-POT}, everything in this section immediately generalizes to the relative setting in Section \ref{sec:4} by replacing $\cA$ with $\cA \times S$, where $S$ is a smooth classical $1$-Artin stack with separated diagonal (e.g.\ the smooth classical $1$-Artin stack $\fMgn$ of prestable curves, see the proof of \cite[Lemma~A.1]{GrSi13}).
\end{re}

\

Throughout this section, we fix the following convention.

Let $X$ be a quasi-smooth derived Deligne--Mumford stack equipped with a morphism $\alpha_X: X \to \cA$. In other words, $(X, \alpha_X)$ is a quasi-smooth derived log Deligne--Mumford stack, where $X$ is the underlying derived stack and $\alpha_X: X \to \cA$ is the anchor morphism. Note that $X$ being quasi-smooth implies that the anchor morphism $\alpha_X: X \to \cA$ is also quasi-smooth.

For simplicity, let $w_{\alpha}: X \to L_{\cA}[-1]$ denote a $(-1)$-shifted log function on $(X, \alpha_X)$. 

Consider the following commutative diagram, where each square diagram is a homotopy Cartesian diagram.
\[\begin{tikzcd}
	\D_{X_{cl}}  & \D_X  & B\bG_m  \\
	X_{\cl}  & X  & \cA
	\arrow[from=1-1, to=1-2, hook]
	\arrow[from=1-1, to=2-1, hook]
	\arrow[phantom, "\lrcorner_{\text{\tiny h}}", very near start, from=1-1, to=2-2]
	\arrow[from=1-2, to=1-3]
	\arrow[from=1-2, to=2-2, hook]
	\arrow[phantom, "\lrcorner_{\text{\tiny h}}", very near start, from=1-2, to=2-3]
	\arrow[from=1-3, to=2-3, hook]
	\arrow[from=2-1, to=2-2, hook, "\tau"]
	\arrow[from=2-2, to=2-3, "\alpha_X"]
\end{tikzcd}\]
Note that $\D_{X_{cl}}$ is not necessarily a classical Deligne--Mumford stack, although $\cO_{X_{cl}}(\D_{X_{\cl}})$ is always an invertible sheaf on $X_\cl$. We also have $(\D_{X_{cl}})_\cl = (\D_{X})_\cl$. For simplicity, set
\begin{align*}
    s_X := \LL\alpha_X^*(s_{\cA}) &: \cO_{X}(-\D_{X}) \to \cO_{X}, \\
    s_{X_{\cl}} := \LL\tau^*\LL\alpha_X^*(s_{\cA}) &: \cO_{X_{cl}}(-\D_{X_{\cl}}) \to \cO_{X_{\cl}}.
\end{align*}
    Recall that the natural inclusion $s_{\cA}: \cO_{\cA}(-B\bG_m) \hookrightarrow \cO_{\cA}$ is the universal section (see Section \ref{sec:basic-setup-der-log}).

Consider the following commutative diagram, where each square diagram is a homotopy Cartesian diagram.
\[\begin{tikzcd}
	U_{cl}   & U  & \pt \cong [\bG_m/\bG_m]  \\
	X_{\cl}  & X  & \cA
	\arrow[from=1-1, to=1-2, hook]
	\arrow[from=1-1, to=2-1, hook]
	\arrow[phantom, "\lrcorner_{\text{\tiny h}}", very near start, from=1-1, to=2-2]
	\arrow[from=1-2, to=1-3]
	\arrow[from=1-2, to=2-2, hook]
	\arrow[phantom, "\lrcorner_{\text{\tiny h}}", very near start, from=1-2, to=2-3]
	\arrow[from=1-3, to=2-3, hook]
	\arrow[from=2-1, to=2-2, hook, "\tau"]
	\arrow[from=2-2, to=2-3, "\alpha_X"]
\end{tikzcd}\]
    Note that $\pt \hookrightarrow \cA$ is an open embedding, and hence a flat morphism. Thus, $U_\cl$ is a classical Deligne--Mumford stack that is an open substack of $X_\cl$. Moreover, we have $U_\cl = X_{\cl} \setminus (\D_{X_{cl}})_\cl$.

\subsection{Logarithmic Cosections}\label{sec:log-cosection}

\begin{definition}\label{def:log-POT}
    The \emph{log perfect obstruction theory associated with $(X, \alpha_X)$} is defined to be
    $$
    \bLL\tau : \bLL_X|_{X_\cl} \to \bLL_{X_\cl}.
    $$
\end{definition}

\begin{re}
    Recall the log cotangent complex $\bLL_X := \bL_{\alpha_X} = \bL_{X/\cA}$ (see Definition \ref{def:log-cotangent-complex}).
\end{re}

\begin{re}\label{rmk:log-POT-is-rel-POT}
    Since $X$ is quasi-smooth and $\cA$ is smooth, $\bLL\tau$ is a relative perfect obstruction theory on $X_{\cl}$ in the sense of Behrend--Fantechi \cite[Section~7]{BeFa97} by Proposition \ref{prop:rel-qsmth-rel-POT}.
\end{re}

\begin{re}\label{rmk:log-POT-virt-cycle-is-usual-virt-cycle}
    By Corollary \ref{cor:qsmooth-POT}, there is a perfect obstruction theory $\bL\tau : \bL_X|_{X_\cl} \to \bL_{X_\cl}$. By Proposition \ref{prop:rel-qsmth-rel-POT}, the two perfect obstruction theories $\bL\tau$ and $\bLL\tau$ induce the same virtual cycle. For simplicity, we use $[X_{\cl}]^{\vir}$ to denote the virtual cycle induced by either $\bL\tau$ or $\bLL\tau$.
\end{re}

\begin{definition}\label{def:log-cosection}
    Let $w_{\alpha} : X \to L_{\cA}[-1]$ be a $(-1)$-shifted log function on $(X, \alpha_X)$. The morphism
    $$
    \bTT w_{\alpha}|_{X_\cl}: \bTT_{X}|_{X_\cl} \to  \bTT_{L_{\cA}[-1]}|_{X_\cl} \simeq \cO_{X_\cl}(\D_{X_\cl})[-1]
    $$
    is called the \emph{log cosection of the log perfect obstruction theory $\bLL\tau : \bLL_{X}|_{X_\cl} \to \bLL_{X_\cl}$ associated with $w_{\alpha}$}. Moreover, $H^1\left(\bTT_X|_{X_{\cl}}\right)$ is called the \emph{log obstruction sheaf $\Ob^\log_{X_\cl}$}, and the morphism
    $$
    \sigma^\log:= H^1\left(\bTT w_\alpha|_{X_{\cl}}\right): \Ob^\log_{X_\cl} = H^1(\bTT_X|_{X_{\cl}}) \to \cO_{X_\cl}(\D_{X_\cl})
    $$
    is called the \emph{log cosection of the log obstruction sheaf $\Ob^\log_{X_\cl}$ associated with $w_{\alpha}$}.
\end{definition}

\begin{re}
    By Proposition \ref{prop:rel-cosection-rel-POT}, log obstruction sheaves are relative obstruction sheaves in the sense of \cite{KiLi13}.
\end{re}

\begin{re}
    Log cosections are not (meromorphic) cosections in the sense of \cite{KiLi13}.
\end{re}

\begin{re}\label{rmk:class-shift-log-diff-form-is-log-cosection}
    Recall that $dw_\alpha:= \bLa w_\alpha[-1]: \cO_X \to \bLa_X[-1] \simeq \bLL_X \otimes^{\LL}_{\cO_{X}} \cO_X(\D_X) [-1]$ is the exact $(-1)$-shifted log differential $1$-form associated with $w_\alpha$ (see Definition \ref{def:shift-log-function-diff-form}). Note that the morphism
    $$
    (dw_\alpha)^\vee|_{X_\cl}: \bTa_X|_{X_\cl}[1] \simeq \bTT_X|_{X_\cl} \otimes^{\LL}_{\cO_{X_\cl}} \cO_{X_\cl}(-\D_{X_\cl}) [1] \to \cO_{X_\cl}
    $$
    induces the log cosection $\bTT w_{\alpha}|_{X_\cl}: \bTT_{X}|_{X_\cl} \to \cO_{X_\cl}(\D_{X_\cl})[-1]$ after applying the functor $- \otimes^{\LL}_{\cO_{X_\cl}} \cO_{X_\cl}(\D_{X_\cl}) [-1]$. Moreover, the morphism
    $$
    \sigma^{\log\tw}:=H^0\left((dw_\alpha)^\vee|_{X_\cl}\right) : H^0\left(\bTa_X|_{X_\cl}[1]\right) = H^1\left(\bTa_X|_{X_\cl}\right) = \Ob_{X_{cl}}^{\log} \otimes_{\cO_{X_{\cl}}} \cO_{X_{\cl}}(-\D_{X_{\cl}}) \to \cO_{X_\cl}
    $$
    induces the log cosection $\sigma^\log: \Ob^\log_{X_\cl} \to \cO_{X_\cl}(\D_{X_\cl})$ applying the functor $- \otimes_{\cO_{X_\cl}} \cO_{X_\cl}(\D_{X_\cl})$. Abusing notation, we also use $dw_\alpha$ to denote the morphism
    $$
    X \to \Ta[-1]X \cong \Spec_X \Sym_X \left(\bTa_X[1]\right)
    $$
    induced by $dw_\alpha: \cO_X \to \bLa_X[-1]$, and use $(\sigma^{\log\tw})^\vee$ to denote the morphism
    $$
    X_\cl \to \Spec_{X_\cl} \Sym_{X_\cl} \left(  \Ob^\log_{X_\cl} \otimes_{\cO_{X_\cl}} \cO_{X_\cl}(-\D_{X_\cl}) \right)
    $$
    induced by $\sigma^{\log\tw}= \sigma^\log \otimes_{\cO_{X_\cl}} \cO_{X_\cl}(-\D_{X_\cl}) : \Ob^\log_{X_\cl} \otimes_{\cO_{X_\cl}} \cO_{X_\cl}(-\D_{X_\cl}) \to \cO_{X_\cl}$. Then 
    $
    H^0\left((dw_\alpha)^\vee|_{X_\cl}\right) = \sigma^{\log\tw}: \Ob^\log_{X_\cl} \otimes_{\cO_{X_\cl}} \cO_{X_\cl}(-\D_{X_\cl}) \to \cO_{X_\cl}
    $
    implies that 
    $$
    (dw_\alpha)_\cl = (\sigma^{\log\tw})^\vee: X_\cl \to (\Ta[-1]X)_\cl = \Spec_{X_\cl} \Sym_{X_\cl} \left(  \Ob^\log_{X_\cl} \otimes_{\cO_{X_\cl}} \cO_{X_\cl}(-\D_{X_\cl}) \right).
    $$
\end{re}

\begin{definition}\label{def:def-locus-log-cosection}
    Let $w_{\alpha} : X \to L_{\cA}[-1]$ be a $(-1)$-shifted log function on $(X, \alpha_X)$. Following the notation in Remark \ref{rmk:class-shift-log-diff-form-is-log-cosection}, we define the \emph{derived degeneracy locus of the log cosection $\sigma^\log: \Ob^\log_{X_\cl} \to \cO_{X_{\cl}}(\D_{X_\cl})$ associated with $w_{\alpha}$} to be the derived zero locus $X(dw_{\alpha})$ of $dw_{\alpha}: \cO_X \to \bLa_X[-1]$, which is defined via the following homotopy Cartesian diagram in $\LogDStk$.
$$
    \begin{tikzcd}
{X(dw_\alpha)} \arrow[r] \arrow[d] \ar[dr, phantom, "\lrcorner_{\text{\tiny h}}", very near start] & {X} \arrow[d,"dw_\alpha"] \\
{X} \arrow[r, "0"]           & {\Ta[-1]X}          
    \end{tikzcd}
$$
    Moreover, we define the \emph{classical degeneracy locus of the log cosection $\sigma^\log: \Ob^\log_{X_\cl} \to \cO_{X_{\cl}}(\D_{X_\cl})$} to be the classical truncation $X(dw_{\alpha})_\cl$ of $X(dw_{\alpha})$, which fits into the following classical Cartesian diagram.
$$
    \begin{tikzcd}
{X(dw_{\alpha})_\cl} \arrow[r] \arrow[d] \ar[dr, phantom, "\lrcorner", very near start] & {X_\cl} \arrow[d,"(dw_{\alpha})_\cl = \left(\sigma^{\log\tw}\right)^\vee"] \\
{X_\cl} \arrow[r, "0"]           & {(\Ta[-1]X)_\cl}          
    \end{tikzcd}
$$
\end{definition}

\begin{re}\label{rmk:degeneracy-locus-log-cosection-is degeneracy-locus-log-tw-cosection}
    $X(dw_{\alpha})$ is the derived zero locus of the section $dw_{\alpha}: \cO_X \to \bLa_X[-1]$, which is equivalent to the derived zero locus $X((dw_{\alpha})^\vee)$ of the cosection $(dw_{\alpha})^\vee: \bTa_X[1] \to \cO_X$ (see \cite[Definition~4.13]{Jiang25}). Hence, the classical truncation $X(dw_{\alpha})_\cl$ of $X(dw_{\alpha})$ is the locus where the morphism $$\sigma^{\log\tw}: \Ob_{X_{cl}}^{\log} \otimes_{\cO_{X_\cl}} \cO_{X_{\cl}}(-\D_{X_{\cl}}) \to \cO_{X_{\cl}}$$ is zero. Moreover, we see that $X(dw_{\alpha})_\cl$ is the locus where the log cosection $$\sigma^{\log}: \Ob_{X_{cl}}^{\log} \to \cO_{X_{\cl}}(\D_{X_{\cl}})$$ is zero, justifying the name of the classical degeneracy locus of the log cosection $\sigma^{\log}$. For simplicity, let $X_{\cl}(\sigma^{\log})$ denote the classical degeneracy locus of the log cosection $\sigma^{\log}$.
\end{re}

\begin{re}\label{rmk:log-virt-Lag-cycle}(Continuation of the Remark \ref{rmk:virt-Lag-cycle-cosection-localized-virt-cycle})
    $X(dw_{\alpha})$ is also called the derived log critical locus of the $(-1)$-shifted log function $w_{\alpha}$ on $(X, \alpha_X)$ (see Definition \ref{def:der-log-crit-locus}). By the log Lagrangian intersection theorem (Theorem \ref{thm:log-lag-intersect}), $X(dw_{\alpha})$ carries a $(-2)$-shifted log symplectic structure.
\end{re}

\begin{definition}\label{def:log-mero-cosection}
Let $w_{\alpha} : X \to L_{\cA}[-1]$ be a $(-1)$-shifted log function on $(X, \alpha_X)$. We define the \emph{log-meromorphic cosection $\widetilde{\sigma^\log}$ associated with the log cosection $\sigma^{\log}$ induced by $w_{\alpha}$} to be a meromorphic cosection (in the sense of \cite{KiLi13})
$$
\Ob^\log_{X_{cl}} \dashrightarrow \cO_{X_{\cl}}
$$
such that it is undefined along $(\D_{X_{cl}})_\cl$ and 
$$
\widetilde{\sigma^\log}|_{U_\cl} = \sigma^{\log}|_{U_\cl} : \Ob^\log_{X_{cl}}|_{U_\cl} \to \cO_{X_{\cl}}(\D_{X_{\cl}})|_{U_\cl} = \cO_{U_\cl}.
$$
Let $X_{\cl}(\widetilde{\sigma^\log})$ denote the degeneracy locus of $\widetilde{\sigma^\log}$ in the sense of \cite[Remark~2.2]{KiLi13}.
\end{definition}

\begin{re}\label{rmk:log-mero-cosection-ind-by-shifted-function}
    Note that $\bTT_X|_U \simeq \bTT_U \simeq \bT_U \simeq \bT_X|_U$, and hence $\Ob^\log_{X_{cl}}|_{U_\cl} = \Ob_{U_\cl} = \Ob_{X_\cl}|_{U_\cl}$ is the obstruction sheaf in the sense of \cite{KiLi13}.
\end{re}

\begin{proposition}\label{prop:log-meromorphic-cosection-localized-virt-cycle}
    Assume that $X_\cl$ is separated. Let $w_{\alpha} : X \to L_{\cA}[-1]$ be a $(-1)$-shifted log function on $(X, \alpha_X)$. The log-meromorphic cosection $$\widetilde{\sigma^\log}: \Ob^\log_{X_{cl}} \dashrightarrow \cO_{X_{\cl}}$$ associated with the log cosection $\sigma^\log$ induced by $w_{\alpha}$ gives rise to a cosection localized virtual cycle $$ [X_{\cl}]^\vir_{\widetilde{\sigma^\log}} \in A_*\left(X_{\cl}(\widetilde{\sigma^\log})\right) $$ such that $$ \iota_* \left([X_{\cl}]^\vir_{\widetilde{\sigma^\log}}\right) = [X_{\cl}]^{\vir}.$$ Here, $\iota: X_{\cl}(\widetilde{\sigma^\log}) \hookrightarrow X_{\cl}$ is the natural closed embedding, and $[X_{\cl}]^{\vir}$ denotes the virtual cycle induced by either the perfect obstruction theory $\bL\tau$ or the log perfect obstruction theory $\bLL\tau$ (see Remark \ref{rmk:log-POT-virt-cycle-is-usual-virt-cycle}).
\end{proposition}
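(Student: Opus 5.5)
Our plan is to reduce the statement to Kiem--Li's theory of meromorphic cosections \cite{KiLi13}, applied to the relative perfect obstruction theory $\bLL\tau$ over the smooth stack $\cA$, and then use Remark \ref{rmk:log-POT-virt-cycle-is-usual-virt-cycle} to identify the resulting virtual cycle with $[X_\cl]^\vir$.

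\textbf{Step 1: construct the cosection.} First we make $\widetilde{\sigma^\log}$ precise as a meromorphic cosection in the sense of \cite[Definition~2.1 and Remark~2.2]{KiLi13}. On $U_\cl = X_\cl \setminus (\D_{X_\cl})_\cl$ the sheaf $\cO_{X_\cl}(\D_{X_\cl})$ is canonically trivialized by $s_{X_\cl}$. By Remark \ref{rmk:log-mero-cosection-ind-by-shifted-function}, $\Ob^\log_{X_\cl}|_{U_\cl} = \Ob_{U_\cl}$. Hence $\sigma^\log|_{U_\cl}$ is an honest cosection $\Ob_{U_\cl} \to \cO_{U_\cl}$, and it is induced by the ordinary $(-1)$-shifted function $w_\alpha|_U \colon U \to \bA^1[-1]$ (Remark \ref{rmk:generic-fib-shift-univ-line-bund}). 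Globally, $\widetilde{\sigma^\log}$ is the composite of $\sigma^\log$ with the rational map $\cO_{X_\cl}(\D_{X_\cl}) \dashrightarrow \cO_{X_\cl}$ given by dividing by $s_{X_\cl}$. This map is regular exactly away from $(\D_{X_\cl})_\cl$. Its degeneracy locus is
\[
X_\cl(\widetilde{\sigma^\log}) = U_\cl(\sigma^\log|_{U_\cl}) \cup (\D_{X_\cl})_\cl,
\]
which is closed in $X_\cl$.

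\textbf{Step 2: verify the cone reduction.} Kiem--Li's key hypothesis is that the intrinsic normal cone $\fC_{X_\cl/\cA}$ lies in the kernel of the cosection over the locus where the cosection is defined. Here that locus is $U_\cl$, and over $U$ everything is an ordinary quasi-smooth derived DM stack with an ordinary $(-1)$-shifted function. So the reduction follows from the derived argument of \cite[Section~6]{AKLPR22P} and \cite{KiPa25}, or equivalently from \cite[Proposition~B.0.1]{CJW21} via Proposition \ref{prop:rel-cosection-rel-POT}. The argument is this: the composite $\bT_{U_\cl} \to \bT_U|_{U_\cl} \to \cO_{U_\cl}[-1]$ is null-homotopic because $w_\alpha|_{U}$ restricts to zero on the classical truncation. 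We carry this out with the relative theory over $\cA$ (Remark \ref{rmk:log-POT-is-rel-POT}). Over $U$ the anchor map lands in $\pt \subset \cA$, so the relative and absolute theories agree there.

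\textbf{Step 3: apply the meromorphic theorem.} Kiem--Li's meromorphic cosection theorem \cite[Theorem~5.1 together with Remark~2.2]{KiLi13}, in its relative version as in \cite[Appendix~B]{CJW21}, requires $X_\cl$ separated and the cone-reduction property only on the regular locus. It yields a localized cycle $[X_\cl]^\vir_{\widetilde{\sigma^\log}}$ supported on $X_\cl(\widetilde{\sigma^\log})$ whose pushforward is the virtual cycle of $\bLL\tau$. By Remark \ref{rmk:log-POT-virt-cycle-is-usual-virt-cycle}, that cycle is $[X_\cl]^\vir$.

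\textbf{Main obstacle.} We expect Step 2 to be the delicate point: checking that Kiem--Li's localization applies to a relative obstruction theory over the Artin stack $\cA$ with a meromorphic, rather than regular, cosection. We would handle this by first passing from relative to absolute using Proposition \ref{prop:rel-qsmth-rel-POT} and the proof pattern of Proposition \ref{prop:rel-cosection-rel-POT}(4), which runs Kiem--Li's blow-up construction along $X_\cl(\widetilde{\sigma^\log})$. The construction only uses the cosection on the open set where it is regular.
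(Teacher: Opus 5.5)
Your proposal is correct and follows the paper's route. The paper's proof consists of the observation (Remark~\ref{rmk:log-mero-cosection-ind-by-shifted-function}) that $\Ob^\log_{X_\cl}|_{U_\cl}=\Ob_{X_\cl}|_{U_\cl}$, so that $\widetilde{\sigma^\log}$ is a cosection of the absolute obstruction sheaf defined on an open set, followed by a direct application of \cite[Theorem~1.1]{KiLi13} to $\bL\tau$ and Remark~\ref{rmk:log-POT-virt-cycle-is-usual-virt-cycle}. Working with the absolute theory this way makes your Step~2 and the relative-over-$\cA$ ``main obstacle'' unnecessary, since Kiem--Li's cone reduction holds automatically for any cosection of the obstruction sheaf of an absolute perfect obstruction theory.
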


\begin{proof}
    It directly follows from Remark \ref{rmk:log-mero-cosection-ind-by-shifted-function} and \cite[Theorem~1.1]{KiLi13}.
\end{proof}

\begin{re}\label{rmk:decompose-virt-cycle}
    If the log cosection $\sigma^{\log}$ is surjective along $(\D_{X_{cl}})_\cl$, i.e. the intersection of $X_{\cl}(\sigma^{\log}):= X(dw_{\alpha})_\cl$ and $(\D_{X_{cl}})_\cl$ is empty, then $X_{\cl}(\widetilde{\sigma^\log})$ is (set-theoretically) the disjoint union of $X_{\cl}(\sigma^{\log})$ and $(\D_{X_{cl}})_\cl$. Such an observation leads to the decomposition theorem for virtual cycles in Section \ref{sec:decompose-vir-cycle}.
\end{re}

\subsection{Reduced Virtual Cycles via Logarithmic Cosections}\label{sec:red-vir-cycle}

Throughout this subsection, we work with the duals of (relative) perfect obstruction theories in the sense of \cite{BeFa97}, and refer to them simply as perfect obstruction theories, to align with the conventions in \cite[Section~6]{CJR21}.

Let $\tilde{s}: \mathbb{A}^1_{\mathcal{A}}[-1] \to L_{\mathcal{A}}[-1]$ denote the morphism
$$
\Spec_{\cA}\Sym_{\cA}\left( s_{\cA}[1] \right): \Spec_{\cA}\Sym_{\cA}\left( \cO_{\cA}[1] \right) \to \Spec_{\cA}\Sym_{\cA}\left( \cO_{\cA}(-B\bG_m)[1] \right).
$$
\begin{re}\label{rmk:tangent-complex-s}
    By standard computation, the relative tangent complex $\bT_{\tilde{s}}$ of the morphism $\tilde{s}$ is quasi-isomorphic to
$$
\fib \Big( s_\cA^{\vee}[-1]: \cO_\cA[-1] \hookrightarrow \cO_\cA(B\bG_m)[-1] \Big),
$$
    which is a perfect complex of Tor-amplitude in $[1,2]$. Hence, $\tilde{s}$ is homotopically $2$-smooth.
\end{re}

\begin{definition}\label{def:red-POT}
    Let $w_{\alpha} : X \to L_{\cA}[-1]$ be a $(-1)$-shifted log function on $(X, \alpha_X)$. The $w_\alpha$-reduction $X^{\red}$ of $X$ is defined via the following homotopy Cartesian diagram in $\LogDStk$.
\begin{equation}\label{diagram:X^red}
\begin{tikzcd}
X^{\red} \arrow[d, "\theta"] \arrow[r, "w_{\alpha}^\red"] \arrow[rd, "\lrcorner_{\text{\tiny h}}", phantom, very near start] & {\mathbb{A}^1_{\mathcal{A}}[-1]} \arrow[d, "\tilde{s}"] \\
X \arrow[r, "w_{\alpha}"]                 & {\mathcal{L}_{\mathcal{A}}[-1]}  \end{tikzcd}
\end{equation}
\end{definition}

\begin{re}
    $X^\red$ is homotopically $2$-smooth by Remark \ref{rmk:tangent-complex-s}.
\end{re}

\begin{re}\label{rmk:class-X-red-is-class-X}
    The classical truncation $X^\red_\cl$ of $X^\red$ fits into the following classical Cartesian diagram.
    $$
\begin{tikzcd}
X^\red_\cl \arrow[d, "\theta_\cl"] \arrow[rr, "(w_{\alpha}^\red)_\cl"] \arrow[rrd, "\lrcorner", phantom, very near start] & &{(\mathbb{A}^1_{\mathcal{A}}[-1])_\cl = \cA} \arrow[d, "\tilde{s}_\cl"] \\
X_\cl \arrow[rr, "(w_{\alpha})_\cl"]               & & {(\mathcal{L}_{\mathcal{A}}[-1])_\cl = \cA}     
\end{tikzcd}
    $$
    Note that $\tilde{s}_\cl = \id: \cA \to \cA$ and $(w_{\alpha})_\cl = (\alpha_X)_\cl : X_\cl \to \cA$. As a result, we see that $\theta_{\cl}$ is precisely the identity morphism, i.e. $X^\red_\cl$ and $X_\cl$ are exactly the same classical Deligne--Mumford stack. Moreover, we have
$$
(w_{\alpha}^\red)_\cl = (\alpha_X)_\cl : X^\red_\cl = X_\cl \to (\mathbb{A}^1_{\mathcal{A}}[-1])_\cl = \cA.
$$
\end{re}

\begin{re}\label{rmk:X-fact-through-X-red}
For the morphism $\theta: X^{\red} \to X$ in Diagram (\ref{diagram:X^red}), there is a commutative diagram as follows.
\[\begin{tikzcd}
	X_\cl = X^{\red}_\cl & X^{\red} \\
	X_\cl & X
	\arrow[from=1-1, to=1-2, hook, "\tau^{\red}"]
	\arrow[from=1-1, to=2-1, "\theta_\cl = id"]
	\arrow[from=1-2, to=2-2, "\theta"]
	\arrow[from=2-1, to=2-2, hook, "\tau"]
\end{tikzcd}\]
For simplicity, we also use $X_\cl$ to denote the classical truncation $X^{\red}_\cl$ of $X^\red$. Hence, we have the following commutative diagram.
\begin{equation}\label{diagram:X^red-factor}
\begin{tikzcd}
	{} & X^{\red} \\
	X_\cl & X
	\arrow[from=2-1, to=1-2, hook, "\tau^{\red}"]
	\arrow[from=1-2, to=2-2, "\theta"]
	\arrow[from=2-1, to=2-2, hook, "\tau"]
\end{tikzcd}
\end{equation}
\end{re}

\begin{lemma}\label{lem:tangent-complex-factor}
    There is a commutative diagram as follows.
$$
\begin{tikzcd}
{\bTT_{X_\cl}} \arrow[rr, "\bTT \tau"] \arrow[rd, "\bTT \tau^{\red}"'] &               & {\bTT_{X}|_{X_\cl}} \\
                         & {\bTT_{X^\red}|_{X_\cl}} \arrow[ru, "(\bTT \theta)|_{X_\cl}"'] &   
\end{tikzcd}
$$
\end{lemma}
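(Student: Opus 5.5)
The plan is to obtain the triangle by applying the log tangent complex functor to Diagram (\ref{diagram:X^red-factor}) of Remark \ref{rmk:X-fact-through-X-red}, viewed as a commutative triangle in $\LogDStk$. The first step is to check that all three objects and morphisms live over $\cA$. $X$ carries the anchor $\alpha_X$. $X^\red$ carries the anchor $X^\red \overset{\theta}{\to} X \overset{\alpha_X}{\to} \cA$; equivalently it carries the anchor induced by $w_\alpha^\red$ followed by $\mathbb{A}^1_{\mathcal{A}}[-1] \to \cA$, because Diagram (\ref{diagram:X^red}) is a homotopy Cartesian square in $\LogDStk$ and these two anchors are identified there. $X_\cl$ carries $\alpha_X \circ \tau$. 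With these choices, $\theta$ and $\tau$ are strict morphisms. For $\tau^\red$ the anchor is $\alpha_X\circ\theta\circ\tau^\red$, and this agrees with $\alpha_X\circ\tau$ via the homotopy $\theta\circ\tau^\red \simeq \tau$ supplied by Diagram (\ref{diagram:X^red-factor}).

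The second step is functoriality. The log cotangent complex is the relative cotangent complex $\bL_{-/\cA}$, which is functorial for composable morphisms over $\cA$. Applying it to $\tau \simeq \theta\circ\tau^\red$ in $\DStk_{/\cA}$ gives a homotopy
$$\bLL\tau \simeq \bLL\tau^\red \circ \LL(\tau^{\red})^*(\bLL\theta) : \bLL_X|_{X_\cl} \to \bLL_{X^\red}|_{X_\cl} \to \bLL_{X_\cl}.$$
Here we use $\LL(\tau^\red)^*\LL\theta^* \simeq \LL\tau^*$, which also rests on the same homotopy $\theta\circ\tau^\red\simeq\tau$.

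The third step is dualization. All the complexes involved are perfect: $X$ and $X^\red$ are derived Artin stacks locally of finite presentation, and $X_\cl$ is a classical Deligne--Mumford stack. Where $\bLL_{X_\cl}$ is not perfect, we interpret $\bTT_{X_\cl}$ as the derived dual, as in Behrend--Fantechi, consistent with the convention of working with duals of obstruction theories announced at the start of this subsection. Taking $(-)^\vee$ reverses the arrows and yields the commutative triangle $\bTT\tau \simeq (\bTT\theta)|_{X_\cl}\circ \bTT\tau^\red$.

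The main obstacle is the bookkeeping in the first step. We have to make sure that the homotopy witnessing the factorization $\tau\simeq\theta\circ\tau^\red$ is a homotopy over $\cA$, and not merely a homotopy of underlying morphisms. If this fails, functoriality would only produce the corresponding statement for the absolute tangent complexes $\bT$. To handle it, we construct $\tau^\red$ directly by the universal property of the homotopy pullback (\ref{diagram:X^red}) in $\DStk_{/\cA}$, using the pair $(\tau,\ (w^\red_\alpha)_\cl)$ together with the identification $\tilde{s}_\cl=\id$ from Remark \ref{rmk:class-X-red-is-class-X}. The compatibility over $\cA$ is then built into the construction.
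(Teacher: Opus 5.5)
Your proposal is correct and is the same argument as the paper, which simply applies the log tangent complex functor $\bTT_{-}=(\bL_{-/\cA})^\vee$ to the commutative triangle (\ref{diagram:X^red-factor}); your extra bookkeeping only makes the anchors and the dualization convention for $\bTT_{X_\cl}$ explicit. The worry in your last paragraph is harmless but unnecessary: the triangle $\theta\circ\tau^\red\simeq\tau$ already lives in $\DStk_{/X}$, so postcomposing with $\alpha_X$ places it in $\DStk_{/\cA}$ with the required compatibilities automatically.
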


\begin{proof}
    It directly follows from applying the log tangent complex functor to Diagram (\ref{diagram:X^red-factor}).
\end{proof}

\begin{re}
    $\bTT\tau: \bTT_{X_\cl} \to \bTT_{X}|_{X_\cl}$ is a relative perfect obstruction theory on $X_\cl$ in the sense of \cite{BeFa97} (see Remark \ref{rmk:log-POT-is-rel-POT}).
\end{re}

\begin{proposition}
\label{prop:red-POT-log-GLSM}
    There is a (co)fiber sequence as follows.
    $$\bTT_{X^\red}|_{X_\cl} \overset{(\bTT \theta)|_{X_\cl}}{\longrightarrow} \bTT_{X}|_{X_\cl} \longrightarrow \bF \overset{+1}{\longrightarrow} $$
    Here, $\bF := \left[s_{X_\cl}^{\vee}: \cO_{X_\cl} \to \cO_{X_\cl}(\D_{X_\cl})\right]$ is a perfect complex of Tor-amplitude in $[0,1]$.
\end{proposition}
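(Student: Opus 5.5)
The cofiber sequence comes from the homotopy Cartesian square (\ref{diagram:X^red}) defining $X^\red$, which is a square in $\LogDStk$, i.e.\ over $\cA$. Since relative cotangent complexes are compatible with homotopy base change, I will first record
$$
\bL_{\theta} \simeq \LL (w_\alpha^\red)^* \bL_{\tilde{s}}, \qquad \bT_\theta \simeq \LL (w_\alpha^\red)^*\bT_{\tilde{s}}.
$$
Since $\theta$ is a morphism over $\cA$, its relative log tangent complex is $\bTT_\theta = \bT_\theta$ by Definition \ref{def:log-cotangent-complex}. The log tangent complex functor applied to $\theta$ then gives a (co)fiber sequence
$$
\bTT_{\theta} \longrightarrow \bTT_{X^\red} \overset{\bTT\theta}{\longrightarrow} \LL\theta^*\bTT_X \overset{+1}{\longrightarrow},
$$
so the cofiber of $\bTT\theta$ is $\bTT_\theta[1] \simeq \LL (w_\alpha^\red)^*\bT_{\tilde{s}}[1]$.

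Next I will compute $\bT_{\tilde{s}}$ explicitly, which is the content of Remark \ref{rmk:tangent-complex-s}. Both $\bA^1_\cA[-1]$ and $L_\cA[-1]$ are linear stacks $\Spec_\cA\Sym_\cA(\cF[1])$ over $\cA$, with $\cF = \cO_\cA$ and $\cF = \cO_\cA(-B\bG_m)$ respectively. Their relative cotangent complexes over $\cA$ are therefore the pullbacks of $\cO_\cA[1]$ and $\cO_\cA(-B\bG_m)[1]$. The map $\tilde{s}$ is linear and induced by $s_\cA[1]$, so $\bL\tilde{s}$ is $s_\cA[1]$ pulled back, giving
$$
\bL_{\tilde s}\simeq \cofib\big(s_\cA[1]\big), \qquad \bT_{\tilde s}\simeq \fib\big(s_\cA^\vee[-1]:\cO_\cA[-1]\to\cO_\cA(B\bG_m)[-1]\big).
$$
Shifting by one,
$$
\bT_{\tilde s}[1]\simeq \fib\big(s_\cA^\vee:\cO_\cA\to \cO_\cA(B\bG_m)\big)[0]\simeq [\cO_\cA\to\cO_\cA(B\bG_m)],
$$
where the two-term complex sits in degrees $[0,1]$.

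I then pull this back along $w_\alpha^\red$ and restrict to $X_\cl$ via $\tau^\red$, using the factorization of Remark \ref{rmk:X-fact-through-X-red}. The composite $X_\cl\to X^\red\to \bA^1_\cA[-1]\to\cA$ is $(\alpha_X)_\cl$ by Remark \ref{rmk:class-X-red-is-class-X}. Hence the pullback of $s_\cA^\vee$ is $s_{X_\cl}^\vee$, and the restricted cofiber becomes $\bF=[\cO_{X_\cl}\to\cO_{X_\cl}(\D_{X_\cl})]$. This is perfect of Tor-amplitude in $[0,1]$, being a two-term complex of line bundles. Restricting the sequence for $\theta$ to $X_\cl$ (pullback is exact) gives the claimed sequence, with first map $(\bTT\theta)|_{X_\cl}$.

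The main obstacle is the bookkeeping of shifts and signs in identifying $\bT_{\tilde{s}}[1]$ with $\bF$. I expect to need care in checking that the relevant map is exactly $s^\vee$ and not its shift or negative. A sign would not change the quasi-isomorphism class, however, so the real content is the degree placement. A minor point is that the relative cotangent complex of the linear stack $\Spec_\cA\Sym_\cA(\cF[1])$ is the pulled-back $\cF[1]$; this is standard for $\V$ (see the conventions in Section 0).
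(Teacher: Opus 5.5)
Your proposal is correct and follows the paper's proof. Both arguments take the transitivity (co)fiber sequence for $\theta$, identify $\bTT_\theta=\bT_\theta$ with the pullback of $\bT_{\tilde s}$ via base change along the defining homotopy Cartesian square of $X^\red$, and restrict $\bT_{\tilde s}[1]$ to $X_\cl$ to obtain $\bF$. The one difference is that you carry out the ``standard computation'' of $\bT_{\tilde s}$ from the linear structure of $\tilde s$, which the paper only asserts in Remark~\ref{rmk:tangent-complex-s}.
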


\begin{proof}
    For the morphism $\theta: X^{\red} \to X$ in Diagram (\ref{diagram:X^red}), there is a (co)fiber sequence
$$
\bTT_{X^\red}|_{X_\cl} \overset{(\bTT \theta)|_{X_\cl}}{\longrightarrow} \bTT_{X}|_{X_\cl} \longrightarrow (\bTT_{\theta}[1])|_{X_\cl} \overset{+1}{\longrightarrow}.
$$
    By Diagram (\ref{diagram:X^red}), we have $(\bTT_{\theta}[1])|_{X_\cl} \simeq (\bTT_{\tilde{s}}|_{X_\cl})[1] \simeq (\bT_{\tilde{s}}|_{X_\cl})[1]$. By Remark \ref{rmk:tangent-complex-s}, we have $$(\bT_{\tilde{s}}|_{X_\cl})[1] \simeq \left[ s_{X_\cl}^{\vee}: \cO_{X_\cl} \to \cO_{X_\cl}(\D_{X_\cl}) \right],$$ which is a perfect complex of Tor-amplitude in $[0,1]$.
\end{proof}

\begin{re}
    Proposition \ref{prop:red-POT-log-GLSM} may be interpreted as saying that $X^\red$ is a natural derived geometric enhancement of the reduced perfect obstruction theory on $X_\cl$ constructed in \cite[Section~6.2]{CJR21}.
\end{re}

\begin{re}\label{rmk:criterion-red-POT-is-POT}
    Note that $X^\red$ is homotopically $2$-smooth. Hence, to show that $$\bTT\tau^{\red}: \bTT_{X_\cl} \to \bTT_{X^\red}|_{X_\cl}$$ is a relative perfect obstruction theory on $X_{\cl}$ in the sense of \cite{BeFa97}, it suffices to show that $H^2( \bTT_{X^\red}|_{X_\cl} ) =0$ by Proposition \ref{prop:cotangent-complex-obstruction-theory}.
\end{re}

\begin{corollary}\label{lem:POT-coindcide-outside-divisor}
    $\bTT \tau^{\red}|_{U_{\cl}} = \bTT \tau|_{U_\cl}$.
\end{corollary}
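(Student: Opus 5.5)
Restricting everything to the open locus $U \subset X$ where the anchor morphism factors through the open embedding $\pt \cong [\bG_m/\bG_m] \hookrightarrow \cA$, we show that $\theta|_U : X^\red \times^h_X U \to U$ is an equivalence. The claim then follows by restricting the factorization of Lemma \ref{lem:tangent-complex-factor} to $U_\cl$.

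The first step is to base change Diagram (\ref{diagram:X^red}) along $U \hookrightarrow X$. Since $U \to X \to \cA$ factors through $\pt$, and homotopy fibered products compose, we get
$$X^\red \times^h_X U \cong U \times^h_{L_\cA[-1]} \bA^1_\cA[-1] \cong U \times^h_{L_\cA[-1]\times^h_\cA \pt} \left(\bA^1_\cA[-1]\times^h_\cA \pt\right).$$
The second step is the key point: over $\pt \hookrightarrow \cA$, the section $s_\cA : \cO_\cA(-B\bG_m) \to \cO_\cA$ restricts to an isomorphism, because the universal divisor $B\bG_m$ is disjoint from the open point. Hence $\tilde{s} = \Spec_\cA \Sym_\cA(s_\cA[1])$ restricts to an equivalence $\bA^1[-1] \to \bA^1[-1]$ over $\pt$. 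Equivalently, Remark \ref{rmk:tangent-complex-s} shows that $\bT_{\tilde{s}}|_{\pt} \simeq \fib(\cO[-1] \xrightarrow{\simeq} \cO[-1]) \simeq 0$, and since $\tilde{s}$ is an affine morphism inducing an equivalence on classical truncations ($\tilde{s}_\cl = \id$), it is an equivalence over $\pt$. Consequently $\theta|_U$ is an equivalence, $U^\red := X^\red\times^h_X U \simeq U$, and in particular $(\bTT\theta)|_{U_\cl}$ is a quasi-isomorphism.

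The third step restricts Lemma \ref{lem:tangent-complex-factor} to $U_\cl$, using that pullback of tangent complexes along the open (hence flat, étale) embedding $U_\cl \hookrightarrow X_\cl$ is compatible with restriction. This gives $\bTT\tau|_{U_\cl} = (\bTT\theta)|_{U_\cl} \circ \bTT\tau^\red|_{U_\cl}$. Under the identification $\bTT_{X^\red}|_{U_\cl} \simeq \bTT_X|_{U_\cl}$ given by the equivalence $(\bTT\theta)|_{U_\cl}$, the two obstruction theories coincide. As a cross-check, Proposition \ref{prop:red-POT-log-GLSM} gives the cofiber $\bF|_{U_\cl} = [\cO_{U_\cl} \xrightarrow{\simeq} \cO_{U_\cl}] \simeq 0$, because $s_{X_\cl}^\vee$ is an isomorphism off $(\D_{X_\cl})_\cl$. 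This independently shows that $(\bTT\theta)|_{U_\cl}$ is a quasi-isomorphism, and it may be the cleanest route to write up.

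The only delicate point is making sense of the equality $=$ in the statement: it has to be read as an equality of morphisms up to the canonical quasi-isomorphism $(\bTT\theta)|_{U_\cl}$. The content is therefore the vanishing of $\bF|_{U_\cl}$, which is immediate.
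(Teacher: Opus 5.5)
Your proposal is correct. The argument you label a cross-check is exactly the paper's proof: the paper notes that $\bF|_{U_\cl}\simeq 0$, so $(\bTT\theta)|_{U_\cl}$ is a quasi-isomorphism by Proposition~\ref{prop:red-POT-log-GLSM}, and then restricts the factorization of Lemma~\ref{lem:tangent-complex-factor} to $U_\cl$.

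Your primary route is a stronger, space-level version of the same observation. By base-changing Diagram~(\ref{diagram:X^red}) to the open point $\pt\hookrightarrow\cA$, where $s_\cA$ and hence $\tilde s$ become equivalences, you get $X^\red\times^h_X U\simeq U$ as derived stacks. This buys more than the corollary needs. It identifies $w_\alpha^\red$ with $w_\alpha$ over $U$, which would also give Proposition~\ref{thm:rel-red-cosection-surj-along-divisor}(4) directly. It also shows $\bTT\theta$ is an equivalence on all of $U$, not only after restriction to $U_\cl$. The paper's route is shorter because it reuses the cofiber computation already done in Proposition~\ref{prop:red-POT-log-GLSM}.

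Your reading of the equality as holding up to the canonical quasi-isomorphism $(\bTT\theta)|_{U_\cl}$ is the same implicit convention the paper uses.
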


\begin{proof}
    We have $\bF|_{U_\cl} \simeq 0$. Hence, we see that $\bTT \tau^{\red}|_{U_\cl} = \bTT \tau|_{U_\cl}$ by Lemma \ref{lem:tangent-complex-factor} and Proposition \ref{prop:red-POT-log-GLSM}.
\end{proof}

\begin{definition}\label{def:red-cosection}
    The morphism $w_{\alpha}^\red: X^{\red} \to \mathbb{A}^1_{\mathcal{A}}[-1]$ in Diagram (\ref{diagram:X^red}) associated with $w_{\alpha}$ is a morphism over $\cA$. Hence, $w_{\alpha}^\red: X^{\red} \to \mathbb{A}^1_{\mathcal{A}}[-1]$ is a relative $(-1)$-shifted function on $X^\red$ over $\cA$ (see Definition \ref{rmk:rel-shifted-function}). Following Definition \ref{def:cosection}, the morphism
    $$
    \bTT w_{\alpha}^{\red}|_{X_\cl}: \bTT_{X^{\red}}|_{X_\cl} \to  \bTT_{\bA^1_{\cA}[-1]}|_{X_\cl} \simeq \cO_{X_\cl}[-1]
    $$
    is called the \emph{reduced cosection of $\bTT\tau^{\red}: \bTT_{X_\cl} \to \bTT_{X^\red}|_{X_\cl}$ associated with $w_{\alpha}$}. Moreover, $H^1\left(\bTT_{X^\red}|_{X_{\cl}}\right)$ is called the \emph{reduced obstruction sheaf $\Ob^{\red}_{X_\cl}$}, and the morphism
    $$
    \sigma^\red:= H^1\left(\bTT w_\alpha^\red|_{X_{\cl}}\right): \Ob^\red_{X_\cl} = H^1(\bTT_{X^\red}|_{X_{\cl}}) \to \cO_{X_\cl}
    $$
    is called the \emph{reduced cosection of the reduced obstruction sheaf $\Ob^\red_{X_\cl}$ associated with $w_{\alpha}$}.
\end{definition}

\begin{proposition}
\label{thm:rel-red-cosection-surj-along-divisor}

There is a (co)fiber square in $\QCoh(X^\red)$ as follows.
$$
\begin{tikzcd}[column sep=4em, row sep=2.5em]
	\bTT_{X^\red} & \bTT_{X}|_{X^\red} \\
	\bTT_{\mathbb{A}^1_{\mathcal{A}}[-1]}|_{X^\red} & \bTT_{\cL_{\mathcal{A}}[-1]}|_{X^\red}
	\arrow[from=1-1, to=1-2, "(\bTT\theta)|_{X^\red}"]
	\arrow[from=1-1, to=2-1, "\bTT w_{\alpha}^{\red}|_{X^\red}"']
	\arrow[from=1-2, to=2-2, "\bTT w_{\alpha}|_{X^\red}"]
	\arrow[from=2-1, to=2-2, "(\bTT\tilde{s})|_{X^\red}"']
\end{tikzcd}
$$

Moreover, we have the following results.

$(1)$ The morphism $\Ob_{X_\cl}^\log := H^1(\bTT_{X}|_{X_\cl}) \to H^{1}(\bF)$ induced by Proposition \ref{prop:red-POT-log-GLSM} admits a factorization
$$
\Ob_{X_\cl}^\log \overset{\sigma^\log}{\longrightarrow} \cO_{X_\cl}(\D_{X_\cl}) \longrightarrow H^{1}(\bF),
$$
where the morphism $\cO_{X_\cl}(\D_{X_\cl}) \to H^{1}(\bF)$ is surjective.

$(2)$ If $\sigma^\log: \Ob_{X_\cl}^\log \to \cO_{X_\cl}(\D_{X_\cl})$ is surjective along $(\D_{X_\cl})_\cl$, then $H^2(\bTT_{X^\red}|_{X_\cl}) = 0$.

$(3)$ If $\sigma^\log: \Ob_{X_\cl}^\log \to \cO_{X_\cl}(\D_{X_\cl})$ is surjective along $(\D_{X_\cl})_\cl$, then $\sigma^{\red}: \Ob_{X_{cl}}^{\red} \to \cO_{X_{\cl}}$ is surjective along $(\D_{X_{cl}})_\cl$.

$(4)$ $\sigma^{\red}|_{U_\cl} = \sigma^{\log}|_{U_\cl}$.
\end{proposition}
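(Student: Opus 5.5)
The plan is to derive the square from the defining homotopy Cartesian diagram (\ref{diagram:X^red}). That square is Cartesian in $\LogDStk = \DStk_{/\cA}$, so applying the log tangent complex functor (the relative tangent complex over $\cA$) gives a (co)Cartesian square in $\QCoh(X^\red)$, by Lemma \ref{lem:cotangent-complex-homotopy-cartesian-diagram} applied to log cotangent complexes and then dualized. This yields the displayed square. In particular the horizontal cofibers agree: $\bTT_\theta[1] \simeq \bTT_{\tilde{s}}[1]|_{X^\red}$, which is consistent with Proposition \ref{prop:red-POT-log-GLSM}. Restricting along $\tau^\red$ then gives the square on $X_\cl$, which we use below.

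\emph{Claim (1).} Restrict the square to $X_\cl$ and compare the cofibers of its two rows. The map $\bTT_X|_{X_\cl} \to \bF$ factors as
$$\bTT_X|_{X_\cl} \xrightarrow{\bTT w_\alpha} \cO_{X_\cl}(\D_{X_\cl})[-1] \to \bF,$$
where the second map is the cofiber map of
$$\bTT\tilde{s}\colon \cO_{X_\cl}[-1] \to \cO_{X_\cl}(\D_{X_\cl})[-1],$$
which is $s^\vee_{X_\cl}[-1]$ by Remark \ref{rmk:tangent-complex-s}. Taking $H^1$ gives the factorization
$$\Ob^\log \xrightarrow{\sigma^\log} \cO_{X_\cl}(\D_{X_\cl}) \to H^1(\bF) = \coker(s^\vee_{X_\cl}).$$
The second map is surjective, since it is the cokernel projection.

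\emph{Claim (2).} Use the long exact sequence of the cofiber sequence from Proposition \ref{prop:red-POT-log-GLSM}:
$$\Ob^\log \to H^1(\bF) \to H^2(\bTT_{X^\red}|_{X_\cl}) \to H^2(\bTT_X|_{X_\cl}).$$
The last term vanishes, because $X$ is quasi-smooth over $\cA$ and so $\bTT_X$ has Tor-amplitude in $[0,1]$. Hence $H^2(\bTT_{X^\red}|_{X_\cl}) = \coker(\Ob^\log \to H^1(\bF))$. The sheaf $H^1(\bF)$ is supported on $(\D_{X_\cl})_\cl$, and near that locus $\sigma^\log$ is surjective by hypothesis. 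Combined with (1), the composite $\Ob^\log \to H^1(\bF)$ is surjective, so $H^2(\bTT_{X^\red}|_{X_\cl}) = 0$.

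\emph{Claims (3) and (4).} Take $H^1$ of the restricted square. From the square and the vanishing $H^2(\bTT_{X^\red}|_{X_\cl}) = 0$ we obtain an exact sequence
$$H^0(\bF) \to \Ob^\red \to \Ob^\log \to H^1(\bF) \to 0,$$
together with the compatible sequence
$$0 \to \cO_{X_\cl} \xrightarrow{s^\vee} \cO_{X_\cl}(\D_{X_\cl}) \to H^1(\bF) \to 0.$$
This holds near $\D$ when $s^\vee$ is injective. That is automatic for a line bundle map whose zero locus is a Cartier divisor on $X_\cl$; if $s_{X_\cl}$ is not injective, one works locally and uses the morphism of cofiber sequences instead.

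The square gives a map between the kernels of the two maps to $H^1(\bF)$: the map $\sigma^\red$ lands in $\cO_{X_\cl}$, which is the kernel of $\cO(\D) \to H^1(\bF)$. Surjectivity of $\sigma^\red$ near $\D$ then follows by a diagram chase. Given a local section $f$ of $\cO_{X_\cl}$, surjectivity of $\sigma^\log$ gives a lift $e \in \Ob^\log$ with $\sigma^\log(e) = s^\vee f$. The image of $e$ in $H^1(\bF)$ is zero, so $e$ lifts to $\Ob^\red$, and this lift maps to $f$ because $s^\vee$ is injective. This proves (3).

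For (4), on $U_\cl$ we have $\bF|_{U_\cl} \simeq 0$ and $\tilde{s}$ is an equivalence there. The square therefore identifies $\bTT w^\red_\alpha$ with $\bTT w_\alpha$, and hence $\sigma^\red|_{U_\cl} = \sigma^\log|_{U_\cl}$.

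The main obstacle is the diagram chase in (3): one must confirm that $H^1$ of the (co)Cartesian square produces compatible exact sequences, taking care with $H^0(\bF)$ and with the injectivity of $s^\vee_{X_\cl}$ when $\D_{X_\cl}$ is not classical. I would first try to resolve this by working with the morphism of cofiber sequences $\bTT_{X^\red} \to \bTT_X \to \bF$ and $\cO[-1] \to \cO(\D)[-1] \to \bF$ given by the square, and applying the snake lemma.
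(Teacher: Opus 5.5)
Your treatment of the square and of parts (1), (2) and (4) matches the paper's proof. The gap is in (3). Your chase ends by deducing $\sigma^\red(e')=f$ from $s^\vee_{X_\cl}(\sigma^\red(e')-f)=0$, which requires $s^\vee_{X_\cl}\colon\cO_{X_\cl}\to\cO_{X_\cl}(\D_{X_\cl})$ to be injective. Likewise, your ``compatible sequence'' $0\to\cO_{X_\cl}\to\cO_{X_\cl}(\D_{X_\cl})\to H^1(\bF)\to 0$ presumes $H^0(\bF)=\ker s^\vee_{X_\cl}=0$. This is not automatic. $\D_{X_\cl}$ is only a virtual Cartier divisor on $X_\cl$, and the paper stresses that it need not be classical. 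In Appendix B, Cases 2 and 3 ($x_3$ a zero-divisor on $Z(s)_\cl$), the map $s^\vee_{X_\cl}$ is locally multiplication by $x_3$ and has nonzero kernel. Your remark that ``one works locally'' does not help, since the failure of injectivity is itself local.

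The repair is short, and there are two ways to do it.
\begin{itemize}
\item Finish your chase using the left-hand square of the ladder. The comparison map of cofibers $\bTT_\theta[1]|_{X_\cl}\to\bTT_{\tilde s}[1]|_{X_\cl}$ is an equivalence, so the connecting map $H^0(\bF)\to\Ob^\red_{X_\cl}$ followed by $\sigma^\red$ is the inclusion $\ker s^\vee_{X_\cl}\hookrightarrow\cO_{X_\cl}$. Hence $f-\sigma^\red(e')\in\ker s^\vee_{X_\cl}=H^0(\bF)$. Its image $g$ in $\Ob^\red_{X_\cl}$ satisfies $\sigma^\red(g)=f-\sigma^\red(e')$, so $e'+g$ is the required lift.
\item Follow the paper and use the Mayer--Vietoris form of the (co)Cartesian square, $\bTT_{X^\red}\to\bTT_X|_{X^\red}\oplus\bTT_{\bA^1_\cA[-1]}|_{X^\red}\to\bTT_{L_\cA[-1]}|_{X^\red}$. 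Restrict to $X_\cl$ and take cohomology, using $H^0(\cO_{X_\cl}(\D_{X_\cl})[-1])=0$ together with (2). This gives a short exact sequence $0\to\Ob^\red_{X_\cl}\to\Ob^\log_{X_\cl}\oplus\cO_{X_\cl}\xrightarrow{\sigma^\log\oplus s^\vee_{X_\cl}}\cO_{X_\cl}(\D_{X_\cl})\to0$. So $\Ob^\red_{X_\cl}$ is, up to sign, the fiber product of $\sigma^\log$ and $s^\vee_{X_\cl}$, and $\sigma^\red$ is the base change of $\sigma^\log$. It is therefore surjective wherever $\sigma^\log$ is, with no hypothesis on $s^\vee_{X_\cl}$.
\end{itemize}
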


\begin{proof}
    By Diagram (\ref{diagram:X^red}) and Lemma \ref{lem:cotangent-complex-homotopy-cartesian-diagram}, we obtain the desired (co)fiber square
\begin{equation}\label{diagram:IMP}
\begin{tikzcd}[column sep=4em, row sep=2.5em]
	\bTT_{X^\red} & \bTT_{X}|_{X^\red} \\
	\bTT_{\mathbb{A}^1_{\mathcal{A}}[-1]}|_{X^\red} & \bTT_{\cL_{\mathcal{A}}[-1]}|_{X^\red}
	\arrow[from=1-1, to=1-2, "(\bTT\theta)|_{X^\red}"]
	\arrow[from=1-1, to=2-1, "\bTT w_{\alpha}^{\red}|_{X^\red}"']
	\arrow[from=1-2, to=2-2, "\bTT w_{\alpha}|_{X^\red}"]
	\arrow[from=2-1, to=2-2, "(\bTT\tilde{s})|_{X^\red}"']
\end{tikzcd}
\end{equation}
    in $\QCoh(X^\red)$.
    
    Pulling back Diagram (\ref{diagram:IMP}) to $X_\cl$, by Proposition \ref{prop:red-POT-log-GLSM}, we obtain the following commutative diagram, where rows are (co)fiber sequences.
$$
\begin{tikzcd}[column sep=4em, row sep=2.5em]
	\bTT_{X^\red}|_{X_\cl} & \bTT_{X}|_{X_\cl} & (\bTT_{\theta}[1])|_{X_\cl} \simeq \bF & {} \\
	\bTT_{\mathbb{A}^1_{\mathcal{A}}[-1]}|_{X_\cl} & \bTT_{\cL_{\mathcal{A}}[-1]}|_{X_\cl} & (\bTT_{\tilde{s}}[1])|_{X_\cl} \simeq \bF & {}
	\arrow[from=1-1, to=1-2, "(\bTT\theta)|_{X_\cl}"]
	\arrow[from=1-1, to=2-1, "\bTT w_{\alpha}^{\red}|_{X_{\cl}}"']
	\arrow[from=1-2, to=2-2, "\bTT w_{\alpha}|_{X_{\cl}}"]
	\arrow[from=2-1, to=2-2, "(\bTT\tilde{s})|_{X_\cl}"']
	\arrow[from=1-2, to=1-3]
	\arrow[from=1-3, to=1-4, "+1"]
	\arrow[from=1-3, to=2-3, "\simeq"]
	\arrow[from=2-2, to=2-3]
	\arrow[from=2-3, to=2-4, "+1"]
\end{tikzcd}
$$
    Then, by Proposition \ref{prop:cotangent-complex-obstruction-theory}, we have the following commutative diagram, where rows are exact sequences.
\begin{equation}\label{diag:rel-red-cosection-log-cosection}
    \begin{tikzcd}[column sep=3.6em, row sep=2.5em]
	0 & H^0(\bF) & \Ob^\red_{X_\cl} & \Ob^\log_{X_\cl} &  H^1(\bF) & H^2( \bTT_{X^\red}|_{X_\cl} ) \\
	0 & H^0(\bF) & \cO_{X_\cl} & \cO_{X_\cl}(\D_{X_\cl}) &  H^1(\bF) & 0
	\arrow[from=1-1, to=1-2]
	\arrow[from=1-2, to=2-2, "="]
	\arrow[from=1-3, to=2-3, "\sigma^\red"]
	\arrow[from=2-1, to=2-2]
	\arrow[from=1-3, to=1-4, "H^1\left((\bTT\theta)|_{X_\cl}\right)"]
	\arrow[from=1-3, to=1-4]
	\arrow[from=1-4, to=2-4, "\sigma^\log"]
	\arrow[from=2-3, to=2-4, "s_{X_\cl}^{\vee}"]
	\arrow[from=2-3, to=2-4]
    \arrow[from=1-5, to=2-5, "="]
    \arrow[from=1-2, to=1-3]
    \arrow[from=2-2, to=2-3]
    \arrow[from=1-4, to=1-5]
    \arrow[from=2-4, to=2-5]
    \arrow[from=1-5, to=1-6]
    \arrow[from=1-6, to=2-6]
    \arrow[from=2-5, to=2-6]
    \end{tikzcd}
\end{equation}
    By the second square diagram from the right in Diagram (\ref{diag:rel-red-cosection-log-cosection}), we see that the morphism $\Ob_{X_\cl}^\log \to H^{1}(\bF)$ induced by Proposition \ref{prop:red-POT-log-GLSM} admits a factorization
$$
\Ob_{X_\cl}^\log \overset{\sigma^\log}{\longrightarrow} \cO_{X_\cl}(\D_{X_\cl}) \longrightarrow H^{1}(\bF),
$$
where the morphism $\cO_{X_\cl}(\D_{X_\cl}) \to H^{1}(\bF)$ is surjective. $(1)$ is proved.

    By Proposition \ref{prop:red-POT-log-GLSM} and Proposition \ref{prop:cotangent-complex-obstruction-theory}, there is a long exact sequence
\[\begin{tikzcd}[row sep=1em]
	0 & H^0( \bTT_{X^\red}|_{X_\cl} )& H^0( \bTT_{X}|_{X_\cl} ) & H^0( \bF ) & {} \\
	{} & H^1( \bTT_{X^\red}|_{X_\cl} ) & H^1( \bTT_{X}|_{X_\cl} ) & H^1( \bF ) & H^2( \bTT_{X^\red}|_{X_\cl} ) & 0.
	\arrow[from=1-1, to=1-2]
	\arrow[from=1-2, to=1-3, "="]
	\arrow[from=1-3, to=1-4, "0"]
	\arrow[from=1-4, to=1-5]
	\arrow[from=2-2, to=2-3]
	\arrow[from=2-3, to=2-4]
	\arrow[from=2-4, to=2-5]
	\arrow[from=2-5, to=2-6]
\end{tikzcd}\]
    Note that $H^1(\bF)|_{U_\cl} =0$, so the morphism $H^1( \bTT_{X}|_{X_\cl}) \to H^1(\bF)$ is surjective along $U_\cl$. If $\sigma^\log$ is surjective along $(\D_{X_\cl})_\cl$, then the morphism $H^1( \bTT_{X}|_{X_\cl}) \to H^1(\bF)$ is surjective along $(\D_{X_\cl})_\cl$ by $(1)$. Hence, we see that $H^2( \bTT_{X^\red}|_{X_\cl} )=0$ if $\sigma^\log$ is surjective along $(\D_{X_\cl})_\cl$. $(2)$ is proved.

    Given that Diagram (\ref{diagram:IMP}) is a (co)fiber square, we have the following (co)fiber sequence.
$$
\bTT_{X^\red} \overset{\left( -(\bTT\theta)|_{X^\red}, \bTT w_{\alpha}^{\red}|_{X^\red} \right)}{\longrightarrow} \bTT_{X}|_{X^\red} \oplus \bTT_{\mathbb{A}^1_{\mathcal{A}}[-1]}|_{X^\red} \overset{\bTT w_{\alpha}|_{X^\red} \oplus (\bTT\tilde{s})|_{X^\red}}{\longrightarrow} \bTT_{\cL_{\mathcal{A}}[-1]}|_{X^\red} \overset{+1}{\longrightarrow}
$$
    Pulling back the above (co)fiber sequence to $X_\cl$ and then applying $H^0$, we obtain the following exact sequence
$$
H^0(\bTT_{\cL_{\mathcal{A}}[-1]}|_{X_\cl}) = 0 \longrightarrow \Ob^\red_{X_\cl} \overset{\left( - \bar{\theta} , \sigma^\red \right)}{\longrightarrow} \Ob^\log_{X_\cl} \oplus \cO_{X_\cl} \overset{\sigma^\log \oplus s_{X_\cl}^\vee}{\longrightarrow} \cO_{X_\cl}(\D_{X_\cl}) \longrightarrow H^2(\bTT_{X^\red}|_{X_\cl}).
$$
    Here, $\bar{\theta}$ denotes $H^1 \left( (\bTT\theta)|_{X_\cl} \right)$ for simplicity. Assume that $\sigma^\log$ is surjective along $(\D_{X_\cl})_\cl$. Then $H^2(\bTT_{X^\red}|_{X_\cl}) = 0$ by $(2)$, and hence we have a short exact sequence
$$
0 \longrightarrow \Ob^\red_{X_\cl} \overset{\left( - \bar{\theta} , \sigma^\red \right)}{\longrightarrow} \Ob^\log_{X_\cl} \oplus \cO_{X_\cl} \overset{\sigma^\log \oplus s_{X_\cl}^\vee}{\longrightarrow} \cO_{X_\cl}(\D_{X_\cl}) \longrightarrow 0.
$$
    Consequently, we have $\im \left( (-\bar{\theta}, \sigma^\red) \right) = \ker \left( \sigma^\log \oplus s_{X_\cl}^\vee \right)$, which implies that $\sigma^\red$ is surjective along $(\D_{X_\cl})_\cl$. $(3)$ is proved.

    Pulling back Diagram (\ref{diag:rel-red-cosection-log-cosection}) to $U_\cl$, we obtain the following commutative diagram, where rows are exact sequences.
\begin{equation}\label{diag:rel-red-cosection-log-cosection-along-opem-complement}
\begin{tikzcd}[column sep=3em, row sep=2.5em]
	H^0(\bF)|_{U_\cl} =0 & \Ob^\red_{X_\cl}|_{U_\cl} & \Ob^\log_{X_\cl}|_{U_\cl} &  H^1(\bF)|_{U_\cl} =0 \\
	H^0(\bF)|_{U_\cl} =0 & \cO_{X_\cl}|_{U_\cl} & \cO_{X_\cl}(\D_{X_\cl})|_{U_\cl} &  H^1(\bF)|_{U_\cl} =0
	\arrow[from=1-1, to=2-1, "="]
	\arrow[from=1-2, to=2-2, "\sigma^\red|_{U_\cl}"]
	\arrow[from=1-2, to=1-3, "="]
	\arrow[from=1-2, to=1-3]
	\arrow[from=1-3, to=2-3, "\sigma^\log|_{U_\cl}"]
	\arrow[from=2-2, to=2-3, "="]
	\arrow[from=2-2, to=2-3]
    \arrow[from=1-4, to=2-4, "="]
    \arrow[from=1-1, to=1-2]
    \arrow[from=2-1, to=2-2]
    \arrow[from=1-3, to=1-4]
    \arrow[from=2-3, to=2-4]
\end{tikzcd}
\end{equation}
    By the middle square diagram in Diagram (\ref{diag:rel-red-cosection-log-cosection-along-opem-complement}), we see that $\sigma^{\red}|_{U_\cl} = \sigma^{\log}|_{U_\cl}$. $(4)$ is proved.
\end{proof}

\begin{re}
    See Example \ref{eg:phi-red-POT} for a similar exact sequence appearing in the first row of Diagram (\ref{diag:rel-red-cosection-log-cosection}).
\end{re}

\begin{corollary}
\label{cor:red-POT}
    If the log cosection $\sigma^\log: \Ob_{X_\cl}^\log \to \cO_{X_\cl}(\D_{X_\cl})$ is surjective along $(\D_{X_\cl})_\cl$, then $$\bTT\tau^{\red}: \bTT_{X_\cl} \to \bTT_{X^\red}|_{X_\cl}$$ is a relative perfect obstruction theory on $X_\cl$ in the sense of \cite{BeFa97}.
\end{corollary}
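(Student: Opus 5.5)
The plan is to reduce Corollary \ref{cor:red-POT} to the criterion in Remark \ref{rmk:criterion-red-POT-is-POT}, combined with Proposition \ref{thm:rel-red-cosection-surj-along-divisor}(2). Recall that $X^\red$ is homotopically $2$-smooth over $\cA$, by Remark \ref{rmk:tangent-complex-s} together with stability of homotopically $n$-smooth morphisms under base change and composition (Remark \ref{re:property-of-n-smth}). Consequently $\bTT_{X^\red}|_{X_\cl} = \bT_{X^\red/\cA}|_{X_\cl}$ is perfect of Tor-amplitude in $[0,2]$.

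First, I would record what Proposition \ref{prop:cotangent-complex-obstruction-theory} gives for the closed embedding $\tau^\red: X_\cl = X^\red_\cl \hookrightarrow X^\red$, working relative to $\cA$. Since $\bL_{X_\cl/X^\red}$ has vanishing cohomology in degrees $\ge -1$, the map $\bLL\tau^\red: \bLL_{X^\red}|_{X_\cl} \to \bLL_{X_\cl}$ is an isomorphism on $H^0$ and a surjection on $H^{-1}$. Dually, $\bTT\tau^\red$ is an isomorphism on $H^0$ and injective on $H^1$. So the obstruction-theory conditions of \cite{BeFa97} hold automatically, and the only remaining issue is perfectness in the correct amplitude.

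Next, under the surjectivity hypothesis on $\sigma^\log$ along $(\D_{X_\cl})_\cl$, Proposition \ref{thm:rel-red-cosection-surj-along-divisor}(2) gives $H^2(\bTT_{X^\red}|_{X_\cl}) = 0$. The main point to check is that this forces $\bTT_{X^\red}|_{X_\cl}$ to have Tor-amplitude in $[0,1]$. For this I would use the standard fact that a perfect complex whose top cohomology sheaf vanishes at every point has smaller amplitude. Concretely, locally represent the complex by a complex of locally free sheaves $E^0 \to E^1 \to E^2$. Since the top cohomology vanishes, $E^1 \to E^2$ is surjective, so its kernel is locally free. Replacing $E^1 \to E^2$ by this kernel gives a two-term complex of vector bundles in degrees $[0,1]$. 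Equivalently, vanishing of $H^2$ of the derived fiber at every point follows from right exactness of the top cohomology under pullback.

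Finally, Remark \ref{rmk:criterion-red-POT-is-POT} (via Proposition \ref{prop:rel-qsmth-rel-POT}, since $\cA$ is smooth) then identifies $\bTT\tau^\red$ as a relative perfect obstruction theory over $\cA$. Alternatively, $X^\red$ is quasi-smooth over $\cA$ near $X_\cl$, and one can invoke Proposition \ref{prop:rel-qsmth-rel-POT}(2) directly. I expect the amplitude-reduction step to be the only part needing care: one must note that the vanishing of $H^2$ is checked on the classical truncation, but Tor-amplitude of $\bT_{X^\red/\cA}$ is detected on $X_\cl$, so quasi-smoothness of $X^\red$ over $\cA$ follows as well.
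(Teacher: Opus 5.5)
Your proposal is correct and is essentially the paper's proof: the paper also deduces the corollary by combining Remark \ref{rmk:criterion-red-POT-is-POT} with Proposition \ref{thm:rel-red-cosection-surj-along-divisor}$(2)$. Your amplitude-reduction step makes explicit what that remark and Remark \ref{rmk:X-red-being-qsmth} leave implicit: top cohomology of a perfect complex commutes with base change, so $H^2=0$ cuts the Tor-amplitude from $[0,2]$ to $[0,1]$.
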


\begin{proof}
    By Remark \ref{rmk:criterion-red-POT-is-POT} and Proposition \ref{thm:rel-red-cosection-surj-along-divisor}$(2)$, $\bTT\tau^{\red}$ is a relative perfect obstruction theory on $X_\cl$ in the sense of \cite{BeFa97} .
\end{proof}

\begin{re}\label{rmk:X-red-being-qsmth}
    By Proposition \ref{thm:rel-red-cosection-surj-along-divisor}$(2)$, if the log cosection $\sigma^\log: \Ob_{X_\cl}^\log \to \cO_{X_\cl}(\D_{X_\cl})$ is surjective along $(\D_{X_\cl})_\cl$, then $\bTT_{X^\red}|_{X_\cl}$ is a perfect complex of Tor-amplitude in $[0,1]$. Since the Tor-amplitude of a perfect complex on a derived stack can be checked on the classical truncation, $\bTT_{X^\red}$ has Tor-amplitude in $[0,1]$, which implies that $X^\red$ is quasi-smooth.
\end{re}

\begin{definition}\label{def:red-vir-cycle}
    If $X^\red$ is quasi-smooth (e.g. if the log cosection $\sigma^\log: \Ob_{X_\cl}^\log \to \cO_{X_\cl}(\D_{X_\cl})$ associated with $w_\alpha$ is surjective along $(\D_{X_\cl})_\cl$), then the \emph{reduced perfect obstruction theory on $X_\cl$ associated with $w_\alpha$} is defined to be $$\bTT\tau^{\red}: \bTT_{X_\cl} \to \bTT_{X^\red}|_{X_\cl}.$$ The \emph{reduced virtual cycle $[X_\cl]^\red$ associated with $w_\alpha$} is then defined to be the virtual cycle induced by $\bTT\tau^{\red}$.
\end{definition}

\begin{re}\label{rmk:red-cosection-applicable}
    If $X^\red$ is quasi-smooth and $X_\cl$ is separated, then Proposition \ref{prop:rel-cosection-rel-POT} applies to the relative $(-1)$-shifted function $w_{\alpha}^\red: X^{\red} \to \mathbb{A}^1_{\mathcal{A}}[-1]$ on $X^\red$ over $\cA$ associated with $w_{\alpha}$.
\end{re}

\begin{corollary}
\label{thm:red-cosection-localized-virt-cycle}

Assume that $X_\cl$ is separated. If the log cosection $\sigma^\log: \Ob^{\log}_{X_\cl} \to \cO_{X_\cl}(\D_{X_\cl})$ is surjective along $(\D_{X_\cl})_\cl$, then the reduced cosection $\sigma^{\red}: \Ob_{X_{cl}}^{\red} \to \cO_{X_{\cl}}$ gives rise to a cosection localized virtual cycle $$ [X_{\cl}]^\vir_{\sigma^{\red}} \in A_*\left(X_{\cl}(\sigma^{\red})\right) $$ such that $\iota^{\red}_*\left([X_{\cl}]^\vir_{\sigma^{\red}}\right)=[X_\cl]^{\red}$. Here, $X_{\cl}(\sigma^\red)$ denotes the classical degeneracy locus of the (reduced) cosection $\sigma^\red$ in the sense of Proposition \ref{prop:rel-cosection-rel-POT}.
\end{corollary}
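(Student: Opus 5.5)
The plan is to reduce the statement to Proposition~\ref{prop:rel-cosection-rel-POT}, applied to the relative $(-1)$-shifted function $w_{\alpha}^\red: X^\red \to \bA^1_{\cA}[-1]$ over $Y = \cA$, as anticipated in Remark~\ref{rmk:red-cosection-applicable}.

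First, by the surjectivity hypothesis and Remark~\ref{rmk:X-red-being-qsmth}, $X^\red$ is quasi-smooth. Since $X^\red_\cl = X_\cl$ (Remark~\ref{rmk:class-X-red-is-class-X}), $X^\red$ is a derived Deligne--Mumford stack, and it is separated because $X_\cl$ is. The morphism $X^\red \to \cA$ is then a quasi-smooth morphism to the smooth classical $1$-Artin stack $\cA$, which has separated (indeed affine) diagonal. So the hypotheses of Propositions~\ref{prop:rel-qsmth-rel-POT} and~\ref{prop:rel-cosection-rel-POT} hold for $X^\red$ over $\cA$. Proposition~\ref{prop:rel-qsmth-rel-POT} identifies the relative perfect obstruction theory $\bTT\tau^\red$ of Corollary~\ref{cor:red-POT} with the one induced by $X^\red \to \cA$. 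Its virtual cycle is therefore $[X_\cl]^\red$ (Definition~\ref{def:red-vir-cycle}), and it agrees with the absolute virtual cycle of $X^\red$.

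Second, by Proposition~\ref{prop:rel-cosection-rel-POT}(1), $\bTT w_\alpha^\red|_{X_\cl}$ is a relative cosection of this obstruction theory in the sense of \cite[Appendix~B]{CJW21}. Its $H^1$ is exactly $\sigma^\red: \Ob^\red_{X_\cl} \to \cO_{X_\cl}$ from Definition~\ref{def:red-cosection}. By Proposition~\ref{prop:rel-cosection-rel-POT}(2)--(3), its degeneracy locus $X_\cl(\sigma^\red)$ is the classical truncation of $X^\red(dw_\alpha^\red)$, and it coincides with the degeneracy locus of the induced absolute cosection.

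Third, apply Kiem--Li cosection localization \cite[Theorem~1.1]{KiLi13}, in its relative form \cite[Appendix~B]{CJW21} (equivalently, via Proposition~\ref{prop:rel-cosection-rel-POT}(4), to the absolute cosection induced by the $(-1)$-shifted function $X^\red \to \bA^1_\cA[-1] \to \bA^1[-1]$). This produces $[X_\cl]^\vir_{\sigma^\red} \in A_*(X_\cl(\sigma^\red))$ with $\iota^\red_*[X_\cl]^\vir_{\sigma^\red}$ equal to the virtual cycle of the obstruction theory, that is, $[X_\cl]^\red$.

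The main point needing care is the compatibility in the second step. Kiem--Li's construction requires the cosection to lift to the intrinsic normal cone, meaning the cone lies in the kernel of $\sigma^\red$. In the derived setting this follows from the null-homotopy of $\bTT w^\red_\alpha \circ \bTT\tau^\red$: the composite $X_\cl \to X^\red \to \bA^1_\cA[-1]$ is the zero relative shifted function, since $(\bA^1_\cA[-1])_\cl = \cA$. This is exactly the argument in the proof of Proposition~\ref{prop:rel-cosection-rel-POT}, so it should transfer verbatim once the hypotheses are checked as in the first step.
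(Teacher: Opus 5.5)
Your proposal is correct and takes the same route as the paper. The paper's proof simply cites Remark~\ref{rmk:X-red-being-qsmth}, Remark~\ref{rmk:red-cosection-applicable}, Proposition~\ref{prop:rel-cosection-rel-POT}, \cite[Theorem~1.1]{KiLi13}, and Proposition~\ref{prop:rel-qsmth-rel-POT}, combined in exactly the way you combine them; your extra checks ($X^\red_\cl = X_\cl$, the separated diagonal of $\cA$, and the null-homotopy giving Kiem--Li compatibility) spell out what those citations leave implicit.
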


\begin{proof}
    It follows from Remark \ref{rmk:X-red-being-qsmth}, Remark \ref{rmk:red-cosection-applicable}, Proposition \ref{prop:rel-cosection-rel-POT}, \cite[Theorem~1.1]{KiLi13}, and Proposition \ref{prop:rel-qsmth-rel-POT}.
\end{proof}

\begin{re}\label{rmk:degeneracy-loci-coincide}
    Assume that $X_\cl$ is separated. If the log cosection $\sigma^\log: \Ob^{\log}_{X_\cl} \to \cO_{X_\cl}(\D_{X_\cl})$ is surjective along $(\D_{X_\cl})_\cl$, then we have
    $$
    X_{\cl}(\sigma^{\log}) = X_{\cl}(\sigma^{\red})
    $$
    by Corollary \ref{thm:red-cosection-localized-virt-cycle} and Proposition \ref{thm:rel-red-cosection-surj-along-divisor}$(3)(4)$. As a result, $X_{\cl}(\widetilde{\sigma^\log})$ is then (set-theoretically) the disjoint union of $X_{\cl}(\sigma^{\red})$ and $(\D_{X_{cl}})_\cl$ by Remark \ref{rmk:decompose-virt-cycle}.
\end{re}

\begin{corollary}\label{cor:virt-cycle-red-virt-cycle-coincide-on-open-locus}
If the log cosection $\sigma^\log: \Ob^{\log}_{X_\cl} \to \cO_{X_\cl}(\D_{X_\cl})$ is surjective along $(\D_{X_\cl})_\cl$, then there is a factorization of perfect obstruction theories
$$
\begin{tikzcd}
{\bTT_{X_\cl}} \arrow[rr, "\bTT \tau"] \arrow[rd, "\bTT \tau^{\red}"'] &               & {\bTT_{X}|_{X_\cl}} \\
                         & {\bTT_{X^\red}|_{X_\cl}} \arrow[ru, "(\bTT \theta)|_{X_\cl}"'] &   
\end{tikzcd}
$$
such that $\bTT \tau^{\red}|_{U_\cl} = \bTT \tau|_{U_\cl}.$
\end{corollary}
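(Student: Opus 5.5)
The corollary is essentially an assembly of results already established, and I would prove it in three steps.

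First, I need both arrows in the triangle to be relative perfect obstruction theories on $X_\cl$ in the sense of \cite{BeFa97}. For $\bTT\tau$ this is Remark~\ref{rmk:log-POT-is-rel-POT}: $X$ is quasi-smooth and $\cA$ is smooth, so Proposition~\ref{prop:rel-qsmth-rel-POT} applies. For $\bTT\tau^{\red}$ this is exactly Corollary~\ref{cor:red-POT}. That corollary uses the surjectivity hypothesis on $\sigma^\log$ along $(\D_{X_\cl})_\cl$, through Proposition~\ref{thm:rel-red-cosection-surj-along-divisor}$(2)$, to get $H^2(\bTT_{X^\red}|_{X_\cl})=0$. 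Remark~\ref{rmk:X-red-being-qsmth} upgrades this to quasi-smoothness of $X^\red$, which I would record so that $\bTT\tau^{\red}$ really is the obstruction theory induced by a quasi-smooth derived enhancement.

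Second, the commutativity of the triangle is Lemma~\ref{lem:tangent-complex-factor}. It comes from applying the log tangent complex functor to Diagram~(\ref{diagram:X^red-factor}), using the factorization $\tau=\theta\circ\tau^{\red}$ from Remark~\ref{rmk:X-fact-through-X-red}. This relies on $\theta_\cl=\id$ from Remark~\ref{rmk:class-X-red-is-class-X}, so that all three objects live over the same classical stack $X_\cl$.

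Third, for the restriction to $U_\cl$, I would invoke Corollary~\ref{lem:POT-coindcide-outside-divisor}. Concretely, restrict the cofiber sequence of Proposition~\ref{prop:red-POT-log-GLSM} to $U_\cl$. There $s_{X_\cl}^\vee$ is an isomorphism, because $\cO_{X_\cl}(-\D_{X_\cl})\to\cO_{X_\cl}$ is invertible away from the divisor. Hence $\bF|_{U_\cl}\simeq 0$, and so $(\bTT\theta)|_{U_\cl}$ is an equivalence. Combined with the commutative triangle, this identifies $\bTT\tau^{\red}|_{U_\cl}$ with $\bTT\tau|_{U_\cl}$.

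None of these steps should be a real obstacle. The only point needing care is the identification in the last step: I must make precise that ``$=$'' means equality up to the canonical equivalence $(\bTT\theta)|_{U_\cl}$. The target should be read via the equivalences $\bTT_{X^\red}|_{U_\cl}\simeq\bTT_X|_{U_\cl}\simeq\bT_{U}|_{U_\cl}$ of Remark~\ref{rmk:log-mero-cosection-ind-by-shifted-function}, so that on $U_\cl$ both theories are the ordinary one induced by $U$.
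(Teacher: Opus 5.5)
Your proposal is correct and follows the paper's proof. The paper also just combines Corollary~\ref{cor:red-POT}, Lemma~\ref{lem:tangent-complex-factor}, and Corollary~\ref{lem:POT-coindcide-outside-divisor}. Your additions only make explicit what the paper leaves implicit: that $\bTT\tau$ is a perfect obstruction theory by Remark~\ref{rmk:log-POT-is-rel-POT}, and that the equality on $U_\cl$ holds via the equivalence $(\bTT\theta)|_{U_\cl}$, which comes from $\bF|_{U_\cl}\simeq 0$.
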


\begin{proof}
    It directly follows from Corollary \ref{cor:red-POT}, Lemma \ref{lem:tangent-complex-factor}, and Corollary \ref{lem:POT-coindcide-outside-divisor}.
\end{proof}

\begin{re}\label{rmk:reduction-argument}
    Assume that $X_\cl$ is separated. If the log cosection $\sigma^\log: \Ob^{\log}_{X_\cl} \to \cO_{X_\cl}(\D_{X_\cl})$ is surjective along $(\D_{X_\cl})_\cl$, then the cosection localized virtual cycle $[X_{\cl}]^\vir_{\sigma^{\red}}$ coincides with the part of the cosection localized virtual cycle $[X_{\cl}]^\vir_{\widetilde{\sigma^\log}} $ supported on $X_\cl(\sigma^\red)=X_\cl(\sigma^\log)$. This follows from Proposition \ref{prop:log-meromorphic-cosection-localized-virt-cycle}, Corollary \ref{thm:red-cosection-localized-virt-cycle}, Remark \ref{rmk:degeneracy-loci-coincide}, Corollary \ref{cor:virt-cycle-red-virt-cycle-coincide-on-open-locus}, and Proposition \ref{thm:rel-red-cosection-surj-along-divisor}(4).  
\end{re}

\subsection{Reduced Boundary Cycles via Logarithmic Cosections}\label{sec:red-bound-cycle}

Throughout this subsection, we work with the duals of (relative) perfect obstruction theories in the sense of \cite{BeFa97}, and refer to them simply as perfect obstruction theories, to align with the conventions in \cite[Section~6]{CJR21}.

\begin{definition}
    Let $w_{\alpha} : X \to L_{\cA}[-1]$ be a $(-1)$-shifted log function on $(X, \alpha_X)$. The derived stack $X^{\tred}$ is defined via the following homotopy Cartesian diagram in $\LogDStk$.
\begin{equation}\label{diagram:X^tred}
\begin{tikzcd}
X^{\tred} \arrow[d, "\upsilon"'] \arrow[r] \arrow[rd, "\lrcorner_{\text{\tiny h}}", phantom, very near start] & \cA \arrow[d, "0"'] \\
X \arrow[r, "w_{\alpha}"]                 & L_{\cA}[-1]          
\end{tikzcd}
\end{equation}
\end{definition}

\begin{re}
    $L_{\cA}[-1] \to \cA$ is quasi-smooth, hence $0: \cA \to L_{\cA}[-1]$ is homotopically $2$-smooth. As a result, $X^\tred$ is homotopically $2$-smooth.
\end{re}

\begin{re}
    We have the following commutative diagram, where square diagrams are homotopy Cartesian diagrams.
\begin{equation}\label{diagram:shifted-function-on-X^tred}
\begin{tikzcd}
	X^{\tred} & \cA \\
	X^{\red} & \bA^1_{\cA}[-1] \\
	X & L_{\cA}[-1]
	\arrow[from=1-1, to=1-2]
	\arrow[from=1-1, to=2-1, "\xi"]
    \arrow[from=1-1, to=2-2, "\lrcorner_{\text{\tiny h}}", phantom, very near start]
	\arrow[from=1-2, to=2-2, "0"]
	\arrow[from=2-1, to=2-2, "w_{\alpha}^\red"]
	\arrow[from=2-1, to=3-1, "\theta"]
    \arrow[from=2-1, to=3-2, "\lrcorner_{\text{\tiny h}}", phantom, very near start]
	\arrow[from=2-2, to=3-2, "\widetilde{s}"]
	\arrow[from=3-1, to=3-2, "w_{\alpha}"]
    \arrow[from=1-1, to=3-1, "\upsilon"', bend right=50]
\end{tikzcd}
\end{equation}
\end{re}

\begin{re}\label{rmk:class-X-tred-is-class-X}
    By similar arguments in Remark \ref{rmk:class-X-red-is-class-X}, $X^\tred_\cl$ and $X_\cl$ are exactly the same classical Deligne--Mumford stack. For simplicity, we also use $X_\cl$ to denote the classical truncation $X^{\tred}_\cl$ of $X^\tred$.
\end{re}

\begin{re}
By similar arguments in Remark \ref{rmk:X-fact-through-X-red}, we have the following commutative diagram.
\begin{equation}\label{diagram:X^tred-factor}
\begin{tikzcd}
	{} & X^{\tred} \\
	X_\cl & X
	\arrow[from=2-1, to=1-2, hook, "\tau^{\tred}"]
	\arrow[from=1-2, to=2-2, "\upsilon"]
	\arrow[from=2-1, to=2-2, hook, "\tau"]
\end{tikzcd}
\end{equation}
\end{re}

\begin{lemma}\label{lem:tangent-complex-factor-tred}
    There is a commutative diagram as follows.
$$
\begin{tikzcd}
{\bTT_{X_\cl}} \arrow[rr, "\bTT \tau"] \arrow[rd, "\bTT \tau^{\tred}"'] &               & {\bTT_{X}|_{X_\cl}} \\
                         & {\bTT_{X^\tred}|_{X_\cl}} \arrow[ru, "(\bTT \upsilon)|_{X_\cl}"'] &   
\end{tikzcd}
$$
\end{lemma}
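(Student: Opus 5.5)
The plan is to copy the proof of Lemma~\ref{lem:tangent-complex-factor}, this time using the factorization in Diagram~(\ref{diagram:X^tred-factor}). By Remark~\ref{rmk:class-X-tred-is-class-X}, the classical truncation of $X^\tred$ is exactly $X_\cl$, so the inclusion of the classical truncation $\tau^{\tred}: X_\cl \hookrightarrow X^\tred$ is defined. Composing with $\upsilon$ gives a map $X_\cl \to X$ that is again the inclusion of the classical truncation. Hence it is homotopic to $\tau$, which means Diagram~(\ref{diagram:X^tred-factor}) commutes up to a specified homotopy $\upsilon\circ\tau^{\tred}\simeq\tau$.

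All three objects are derived stacks over $\cA$ and all maps are morphisms in $\LogDStk$. Indeed, $\upsilon$ is a morphism over $\cA$ because Diagram~(\ref{diagram:X^tred}) is a homotopy Cartesian diagram in $\LogDStk$, and $\tau$, $\tau^{\tred}$ inherit their anchors by restriction. The log cotangent complex is the relative cotangent complex $\bL_{-/\cA}$, so it is functorial for commuting triangles in $\DStk_{/\cA}$. Applying it to the triangle $\upsilon\circ\tau^{\tred}\simeq\tau$ yields a homotopy $\bLL\tau\simeq \bLL\tau^{\tred}\circ(\bLL\upsilon)|_{X_\cl}$ between maps $\bLL_X|_{X_\cl}\to\bLL_{X_\cl}$; this is the transitivity of relative cotangent complexes for the composite.

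Each log cotangent complex involved is perfect by the conventions, so we can take duals. Dualizing, and using that pullback of perfect complexes commutes with dualization, gives $\bTT\tau\simeq(\bTT\upsilon)|_{X_\cl}\circ\bTT\tau^{\tred}$. This is the asserted commutative triangle.

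The only point that needs some care is that the anchor of $X_\cl$ viewed through $\tau^{\tred}$ matches the anchor viewed through $\tau$. This holds because both are restrictions of $\alpha_X$: by Diagram~(\ref{diagram:X^tred}), the anchor of $X^\tred$ is $\alpha_X\circ\upsilon$. Consequently $\bLL_{X_\cl}$ is the same object whichever route is used to compute it, and the triangle is a triangle of log tangent complexes as stated.
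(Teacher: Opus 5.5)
Your proposal is correct and follows the paper's argument: the paper's proof simply applies the log tangent complex functor to Diagram~(\ref{diagram:X^tred-factor}), and you have spelled out the functoriality-then-dualize steps behind that. One small inaccuracy: $\bLL_{X_\cl}$ need not be perfect, since the classical truncation $X_\cl$ is generally not homotopically finitely presented. This is harmless, because dualizing via internal $\mathrm{Hom}$ is functorial on all complexes, and perfectness is needed only to identify $(\bLL_X|_{X_\cl})^\vee$ and $(\bLL_{X^\tred}|_{X_\cl})^\vee$ with $\bTT_X|_{X_\cl}$ and $\bTT_{X^\tred}|_{X_\cl}$.
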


\begin{proof}
    It directly follows from applying the log tangent complex functor to Diagram (\ref{diagram:X^tred-factor}).
\end{proof}

\begin{proposition}
\label{prop:tred-POT-log-GLSM}
    There is a (co)fiber sequence as follows.
    $$\bTT_{X^\tred}|_{X_\cl} \overset{(\bTT \upsilon)|_{X_\cl}}{\longrightarrow} \bTT_{X}|_{X_\cl} \overset{(\bTT w_{\alpha})|_{X_\cl}}{\longrightarrow} \bTT_{\cL_\cA[-1]}|_{X_\cl} \simeq \cO_{X_\cl}(\D_{X_\cl})[-1] \overset{+1}{\longrightarrow} $$
\end{proposition}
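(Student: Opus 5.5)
The plan is to mimic the proof of Proposition \ref{prop:red-POT-log-GLSM}, using the homotopy Cartesian square (\ref{diagram:X^tred}) that defines $X^\tred$ in place of (\ref{diagram:X^red}). Applying the log tangent complex functor to $\upsilon: X^\tred \to X$ gives a (co)fiber sequence
$$
\bTT_{X^\tred}|_{X_\cl} \overset{(\bTT \upsilon)|_{X_\cl}}{\longrightarrow} \bTT_{X}|_{X_\cl} \longrightarrow (\bTT_{\upsilon}[1])|_{X_\cl} \overset{+1}{\longrightarrow}.
$$
Since (\ref{diagram:X^tred}) is homotopy Cartesian, base change gives $\bTT_\upsilon \simeq \bT_{0}|_{X^\tred}$, where $0: \cA \to L_\cA[-1]$ is the zero section. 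So the task reduces to identifying the third term with $\bTT_{L_\cA[-1]}|_{X_\cl}$, and the connecting map with $\bTT w_\alpha|_{X_\cl}$.

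For the identification, I will use the fact that $0$ is a section of the projection $\pi: L_\cA[-1]\to \cA$. This gives $\bT_0[1] \simeq 0^*\bT_{\pi} = 0^*\bTT_{L_\cA[-1]}$. From the definition $L_\cA[-1] = \Spec_\cA\Sym_\cA(\cLA^\vee[1])$ we have $\bL_\pi \simeq \cLA^\vee[1]$, hence $\bTT_{L_\cA[-1]} \simeq \pi^*\cLA[-1]$. Pulling back to $X_\cl$ along $\alpha_X$ then gives $\cO_{X_\cl}(\D_{X_\cl})[-1]$.

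The identification of the maps uses the (co)fiber square obtained from (\ref{diagram:X^tred}) via Lemma \ref{lem:cotangent-complex-homotopy-cartesian-diagram}, exactly as in (\ref{diagram:IMP}):
$$\bTT_{X^\tred} \to \bTT_X|_{X^\tred}\oplus \bTT_{\cA}|_{X^\tred} \to \bTT_{L_\cA[-1]}|_{X^\tred}.$$
Because the log tangent complex is taken relative to $\cA$, we have $\bTT_\cA \simeq 0$. The square therefore collapses to the fiber sequence $\bTT_{X^\tred} \to \bTT_X|_{X^\tred} \xrightarrow{\bTT w_\alpha} \bTT_{L_\cA[-1]}|_{X^\tred}$. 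Restricting to $X_\cl$ through $\tau^\tred$ gives the claimed sequence, with the middle map $\bTT w_\alpha|_{X_\cl}$.

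I expect the main obstacle to be bookkeeping rather than substance: I must check that the restriction from $X^\tred$ to $X_\cl$ is compatible with Remark \ref{rmk:class-X-tred-is-class-X} and diagram (\ref{diagram:X^tred-factor}). The sign and shift conventions in the identification $\bTT_{L_\cA[-1]}\simeq \cO(\D)[-1]$ should match those already used in Definition \ref{def:log-cosection}.
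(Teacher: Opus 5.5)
Your proposal is correct and follows the paper's proof. The paper also obtains the sequence by applying Lemma \ref{lem:cotangent-complex-homotopy-cartesian-diagram} (relative to $\cA$) to the square (\ref{diagram:X^tred}), using that the corner $\bTT_\cA=\bT_{\cA/\cA}$ vanishes; your separate identification of the third term via $\bTT_\upsilon\simeq \bT_0|_{X^\tred}$ and $\bT_0[1]\simeq 0^*\bT_\pi\simeq \pi^*\cLA[-1]$ is a correct but redundant check.
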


\begin{proof}
    It directly follows from Diagram (\ref{diagram:X^tred}) and Lemma \ref{lem:cotangent-complex-homotopy-cartesian-diagram}.
\end{proof}

\begin{re}
    Proposition \ref{prop:tred-POT-log-GLSM} may be interpreted as saying that $X^\tred$ is a natural derived geometric enhancement of the totally reduced perfect obstruction theory on $X_\cl$ constructed in \cite[Section~6.5]{CJR21}.
\end{re}

\begin{re}\label{rmk:criterion-tred-POT-is-POT}
    Note that $X^\tred$ is homotopically $2$-smooth. Hence, to show that $$\bTT\tau^{\tred}: \bTT_{X_\cl} \to \bTT_{X^\tred}|_{X_\cl}$$ is a relative perfect obstruction theory on $X_{\cl}$ in the sense of \cite{BeFa97}, it suffices to show that $H^2( \bTT_{X^\tred}|_{X_\cl} ) = 0$ by Proposition \ref{prop:cotangent-complex-obstruction-theory}.
\end{re}

\begin{re}\label{rmk:tred-exact-seq}
    By Proposition \ref{prop:tred-POT-log-GLSM}, there is an exact sequence
    $$
H^1(\bTT_{X}|_{X_\cl}) = \Ob^\log_{X_\cl} \overset{\sigma^\log}{\longrightarrow} \cO_{X_\cl}(\D_{X_\cl}) \longrightarrow H^2(\bTT_{X^\tred}|_{X_\cl}).
    $$
\end{re}

\begin{corollary}
\label{cor:tred-POT}
    If the log cosection $\sigma^\log: \Ob_{X_\cl}^\log \to \cO_{X_\cl}(\D_{X_\cl})$ is surjective everywhere on ${X_\cl}$, then $$\bTT\tau^{\tred}: \bTT_{X_\cl} \to \bTT_{X^\tred}|_{X_\cl}$$ is a relative perfect obstruction theory on $X_\cl$ in the sense of \cite{BeFa97}.
\end{corollary}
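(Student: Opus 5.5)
The plan mirrors the proof of Corollary \ref{cor:red-POT}. By Remark \ref{rmk:criterion-tred-POT-is-POT}, $X^\tred$ is homotopically $2$-smooth. So it suffices to show that $H^2(\bTT_{X^\tred}|_{X_\cl}) = 0$; Proposition \ref{prop:cotangent-complex-obstruction-theory} then gives that $\bTT\tau^{\tred}$ is a relative perfect obstruction theory in the sense of \cite{BeFa97}.

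To compute $H^2$, I would take the long exact cohomology sequence of the (co)fiber sequence in Proposition \ref{prop:tred-POT-log-GLSM}:
$$\bTT_{X^\tred}|_{X_\cl} \to \bTT_X|_{X_\cl} \to \cO_{X_\cl}(\D_{X_\cl})[-1] \overset{+1}{\to}.$$
Since $X$ is quasi-smooth, $\bTT_X|_{X_\cl}$ has Tor-amplitude in $[0,1]$, so $H^2(\bTT_X|_{X_\cl}) = 0$. The shifted line bundle $\cO_{X_\cl}(\D_{X_\cl})[-1]$ has cohomology only in degree $1$. The relevant piece of the long exact sequence is therefore
$$\Ob^\log_{X_\cl} = H^1(\bTT_X|_{X_\cl}) \longrightarrow \cO_{X_\cl}(\D_{X_\cl}) \longrightarrow H^2(\bTT_{X^\tred}|_{X_\cl}) \longrightarrow H^2(\bTT_X|_{X_\cl}) = 0,$$
which extends Remark \ref{rmk:tred-exact-seq} by the zero on the right. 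Moreover, $H^2(\bTT_{X^\tred}|_{X_\cl})$ is the cokernel of the first map.

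The step needing care is identifying that first map with $\sigma^\log$. It is $H^1\bigl((\bTT w_\alpha)|_{X_\cl}\bigr)$, which is exactly Definition \ref{def:log-cosection}. Since $\sigma^\log$ is assumed surjective on all of $X_\cl$, the cokernel vanishes, so $H^2(\bTT_{X^\tred}|_{X_\cl})=0$ and the claim follows.

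As a side remark, the Tor-amplitude of $\bTT_{X^\tred}$ can then be checked on $X_\cl$, exactly as in Remark \ref{rmk:X-red-being-qsmth}, so $X^\tred$ is quasi-smooth.
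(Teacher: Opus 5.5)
Your proposal is correct and follows the paper's argument: the paper's proof simply cites Remark~\ref{rmk:criterion-tred-POT-is-POT}, which reduces the claim to $H^2(\bTT_{X^\tred}|_{X_\cl})=0$, and Remark~\ref{rmk:tred-exact-seq}, which gives the exact sequence from Proposition~\ref{prop:tred-POT-log-GLSM} whose first map is $\sigma^\log$. Your observation that $H^2(\bTT_X|_{X_\cl})=0$, so that $H^2(\bTT_{X^\tred}|_{X_\cl})$ is exactly $\coker(\sigma^\log)$, makes explicit a step the paper leaves implicit.
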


\begin{proof}
    It directly follows from Remark \ref{rmk:criterion-tred-POT-is-POT} and Remark \ref{rmk:tred-exact-seq}.
\end{proof}

\begin{re}\label{rmk:X-tred-being-qsmth}
    By Corollary \ref{cor:tred-POT} and similar arguments in Remark \ref{rmk:X-red-being-qsmth}, if the log cosection $\sigma^\log: \Ob_{X_\cl}^\log \to \cO_{X_\cl}(\D_{X_\cl})$ is surjective everywhere on ${X_\cl}$, then $X^\tred$ is in fact quasi-smooth.
\end{re}

\begin{definition}
    If $X^\tred$ is quasi-smooth (e.g. if the log cosection $\sigma^\log: \Ob_{X_\cl}^\log \to \cO_{X_\cl}(\D_{X_\cl})$ associated with $w_\alpha$ is surjective everywhere on ${X_\cl}$), then the \emph{totally reduced perfect obstruction theory on $X_\cl$ associated with $w_\alpha$} is defined to be $$\bTT\tau^{\tred}: \bTT_{X_\cl} \to \bTT_{X^\tred}|_{X_\cl}.$$ The \emph{totally reduced virtual cycle $[X_\cl]^\tred$ associated with $w_\alpha$} is then defined to be the virtual cycle induced by $\bTT\tau^{\tred}$.
\end{definition}

\begin{definition}\label{def:red-bound-POT}
    Given Diagram (\ref{diagram:X^tred}), we have the following commutative diagram, where square diagrams are homotopy Cartesian diagrams.
\begin{equation}
\label{diag:divisor-red}
\begin{tikzcd}[column sep=4em]
(\D_{X_\cl})_\cl \arrow[r, hook] \arrow[dr, hook] \arrow[rrr, bend left, "\tau_\D"] \arrow[rr, bend left, "\tau_{\D}^\red"'] & \D_{X_\cl} \arrow[r, hook] \arrow[d, hook] \arrow[rd, "\lrcorner_{\text{\tiny h}}", phantom, very near start] & \D_{X^{\tred}} \arrow[r, "\upsilon_\D"] \arrow[d, hook] \arrow[rd, "\lrcorner_{\text{\tiny h}}", phantom, very near start] & \D_{X} \arrow[r] \arrow[d, hook] \arrow[rd, "\lrcorner_{\text{\tiny h}}", phantom, very near start] & B\bG_m \arrow[d, hook] \\
& X_\cl \arrow[r, hook, "\tau^\tred"]  & X^{\tred} \arrow[r, "\upsilon"]    & X \arrow[r, "\alpha_X"]     & \cA    
\end{tikzcd}
\end{equation}
Here, $\D^\red_X := \D_{X^\tred}$ is called the $w_\alpha$-reduction of $\D_X$.
\end{definition}

\begin{re}
    Note that we have
    $$
    (\D_{X_\cl})_\cl = (\D^\red_X)_\cl = (\D_X)_\cl.
    $$
\end{re}

\begin{re}
    $\D^\red_X$ is homotopically $2$-smooth, because $X^\tred$ is homotopically $2$-smooth.
\end{re}

\begin{re}
    For simplicity, we define \emph{semi-log tangent complex functor $\bTs_{-}$} to be the relative tangent complex functor $\bT_{-/B\bG_m}$ over $B\bG_m$. Then, by Diagram (\ref{diag:divisor-red}), there are two quasi-isomorphisms
\begin{align}
\label{align:cycle-bound-cycle}
    \bTT_{X}|_{\D_X} &\simeq \bTs_{\D_X}, \\
\label{align:tred-cycle-red-bound-cycle}
    \bTT_{X^\tred}|_{\D^\red_X} &\simeq \bTs_{\D_X^\red}.
\end{align}
\end{re}

\begin{lemma}\label{lem:tangent-complex-factor-Dtred}
    There is a commutative diagram as follows.
$$
\begin{tikzcd}
{\bTs_{(\D_{X_\cl})_\cl}} \arrow[rr, "\bTs \tau_\D"] \arrow[rd, "\bTs \tau^{\red}_\D"'] &               & {\bTs_{\D_X}|_{(\D_{X_\cl})_\cl}} \\
                         & {\bTs_{\D^\red_X}|_{(\D_{X_\cl})_\cl}} \arrow[ru, "(\bTs \upsilon_\D)|_{(\D_{X_\cl})_\cl}"'] &   
\end{tikzcd}
$$
\end{lemma}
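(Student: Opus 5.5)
This is the analogue of Lemma~\ref{lem:tangent-complex-factor} and Lemma~\ref{lem:tangent-complex-factor-tred}, now for the boundary divisors. The plan is to apply the semi-log tangent complex functor $\bTs_{-} = \bT_{-/B\bG_m}$ to a commutative triangle of derived stacks over $B\bG_m$.

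First we record the geometric triangle. By Diagram (\ref{diag:divisor-red}), the morphism $\tau_\D: (\D_{X_\cl})_\cl \to \D_X$ factors as the composite
$$(\D_{X_\cl})_\cl \hookrightarrow \D_{X_\cl} \hookrightarrow \D_{X^\tred} = \D^\red_X \overset{\upsilon_\D}{\longrightarrow} \D_X.$$
We set $\tau^\red_\D$ to be the composite of the first two arrows, so that $\tau_\D \simeq \upsilon_\D \circ \tau^\red_\D$. This uses the factorization $\tau = \upsilon \circ \tau^\tred$ from Diagram (\ref{diagram:X^tred-factor}), base changed along $B\bG_m \hookrightarrow \cA$ via the homotopy Cartesian squares in Diagram (\ref{diag:divisor-red}).

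Next we check that all maps lie over $B\bG_m$. Each of $\D_X$, $\D^\red_X$, $\D_{X_\cl}$ and $(\D_{X_\cl})_\cl$ maps to $B\bG_m$ through the top row of Diagram (\ref{diag:divisor-red}). For $(\D_{X_\cl})_\cl$ this map is the composite with its inclusion into $\D_{X_\cl}$. The triangle therefore commutes in $\DStk_{/B\bG_m}$, and the composite homotopy is the one coming from pasting the homotopy Cartesian squares.

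Finally we apply functoriality of relative tangent complexes over $B\bG_m$. For a composite $Z \overset{g}{\to} Y \overset{f}{\to} W$ over $B\bG_m$, we have $\bTs(f\circ g) \simeq \LL g^*(\bTs f) \circ \bTs g$. Taking $g = \tau^\red_\D$, $f = \upsilon_\D$ and restricting to $(\D_{X_\cl})_\cl$ gives exactly the claimed triangle.

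There is no real obstacle here. The only point needing care is that the composite homotopy $\tau_\D \simeq \upsilon_\D\circ\tau^\red_\D$ is compatible with the structure maps to $B\bG_m$. This follows because every square in Diagram (\ref{diag:divisor-red}) is a homotopy pullback over the fixed map $B\bG_m \hookrightarrow \cA$, so the pasting is canonical.
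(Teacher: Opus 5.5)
Your proposal is correct and matches the paper's proof, which applies the semi-log tangent complex functor $\bTs_{-} = \bT_{-/B\bG_m}$ to the triangle $\tau_\D \simeq \upsilon_\D \circ \tau^\red_\D$ read off from Diagram~(\ref{diag:divisor-red}). Your remarks on the structure maps to $B\bG_m$ and on the chain rule for relative tangent complexes spell out what the paper leaves implicit.
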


\begin{proof}
    It directly follows from applying the semi-log tangent complex functor to the following diagram.
    $$
\begin{tikzcd}
	{} & \D_X^{\red} \\
	(\D_{X_\cl})_\cl & \D_X
	\arrow[from=2-1, to=1-2, hook, "\tau^{\red}_\D"]
	\arrow[from=1-2, to=2-2, "\upsilon_\D"]
	\arrow[from=2-1, to=2-2, hook, "\tau_\D"]
\end{tikzcd}
    $$
\end{proof}

\begin{re}
    Since $\D_X$ is quasi-smooth and $B\bG_m$ is smooth, $\bTs \tau_\D: \bTs_{\D_{X_\cl}} \to \bTs_{\D_X}|_{\D_{X_\cl}}$ is a (relative) perfect obstruction theory on $(\D_{X_\cl})_\cl = (\D_X)_\cl$ in the sense of Behrend--Fantechi \cite[Section~7]{BeFa97} by Proposition \ref{prop:cotangent-complex-obstruction-theory}.
\end{re}

\begin{proposition}
\label{prop:red-divisor-POT-log-GLSM}
    There is a (co)fiber sequence as follows.
    $$\bTs_{\D^\red_X}|_{(\D_{X_\cl})_\cl} \overset{(\bTs \upsilon_\D)|_{(\D_{X_\cl})_\cl}}{\longrightarrow} \bTs_{\D_X}|_{(\D_{X_\cl})_\cl} \overset{(\bTT w_\alpha)|_{(\D_{X_\cl})_\cl}}{\longrightarrow} \cO_{X_\cl}(\D_{X_\cl})|_{(\D_{X_\cl})_\cl}[-1] \overset{+1}{\longrightarrow} $$
\end{proposition}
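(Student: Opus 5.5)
The plan is to deduce the sequence from the fiber sequence for $X^\tred$ in Proposition \ref{prop:tred-POT-log-GLSM}, restricted to $(\D_{X_\cl})_\cl$. The restriction is then rewritten using the two identifications (\ref{align:cycle-bound-cycle}) and (\ref{align:tred-cycle-red-bound-cycle}).

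\textbf{Step 1: transfer the fiber sequence along $\D$.} Diagram (\ref{diag:divisor-red}) says that the square with vertices $\D^\red_X = \D_{X^\tred}$, $\D_X$, $X^\tred$ and $X$ is homotopy Cartesian. Hence the morphism $\upsilon_\D$ is the base change of $\upsilon$ along $\D_X \hookrightarrow X$. Relative tangent complexes are compatible with base change, so this gives $\bT_{\upsilon_\D} \simeq \bT_{\upsilon}|_{\D^\red_X}$.

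Every object involved lives over $B\bG_m$. The cofiber sequence
$$\bTs_{\D^\red_X} \to \bTs_{\D_X}|_{\D^\red_X} \to \bT_{\upsilon_\D}[1]$$
therefore identifies, after using (\ref{align:cycle-bound-cycle}) and (\ref{align:tred-cycle-red-bound-cycle}), with the restriction to $\D^\red_X$ of
$$\bTT_{X^\tred} \to \bTT_X|_{X^\tred} \to \bTT_{\upsilon}[1].$$
One must check that the identifications commute with the maps $\bTs\upsilon_\D$ and $\bTT\upsilon$. This should follow from functoriality of relative tangent complexes applied to the cube formed by $\D^\red_X \to \D_X \to B\bG_m$ lying over $X^\tred \to X \to \cA$. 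This is the same cube argument that produces (\ref{align:cycle-bound-cycle}) and (\ref{align:tred-cycle-red-bound-cycle}) themselves, as base changes of $\bT_{-/\cA}$.

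\textbf{Step 2: identify the third term.} Diagram (\ref{diagram:X^tred}) expresses $\upsilon$ as the base change of $0: \cA \to L_\cA[-1]$ along $w_\alpha$. By Lemma \ref{lem:cotangent-complex-homotopy-cartesian-diagram}, as in Proposition \ref{prop:tred-POT-log-GLSM}, this gives
$$\bTT_\upsilon[1] \simeq \bTT_{L_\cA[-1]}|_{X^\tred} \simeq \cO(\D)[-1].$$
Under this identification the map $\bTT_X|_{X^\tred} \to \bTT_\upsilon[1]$ is $\bTT w_\alpha$. The reason is that the tangent complex of $0: \cA \to L_\cA[-1]$, shifted by one, is the fiber direction $\bT_{L_\cA[-1]/\cA}|_\cA \simeq \cLA[-1]$. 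This is exactly the content of the sequence in Proposition \ref{prop:tred-POT-log-GLSM}.

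\textbf{Step 3: restrict and conclude.} Pulling the resulting sequence on $\D^\red_X$ back along $\tau^\red_\D: (\D_{X_\cl})_\cl \hookrightarrow \D^\red_X$ gives the stated sequence. Derived pullback is exact, and the restriction of $\cO_X(\D_X)$ to $(\D_{X_\cl})_\cl$ agrees with $\cO_{X_\cl}(\D_{X_\cl})|_{(\D_{X_\cl})_\cl}$.

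Equivalently, one can restrict the sequence of Proposition \ref{prop:tred-POT-log-GLSM} from $X_\cl$ to $(\D_{X_\cl})_\cl$. Then $\bTT_X|_{(\D_{X_\cl})_\cl} \simeq \bTs_{\D_X}|_{(\D_{X_\cl})_\cl}$ by (\ref{align:cycle-bound-cycle}), and $\bTT_{X^\tred}|_{(\D_{X_\cl})_\cl} \simeq \bTs_{\D^\red_X}|_{(\D_{X_\cl})_\cl}$ by (\ref{align:tred-cycle-red-bound-cycle}). This route is shorter, and I would present it as the proof, with Step 1 justifying that the first arrow is $\bTs\upsilon_\D$.

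The main obstacle is the compatibility check in Step 1. The point is that the semi-log identifications intertwine $(\bTT\upsilon)|$ with $(\bTs\upsilon_\D)|$. This requires a coherent homotopy-commutative cube of cotangent complexes, not just objectwise equivalences. I would handle it by viewing both as instances of the natural base-change transformation for $\bL_{-/\cA}$ along $B\bG_m \to \cA$, which is functorial in the source.
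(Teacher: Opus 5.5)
Your proposal is correct and is essentially the paper's argument: the paper likewise obtains the sequence by restricting Proposition~\ref{prop:tred-POT-log-GLSM} to $(\D_{X_\cl})_\cl$ and rewriting the first two terms via (\ref{align:cycle-bound-cycle}) and (\ref{align:tred-cycle-red-bound-cycle}), using the homotopy Cartesian squares of Diagram~(\ref{diag:divisor-red}). Your Step~1 simply spells out that the first arrow is compatible with $\bTs\upsilon_\D$, a point the paper leaves implicit.
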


\begin{proof}
    It follows from Diagram (\ref{diag:divisor-red}), Equation (\ref{align:cycle-bound-cycle}), Equation (\ref{align:tred-cycle-red-bound-cycle}), and Proposition \ref{prop:tred-POT-log-GLSM}.
\end{proof}

\begin{re}
    Proposition \ref{prop:red-divisor-POT-log-GLSM} may be interpreted as saying that $\D_X^\red$ is a natural derived geometric enhancement of the reduced perfect obstruction theory on $(\D_{X_\cl})_\cl$ constructed in \cite[Section~6.5]{CJR21}.
\end{re}

\begin{re}\label{rmk:criterion-red-divisor-POT-is-POT}
    Note that $\D_X^\red$ is homotopically $2$-smooth. Hence, to show that $$\bTs \tau^{\red}_\D: \bTs_{(\D_{X_\cl})_\cl} \to \bTs_{\D^\red_X}|_{(\D_{X_\cl})_\cl}$$ is a relative perfect obstruction theory on $(\D_{X_\cl})_\cl$ in the sense of \cite{BeFa97}, it suffices to show that $H^2( \bTs_{\D^\red_X}|_{(\D_{X_\cl})_\cl} ) = 0$ by Proposition \ref{prop:cotangent-complex-obstruction-theory}.
\end{re}

\begin{corollary}
\label{cor:red-divisor-POT}
    If the morphism $$H^1((\bTT w_{\alpha})|_{(\D_{X_\cl})_\cl}): H^1(\bTs_{\D_X}|_{(\D_{X_\cl})_\cl}) = H^1(\bTT_{X}|_{(\D_{X_\cl})_\cl}) \to \cO_{X_\cl}(\D_{X_\cl})|_{(\D_{X_\cl})_\cl}$$ is surjective, then $$\bTs \tau^{\red}_\D: \bTs_{(\D_{X_\cl})_\cl} \to \bTs_{\D^\red_X}|_{(\D_{X_\cl})_\cl}$$ is a relative perfect obstruction theory on $(\D_{X_\cl})_\cl$ in the sense of \cite{BeFa97}.
\end{corollary}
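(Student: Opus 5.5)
The plan mirrors the proof of Corollary~\ref{cor:tred-POT}, but carried out on the boundary $(\D_{X_\cl})_\cl$ rather than on all of $X_\cl$. By Remark~\ref{rmk:criterion-red-divisor-POT-is-POT}, $\D_X^\red$ is homotopically $2$-smooth. It therefore suffices to prove
$$H^2\big(\bTs_{\D^\red_X}|_{(\D_{X_\cl})_\cl}\big)=0.$$
Once this vanishing holds, Proposition~\ref{prop:cotangent-complex-obstruction-theory} applies to the closed embedding $\tau^\red_\D:(\D_{X_\cl})_\cl\hookrightarrow \D^\red_X$, now relative to the smooth base $B\bG_m$ via the semi-log tangent complex. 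Its dual shows that $\bTs\tau^\red_\D$ is an isomorphism on $H^0$ and injective on $H^1$. The amplitude condition then gives a relative perfect obstruction theory in the sense of \cite{BeFa97}, exactly as in Proposition~\ref{prop:rel-qsmth-rel-POT}.

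To get the vanishing, take the long exact cohomology sequence of the (co)fiber sequence in Proposition~\ref{prop:red-divisor-POT-log-GLSM}. Its third term $\cO_{X_\cl}(\D_{X_\cl})|_{(\D_{X_\cl})_\cl}[-1]$ has cohomology only in degree $1$. The relevant segment is
$$H^1\big(\bTs_{\D_X}|_{(\D_{X_\cl})_\cl}\big)\longrightarrow \cO_{X_\cl}(\D_{X_\cl})|_{(\D_{X_\cl})_\cl}\longrightarrow H^2\big(\bTs_{\D^\red_X}|_{(\D_{X_\cl})_\cl}\big)\longrightarrow H^2\big(\bTs_{\D_X}|_{(\D_{X_\cl})_\cl}\big).$$
Using Equation~(\ref{align:cycle-bound-cycle}), the left arrow is identified with $H^1((\bTT w_\alpha)|_{(\D_{X_\cl})_\cl})$, which is surjective by hypothesis. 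So $H^2(\bTs_{\D^\red_X}|_{(\D_{X_\cl})_\cl})$ injects into $H^2(\bTs_{\D_X}|_{(\D_{X_\cl})_\cl})$.

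It remains to see that $H^2(\bTs_{\D_X}|_{(\D_{X_\cl})_\cl})=0$. By Equation~(\ref{align:cycle-bound-cycle}) this complex is $\bTT_X|_{(\D_{X_\cl})_\cl}$. Since $X$ is quasi-smooth and $\cA$ is smooth, $\bLL_X$ has Tor-amplitude in $[-1,0]$, so its dual $\bTT_X$ has Tor-amplitude in $[0,1]$. Pullback preserves this, so the $H^2$ vanishes and the claim follows.

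The main point needing care is the identification of the connecting maps. The arrow $H^1(\bTs_{\D_X}|_{(\D_{X_\cl})_\cl})\to\cO_{X_\cl}(\D_{X_\cl})|_{(\D_{X_\cl})_\cl}$ in the long exact sequence must literally be $H^1$ of $(\bTT w_\alpha)|_{(\D_{X_\cl})_\cl}$ under Equation~(\ref{align:cycle-bound-cycle}). This compatibility is built into Proposition~\ref{prop:red-divisor-POT-log-GLSM}, which is derived from Proposition~\ref{prop:tred-POT-log-GLSM} by pulling back along $\D_X\hookrightarrow X$. I would verify it by checking that the squares in Diagram~(\ref{diag:divisor-red}) are homotopy Cartesian, so that relative tangent complexes over $B\bG_m$ and over $\cA$ agree after restriction.

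A secondary subtlety is that Proposition~\ref{prop:cotangent-complex-obstruction-theory} should be applied to $\D_X^\red$ relative to $B\bG_m$ rather than absolutely. This is handled by the same cofiber-sequence argument as in Proposition~\ref{prop:rel-qsmth-rel-POT}, using that $B\bG_m$ is smooth.
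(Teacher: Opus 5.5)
Your proposal is correct and follows the paper's own argument. The paper's proof simply cites Proposition~\ref{prop:red-divisor-POT-log-GLSM} and Remark~\ref{rmk:criterion-red-divisor-POT-is-POT}: the long exact sequence, together with the surjectivity hypothesis, kills $H^2(\bTs_{\D^\red_X}|_{(\D_{X_\cl})_\cl})$. You have additionally made explicit a step the paper leaves implicit, namely that $H^2(\bTT_X|_{(\D_{X_\cl})_\cl})=0$ because quasi-smoothness of $X$ gives $\bTT_X$ Tor-amplitude in $[0,1]$.
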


\begin{proof}
    It directly follows from Proposition \ref{prop:red-divisor-POT-log-GLSM} and Remark \ref{rmk:criterion-red-divisor-POT-is-POT}.
\end{proof}

\begin{re}\label{rmk:DX-red-being-qsmth}
    By Corollary \ref{cor:red-divisor-POT} and similar arguments in Remark \ref{rmk:X-red-being-qsmth}, if the morphism $$H^1((\bTT w_{\alpha})|_{(\D_{X_\cl})_\cl}): H^1(\bTs_{\D_X}|_{(\D_{X_\cl})_\cl}) = H^1(\bTT_{X}|_{(\D_{X_\cl})_\cl}) \to \cO_{X_\cl}(\D_{X_\cl})|_{(\D_{X_\cl})_\cl}$$ is surjective, then $\D_X^\red$ is in fact quasi-smooth.
\end{re}

\begin{re}\label{rmk:log-cosection-surj-along-divisor}
    Assuming the log cosection 
$$\sigma^\log := H^1((\bTT w_{\alpha})|_{X_\cl}): \Ob_{X_\cl}^\log :=H^1(\bTT_{X}|_{X_\cl}) \to \cO_{X_\cl}(\D_{X_\cl})$$
is surjective along $(\D_{X_\cl})_\cl$, i.e. the morphism
$$
H^1((\bTT w_{\alpha})|_{X_\cl})|_{(\D_{X_\cl})_\cl}: H^1(\bTT_{X}|_{X_\cl})|_{(\D_{X_\cl})_\cl} \to \cO_{X_\cl}(\D_{X_\cl})|_{(\D_{X_\cl})_\cl}
$$
    is surjective. Then the morphism 
$$H^1((\bTT w_{\alpha})|_{(\D_{X_\cl})_\cl}): H^1(\bTT_{X}|_{(\D_{X_\cl})_\cl}) \to \cO_{X_\cl}(\D_{X_\cl})|_{(\D_{X_\cl})_\cl}$$
    is surjective.
\end{re}

\begin{re}\label{rmk:what-does-log-cosection-surj-imply}
    By Corollary \ref{cor:red-divisor-POT}, Remark \ref{rmk:DX-red-being-qsmth}, and Remark \ref{rmk:log-cosection-surj-along-divisor}, if the log cosection $\sigma^\log: \Ob^{\log}_{X_\cl} \to \cO_{X_\cl}(\D_{X_\cl})$ is surjective along $(\D_{X_\cl})_\cl$, then $$\bTs \tau^{\red}_\D: \bTs_{(\D_{X_\cl})_\cl} \to \bTs_{\D^\red_X}|_{(\D_{X_\cl})_\cl}$$ is a relative perfect obstruction theory on $(\D_{X_\cl})_\cl$ in the sense of \cite{BeFa97}, and $\D_X^\red$ is in fact quasi-smooth.
\end{re}

\begin{definition}\label{rmk:red-divisor-POT}
    If $\D_X^\red$ is quasi-smooth (e.g. if the log cosection $\sigma^\log: \Ob_{X_\cl}^\log \to \cO_{X_\cl}(\D_{X_\cl})$ associated with $w_\alpha$ is surjective along $(\D_{X_\cl})_\cl$), then the \emph{reduced perfect obstruction theory on $(\D_{X_\cl})_\cl$ associated with $w_\alpha$} is defined to be $$\bTs \tau^{\red}_\D: \bTs_{(\D_{X_\cl})_\cl} \to \bTs_{\D^\red_X}|_{(\D_{X_\cl})_\cl}.$$ The \emph{reduced boundary cycle $[(\D_{X_\cl})_\cl]^\red$ associated with $w_\alpha$} is then defined to be the virtual cycle induced by $\bTs \tau^{\red}_\D$.
\end{definition}

\begin{re}\label{rmk:notation-red-bound-cycle}
    Note that $(\D_{X_\cl})_\cl = (\D_{X})_\cl$. For simplicity, we may also use $[(\D_{X})_\cl]^\red$ to denote the reduced boundary cycle associated with $w_\alpha$.
\end{re}

\subsection{Decomposition Theorem for Virtual Cycles}\label{sec:decompose-vir-cycle}

In this subsection, we prove the decomposition theorem for virtual cycles (Theorem \ref{thm:decompose-virt-cycle}) using results from Section \ref{sec:log-cosection}, Section \ref{sec:red-vir-cycle}, and Section \ref{sec:red-bound-cycle}.

Let $w_{\alpha} : X \to L_{\cA}[-1]$ be a $(-1)$-shifted log function on a separated quasi-smooth derived log Deligne--Mumford stack $(X, \alpha_X)$.

\begin{re}\label{rmk:vdim-X-red-DX-red}
    Let $\vdim\ X$ denote the virtual dimension of $X$, which is the virtual rank of $\bL_X|_{X_\cl}$. Note that $\vdim\ X = \vdim\ X^\red = \vdim\ \D_X^\red = \vdim\ X^\tred -1$.
\end{re}

If the log cosection $\sigma^\log: \Ob^{\log}_{X_\cl} \to \cO_{X_\cl}(\D_{X_\cl})$ associated with $w_{\alpha}$ is surjective along $(\D_{X_\cl})_\cl$, then $X^\red$ and $\D_X^\red$ are quasi-smooth (see Remark \ref{rmk:X-red-being-qsmth} and Remark \ref{rmk:what-does-log-cosection-surj-imply}), and we have
$$
\dim\ [X_\cl]^\vir = \dim\ [X_\cl]^\red = \dim\ [(\D_{X_\cl})_\cl]^\red
$$
by Remark \ref{rmk:vdim-X-red-DX-red}. Moreover, we have the following theorem.

\begin{theorem}[Decomposition Theorem for Virtual Cycles]
\label{thm:decompose-virt-cycle}

    Let $w_{\alpha} : X \to L_{\cA}[-1]$ be a $(-1)$-shifted log function on a separated\footnote{A derived Deligne--Mumford stack is separated if its classical truncation is separated.} quasi-smooth derived log Deligne--Mumford stack $(X, \alpha_X)$. If the log cosection $\sigma^\log: \Ob^{\log}_{X_\cl} \to \cO_{X_\cl}(\D_{X_\cl})$ associated with $w_{\alpha}$ is surjective along $(\D_{X_\cl})_\cl$, then we have a relation of virtual cycles
$$
[X_\cl]^\vir = [X_\cl]^\red + i_*[(\D_{X_\cl})_\cl]^\red.
$$
    Here $i: (\D_{X_\cl})_\cl \hookrightarrow X_\cl$ is the natural closed embedding.
\end{theorem}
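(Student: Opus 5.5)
We plan to argue via cosection localization, comparing the Kiem--Li localized class of the log-meromorphic cosection with the reduced pieces. By Proposition \ref{prop:log-meromorphic-cosection-localized-virt-cycle}, $[X_\cl]^\vir = \iota_*[X_\cl]^\vir_{\widetilde{\sigma^\log}}$. By Remark \ref{rmk:degeneracy-loci-coincide}, under the surjectivity hypothesis the degeneracy locus $X_\cl(\widetilde{\sigma^\log})$ is the disjoint union of $X_\cl(\sigma^\red)=X_\cl(\sigma^\log)$ and $(\D_{X_\cl})_\cl$. Hence the localized class splits as a sum of a part $A$ supported on $X_\cl(\sigma^\red)$ and a part $B$ supported on $(\D_{X_\cl})_\cl$. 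Remark \ref{rmk:reduction-argument} identifies $A$ with $[X_\cl]^\vir_{\sigma^\red}$. Corollary \ref{thm:red-cosection-localized-virt-cycle} then shows that its pushforward is $[X_\cl]^\red$. So the theorem reduces to showing that the boundary part satisfies $B = [(\D_{X_\cl})_\cl]^\red$ in $A_*((\D_{X_\cl})_\cl)$.

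For the boundary part we work locally near $(\D_{X_\cl})_\cl$. Since the two degeneracy components are disjoint, we may restrict to an open neighbourhood $V$ of $(\D_{X_\cl})_\cl$ in $X_\cl$ on which $\sigma^\log$ is surjective. On $V$ the twisted cosection $\sigma^{\log\tw}$ is surjective, so by Corollary \ref{cor:tred-POT} and Remark \ref{rmk:X-tred-being-qsmth} the space $X^\tred|_V$ is quasi-smooth and carries the totally reduced obstruction theory. We first show that on $V$ the localized class of the meromorphic cosection equals $-[\D]\cap[V]^\tred$, with the sign and degree fixed by Remark \ref{rmk:vdim-X-red-DX-red}. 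Concretely, $\bTT_X|_V$ differs from $\bTT_{X^\tred}|_V$ by the line bundle $\cO(\D)[-1]$ (Proposition \ref{prop:tred-POT-log-GLSM}). The meromorphic cosection is $\sigma^\log$ composed with $s^\vee$ inverted, so its pole is exactly the divisor. This is the blow-up formula $b_*(-[D]\cap e(K,\cdot))$ of Example \ref{eg:local-model-cosection-localization}, where no blow-up is needed because the meromorphic cosection is already a surjective map to $\cO(\D)$ with the divisor as its pole. I would check this following the Kiem--Li construction for meromorphic cosections, with the Kiem--Li cone reduction applied to the kernel of $\sigma^\log$.

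Next we identify $\D\cdot[V]^\tred$ (the Gysin pullback along the virtual Cartier divisor) with $[(\D_{X_\cl})_\cl]^\red$. Diagram (\ref{diag:divisor-red}) shows that $\D_X^\red = X^\tred\times^h_\cA B\bG_m$ is a homotopy pullback of the quasi-smooth $X^\tred$ along the virtual Cartier divisor $B\bG_m\hookrightarrow\cA$. Functoriality of virtual classes under quasi-smooth base change (virtual pullback, compatibility of Gysin maps with derived fibre products) then gives $[(\D_X)_\cl]^\red = 0^!_{\cA}[X_\cl]^\tred$, and the latter is $[\D]\cap[X_\cl]^\tred$ near the boundary. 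The remaining task is to track the sign, which should be absorbed by the $[-1]$ shift relating the twisted and untwisted obstruction theories.

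The main obstacle is the second paragraph: the precise equality between the boundary part of the Kiem--Li localized class of $\widetilde{\sigma^\log}$ and the divisor intersection of the totally reduced class, including the sign. To handle it, I would deform the cone $C_{X_\cl}$ inside $h^1/h^0(\bTT_X|_V)$ to the cone in the kernel stack, using the exact sequence $0\to\Ob^\tred\to\Ob^\log\to\cO(\D)\to0$ and the local model of Example \ref{eg:local-model-cosection-localization}. Alternatively, I would compare with \cite[Theorem~6.5]{CJR21}, whose proof carries out exactly this computation once these obstruction theories are identified.
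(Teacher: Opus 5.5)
Your outline is the paper's proof. You split $[X_\cl]^\vir_{\widetilde{\sigma^\log}}$ over the two disjoint components of $X_\cl(\widetilde{\sigma^\log})$ and identify the interior part with $[X_\cl]^\vir_{\sigma^\red}$ via Remark~\ref{rmk:reduction-argument}. Near the boundary, where $\sigma^\log$ is surjective and $X^\tred$ is quasi-smooth, you compare the boundary part with the Gysin pullback of $[X_\cl]^\tred$. That pullback is $[(\D_{X_\cl})_\cl]^\red$ by virtual pullback along $\D^\red_X\hookrightarrow X^\tred$; the paper cites \cite[Proposition~1.5]{STV15}, and your base change along $B\bG_m\hookrightarrow\cA$ is the same step.

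The problem is the central claim, which you state with the wrong sign. The boundary part is $+[\D]\cap[V]^\tred=\bfc_1(\cO(\D))\cap[V]^\tred$, not $-[\D]\cap[V]^\tred$. The $-[D]$ in Example~\ref{eg:local-model-cosection-localization} arises because the blown-up cosection lands in $\cO_{\widetilde{X}}(-D)$, so $D$ is where the cosection \emph{vanishes}. Here $\widetilde{\sigma^\log}$ has a \emph{pole} along $\D$ and extends to the surjection $\sigma^\log$ onto $\cO(\D)$. The excess line bundle is therefore $\cO(\D)$ and you get $\bfc_1(\cO(\D))$ with a plus sign. This is exactly Example~\ref{eg:local-model-decompose-virt-cycle}, where $\bfc_r(E)=\bfc_{r-1}(E^\tred)\bfc_1(\cO_X(\D_X))$. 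No shift or twist of complexes can ``absorb'' a sign in a Chow class. Carried through as written, your argument would give $[X_\cl]^\vir=[X_\cl]^\red-i_*[(\D_{X_\cl})_\cl]^\red$, which is false.

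The step you call the main obstacle also needs no deformation of cones. Since $\tau=\upsilon\circ\tau^\tred$, the log obstruction theory factors through the totally reduced one (Lemma~\ref{lem:tangent-complex-factor-tred}). Both are obstruction theories for the same morphism $(\alpha_X)_\cl\colon X_\cl\to\cA$. Hence the relative intrinsic normal cone $\fC_{X_\cl}$ already lies in $\fE^\tred_{X_\cl}$, the kernel of $\fE_{X_\cl}\to\V_{X_\cl}(\cO_{X_\cl}(-\D_{X_\cl}))$ induced by Proposition~\ref{prop:tred-POT-log-GLSM}. This is the Kiem--Li cone reduction, supplied for free by the derived structure. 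With $\id$ as the regularizing morphism, unwinding the Kiem--Li definition gives directly $[X_\cl]^\vir_{\widetilde{\sigma^\log}}=i^!0^!_{\fE^\tred_{X_\cl}}[\fC_{X_\cl}]=i^![X_\cl]^\tred$. Your second step then turns this into $[(\D_{X_\cl})_\cl]^\red$.

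Your fallback of comparing with \cite[Theorem~6.5]{CJR21} does not cover the present generality. That result relies on an auxiliary stack (cf.\ \cite[Remark~6.2]{CJR21}), which is precisely what this theorem is designed to avoid.
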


\begin{proof}
    There is a right exact sequence of Chow groups of classical Deligne--Mumford stacks
$$
A_{*}\left( X_\cl\left( \sigma^\red \right) \right) \to A_{*}\left( X_\cl \right) \to A_{*}\left( X_\cl \setminus X_\cl\left( \sigma^\red \right) \right) \to 0.
$$
    Replacing $X_\cl$ by $X_\cl \setminus X_\cl\left( \sigma^\red \right)$, we may assume that the log cosection $\sigma^\log$ is surjective everywhere on $X_\cl$ by Remark \ref{rmk:degeneracy-loci-coincide}. By Corollary \ref{thm:red-cosection-localized-virt-cycle}, the cosection localized virtual cycle $[X_\cl]^\vir_{\sigma^\red}$ is supported on $X_\cl(\sigma^\red)$, which is empty by assumption. Hence, we see that $[X_\cl]^\vir_{\sigma^\red}=0$, and thus $[X_\cl]^\red=0$. It remains to show that
$$
[X_\cl]^\vir = i_* [(\D_{X_\cl})_\cl]^\red
$$
    by Remark \ref{rmk:reduction-argument}.
    
    By Corollary \ref{cor:tred-POT} and Remark \ref{rmk:X-tred-being-qsmth}, $\bTT\tau^\tred$ is a perfect obstruction theory and $X^\tred$ is quasi-smooth. By Remark \ref{rmk:what-does-log-cosection-surj-imply}, $\bTs\tau_{\D}^\red$ is a perfect obstruction theory and $\D_X^\red$ is quasi-smooth. Moreover, the natural closed embedding $\D^\red_{X} \hookrightarrow X^\tred$ in Diagram (\ref{diag:divisor-red}) is quasi-smooth, and its classical truncation is $i: (\D_{X_\cl})_\cl \hookrightarrow X_\cl$. Hence, by \cite[Proposition~1.5]{STV15}, we have
$$
i^![X_\cl]^\tred = [(\D_{X_\cl})_\cl]^\red,
$$
    where $i^!: A_{*}\left( X_\cl \right) \to A_{*}\left( (\D_{X_\cl})_\cl \right)$ is the virtual pullback (see \cite{Manolache12} for more details).

    $\fE_{X_\cl} := \V_{X_\cl}(\bLL_{X}|_{X_\cl}[-1])$ and $\fE^\tred_{X_\cl} := \V_{X_\cl}(\bLL_{X^\tred}|_{X_\cl}[-1])$ are vector bundle stacks over $X_\cl$ in the sense of \cite{BeFa97} (see \cite[Proposition~2.4]{BeFa97} and \cite[Remark~3.12]{ArPs24}). Moreover, $\fE^\tred_{X_\cl}$ is the kernel cone stack of $\fE_{X_\cl} \to \V_{X_\cl} \left(\cO_{X_\cl}(-\D_{X_\cl})\right)$ induced by the morphism $\bTT_{X}|_{X_\cl} \to \cO_{X_\cl}(\D_{X_\cl})[-1]$ in Proposition \ref{prop:tred-POT-log-GLSM} (see \cite[Proposition~2.7]{BeFa97}, \cite[Lemma~3.4]{ArPs24}, and \cite[Remark~3.9]{ArPs24}). Let $\fC_{X_\cl}$ be the relative intrinsic normal cone of $(\alpha_X)_\cl: X_\cl \to \cA$ in the sense of \cite{BeFa97}.
    Unwinding the definition of cosection localized virtual cycle in \cite[Definition~3.2]{KiLi13} and {\cite[Definition~2.3]{KiLi13}}, we have 
$$
[X_\cl]^\vir_{\widetilde{\sigma^\log}} = i^! 0^!_{\fE^\tred_{X_\cl}}[\fC_{X_\cl}] = i^![X_\cl]^\tred.
$$
    Note that $[X_\cl]^\vir_{\widetilde{\sigma^\log}} \in A_{*}\left( (\D_{X_\cl})_\cl \right)$, because $X_{\cl}(\widetilde{\sigma^\log}) = (\D_{X_\cl})_\cl$ by Remark \ref{rmk:decompose-virt-cycle}. 
    
    By Proposition \ref{prop:log-meromorphic-cosection-localized-virt-cycle}, we have
$$
[X_\cl]^\vir = i_* [X_\cl]^\vir_{\widetilde{\sigma^\log}}.
$$
    
    Therefore, we see that
$$
[X_\cl]^\vir = i_* [X_\cl]^\vir_{\widetilde{\sigma^\log}}= i_*i^![X_\cl]^\tred = i_* [(\D_{X_\cl})_\cl]^\red.
$$   
\end{proof}

\begin{re}
    $\D^\red_{X} \hookrightarrow X^\tred$ is a virtual Cartier divisor, and hence quasi-smooth, which ensures the existence of the virtual pullback $i^!$ associated with its classical truncation $i: (\D_{X_\cl})_\cl \hookrightarrow X_\cl$ by \cite[Proposition~1.5]{STV15}. Note that the fact that the classical truncation of a virtual Cartier divisor is not necessarily an effective Cartier divisor in the classical sense is not an obstruction to the existence of virtual pullback $i^!$.
\end{re}

\begin{re}
    Note that $\xi: X^\tred \to X^\red$ is homotopically $2$-smooth, so the current technology for virtual pullback does not apply at all. In particular, we do \textbf{NOT} have
    $$
    [X_\cl]^\tred = \xi^![X_\cl]^\red = 0
    $$
    when log cosection $\sigma^\log$ is surjective everywhere on $X_\cl$.
\end{re}

\begin{re}
    By Diagram (\ref{diagram:shifted-function-on-X^tred}), associated with $w_\alpha$, there is a zero relative $(-1)$-shifted function
$$
w_\alpha^\tred: X^\tred \longrightarrow \cA \overset{0}{\longrightarrow} \bA^1_\cA[-1]
$$
    on $X^\tred$ over $\cA$. The degeneracy locus of the (relative) cosection associated with $w_\alpha^\tred$ is $X_\cl$. This implies that the corresponding cosection localized virtual cycle is precisely $[X_\cl]^\tred$ if we assume that the log cosection $\sigma^\log$ associated with $w_\alpha$ is surjective everywhere on $X_\cl$. 
\end{re}

\begin{example}[Toy Model for Theorem \ref{thm:decompose-virt-cycle}]
\label{eg:toy-model-decompose-virt-cycle}

Let $X$ be a smooth (classical) scheme, and $\D_X$ be an effective Cartier divisor on $X$. Let $j$ denote the canonical section $\cO_X \to \cO_X(\D_X)$ defining the divisor $\D_X$.

Assume there is a morphism\footnote{Within this example, vector bundles and locally free sheaves share the same notation, e.g. $E := \Spec_X \Sym_X (E^\vee)$.}
$$
\sigma^\log: E \to \cO_X(\D_X)
$$
    that is surjective along $\D_X$. Here, $E$ is a locally free sheaf of rank $r$. Note that the morphism $$\sigma^\log \oplus j: E \oplus \cO_X \to \cO_X(\D_X)$$ is surjective, and hence $E^\red := \ker \left(\sigma^\log \oplus j\right)$ is a locally free sheaf of rank $r$. Then there is a short exact sequence
\begin{align}
    \label{align:origin-X-red}
    0 \to E^\red \to E \oplus \cO_X \to \cO_X(\D_X) \to 0
\end{align}
    of locally free sheaves. By {\cite[Theorem~3.2(e)]{Fulton98}}, we have
\begin{align*}
    \bfct(E \oplus \cO_X) = \bfct(E^\red)\cdot \bfct(\cO_X(\D_X)),
\end{align*}
    where $\bfct$ denotes the Chern polynomial.
    Thus, by standard computation, we have
\begin{align}
    \label{align:1-decompose-euler-class}
    \bfc_r(E^\red) \bfc_1(\cO_X(\D_X)) &= 0; \\
    \label{align:2-decompose-euler-class}
    \bfc_r(E^\red) + \bfc_{r-1}(E^\red)\bfc_1(\cO_X(\D_X)) &= \bfc_r(E).
\end{align}
    Equation (\ref{align:2-decompose-euler-class}) tells us that there is a decomposition formula
    $$\bfc_r(E) \cap [X] = \bfc_r(E^\red) \cap [X] + \bfc_{r-1}(E^\red) \cap [\D_X]$$
    for cycle $\bfc_r(E) \cap [X]$, where $\bfc_{r-1}(E^\red) \cap [\D_X]$ is supported on $\D_X$. $\blacksquare$
\end{example}

\begin{re}
    Note that Sequence (\ref{align:origin-X-red}) inspires the construction of $X^\red$ (see Definition \ref{def:red-POT}).
\end{re}

\begin{example}[Local Model for Theorem \ref{thm:decompose-virt-cycle}]
\label{eg:local-model-decompose-virt-cycle}

Consider the following diagram.\footnote{Within this example, vector bundles and locally free sheaves share the same notation, e.g. $E := \Spec_X \Sym_X (E^\vee)$.}
$$
\begin{tikzcd}
                   & {E} \arrow[r, "\sigma^\log"] \arrow[d]      & {\cO_X(\D_X)} \\
{Z(s)_\cl} \arrow[r, hook] & {X} \arrow[u, bend right, "s"'] &   
\end{tikzcd}
$$
Here, $X$ is a connected smooth (classical) scheme of dimension $d$; $\cO_X(\D_X)$ is a line bundle associated with an effective Cartier divisor $\D_X$ on $X$; $E$ is a vector bundle of rank $r$; $Z(s)_\cl$ is the classical zero locus of a section $s: X \to E$. We assume that $\sigma^\log \circ s = 0.$

By Example \ref{eg:local-model-POT}, we have
$$
[Z(s)_\cl]^\vir = e(E,s) \in A_{d-r}\left( Z(s)_\cl\right).
$$
Here, $[Z(s)_\cl]^\vir$ is the virtual cycle; $e(E,s)$ is the refined Euler class of $E$ with respect to $s$.

We additionally assume that $\sigma^\log: E \to \cO_X(\D_X)$ is surjective along $\D_X$. Let $j: \cO_X \to \cO_X(\D_X)$ denote the canonical section defining the divisor $\D_X$. Note that the morphism $$\sigma^\log \oplus j: E \oplus \cO_X \to \cO_X(\D_X)$$ is surjective due to the assumption that $\sigma^\log$ is surjective along $\D_X$. 
Then there is a commutative diagram below, where the row is a short exact sequence of locally free sheaves.
$$
\begin{tikzcd}
{0} \arrow[r] & {E^\red} \arrow[r, "(\bar{\theta} \text{,} \sigma^\red)"] & {E \oplus \cO_X} \arrow[r, "\sigma^\log \oplus j"]                               & {\cO_X(\D_X)} \arrow[r] & {0} \\
             &              & {\cO_X} \arrow[lu, dashed, "s"] \arrow[u, "(s \text{,} 0)"'] \arrow[ru, "0"'] &              &   
\end{tikzcd}
$$
We define the reduced virtual cycle $[Z(s)_\cl]^\red$ of $Z(s)_\cl$ to be the refined Euler class $e(E^\red,s)$ of $E^\red$ with respect to $s$.

Note that $\sigma^\red \circ s = 0$. Moreover, $\sigma^\red: E^\red \to \cO_X$ is surjective along $\D_X$, due to the fact that $\ker \left( \sigma^\log \oplus j \right) = \im \left( (\bar{\theta}, \sigma^\red) \right)$. Thus, we see that the classical degeneracy locus of $\sigma^\red$ is contained in $Z(s)_\cl \setminus \D_X$. This means that the reduced virtual cycle $[Z(s)_\cl]^\red$ of $Z(s)_\cl$ is supported outside $\D_X$ by Example \ref{eg:local-model-cosection-localization}.

From now on, we moreover assume that $\sigma^\log: E \to \cO_X(\D_X)$ is a surjective morphism.

Then $\sigma^\red: E^\red \to \cO_X$ is a surjective morphism due to the assumption that $\sigma^\log$ is a surjective morphism and the fact that $\ker \left( \sigma^\log \oplus j \right) = \im \left( (\bar{\theta}, \sigma^\red) \right)$. Thus, we see that the degeneracy locus of $\sigma^\red$ is empty and hence the cosection localized virtual cycle $[Z(s)_\cl]_{\sigma^\red} = 0$ by Example \ref{eg:local-model-cosection-localization}. As a result, we have 
$$[Z(s)_\cl]^\red = 0,$$
which implies that 
$$
\bfc_r(E) \cap [X] = \bfc_{r-1}(E^\red) \cap [\D_X]
$$
by Example \ref{eg:toy-model-decompose-virt-cycle} and {\cite[Proposition~14.1(a)]{Fulton98}}.

There is also a commutative diagram below, where the row is a short exact sequence of locally free sheaves.
\begin{equation}\label{diagram:E-tred}
\begin{tikzcd}
{0} \arrow[r] & {E^\tred} \arrow[r] & {E} \arrow[r, "\sigma^\log"]                               & {\cO_X(\D_X)} \arrow[r] & {0} \\
             &              & {\cO_X} \arrow[lu, dashed, "s"] \arrow[u, "s"'] \arrow[ru, "0"'] &              &   
\end{tikzcd}
\end{equation}
We define the totally reduced virtual cycle $[Z(s)_\cl]^\tred$ of $Z(s)_\cl$ to be the refined Euler class $e(E^\tred,s)$ of $E^\tred$ with respect to $s$, and the reduced boundary cycle to be $$[\D_X \cap Z(s)_\cl]^\red:= \bfc_1(\cO_X(\D_X)) \cap e(E^\tred,s) \in A_{d-r}(\D_X \cap Z(s)_\cl).$$
Note that we have
$$
\bfct(E^\tred)\cdot \bfct(\cO_X(\D_X)) = \bfct(E)
$$
by the above short exact sequence. Hence, we have
$$
\bfc_{r-1}(E^\tred) \cap [\D_X] = \bfc_r (E) \cap [X] = \bfc_{r-1}(E^\red) \cap [\D_X].
$$
Note that $\bfc_{r-1}(E^\tred)$ is the top Chern class (or Euler class) of $E^\tred$, but $\bfc_{r-1}(E^\red)$ is not the top Chern class (or Euler class) of $E^\red$.

Set $U := X \setminus \D_X$. Since $\sigma^\log: E \to \cO_X(\D_X)$ is assumed to be surjective, we can construct a map $\widetilde{\sigma^\log}: E \dashrightarrow \cO_X$ such that it is undefined along $D_X$ and
$$
    \widetilde{\sigma^\log}|_U := \sigma^\log|_U: E|_U \to \cO_U \text{ is surjective}
$$
Note that $\id: X \to X$ is a $\widetilde{\sigma^\log}$-regularizing morphism in the sense of \cite[Definition~2.3]{KiLi13}, since $\widetilde{\sigma^\log}$ extends to $\sigma^\log$ by construction. Hence, following \cite[Basic Construction]{KiLi13}, we have
$$
[Z(s)_\cl]^\vir_{\widetilde{\sigma^\log}} := \bfc_1(\cO_X(\D_X)) \cap e(E^\tred,s)
$$
with $\ker\left( \sigma^\log \right) = E^\tred$.

Then we immediately see that 
$$
[Z(s)_\cl]^\vir_{\widetilde{\sigma^\log}} = \bfc_1(\cO_X(\D_X)) \cap e(E^\tred,s) = [\D_X \cap Z(s)_\cl]^\red.
$$
Therefore, we have
$$
[Z(s)_\cl]^\vir = i_*[Z(s)_\cl]^\vir_{\widetilde{\sigma^\log}} = i_*[\D_X \cap Z(s)_\cl]^\red,
$$
where $i: \D_X \cap Z(s)_\cl \hookrightarrow Z(s)_\cl$ is the closed embedding. $\blacksquare$

\end{example}

\begin{example}[The Derived Self-Intersection of $\bP^1$ in $\bP^2$]\
\label{eg:der-self-intersect-P1-in-P2}

Let $\widetilde{\bP^1}$ denote the derived self-intersection of $\bP^1$ in $\bP^2$, defined via the homotopy Cartesian diagram
\begin{equation}\label{diag:1-tilde-P^1}
\begin{tikzcd}
{\widetilde{\bP^1}} \arrow[r] \arrow[d] \ar[dr, phantom, "\lrcorner_{\text{\tiny h}}", very near start] & {\bP^1} \arrow[d,"0"] \\
{\bP^1} \arrow[r, "0"]           & {\bP^2}       .
\end{tikzcd}
\end{equation}
Note that $(\widetilde{\bP^1})_\cl \cong \bP^1$.

Since there is a natural open embedding $\Tot_{\bP^1}(\cO_{\bP^1}(1)) := \Spec_{\bP^1}\Sym_{\bP^1}(\cO_{\bP^1}(-1)) \hookrightarrow \bP^2$, $\widetilde{\bP^1}$ also fits into the homotopy Cartesian diagram
\begin{equation}\label{diag:2-tilde-P^1}
\begin{tikzcd}
{\widetilde{\bP^1}} \arrow[r] \arrow[d] \ar[dr, phantom, "\lrcorner_{\text{\tiny h}}", very near start] & {\bP^1} \arrow[d,"0"] \\
{\bP^1} \arrow[r, "0"]           & {\Tot_{\bP^1}(\cO_{\bP^1}(1))}       
\end{tikzcd}
\end{equation}
by \cite[Corollary~25.3.6.6]{LurieSAG}\footnote{Also known as Hurewicz theorem.}. Thus, we see that $\widetilde{\bP^1} \cong \Tot_{\bP^1}(\cO_{\bP^1}(1))[-1]$, and hence
$$
\cO_{\widetilde{\bP^1}} \simeq \left[ \cO_{\bP^1}(-1)\langle \nu \rangle \overset{0}{\longrightarrow} \cO_{\bP^1} \right]
$$
with cohomological degree $|\nu|=-1$. We immediately see that $H^{-1}\left(\Gamma\left(\widetilde{\bP^1}, \cO_{\widetilde{\bP^1}} \right) \right)=0$. In other words, there is no non-zero $(-1)$-shifted function on $\widetilde{\bP^1}$.

However, we can still construct a non-trivial $(-1)$-shifted log function on $\widetilde{\bP^1}$.

We use $(\bP^1, \pt \hookrightarrow \bP^1)$ to denote the (derived) log scheme whose log structure is associated with the (virtual) Cartier divisor $\pt \hookrightarrow \bP^1$, and $(\bP^2, \bP^1 \hookrightarrow \bP^2)$ to denote the (derived) log scheme whose log structure is associated with the (virtual) Cartier divisor $\bP^1 \hookrightarrow \bP^2$.\footnote{There are many ways to equip $\bP^1$ and $\bP^2$ with log structures, but the log structures considered here are chosen so that $0: \bP^1 \to \bP^2$ in Diagram (\ref{diag:1-tilde-P^1}) becomes a strict morphism. More explicitly, the desired strict morphism is $$0: (\bP^1_{x,y} , \{x=0\}=\pt \to \bP^1_{x,y} ) \to (\bP^2_{x,y,z} , \{x=0\} = \bP^1_{y,z} \to \bP^2_{x,y,z}).$$} By Diagram (\ref{diag:1-tilde-P^1}), we obtain a derived log scheme $(\widetilde{\bP^1}, \pt \times^h_{\bP^1} \pt \hookrightarrow \widetilde{\bP^1})$ whose log structure is associated with the virtual Cartier divisor $\pt \times^h_{\bP^1} \pt \hookrightarrow \widetilde{\bP^1}$. By Diagram (\ref{diag:2-tilde-P^1}), we obtain a derived log scheme $(\widetilde{\bP^1}, \pt \times^h_{\bA^1} \pt \hookrightarrow \widetilde{\bP^1})$ whose log structure is associated with the virtual Cartier divisor $\pt \times^h_{\bA^1} \pt \hookrightarrow \widetilde{\bP^1}$. Since $\bA^1 \hookrightarrow \bP^1$ is an open embedding, we see that $\pt \times^h_{\bP^1} \pt \cong \pt \times^h_{\bA^1} \pt \cong \bA^1[-1]$ by \cite[Corollary~25.3.6.6]{LurieSAG}. Thus, we conclude that $(\widetilde{\bP^1}, \pt \times^h_{\bP^1} \pt \hookrightarrow \widetilde{\bP^1})$ and $(\widetilde{\bP^1}, \pt \times^h_{\bA^1} \pt \hookrightarrow \widetilde{\bP^1})$ are equivalent derived log schemes.

For simplicity, let $\D_{\widetilde{\bP^1}}$ denote $\bA^1[-1]$, and $(\widetilde{\bP^1}, \D_{\widetilde{\bP^1}} \hookrightarrow \widetilde{\bP^1})$ denote the derived log scheme whose log structure is associated with the virtual Cartier divisor $\D_{\widetilde{\bP^1}} \hookrightarrow \widetilde{\bP^1}$.

By \cite{KhRy25}, there is a generalized Cartier divisor $t_{\text{\tiny $\infty$}}: \cO_{\widetilde{\bP^1}}(-\D_{\widetilde{\bP^1}}) \to \cO_{\widetilde{\bP^1}}$ associated with the virtual Cartier divisor $\D_{\widetilde{\bP^1}} \hookrightarrow \widetilde{\bP^1}$. Hence, we can conclude that $H^{-1}\left( \Gamma\left(\widetilde{\bP^1}, \cO_{\widetilde{\bP^1}}(\D_{\widetilde{\bP^1}}) \right) \right) =\bC \langle \frac{\nu}{t_{\text{\tiny $\infty$}}} \rangle$. Thus, there is a $(-1)$-shifted log function
$$
1 \cdot \frac{\nu}{t_{\text{\tiny $\infty$}}} : \cO_{\widetilde{\bP^1}} \to \cO_{\widetilde{\bP^1}}(\D_{\widetilde{\bP^1}})[-1].
$$
Moreover, we see that the classical degeneracy locus of the log cosection induced by the above $(-1)$-shifted log function is empty (see Example \ref{eg:classical-truncation-affine-derived-log-critical-locus}). Therefore, we have $[(\widetilde{\bP^1})_\cl]^\red = 0$ by Remark \ref{rmk:degeneracy-loci-coincide} and Corollary \ref{thm:red-cosection-localized-virt-cycle}. 

$[(\widetilde{\bP^1})_\cl]^\vir = [\pt]$ follows from Diagram (\ref{diag:2-tilde-P^1}), Example \ref{eg:local-model-POT}, and self-intersection formula \cite[Corollary~6.3]{Fulton98}. Then by Theorem \ref{thm:decompose-virt-cycle}, we have
\begin{align}\label{align:tilde-P^1}
[\pt]=[(\widetilde{\bP^1})_\cl]^\vir = [(\widetilde{\bP^1})_\cl]^\red + i_*[(\D_{\widetilde{\bP^1}})_\cl]^\red = i_*[(\D_{\widetilde{\bP^1}})_\cl]^\red,
\end{align}
where $i: \pt \hookrightarrow \bP^1$ is the natural closed embedding. Note that $(\D_{\widetilde{\bP^1}})_\cl = \pt$.
    
    Equation (\ref{align:tilde-P^1}) is exactly what one would expect in intersection theory, because the intersection product $[\bP^1 \cdot_{\bP^2} \bP^1]$ given by Diagram (\ref{diag:1-tilde-P^1}) is precisely $[(\widetilde{\bP^1})_\cl]^\vir = [\pt]$, which essentially comes from the boundary contribution $[(\D_{\widetilde{\bP^1}})_\cl]^\red$.
    $\blacksquare$
\end{example}

\begin{re}
    By Diagram (\ref{diag:2-tilde-P^1}) and \cite[Example~4.35]{Jiang25}, there is an equivalence of derived schemes $$\widetilde{\bP^1} \cong \Proj_{\bP^1} \Sym_{\bP^1} \left( \left[ \cO_{\bP^1}(-1) \overset{0}{\longrightarrow} \cO_{\bP^1} \right] \right).$$ Recall that in Example \ref{eg:der-self-intersect-P1-in-P2}, we also have an equivalence of derived schemes $$\widetilde{\bP^1} \cong \Spec_{\bP^1}\Sym_{\bP^1}(\cO_{\bP^1}(-1)[1]).$$
    Thus, $\widetilde{\bP^1}$ is somewhat an interesting derived scheme, since it is both a derived vector bundle and a derived projective vector bundle over $\bP^1$.
\end{re}

\section{A Prototype for Derived Logarithmic GLSM}\label{sec:4}

In this section, we construct log compactifications of the derived Hodge bundles and the derived moduli of stable maps with $p$-fields (or, for short, the derived moduli of $p$-fields). We then construct $(-1)$-shifted log functions on these log compactifications, which induce log cosections.

The construction given in Section \ref{sec:log-compact-der-moduli-p-fields} can be made more general. For the sake of clarity, we are content with this level of generality.

The starting point of this section is to recover the work of \cite{CJR21} in a derived geometry setting. However, we do not have a clear approach to constructing a quasi-smooth derived enhancement of the Log GLSM moduli stack in \cite{CJR21}. Nevertheless, thanks to Corollary \ref{cor:log-compact-derived-weil-restriction}, we can log compactify the derived moduli of $p$-fields in a relatively simple way, and hence the title of this section. In addition to Log GLSM \cite{CJR21}, this section is influenced by \cite{ChLi12, CJW21, Oh22, OhTh24, KMMP25}.

\begin{re}\label{rmk:sect-4-from-sect-3}
    Thanks to Proposition \ref{prop:rel-qsmth-rel-POT} and Proposition \ref{prop:rel-cosection-rel-POT}, everything in section  \ref{sec:3} immediately generalizes to the relative setting in this section by replacing $\cA$ with $\cA \times S$, where $S$ is a smooth classical $1$-Artin stack with separated diagonal (e.g.\ the smooth classical $1$-Artin stack $\fMgn$ of prestable curves, see the proof of \cite[Lemma~A.1]{GrSi13}).
\end{re}

In this section, for simplicity, we set $\bG_{m, B} := \bG_{m} \times^h B$, where $B$ is a derived stack.

\subsection{A Logarithmic Compactification of the Derived Hodge Bundles}\label{sec:log-compact-der-hodge-bund}

We begin by constructing a $(-1)$-shifted function on the derived Hodge bundle, and hence a cosection on the Hodge bundle. We then construct a $(-1)$-shifted log function on a log compactified derived Hodge bundle (the derived projective completion of the derived Hodge bundle), and hence a log cosection on the classical projective completion of the Hodge bundle.

\subsubsection{Basic Setup}
Consider the universal curve $\bar{\pi}: \bfCgn \to \bfMgn$ over the smooth classical Deligne--Mumford stack $\bfMgn$ of stable curves of genus $g$ with $n$ marked points. Let $\w$ denote the relative dualizing sheaf of $\bar{\pi}: \bfCgn \to \bfMgn$.

\subsubsection{Derived Hodge Bundles}

The vector bundle 
$$\HB := \Spec_{\bfMgn} \Sym_{\bfMgn} \left((\bar{\pi}_*\w)^\vee\right)$$ 
over $\bfMgn$ is called the \emph{Hodge bundle}. Following this construction, we can construct a derived vector bundle 
$$\RHB := \Spec_{\bfMgn} \Sym_{\bfMgn} \left((\RR\bar{\pi}_*\w)^\vee\right)$$
over $\bfMgn$ such that $(\RHB)_\cl \cong \HB$. We call $\RHB$ the \emph{derived Hodge bundle} (see {\cite[Example~1.2.4]{KMMP25}}).

\begin{re}
    Since $(\RHB)_\cl \cong \HB$ is a classical Deligne--Mumford stack, $\RHB$ is a derived Deligne--Mumford stack.
\end{re}

\begin{re}
    There is a quasi-isomorphism $\bL_{\RHB/\bfMgn} \simeq \left((\RR\bar{\pi}_*\w)^\vee\right) \big|_{\RHB}$. Recall that $\RR\bar{\pi}_*\w$ is a perfect complex of Tor-amplitude in $[0,1]$. Hence, $\RHB$ is quasi-smooth over $\bfMgn$. Since $\bfMgn$ is smooth, we see that $\RHB$ is quasi-smooth.
\end{re}

\begin{re}
\label{rmk:der-hodge-bund-as-weil-res}

    By Proposition \ref{prop:weil-res-vect-bundle} and Proposition \ref{prop:weil-res-base-change}, there is an equivalence of derived stacks
    $$
    \RHB \cong \RW_{\bfMgn}\left( \Spec_{\bfCgn} \Sym_{\bfCgn} (\w^\vee) \ \big/ \ \bfCgn \right).
    $$
\end{re}

Recall that $H^1(\RR\bar{\pi}_*\w) = \cO_{\bfMgn}$. Hence, there is a natural morphism
$$
\kappa: \RR\bar{\pi}_*\w \longrightarrow \cO_{\bfMgn}[-1],
$$
which induces a relative $(-1)$-shifted function
$$
\kappa: \RHB \longrightarrow \bA^1_{\bfMgn}[-1] := \Spec_{\bfMgn} \Sym_{\bfMgn} \left((\cO_{\bfMgn}[-1])^\vee \right)
$$
on $\RHB$ over $\bfMgn$.

\begin{re}\label{rmk:morph-of-der-vect-bund-over-bfMgn}
    Note that $\kappa: \RHB \longrightarrow \bA^1_{\bfMgn}[-1]$ is a morphism of derived vector bundles over $\bfMgn$.
\end{re}

Applying the relative tangent complex functor, we obtain a morphism
$$
\bT \kappa \simeq \kappa\big|_{\RHB}: \bT_{\RHB/\bfMgn} \simeq (\RR\bar{\pi}_*\w) \big|_{\RHB} \longrightarrow \cO_{\RHB}[-1].
$$
Pulling back to $\HB$ and then taking $H^1$, we obtain a (relative) cosection
$$
\sigma_{\HB/\bfMgn}:= H^1(\kappa\big|_{\HB}): H^1(\bT_{\RHB/\bfMgn}\big|_{\HB}) = H^1\left((\RR\bar{\pi}_*\w) \big|_{\HB}\right) = \cO_{\HB} \overset{=}{\longrightarrow} \cO_{\HB}
$$
on $\HB$. Note that $\sigma_{\HB/\bfMgn}$ is surjective everywhere on $\HB$ by construction, so its degeneracy locus is empty.

\subsubsection{A Log Compactification of Derived Hodge Bundles}\label{ss-sec:log-comp-der-hodg-bund}

Thanks to Proposition \ref{prop:log-compact-derived-proj-completion} (or by Remark \ref{rmk:der-hodge-bund-as-weil-res} and Corollary \ref{cor:log-compact-derived-weil-restriction}), the derived Hodge bundle $\RHB$ admits a log compactification 
$$
\bP\left( \RHB \oplus 1 \right) := \Proj_{\bfMgn} \Sym_{\bfMgn} \left((\RR\bar{\pi}_*\w)^\vee \oplus \cO_{\bfMgn} \right),
$$
which is the derived projective completion of $\RHB$ (see Definition \ref{def:der-proj-complete}). 
By {\cite[Proposition~A.6.(v)]{AHPS23}}, the classical truncation $\left(\bP\left( \RHB \oplus 1 \right)\right)_\cl$ of $\bP\left( \RHB \oplus 1 \right)$ is $$\bP\left( \HB \oplus 1 \right) := \Proj_{\bfMgn} \Sym_{\bfMgn} \left((\bar{\pi}_*\w)^\vee \oplus \cO_{\bfMgn} \right),$$ which is the classical projective completion of $\HB$.

Note that the relative $(-1)$-shifted function $$\kappa: \RHB \to \bA^1_{\bfMgn}[-1]$$ over $\bfMgn$ is $\bG_{m}$-equivariant by Remark \ref{rmk:morph-of-der-vect-bund-over-bfMgn} and \cite{Monier21}. Hence, there is a $\bG_{m}$-equivariant morphism
$$
\RHB \times^h_{\bfMgn} \bA^1_{\bfMgn} \longrightarrow \bA^1_{\bfMgn}[-1] \times^h_{\bfMgn} \bA^1_{\bfMgn}
$$
over $\bfMgn$, which induces a morphism
$$
\left[ \RHB \times^h_{\bfMgn} \bA^1_{\bfMgn} \Big/ \bG_{m, \bfMgn} \right] \longrightarrow \left[ \bA^1_{\bfMgn}[-1] \times^h_{\bfMgn} \bA^1_{\bfMgn} \Big/ \bG_{m, \bfMgn} \right] \cong L_\cA[-1] \times^h \bfMgn.
$$
over $\cA \times^h \bfMgn \cong \cA \times \bfMgn$.

\begin{re}
    There is an equivalence of derived stacks $\left[ \bA^1[-1] \times^h \bA^1 \big/ \bG_{m} \right] \cong L_\cA[-1]$.
\end{re}

By Remark \ref{rmk:proj-sym-being-empty}, there is a natural open embedding
$$
\bP\left( \RHB \oplus 1 \right) \longhookrightarrow \left[ \RHB \times^h_{\bfMgn} \bA^1_{\bfMgn} \Big/ \bG_{m, \bfMgn} \right].
$$
Thus, we obtain a relative $(-1)$-shifted log function
$$
\kappa^{\log}: \bP\left( \RHB \oplus 1 \right) \longhookrightarrow \left[ \RHB \times^h_{\bfMgn} \bA^1_{\bfMgn} \Big/ \bG_{m, \bfMgn} \right] \longrightarrow L_\cA[-1] \times^h \bfMgn
$$
on $\bP\left( \RHB \oplus 1 \right)$ over $\bfMgn$. Note that $\kappa^{\log}|_{\RHB} \cong \kappa: \RHB \longrightarrow \bA^1_{\bfMgn}[-1]$ is relative $(-1)$-shifted function over $\bfMgn$ by construction.

\begin{re}\label{rmk:generic-fib-shift-univ-line-bund}
    There is a homotopy Cartesian diagram
$$
\begin{tikzcd}
{\bA^1[-1]} \arrow[r, hook] \arrow[d] \ar[dr, phantom, "\lrcorner_{\text{\tiny h}}", very near start] & {L_\cA[-1]} \arrow[d] \\
{\pt \cong [\bG_m/\bG_m]} \arrow[r, hook]           & {\cA} ,  
\end{tikzcd}
$$
where $\pt \hookrightarrow \cA$ is an open embedding.
\end{re}

Following Section \ref{sec:log-cosection}, $\kappa^{\log}$ induces a (relative) log cosection
$$
\sigma^\log_{\bP\left( \HB \oplus 1 \right)/\bfMgn}:= H^1\left(\bT^{\bfMgn-\log}\kappa^{\log}\big|_{\bP\left( \HB \oplus 1 \right)}\right) : H^1\left(\bT^{\bfMgn-\log}_{\bP\left( \RHB \oplus 1 \right)}\big|_{\bP\left( \HB \oplus 1 \right)}\right) \to  \cO_{\bP\left( \HB \oplus 1 \right)}\Big(\bP\left( \HB \right)\Big)
$$
on $\bP\left( \HB \oplus 1 \right)$ over $\bfMgn$. 

Note that $\sigma^\log_{\bP\left( \HB \oplus 1 \right)/\bfMgn}$ is surjective along $\bP\left( \HB \right)$ by construction. Moreover, we have $$\sigma^\log_{\bP\left( \HB \oplus 1 \right)/\bfMgn}\big|_{ \HB } = \sigma_{\HB/\bfMgn},$$ because $\kappa^{\log}|_{\RHB} \cong \kappa$. This implies that $\sigma^\log_{\bP\left( \HB \oplus 1 \right)/\bfMgn}$ is surjective along $\HB$. Hence, we conclude that $\sigma^\log_{\bP\left( \HB \oplus 1 \right)/\bfMgn}$ is surjective everywhere on $\bP\left( \HB \oplus 1 \right)$.

As a result, by Corollary \ref{thm:red-cosection-localized-virt-cycle} and Remark \ref{rmk:degeneracy-loci-coincide}, we see that $[\bP\left( \HB \oplus 1 \right)]^\red = 0$; by Theorem \ref{thm:decompose-virt-cycle}, we have $[\bP\left( \HB \oplus 1 \right)]^\vir = i_* [\bP\left( \HB\right)]^\red$, where $i: \bP\left( \HB\right) \hookrightarrow \bP\left( \HB \oplus 1 \right)$ is the natural closed embedding.

\begin{re}
    By considering the sequence of morphisms
$$
\bP\left( \RHB \oplus 1 \right) \to \cA \times^h \bfMgn \cong \cA \times \bfMgn \to \bfMgn,
$$
    we obtain a (co)fiber sequence
$$
\bL_{\cA}|_{\bP\left( \RHB \oplus 1 \right)} \simeq \bL_{\cA \times \bfMgn/\bfMgn}|_{\bP\left( \RHB \oplus 1 \right)} \longrightarrow \bL_{\bP\left( \RHB \oplus 1 \right)/\bfMgn} \longrightarrow \bL^{\bfMgn-\log}_{\bP\left( \RHB \oplus 1 \right)} \overset{+1}{\longrightarrow}.
$$
Thus, $\bL^{\bfMgn-\log}_{\bP\left( \RHB \oplus 1 \right)}$ is somewhat tractable due to Remark \ref{rmk:cotangent-complex-cA} and Theorem \ref{thm:derived-Euler-seq}.
\end{re}

\subsection{A Logarithmic Compactification of the Derived Moduli of Stable Maps with $p$-Fields}\label{sec:log-compact-der-moduli-p-fields}

We begin by constructing a $(-1)$-shifted function on the derived moduli of stable maps with $p$-fields (or, for short, the derived moduli of $p$-fields), and hence a cosection on the classical moduli of stable maps with $p$-fields (or, for short, the classical moduli of $p$-fields). We then construct a $(-1)$-shifted log function on a log compactified derived moduli of $p$-fields (the derived projective completion of the derived moduli of $p$-fields), and hence a log cosection on the classical projective completion of the classical moduli of $p$-fields.

\cite{OhTh24} provides an alternative construction of a (relative) $(-1)$-shifted function on the derived moduli of $p$-fields, and hence a cosection on the classical moduli of $p$-fields. See Remark \ref{rmk:alter-construct-shift-fun} for more discussion. Note that \cite{KiPa25} constructs $(-1)$-shifted locked $1$-forms on a derived enhancement of the GLSM moduli stack (see \cite[Lemma~7.8]{KiPa25}).

Note that the classical truncation of the log compactified derived moduli of $p$-fields given here does not agree with the log GLSM moduli stack in \cite[Section~4.1]{CJR21}, as mentioned in Remark \ref{rmk:class-trunc-is-not-log-glsm-moduli}.

\subsubsection{Basic Setup}

Consider the universal curve $\pi: \fCgn \to \fMgn$ over the smooth classical $1$-Artin stack $\fMgn$ of pre-stable curves of genus $g$ with $n$ marked points. Let $\W$ denote the relative dualizing sheaf of $\pi: \fCgn \to \fMgn$.

Let $\cE$ be a locally free sheaf on the $r$-dimensional projective space $\bP^r$. $E := \V_{\bP^r} (\cE^\vee) = \Spec_{\bP^r} \Sym_{\bP^r} (\cE^\vee)$ is a vector bundle over $\bP^r$. Assume that there exists a section $s: \cO_{\bP^r} \to \cE$ whose classical zero locus $Z(s)_\cl$ is a smooth complete intersection subvariety of $\bP^r$ of codimension $\rk\ \cE$.

Let $\cZ := \V_{\fCgn \times \bP^r} \left( \left( \W \boxtimes \cE^\vee \right)^\vee \right) \cong \Spec_{\fCgn \times \bP^r} \Sym_{\fCgn \times \bP^r} \left( \left( \W \boxtimes \cE^\vee \right)^\vee \right)$ denote a vector bundle over $\fCgn \times \bP^r$.

\subsubsection{Derived Moduli of $p$-Fields}

Consider the sequence of morphisms $\cZ \longrightarrow \fCgn \times \bP^r \longrightarrow \fCgn \overset{\pi}{\longrightarrow} \fMgn$. 

By Example \ref{eg:der-map-stk-is-weil-res}, there is an equivalence of derived stacks
$$
R\fW := \RH_{\fMgn}\left( \fCgn , \fMgn \times \bP^r \right) \cong \RW_{\fMgn}\left( \fCgn \times \bP^r /  \fCgn \right).
$$
The universal family of $\RH_{\fMgn}\left( \fCgn , \fMgn \times \bP^r \right)$ is the commutative diagram as follows.
$$
\begin{tikzcd}
{\cC_{R\fW} := \RH_{\fMgn}\left( \fCgn, \fMgn \times \bP^r  \right) \times^h_{\fMgn} \fCgn} \ar[d, "\widetilde{\pi}"] \ar[r, "\ev"] &{\fMgn \times \bP^r}\\
{\RH_{\fMgn}\left( \fCgn, \fMgn \times \bP^r  \right)}
\end{tikzcd}
$$
We define the morphism $f: \cC_{R\fW} \to \bP^r$ to be the composition $\cC_{R\fW} \overset{\ev}{\longrightarrow} \fMgn \times \bP^r \longrightarrow \bP^r$.

By Proposition \ref{prop:two-weil-res-equiv}, there is an equivalence of derived stacks
$$
\RW_{R\fW} \left( \cZ \times^h_{(\fCgn \times \bP^r)} \cC_{R\fW} \ \big/ \ \cC_{R\fW} \right) \cong \RW_{\fMgn}\left(\cZ/\fCgn\right).
$$
Note that the morphism $\cC_{R\fW} \to \fCgn \times \bP^r$ is the universal evaluation map of $\RW_{\fMgn}\left( \fCgn \times \bP^r /  \fCgn \right)$, which can be induced by the universal evaluation map $\ev: \cC_{R\fW} \to \fMgn \times \bP^r$ of $\RH_{\fMgn}\left( \fCgn, \fMgn \times \bP^r  \right)$, due to the following commutative diagram.
$$
\begin{tikzcd}
{\cC_{R\fW}} \ar[rrd, "\ev"] \ar[rdd] \ar[rd, dashed] \\
{} &{\fCgn \times \bP^r} \arrow[r] \arrow[d] \ar[dr, phantom, "\lrcorner", very near start] & {\fMgn \times \bP^r} \arrow[d] \\
{} &{\fCgn} \arrow[r, "\pi"]           & {\fMgn}       
\end{tikzcd}
$$
Moreover, we have equivalences of derived stacks
\begin{align*}
    \cZ \times^h_{(\fCgn \times \bP^r)} \cC_{R\fW} &\cong \V_{\cC_{R\fW}} \left( \left( \W \boxtimes \cE^\vee \right)^\vee \big|_{\cC_{R\fW}} \right) \\
    &\cong \V_{\cC_{R\fW}} \left( \left( \omega_{\cC_{R\fW}/R\fW} \otimes^{\LL}_{\cO_{\cC_{R\fW}}} \LL f^*\cE^\vee \right)^\vee \right)
\end{align*}
As a result, we have equivalences of derived stacks
\begin{align*}
    \RW_{\fMgn}\left(\cZ/\fCgn\right) &\cong \RW_{R\fW} \left( \V_{\cC_{R\fW}} \left( \left( \omega_{\cC_{R\fW}/R\fW} \otimes^{\LL}_{\cO_{\cC_{R\fW}}} \LL f^*\cE^\vee \right)^\vee \right) \Big/\cC_{R\fW} \right) \\
    &\cong \Spec_{R\fW} \Sym_{R\fW} \left( \left(\RR \widetilde{\pi}_* \left( \omega_{\cC_{R\fW}/R\fW} \otimes^{\LL}_{\cO_{\cC_{R\fW}}} \LL f^*\cE^\vee \right)\right)^\vee \right) \\
    &\cong \V_{R\fW} \left( \left(\RR \widetilde{\pi}_* \left( \omega_{\cC_{R\fW}/R\fW} \otimes^{\LL}_{\cO_{\cC_{R\fW}}} \LL f^*\cE^\vee \right)\right)^\vee \right).
\end{align*}
The second equivalence follows from Corollary \ref{cor:log-compact-derived-weil-restriction}.

By Example \ref{eg:Gromov-Witten}, there is a natural open embedding 
$$
R\cW :=\RMP \hookrightarrow R\fW := \RH_{\fMgn}\left( \fCgn, \fMgn \times \bP^r  \right),
$$
where $\RMP$ is the quasi-smooth derived Deligne--Mumford stacks of stable maps to $\bP^r$. Note that we have a commutative diagram as follows.
$$
\begin{tikzcd}
{\cC_{R\cW} := R\cW \times^h_{\fMgn}\fCgn}  \ar[dr, phantom, "\lrcorner_{\text{\tiny h}}", very near start] \arrow[rrr, shift left=1, bend left=15, "f^\st"'] \arrow[r, hook] \arrow[d, "\widetilde{\pi}^\st"] &{\cC_{R\fW}} \arrow[rr, shift right=1, bend right=15, "f"'] \arrow[r, "\ev"] \arrow[d, "\widetilde{\pi}"] & { \fMgn \times \bP^r} \arrow[r] & { \bP^r } \\
{R\cW} \arrow[r, hook] &{R\fW}        
\end{tikzcd}
$$

We then define the \emph{derived moduli $\RMP^E$ of stable maps to $\bP^r$ with $p$-fields} (or \emph{derived moduli $R\cW^E$ of $p$-fields} for simplicity) to be $ R\cW \times^h_{R\fW} \RW_{\fMgn}\left(\cZ/\fCgn\right)$, which is equivalent to
\begin{align*}
    &R\cW \times^h_{R\fW} \Spec_{R\fW} \Sym_{R\fW} \left( \left(\RR \widetilde{\pi}_* \left( \omega_{\cC_{R\fW}/R\fW} \otimes^{\LL}_{\cO_{\cC_{R\fW}}} \LL f^*\cE^\vee \right)\right)^\vee \right) \\
    \cong &\Spec_{R\cW} \Sym_{R\cW} \left( \left(\RR (\widetilde{\pi}^{\st})_* \left( \omega_{\cC_{R\cW}/R\cW} \otimes^{\LL}_{\cO_{\cC_{R\cW}}} \LL (f^{\st})^*\cE^\vee \right)\right)^\vee \right) \\
    \cong &\V_{R\cW} \left( \left(\RR (\widetilde{\pi}^{\st})_* \left( \omega_{\cC_{R\cW}/R\cW} \otimes^{\LL}_{\cO_{\cC_{R\cW}}} \LL (f^{\st})^*\cE^\vee \right)\right)^\vee \right).
\end{align*}
    By construction, there is a natural open embedding 
$$
R\cW^E \hookrightarrow \RW_{\fMgn}\left(\cZ/\fCgn\right).
$$

\begin{re}\label{re:alternative-der-moduli-p-fields}
By Serre duality, there is an equivalence of derived stacks
$$
R\cW^E \cong \V_{R\cW} \left( \left(\RR (\widetilde{\pi}^{\st})_* \left( \omega_{\cC_{R\cW}/R\cW} \otimes^{\LL}_{\cO_{\cC_{R\cW}}} \LL (f^{\st})^*\cE^\vee \right)\right)^\vee \right) \cong \V_{R\cW} \left( \left(\RR (\widetilde{\pi}^{\st})_*  \LL (f^{\st})^*\cE \right)[1] \right).
$$
Thus, our construction of derived moduli of $p$-fields agrees with \cite[Example~1.2.5]{KMMP25}.
\end{re}

\begin{re}
By Remark \ref{rmk:class-truncation-Weil-Restriction} and Example \ref{eg:Gromov-Witten}, the classical truncation $(R\cW^E)_\cl$ of $R\cW^E$ is
\begin{align*}
    \left(R\cW \times^h_{R\fW} \RW_{\fMgn}\left(\cZ/\fCgn\right)\right)_\cl &\cong \left(R\cW\right)_\cl \times_{\left(R\fW\right)_\cl} \left(\RW_{\fMgn}\left(\cZ/\fCgn\right)\right)_\cl \\
    &\cong \MP \times_{\Hom_{\fMgn}\left( \fCgn, \fMgn \times \bP^r  \right)} \Weil_{\fMgn}\left(\cZ/\fCgn\right),
\end{align*}
which agrees with the classical Deligne--Mumford stack of $p$-fields defined in {\cite[Section~1.1]{CJW21}}.  Since $(R\cW^E)_\cl$ is a classical Deligne--Mumford stack, we see that $R\cW^E$ is a derived Deligne--Mumford stack.
\end{re}

\begin{re}
    Since $\widetilde{\pi}^{\st}$ is proper, quasi-smooth, and has finite cohomological dimension at most $1$ (see Example \ref{eg:Gromov-Witten}), we see that
$$
\bL_{R\cW^E/R\cW} \simeq \left(\RR (\widetilde{\pi}^{\st})_* \left( \omega_{\cC_{R\cW}/R\cW} \otimes^{\LL}_{\cO_{\cC_{R\cW}}} \LL (f^{\st})^*\cE^\vee \right)\right)^\vee \Big|_{R\cW^E}
$$
is a perfect complex of Tor-amplitude in $[-1,0]$ by similar arguments in Example \ref{eg:Gromov-Witten}. Thus, $R\cW^E$ is quasi-smooth over $R\cW$. Since $R\cW$ is quasi-smooth by Example \ref{eg:Gromov-Witten}, we conclude that $R\cW^E$ is quasi-smooth.
\end{re}

\begin{re}
    By considering the sequence of morphisms
$$
R\cW^E \to R\cW \to \fMgn,
$$
    we obtain a (co)fiber sequence
$$
\bL_{R\cW/\fMgn}|_{R\cW^E} \longrightarrow \bL_{R\cW^E/\fMgn} \longrightarrow \bL_{R\cW^E/R\cW} \overset{+1}{\longrightarrow}.
$$
As a result, $\bL_{R\cW^E/\fMgn}$ is somewhat tractable due to Example \ref{eg:Gromov-Witten} (or Proposition \ref{prop:weil-res-tangent-complex}).
\end{re}

\subsubsection{Construction of $(-1)$-Shifted Functions}\label{ss-sec:shift-function-on-der-moduli-p-fields}

There is a natural morphism 
$$
\W \boxtimes \cE^\vee \overset{1 \boxtimes s^{\vee}}{\longrightarrow} \W \boxtimes \cO_{\bP^r} \simeq \W \big|_{\fCgn \times \bP^r}
$$
induced by the section $s: \cO_{\bP^r} \to \cE$. Hence, we obtain a morphism
$$
\cZ \cong \V_{\fCgn \times \bP^r} \left( \left( \W \boxtimes \cE^\vee \right)^\vee \right) \overset{\V_{\fCgn \times \bP^r}\left(1 \boxtimes s^{\vee}\right)}{\xrightarrow{\hspace{2cm}}} \V_{\fCgn \times \bP^r} \left( \left( \W \big|_{\fCgn \times \bP^r} \right)^\vee \right) \longrightarrow \V_{\fCgn} \left( \W^\vee \right),
$$
where the second morphism is the projection map. Applying the derived Weil restriction functor, we then obtain a morphism
$$
\RW_{\fMgn}\left(\cZ/\fCgn\right) \overset{\natural}{\longrightarrow} \RW_{\fMgn}\left(\V_{\fCgn \times \bP^r} \left( \left( \W \big|_{\fCgn \times \bP^r} \right)^\vee \right) \Big/ \fCgn\right) \longrightarrow \RW_{\fMgn}\left(\V_{\fCgn} \left( \W^\vee \right) \Big/ \fCgn\right)
$$
over $\fMgn$.

\begin{re}\label{rmk:morph-of-der-vect-bund-over-RfW}
    Note that $\natural:\RW_{\fMgn}\left(\cZ/\fCgn\right) \to \RHBPM$ is a morphism of derived vector bundles over $R\fW$.
\end{re}

\begin{re}
    There is an equivalence of derived stacks
$$
\RW_{\fMgn}\left(\V_{\fCgn} \left( \W^\vee \right) \Big/ \fCgn\right) \cong \Spec_{\fMgn} \Sym_{\fMgn} \left((\RR \pi_*\W)^\vee\right) =: \RHBP
$$
by Proposition \ref{prop:weil-res-vect-bundle}.
\end{re}

\begin{re}\label{rmk:der-Hodge-bund-over-moduli-of-maps}
    We have equivalences of derived stacks
\begin{align*}
    &\RW_{\fMgn}\left(\V_{\fCgn \times \bP^r} \left( \left( \W \big|_{\fCgn \times \bP^r} \right)^\vee \right) \Big/ \fCgn\right) \\ \cong &\RW_{R\fW} \left( \V_{\cC_{R\fW}} \left( \left( \W \big|_{\fCgn \times \bP^r} \right)^\vee \big|_{\cC_{R\fW}} \right) \Big/ \cC_{R\fW} \right) \\
    \cong &\RW_{R\fW} \left( \V_{\cC_{R\fW}} \left( \omega_{\cC_{R\fW}/R\fW}^\vee \right) \Big/ \cC_{R\fW} \right) \\
    \cong &\Spec_{R\fW} \Sym_{R\fW} \left((\RR \widetilde{\pi}_*\omega_{\cC_{R\fW}/R\fW})^\vee\right) =: \RHBPM \\
    \cong &R\fW \times^h_{\fMgn} \Spec_{\fMgn} \Sym_{\fMgn} \left((\RR \pi_*\W)^\vee\right) \\
    \cong &R\fW \times^h_{\fMgn} \RHBP.
\end{align*}
The first equivalence is due to Proposition \ref{prop:two-weil-res-equiv}; the third equivalence is due to Corollary \ref{cor:log-compact-derived-weil-restriction}; the fourth equivalence is due to the base change formula.
\end{re}

Following the construction of the relative $(-1)$-shifted function
$\kappa: \RHB \to \bA^1_{\bfMgn}[-1]$ on $\RHB$ over $\bfMgn$ given in Section \ref{sec:log-compact-der-hodge-bund}, we can construct a relative $(-1)$-shifted function
$$
\varkappa: \RHBP \longrightarrow \bA^1_{\fMgn}[-1] := \Spec_{\fMgn} \Sym_{\fMgn} \left((\cO_{\fMgn}[-1])^\vee \right)
$$
on $\RHBP$ over $\fMgn$.

\begin{re}\label{rmk:morph-of-der-vect-bund-over-fMgn}
    Note that $\varkappa: \RHBP \longrightarrow \bA^1_{\fMgn}[-1]$ is a morphism of derived vector bundles over $\fMgn$.
\end{re}

Thus, there is a relative $(-1)$-shifted function
$$
\RW_{\fMgn}\left(\cZ/\fCgn\right) \overset{\natural}{\longrightarrow} \RHBPM \cong R\fW \times^h_{\fMgn} \RHBP \longrightarrow \RHBP \overset{\varkappa}{\longrightarrow} \bA^1_{\fMgn}[-1]
$$
on $\RW_{\fMgn}\left(\cZ/\fCgn\right)$ over $\fMgn$.

Moreover, there is a natural open embedding $R\cW^E \hookrightarrow \RW_{\fMgn}\left(\cZ/\fCgn\right)$, so we finally obtain a relative $(-1)$-shifted function
$$
\ell_{/\fMgn}: R\cW^E \longhookrightarrow \RW_{\fMgn}\left(\cZ/\fCgn\right) \longrightarrow \bA^1_{\fMgn}[-1]
$$
on $R\cW^E$ over $\fMgn$.

\begin{re}
    Note that $\ell_{/\fMgn}$ is determined by the section $s: \cO_{\bP^r} \to \cE$.
\end{re}

\begin{re}\label{rmk:rel-shift-function-on-RcWE-over-RcW}
    We can construct a relative $(-1)$-shifted function on $R\cW^E$ over $R\cW$ as follows. There is a morphism
$$
R\cW^E \cong R\cW \times^h_{R\fW} \RW_{\fMgn}\left(\cZ/\fCgn\right) \overset{\natural^\st:= R\cW \times^h_{R\fW}\natural}{\xrightarrow{\hspace{1cm}}} R\cW \times^h_{R\fW} \RHBPM \cong \Spec_{R\cW} \Sym_{R\cW} \left((\RR (\widetilde{\pi}^\st)_*\omega_{\cC_{R\cW}/R\cW})^\vee\right) =: \RHBM.
$$
Moreover, there is a relative $(-1)$-shifted function
$$
\RHBM \cong R\cW \times^h_{\fMgn} \RHBP \overset{\varkappa^\st := R\cW \times^h_{\fMgn}\varkappa}{\xrightarrow{\hspace{1cm}}} R\cW \times^h_{\fMgn} \bA^1_{\fMgn}[-1] \cong \Spec_{R\cW} \Sym_{R\cW} \left((\cO_{R\cW}[-1])^\vee \right) =: \bA^1_{R\cW}[-1]
$$
on $\RHBM$ over $R\cW$, following the construction given in Section \ref{sec:log-compact-der-hodge-bund}. Therefore, we obtain a relative $(-1)$-shifted function
$$
\ell_{/R\cW}: R\cW^E \overset{\natural^\st}{\longrightarrow} \RHBM \overset{\varkappa^\st}{\longrightarrow} \bA^1_{R\cW}[-1]
$$
on $R\cW^E$ over $R\cW$. 
\end{re}

\subsubsection{Cosections and Degeneracy Loci}\label{ss-sec:cosection-degeneracy-locus}

Applying the relative tangent complex functor to $\ell_{/\fMgn}$, we obtain a morphism
$$
\bT_{R\cW^E/\fMgn} \to \bT_{\bA^1_{\fMgn}[-1]/\fMgn}|_{R\cW^E} \simeq \cO_{R\cW^E}[-1].
$$
Pulling back to the classical Deligne--Mumford stack $\MPp := \left( R\cW^E \right)_\cl$ of $p$-fields, we obtain a (relative) cosection
$$
\bT_{R\cW^E/\fMgn}|_{\MPp} \to \cO_{\MPp}[-1]
$$
of the (relative) perfect obstruction theory
$$
\bL_{R\cW^E/\fMgn}|_{\MPp} \to \bL_{\MPp/\fMgn}.
$$
Taking $H^1$, we obtain a (relative) cosection
$$
\sigma_{\MPp/\fMgn}: H^1(\bT_{R\cW^E/\fMgn}|_{\MPp}) \longrightarrow \cO_{\MPp}
$$
of the (relative) obstruction sheaf $\Ob_{\MPp/\fMgn}$. 
This (relative) cosection $\sigma_{\MPp/\fMgn}$ is precisely the cosection $\rho$ constructed in {\cite[Section~4.4]{CJW21}}. Therefore, we conclude that the (set-theoretic) degeneracy locus of $\sigma_{\MPp/\fMgn}$ is the classical Deligne--Mumford stack $\overline{\mathcal{M}}_{g,n}(Z(s)_\cl,\beta)$ of stable maps to $Z(s)_\cl$ by {\cite[Lemma~4.4.2]{CJW21}} and {\cite[Proposition~4.3.1]{CJW21}}.

\begin{re}\label{rmk:alter-construct-shift-fun}
    In \cite[Introduction]{OhTh24}, they provide an alternative construction of a (relative) $(-1)$-shifted function $\tilde{s}$ on $\mathbf{F}$ over a quasi-smooth derived Deligne--Mumford stack $\mathbf{P}$; in \cite[Section~2]{OhTh24}, they prove that the degeneracy locus of the (absolute) cosection $h^1(d\tilde{s})$ on $\mathrm{F}$ is precisely $\mathrm{M}$, assuming $\mathbf{M}$ is quasi-smooth.\footnote{We are using notation in \cite{OhTh24}.}
    In particular, the relative $(-1)$-shifted function $\ell_{/\fMgn}$ on derived moduli $R\cW^E$ of $p$-fields constructed in Section \ref{ss-sec:shift-function-on-der-moduli-p-fields} is equivalent, via Serre duality, to the relative $(-1)$-shifted function on $R\cW^E$ given in \cite[Introduction]{OhTh24}.
    Moreover, by taking $\mathbf{P}:= R\cW$, we have $\mathbf{F} \cong R\cW^E$ (see Remark \ref{re:alternative-der-moduli-p-fields}), $\mathrm{F} = \MPp$, $\mathbf{M} \cong R\overline{\mathcal{M}}_{g,n}(Z(s)_\cl,\beta)$ (see Example \ref{eg:Gromov-Witten}), and $\mathrm{M} = \overline{\mathcal{M}}_{g,n}(Z(s)_\cl,\beta)$.
\end{re}

\begin{re}
    If we drop the assumption that $Z(s)_\cl$ is smooth, then $R\overline{\mathcal{M}}_{g,n}(Z(s)_\cl,\beta)$ may fail to be quasi-smooth (see Example \ref{eg:Gromov-Witten}). As a result, the degeneracy locus of the cosection induced by $s$ may not agree with $\overline{\mathcal{M}}_{g,n}(Z(s)_\cl,\beta)$ by \cite[Section~2]{OhTh24}. A specific case is obtained by taking $Z(s)_\cl$ to be a singular quintic $3$-fold, and considering positive genus stable maps. In this case, the degeneracy locus is not even proper, and in particular is not $\overline{\mathcal{M}}_{g,n}(Z(s)_\cl,\beta)$.
\end{re}

\begin{re}($(-2)$-Shifted Symplectic Structures on $\bfMgn$)
    Let $s : \cO_{\bP^r} \to \cE$ be a section of locally free sheaf $\cE$ on $\bP^r$ such that the classical zero locus $Z(s)_\cl$ is $\pt$. Then the degeneracy locus of the (relative) cosection $\sigma_{\overline{\mathcal{M}}_{g,n}(\mathbb{P}^r,0)^E/\fMgn}$ is $\bfMgn$. Thus, by Remark \ref{rmk:virt-Lag-cycle-cosection-localized-virt-cycle}, there exists a $(-2)$-shifted symplectic derived Deligne--Mumford stack whose classical truncation is $\bfMgn$.
\end{re}

\subsubsection{A Log Compactification of Derived Moduli of $p$-Fields}\label{ss-sec:log-compact-der-moduli-p-fields}

Recall that we have
$$
R\cW^E \cong \V_{R\cW} \left( \left(\RR (\widetilde{\pi}^{\st})_* \left( \omega_{\cC_{R\cW}/R\cW} \otimes^{\LL}_{\cO_{\cC_{R\cW}}} \LL (f^{\st})^*E^\vee \right)\right)^\vee \right) \cong \V_{R\cW} \left( \left(\RR (\widetilde{\pi}^{\st})_*  \LL (f^{\st})^*E \right)[1] \right).
$$
In other words, $R\cW^E$ is a derived vector bundle over $R\cW$.
Then we define $\overline{R\cW^E}^\log$ to be
$$
\Proj_{R\cW} \Sym_{R\cW} \left( \left(\RR \left(\widetilde{\pi}^{\st}\right)_*  \LL (f^{\st})^*E \right)[1] \oplus \cO_{R\cW} \right),
$$
which is a log compactification of $R\cW^E$ by Proposition \ref{prop:log-compact-derived-proj-completion} (or Corollary \ref{cor:log-compact-derived-weil-restriction}).

\begin{re}\label{rmk:class-trunc-is-not-log-glsm-moduli}
    The classical truncation of $\overline{R\cW^E}^\log$ does not agree with the log GLSM moduli stack in \cite[Section~4.1]{CJR21}.
\end{re}

\subsubsection{Construction of $(-1)$-Shifted Log Functions}\label{ss-sec:shift-log-function-on-log-compact-der-moduli-p-fields}

Note that the relative $(-1)$-shifted function
$$
\RW_{\fMgn}\left(\cZ/\fCgn\right) \overset{\natural}{\longrightarrow} \RHBPM \cong R\fW \times^h_{\fMgn} \RHBP \longrightarrow \RHBP \overset{\varkappa}{\longrightarrow} \bA^1_{\fMgn}[-1]
$$
on $\RW_{\fMgn}\left(\cZ/\fCgn\right)$ over $\fMgn$ is $\bG_{m}$-equivariant by Remark \ref{rmk:morph-of-der-vect-bund-over-RfW}, Remark \ref{rmk:morph-of-der-vect-bund-over-fMgn}, and \cite{Monier21}. Hence, there is a $\bG_{m}$-equivariant morphism
$$
\RW_{\fMgn}\left(\cZ/\fCgn\right) \times^h_{R\fW} \bA^1_{R\fW} \longrightarrow \RHBPM \times^h_{R\fW} \bA^1_{R\fW} \longrightarrow \RHBP \times^h_{\fMgn} \bA^1_{\fMgn} \longrightarrow \bA^1_{\fMgn}[-1] \times^h_{\fMgn} \bA^1_{\fMgn}
$$
over $\fMgn$.
Consequently, there is a morphism
\begin{align*}
    &\left[ \RW_{\fMgn}\left(\cZ/\fCgn\right) \times^h_{R\fW} \bA^1_{R\fW} \Big/ \bG_{m, R\fW} \right] \longrightarrow \left[ \RHBPM \times^h_{R\fW} \bA^1_{R\fW} \Big/ \bG_{m, R\fW} \right] \longrightarrow \\
    &\left[ \RHBP \times^h_{\fMgn} \bA^1_{\fMgn} \Big/ \bG_{m, \fMgn} \right] \longrightarrow \left[ \bA^1_{\fMgn}[-1] \times^h_{\fMgn} \bA^1_{\fMgn} \Big/ \bG_{m, \fMgn} \right] \cong L_\cA[-1] \times^h \fMgn
\end{align*}
over $\cA \times^h \fMgn \cong \cA \times \fMgn$.

By Remark \ref{rmk:proj-sym-being-empty} and definition of $R\cW^E$, there is an open embedding
$$
\overline{R\cW^E}^\log \longhookrightarrow \left[ R\cW^E \times^h_{R\cW} \bA^1_{R\cW} \Big/ \bG_{m, R\cW} \right] \longhookrightarrow \left[ \RW_{\fMgn}\left(\cZ/\fCgn\right) \times^h_{R\fW} \bA^1_{R\fW} \Big/ \bG_{m, R\fW} \right].
$$
Thus, we obtain a relative $(-1)$-shifted log function
$$
\ell_{/\fMgn}^\log: \overline{R\cW^E}^\log \hookrightarrow \left[ \RW_{\fMgn}\left(\cZ/\fCgn\right) \times^h_{R\fW} \bA^1_{R\fW} \Big/ \bG_{m, R\fW} \right] \to L_\cA[-1] \times^h \fMgn
$$
on $\overline{R\cW^E}^\log$ over $\fMgn$.

\begin{re}
An alternative construction of the relative $(-1)$-shifted log function $\ell_{/\fMgn}^\log$ on $\overline{R\cW^E}^\log$ over $\fMgn$ is as follows. There is a relative $(-1)$-shifted function
$$
\ell_{/R\cW}: R\cW^E \overset{\natural^\st}{\longrightarrow} \RHBM \overset{\varkappa^\st}{\longrightarrow} \bA^1_{R\cW}[-1]
$$
on $R\cW^E$ over $R\cW$ by Remark \ref{rmk:rel-shift-function-on-RcWE-over-RcW}. Note that $\ell_{/R\cW}$ is $\bG_{m}$-equivariant (see \cite{Monier21}). Hence, there is a $\bG_{m}$-equivariant morphism
$$
R\cW^E \times^h_{R\cW} \bA^1_{R\cW} \longrightarrow \RHBM \times^h_{R\cW} \bA^1_{R\cW} \longrightarrow \bA^1_{R\cW}[-1] \times^h_{R\cW} \bA^1_{R\cW}
$$
over $R\cW$, which induces a morphism
$$
\left[ R\cW^E \times^h_{R\cW} \bA^1_{R\cW} \Big/ \bG_{m, R\cW} \right] \longrightarrow \left[ \RHBM \times^h_{R\cW} \bA^1_{R\cW} \Big/ \bG_{m, R\cW} \right] \longrightarrow \left[ \bA^1_{R\cW}[-1] \times^h_{R\cW} \bA^1_{R\cW} \Big/ \bG_{m, R\cW} \right] \cong L_\cA[-1] \times^h R\cW
$$
over $\cA \times^h R\cW$. Thus, we obtain a relative $(-1)$-shifted log function
$$
\ell_{/R\cW}^\log: \overline{R\cW^E}^\log \hookrightarrow \left[ R\cW^E \times^h_{R\cW} \bA^1_{R\cW} \Big/ \bG_{m, R\cW} \right] \to L_\cA[-1] \times^h R\cW
$$
on $\overline{R\cW^E}^\log$ over $R\cW$. Moreover, we have a commutative diagram
$$
\begin{tikzcd}
{\overline{R\cW^E}^\log} \arrow[r, hook] \arrow[rr, bend left=20, "\ell_{/R\cW}^\log"'] & {\left[ R\cW^E \times^h_{R\cW} \bA^1_{R\cW} \Big/ \bG_{m, R\cW} \right]} \arrow[r] \arrow[rd] & {L_\cA[-1] \times^h R\cW} \arrow[r] \arrow[d] \ar[dr, phantom, "\lrcorner_{\text{\tiny h}}", very near start] & {L_\cA[-1] \times^h \fMgn} \arrow[d] \\
             &                         & {\cA \times^h R\cW} \arrow[r]           & {\cA \times^h \fMgn}          .
\end{tikzcd}
$$
Note that the morphism $\left[ R\cW^E \times^h_{R\cW} \bA^1_{R\cW} \Big/ \bG_{m, R\cW} \right] \to L_\cA[-1] \times^h R\cW \to L_\cA[-1] \times^h \bfMgn$ admits a factorization
$$
\begin{tikzcd}
{\left[ R\cW^E \times^h_{R\cW} \bA^1_{R\cW} \Big/ \bG_{m, R\cW} \right]} \arrow[r, hook] \arrow[d] & {\left[ \RW_{\fMgn}\left(\cZ/\fCgn\right) \times^h_{R\fW} \bA^1_{R\fW} \Big/ \bG_{m, R\fW} \right]} \arrow[d] \\
{L_\cA[-1] \times^h R\cW} \arrow[r]           & {L_\cA[-1] \times^h \fMgn}          
\end{tikzcd}
$$
by the construction given in Remark \ref{rmk:rel-shift-function-on-RcWE-over-RcW}. As a result, we conclude that the morphism
$$
\overline{R\cW^E}^\log \overset{\ell_{/R\cW}^\log}{\longrightarrow} L_\cA[-1] \times^h R\cW \longrightarrow L_\cA[-1] \times^h \fMgn
$$
over $\cA \times^h \fMgn \cong \cA \times \fMgn$ is the relative $(-1)$-shifted log function $\ell_{/\fMgn}^\log$ on $\overline{R\cW^E}^\log$ over $\fMgn$.
\end{re}

\subsubsection{Proof of Theorem \ref{thm:intro-prototype-der-log-GLSM}}\label{ss-sec:prf-prototype-der-log-glsm}

We prove Theorem \ref{thm:intro-prototype-der-log-GLSM} by collecting the results in Section \ref{sec:log-compact-der-moduli-p-fields}.

The construction of $\overline{R\cW^E}^\log$ is given in Section \ref{ss-sec:log-compact-der-moduli-p-fields}, while the relative $(-1)$-shifted log function $\ell_{/\fMgn}^\log$ on $\overline{R\cW^E}^\log$ over $\fMgn$ is constructed in Section \ref{ss-sec:shift-log-function-on-log-compact-der-moduli-p-fields}. 

By the construction of $\ell_{/\fMgn}^\log$, its restriction $\ell_{/\fMgn}^\log\big|_{R\cW^E}$ is the relative $(-1)$-shifted function $\ell_{/\fMgn}$ on $R\cW^E$ over $\fMgn$ constructed in Section \ref{ss-sec:shift-function-on-der-moduli-p-fields}.

By Section \ref{ss-sec:cosection-degeneracy-locus} (in particular Remark \ref{rmk:alter-construct-shift-fun}), we conclude that $\ell_{/\fMgn}$ agrees with the $(-1)$-shifted function constructed \cite[Introduction]{OhTh24}, and induces precisely the (relative) cosection $\rho$ constructed in \cite[Section~4.4]{CJW21}.

\appendix
\section{Reduced Perfect Obstruction Theories in General}\label{append:1}

In Appendix \ref{append:red-POT-in-general}, we develop a general construction of reduced perfect obstruction theories via derived zero loci, inspired by \cite[Definition~4.7]{STV15}, \cite[Construction~6.18]{AKLPR22P}\footnote{Note that we refer to a specific arXiv version, not the latest or published version.} and \cite[Construction~4.2.7]{ParkNotes25}, and provide several examples.

For the sake of clarity, we include basic properties that help compute cotangent complexes in Appendix \ref{append:basic-cotang-complex}, and the computation of the cotangent complexes of derived zero loci in Appendix \ref{append:cotang-complex-der-zero-loci}.

\subsection{Basic Properties of Cotangent Complexes}\label{append:basic-cotang-complex}

\begin{lemma}\label{lem:cotangent-complex-homotopy-fib-prod}
Given a homotopy Cartesian diagram of derived Artin stacks
$$
\begin{tikzcd}
{X \times^h_S Y} \ar[dr, phantom, "\lrcorner_{\text{\tiny h}}", very near start] \arrow[r] \arrow[d] & {Y} \arrow[d] \\
{X} \arrow[r]  & {S}
\end{tikzcd}
$$
in $\DStk$, there are canonical quasi-isomorphisms
\begin{align*}
    \bL_{Y/S}|_{X \times^h_S Y} &\simeq \bL_{X \times^h_S Y/X} \\
    \bL_{X/S}|_{X \times^h_S Y} \oplus \bL_{Y/S}|_{X \times^h_S Y} &\simeq \bL_{X \times^h_S Y / S}
\end{align*}
in $\QCoh(X \times^h_S Y)$.
\end{lemma}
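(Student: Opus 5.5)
The plan is to derive both quasi-isomorphisms from two standard facts: base change for cotangent complexes along homotopy Cartesian squares, and the transitivity (co)fiber sequence for a composite of morphisms.

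For the first quasi-isomorphism, let $p: X \times^h_S Y \to Y$ and $q: X \times^h_S Y \to X$ denote the projections. The key input is that the relative cotangent complex is compatible with derived base change. Concretely, for any $T$-point of $X \times^h_S Y$ and any square-zero extension, deformations of the lift over $X$ are the same data as deformations of the composite to $Y$ over $S$, because the square is homotopy Cartesian. Comparing the functors represented by $\bL_{X\times^h_S Y/X}$ and $\LL p^*\bL_{Y/S}$ on derivations then gives the canonical map $\LL p^*\bL_{Y/S} \to \bL_{X\times^h_S Y/X}$ and shows it is an equivalence. For derived Artin stacks I would reduce this to the affine case by smooth descent of cotangent complexes, citing \cite[Lemma~1.15]{PTVV13}. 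In the affine case it is the classical formula $\bL_{B/A}\otimes^{\LL}_B (B\otimes^{\LL}_A C) \simeq \bL_{B\otimes^{\LL}_A C / C}$ for CDGAs.

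For the second quasi-isomorphism, apply the transitivity sequence to the composite $X \times^h_S Y \xrightarrow{q} X \to S$:
$$\LL q^*\bL_{X/S} \longrightarrow \bL_{X\times^h_S Y/S} \longrightarrow \bL_{X\times^h_S Y/X} \overset{+1}{\longrightarrow}.$$
By the first quasi-isomorphism, the last term is $\LL p^*\bL_{Y/S}$. It remains to split this sequence. There is a natural map $\LL p^*\bL_{Y/S} \to \bL_{X\times^h_S Y/S}$, obtained from the transitivity sequence of the composite through $Y$. Composing it with the projection onto $\bL_{X\times^h_S Y/X}$ gives exactly the base change equivalence of the first step. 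Therefore the sum of the two pullback maps
$$\LL q^*\bL_{X/S}\oplus \LL p^*\bL_{Y/S} \to \bL_{X\times^h_S Y/S}$$
is a map of (co)fiber sequences that is the identity on the outer terms. By the five lemma in the stable $\infty$-category $\QCoh(X\times^h_S Y)$, it is an equivalence.

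The main obstacle I expect is the first step for stacks that are not affine. There one has to check that the base change comparison map is compatible with smooth atlases and glues, and this is where I would rely on descent of $\bL$ for derived Artin stacks. The splitting argument itself is formal once that step is in place.
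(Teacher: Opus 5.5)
Your proof is correct. The paper gives no proof of this lemma. It records it as a standard fact, points only to Olsson's classical analogue \cite[Theorem~8.1.(ii)]{Olssonsheaves07}, and then uses it, together with the transitivity sequence, in the proof of Lemma~\ref{lem:cotangent-complex-homotopy-cartesian-diagram}. Your argument, base change followed by splitting the transitivity triangle, therefore supplies what the paper takes for granted, along the standard route.

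Two small points.

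First, the reduction to affines by descent, which you flag as the main obstacle, is not needed. Write $X' := X\times^h_S Y$. Then $X'(R)\simeq X(R)\times_{S(R)}Y(R)$ for every CDGA $R$. Hence $X'(R[M])\to X'(R)\times_{X(R)}X(R[M])$ is a base change of $Y(R[M])\to Y(R)\times_{S(R)}S(R[M])$. The spaces of relative derivations therefore agree at every point. The universal property of relative cotangent complexes of derived Artin stacks then gives the equivalence globally, with no atlases to make compatible.

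Second, the five-lemma step is cleanest in the triangulated homotopy category $h\QCoh(X')$, where squares only need to commute up to homotopy. Write $i\colon \LL q^*\bL_{X/S}\to\bL_{X'/S}$, $\pi\colon \bL_{X'/S}\to\bL_{X'/X}$, $\delta$ for the connecting map, and $j\colon \LL p^*\bL_{Y/S}\to\bL_{X'/S}$ for your splitting map. The map $(i,j)$ then gives a morphism from the split triangle to the transitivity triangle. The right-hand square commutes on the first summand because $\pi\circ i\simeq 0$. The connecting-map square commutes because $\delta\circ\pi\simeq 0$. So you never need to construct a coherent map of cofiber sequences.
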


\begin{re}
    A classical version of Lemma \ref{lem:cotangent-complex-homotopy-fib-prod} is \cite[Theorem~8.1.(ii)]{Olssonsheaves07}.
\end{re}

\begin{lemma}\label{lem:cotangent-complex-cofiber-seq}
    Let $S$ be a derived Artin stack. Given a sequence $X \to Y \to Z$ of morphisms of derived Artin stacks in $\DStk_{/S}$, there is a (co)fiber sequence
$$
\bL_{Y/Z}|_X \longrightarrow \bL_{X/Z} \longrightarrow \bL_{X/Y} \overset{+1}{\longrightarrow}
$$
    in $\QCoh(X)$.
\end{lemma}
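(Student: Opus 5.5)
The statement to prove is Lemma~\ref{lem:cotangent-complex-cofiber-seq}: the transitivity (co)fiber sequence for $X \to Y \to Z$ in $\DStk_{/S}$. My plan is to reduce to the affine case by descent, and then invoke the standard transitivity sequence for cotangent complexes of maps of CDGAs.

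First, I would recall how relative cotangent complexes of derived Artin stacks are characterized. By \cite[Corollary~2.2.3.3]{ToVe08} they exist, and $\bL_{X/Y}$ corepresents derivations: for $M \in \QCoh(X)$ connective,
$$\map_{\QCoh(X)}(\bL_{X/Y}, M) \simeq \map_{\DStk_{Y/ /X}}(X[M], X),$$
where $X[M]$ is the trivial square-zero extension. For the composite $X \to Y \to Z$ there is a fiber sequence of derivation spaces
$$\mathrm{Der}_Y(X,M) \to \mathrm{Der}_Z(X,M) \to \mathrm{Der}_Z(Y, M),$$
with the last term identified with $\map(\bL_{Y/Z}|_X, M)$. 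This holds because a lift $X[M] \to X$ over $Z$ is the same as a lift over $Y$ together with a compatibility with the composite to $Y$, which expresses a pullback of mapping spaces.

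Second, since these spaces are corepresented, Yoneda in the stable $\infty$-category $\QCoh(X)$ turns the fiber sequence of mapping spaces, natural in $M$, into a cofiber sequence
$$\bL_{Y/Z}|_X \to \bL_{X/Z} \to \bL_{X/Y}.$$
To use Yoneda I need all $M$, including nonconnective ones, or else I need to pass through the Postnikov and t-structure argument used in \cite{ToVe08}. This is the step I expect to be the main obstacle. Rather than redo it by hand, I would check the claim smooth-locally. Choose smooth atlases compatibly. Since cotangent complexes satisfy smooth descent \cite[Lemma~1.15]{PTVV13}, and the canonical maps are natural, it suffices to verify exactness after pulling back to affines $\Spec C \to \Spec B \to \Spec A$. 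There the statement is the classical transitivity cofiber sequence $\bL_{B/A}\otimes_B C \to \bL_{C/A} \to \bL_{C/B}$ for simplicial commutative rings (Quillen/Illusie, or \cite{LurieSAG}).

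Finally, I would note that $S$ plays no role beyond ambient bookkeeping: the sequence takes place in $\QCoh(X)$ and does not depend on the base $S$.
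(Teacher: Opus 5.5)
The paper states this lemma without proof, as a standard fact, and points to \cite[Theorem~8.1.(iii)]{Olssonsheaves07} for the classical version. Your first two paragraphs set up the standard derivation-theoretic argument correctly, up to notation: derivations of $X$ over $Y$ are maps $X[M]\to X$ \emph{under} $X$ and over $Y$, and the third term consists of maps $X[M]\to Y$ under $X$ and over $Z$.

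\textbf{The gap.} The problem is the step you substituted for the Yoneda argument. Descent for $\QCoh$ does let you test exactness after pulling back to a chart $\Spec C\to X$. But the pulled-back sequence $\bL_{Y/Z}|_{\Spec C}\to\bL_{X/Z}|_{\Spec C}\to\bL_{X/Y}|_{\Spec C}$ is not the affine sequence $\bL_{B/A}\otimes_B C\to\bL_{C/A}\to\bL_{C/B}$. The restriction of a stack's cotangent complex to a smooth chart differs from the chart's cotangent complex by the cotangent complex of the chart map. Relating the two needs the transitivity sequence for $\Spec C\to X\times^h_Z\Spec A\to\Spec A$, which is the lemma itself for a map from an affine to a stack, so the reduction is circular.

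A concrete example: for $\pt\to B\bG_m\to\pt$, compatible atlases give $\bG_m\to\pt\to\pt$. The affine transitivity sequence is then $0\to\Omega_{\bG_m}\to\Omega_{\bG_m}$. The sequence you must prove, pulled back to $\bG_m$, is $\cO_{\bG_m}[-1]\to 0\to\cO_{\bG_m}$. Exactness of the first says nothing about the second.

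\textbf{The fix.} Finish the argument you started; the step you flagged as the obstacle is easy, and this is the standard proof (cf.\ \cite{ToVe08}).
\begin{itemize}
\item The maps $\bL_{Y/Z}|_X\to\bL_{X/Z}\to\bL_{X/Y}$ come from functoriality. The composite has a canonical null-homotopy, since it factors through $\bL_{Y/Y}|_X\simeq 0$.
\item Exactness may be checked after pulling back along every $x\colon\Spec R\to X$, because $\QCoh(X)\simeq\lim_{\Spec R\to X}\mathrm{Mod}_R$.
\item At such a point, let $Q$ be the cofiber of the comparison map $\cofib(x^*\bL_{Y/Z}\to x^*\bL_{X/Z})\to x^*\bL_{X/Y}$. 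Your fiber sequence of derivation spaces, natural in connective $R$-modules $M$, gives $\map(Q,M)\simeq *$ for all connective $M$.
\item Since $\Omega^k\map(Q,M)\simeq\map(Q,M[-k])$, it follows that $\mathrm{Hom}(Q,N)=0$ for every cohomologically bounded-above $N$.
\item All three cotangent complexes are bounded above (under the paper's conventions they are even perfect), so $Q$ is too. Take $N=\tau^{\ge j}Q$: the truncation map $Q\to\tau^{\ge j}Q$ is then zero, yet it is an isomorphism on $H^j$. Hence $H^j(Q)=0$ for every $j$, and $Q\simeq 0$.
\end{itemize}
Your remark that $S$ plays no role is correct.
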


\begin{re}
    A classical version of Lemma \ref{lem:cotangent-complex-cofiber-seq} is \cite[Theorem~8.1.(iii)]{Olssonsheaves07}.
\end{re}

\begin{lemma}\label{lem:cotangent-complex-homotopy-cartesian-diagram}
Let $S$ be a derived Artin stack. Given a homotopy Cartesian diagram of derived Artin stacks
$$
\begin{tikzcd}
{W} \ar[dr, phantom, "\lrcorner_{\text{\tiny h}}", very near start] \arrow[r] \arrow[d] & {Y} \arrow[d] \\
{X} \arrow[r]  & {Z}
\end{tikzcd}
$$
in $\DStk_{/S}$, there is a (co)fiber square
$$
\begin{tikzcd}
{\bL_{Z/S}|_W} \arrow[r] \arrow[d] & {\bL_{X/S}|_W} \arrow[d] \\
{\bL_{Y/S}|_W} \arrow[r]           & {\bL_{W/S}}         
\end{tikzcd}
$$
in $\QCoh(W)$.
\end{lemma}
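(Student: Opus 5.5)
The idea is to reduce the square to the standard fact that relative cotangent complexes pull back along homotopy Cartesian squares, combined with the transitivity triangle of Lemma \ref{lem:cotangent-complex-cofiber-seq}.

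First, apply Lemma \ref{lem:cotangent-complex-cofiber-seq} to the two composites $W \to Y \to Z$ and $W \to X \to Z$ (working in $\DStk_{/S}$, with $S$ as base). This gives cofiber sequences
$$
\bL_{Z/S}|_W \longrightarrow \bL_{X/S}|_W \longrightarrow \bL_{X/Z}|_W \overset{+1}{\longrightarrow},
\qquad
\bL_{Y/S}|_W \longrightarrow \bL_{W/S} \longrightarrow \bL_{W/Y} \overset{+1}{\longrightarrow}.
$$
Second, apply Lemma \ref{lem:cotangent-complex-homotopy-fib-prod} to the homotopy Cartesian square. It gives a canonical equivalence $\bL_{X/Z}|_W \simeq \bL_{W/Y}$.

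The square in the statement has morphisms induced by functoriality of $\bL_{-/S}$ along $W \to X \to Z$ and $W \to Y \to Z$. Its commutativity comes from the commutativity of the original diagram over $S$. The two composites $W\to Z$ agree up to a specified homotopy, and pullback of cotangent complexes is functorial in the $\infty$-category $\DStk_{/S}$, so the two composites $\bL_{Z/S}|_W \to \bL_{W/S}$ are canonically homotopic.

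To see that the square is (co)Cartesian, it suffices, in a stable $\infty$-category, to check that the induced map on horizontal cofibers is an equivalence. The cofibers are the map $\bL_{X/Z}|_W \to \bL_{W/Y}$, and we must show this is the base-change equivalence above. That equivalence is constructed exactly as the natural map $\LL g^*\bL_{X/Z} \to \bL_{W/Y}$ induced by the morphism of pairs $(W\to Y)\to(X\to Z)$. The map between transitivity triangles is also induced by this morphism of pairs, so the two agree by naturality of the transitivity sequence.

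The main obstacle is making this compatibility precise. We need that the equivalence of Lemma \ref{lem:cotangent-complex-homotopy-fib-prod} is the same map that the morphism of transitivity triangles induces on cofibers. I would handle it by working with the functor $\bL$ on the arrow $\infty$-category: transitivity is a natural cofiber sequence of functors, and base change is a natural transformation that is an equivalence on Cartesian squares. With that formulation the identification is formal.
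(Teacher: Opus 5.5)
Your argument is correct, but it takes a different route from the paper. The paper uses the diagonal (``magic square'') trick. It writes $W\simeq X\times^h_Z Y$ as the base change of the relative diagonal $Z\to Z\times^h_S Z$ along $X\times^h_S Y\to Z\times^h_S Z$. It then gets $\bL_{Z/Z\times^h_S Z}\simeq \bL_{Z/S}[1]$ from the composite $Z\to Z\times^h_S Z\to Z$, which is the identity. Combining this with the transitivity triangle for $W\to X\times^h_S Y\to S$ and the splitting $\bL_{X\times^h_S Y/S}\simeq\bL_{X/S}\oplus\bL_{Y/S}$ gives one cofiber sequence $\bL_{Z/S}|_W\to\bL_{X/S}|_W\oplus\bL_{Y/S}|_W\to\bL_{W/S}$, which the paper then reads as the square.

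You instead view the square as a morphism between the transitivity triangles of $X\to Z\to S$ (restricted to $W$) and of $W\to Y\to S$. Cocartesianness then reduces to the base-change equivalence $\bL_{X/Z}|_W\simeq\bL_{W/Y}$ on horizontal cofibers.

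Your route needs only the base-change half of Lemma~\ref{lem:cotangent-complex-homotopy-fib-prod} together with Lemma~\ref{lem:cotangent-complex-cofiber-seq}. Its edges are the functorial pullback maps by construction, so you land on exactly the square in the statement. The compatibility you flag is formal: base change is the map induced on cofibers, by naturality of the transitivity triangle in morphisms of composable pairs. The paper's route is symmetric in $X$ and $Y$ and reduces everything to the universal case of the diagonal. However, it additionally needs the product formula and the diagonal computation. Its last step (``by simple observation'') also silently requires identifying the components of $\bL_{Z/S}|_W\to\bL_{X/S}|_W\oplus\bL_{Y/S}|_W$, up to sign, with the pullback maps. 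That is the same kind of compatibility your approach makes explicit.

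One small slip: the triangles you display come from the composites $X\to Z\to S$ and $W\to Y\to S$. Your first sentence says they come from $W\to Y\to Z$ and $W\to X\to Z$.
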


\begin{proof}
    Consider the following homotopy Cartesian diagram\footnote{Following Vakil, we may refer to such a diagram as a homotopy magic diagram.}
$$
\begin{tikzcd}
{W \cong X \times^h_Z Y} \ar[dr, phantom, "\lrcorner_{\text{\tiny h}}", very near start] \arrow[r] \arrow[d, shift left=1] & {X \times^h_S Y} \arrow[d] \\
{Z} \arrow[r]           & {Z \times^h_S Z},
\end{tikzcd}
$$
    where $Z \to Z \times^h_S Z$ is the relative diagonal over $S$.
    
    By this homotopy Cartesian diagram, there is a canonical quasi-isomorphism
$$ \bL_{W / X \times^h_S Y} \simeq \bL_{Z / Z \times^h_S Z}|_{W}.$$
    Moreover, for the morphism $W \to X \times^h_S Y$, there is a (co)fiber sequence
$$
\bL_{X \times^h_S Y / S}|_W \longrightarrow \bL_{W/S} \longrightarrow \bL_{W/X \times^h_S Y} \overset{+1}{\longrightarrow}.
$$

    Now consider the following commutative diagram, where the square diagram is a homotopy Cartesian diagram.
$$
\begin{tikzcd}
{Z} \arrow[r]    \arrow[rr, bend left, "id_Z"]       & {Z \times^h_S Z} \ar[dr, phantom, "\lrcorner_{\text{\tiny h}}", very near start] \arrow[r] \arrow[d]                   &   {Z} \arrow[d] \\
{} &{Z} \arrow[r] &{S} 
\end{tikzcd}
$$
    Here, $Z \to Z \times^h_S Z$ is the relative diagonal over $S$.
    By the homotopy Cartesian diagram above, there is a canonical quasi-isomorphism
    $$ \bL_{Z \times^h_S Z / Z} \simeq \bL_{Z / S}|_{Z \times^h_S Z}.$$
    Moreover, for the morphism $id_Z :Z \to Z \times^h_S Z \to Z$, there is a canonical quasi-isomorphism
    $$ \bL_{Z / Z \times^h_S Z} \simeq \bL_{Z \times^h_S Z / Z}|_{Z}[1].$$

    Putting previous results together, we obtain a (co)fiber sequence
$$
\bL_{X \times^h_S Y / S}|_W \longrightarrow \bL_{W/S} \longrightarrow \bL_{W/X \times^h_S Y}\simeq \bL_{Z / Z \times^h_S Z}|_{W} \simeq \bL_{Z \times^h_S Z / Z}|_{W}[1] \simeq \bL_{Z / S}|_{W}[1] \overset{+1}{\longrightarrow} .
$$
    Note that there is a canonical quasi-isomorphism
    $$\bL_{X \times^h_S Y / S} \simeq \bL_{X/S}|_{X \times^h_S Y} \oplus \bL_{Y/S}|_{X \times^h_S Y}.$$
    Thus, we obtain a (co)fiber sequence
$$
\bL_{X/S}|_{W} \oplus \bL_{Y/S}|_{W} \longrightarrow \bL_{W/S} \longrightarrow \bL_{Z / S}|_{W}[1] \overset{+1}{\longrightarrow},
$$
    which is equivalent to a (co)fiber sequence
$$
\bL_{Z / S}|_{W} \longrightarrow \bL_{X/S}|_{W} \oplus \bL_{Y/S}|_{W} \longrightarrow \bL_{W/S} \overset{+1}{\longrightarrow}.
$$

    Note that the (co)fiber sequence above is literally the (co)fiber square
$$
\begin{tikzcd}
{\bL_{Z/S}|_W} \arrow[r] \arrow[d] & {\bL_{X/S}|_{W} \oplus \bL_{Y/S}|_{W}} \arrow[d] \\
{0} \arrow[r]           & {\bL_{W/S}}   ,      
\end{tikzcd}
$$    
    which, by simple observation, is equivalent to the (co)fiber square
$$
\begin{tikzcd}
{\bL_{Z/S}|_W} \arrow[r] \arrow[d] & {\bL_{X/S}|_W} \arrow[d] \\
{\bL_{Y/S}|_W} \arrow[r]           & {\bL_{W/S}}        . 
\end{tikzcd}
$$
\end{proof}

\begin{re}
    An affine version of Lemma \ref{lem:cotangent-complex-homotopy-cartesian-diagram} is \cite[Proposition~1.2.1.16(3)]{ToVe08}; a local version of Lemma \ref{lem:cotangent-complex-homotopy-cartesian-diagram} is \cite[Lemma~1.4.1.12]{ToVe08}; an absolute version of Lemma \ref{lem:cotangent-complex-homotopy-cartesian-diagram} is implicitly used in the proof of \cite[Theorem~2.9]{PTVV13}.
\end{re}

\subsection{Cotangent Complexes of Derived Zero Loci}\label{append:cotang-complex-der-zero-loci}

\begin{definition}\label{def:def-linear-der-stk}
    Let $X$ be a derived stack, and $\cE$ be a perfect complex on $X$. The derived stack
\begin{align*}
    \V_X(\cE) : \DSchm_{/X}^{\op} &\longrightarrow \mathcal{S} \\
    (u: T \to X) &\longmapsto \map_{\QCoh(T)}\left(\LL u^* \cE, \cO_{T} \right)
\end{align*}
over $X$ is called the \emph{linear derived stack over $X$ associated to $\cE$} (see \cite[Section~4.3.(3)]{Toen09}, \cite[Section~3.3]{ToenSurvey14} and \cite{Monier21}).
\end{definition}

\begin{re}
    For simplicity, we use $E$ to denote $\V_X(\cE^\vee)$. There is a natural projection morphism $E \to X$. Moreover, the cotangent complex $\bL_{E/X}$ is quasi-isomorphic to $\cE^\vee|_E$.
\end{re}

\begin{re}\label{rmk:der-vect-bund}
    If $\cE^{\vee}$ is a perfect complex on $X$ that is of Tor-amplitude\footnote{Recall that we are using cohomological conventions.} $\le 0$, then the natural morphism $E \to X$ is representable by derived schemes. In this case, $E$ can be written as $\Spec_X \Sym_X (\cE^{\vee})$, called the \emph{derived vector bundle over $X$}. Moreover, if $\cE^{\vee}$ is of Tor-amplitude in $[-n, 0]$, then $E \to X$ is homotopically $n$-smooth.
\end{re}

\begin{definition}
    Let $X$ be a derived stack, and $\cE$ be a perfect complex on $X$. Let $s$ denote a section $\cO_X \to \cE$. Abusing notation, we also use $s$ to denote the morphism $X \to E$ induced by $s: \cO_X \to \cE$. The \emph{derived zero locus $X(s)$ of a section $s: \cO_X \to \cE$} is defined via the following homotopy Cartesian diagram.
$$
\begin{tikzcd}
{X(s)} \arrow[r] \arrow[d] \ar[dr, phantom, "\lrcorner_{\text{\tiny h}}", very near start] & {X} \arrow[d,"s"] \\
{X} \arrow[r, "0"]           & {E}          
\end{tikzcd}
$$
\end{definition}

\begin{re}
    $X(s)$ is precisely the derived zero locus $X(s^\vee)$ of the cosection $s^\vee: \cE^\vee \to \cO_X$ (see \cite[Definition~4.13]{Jiang25}).
\end{re}

\begin{proposition}\label{prop:cotangent-complex-derived-zero-loci}
    Let $X$ be a derived Artin stack, and $\cE$ be a perfect complex on $X$. Then $\bL_{X(s)}$ fits into the (co)fiber sequence
$$
\cE^{\vee}|_{X(s)} \overset{d\bar{s}}{\longrightarrow} \bL_{X}|_{X(s)} \longrightarrow \bL_{X(s)} \overset{+1}{\longrightarrow}
$$
    in $\QCoh(X(s))$. Here, the morphism $$d\bar{s}: \cE^{\vee}|_{X(s)} \to \bL_{X}|_{X(s)}$$ is induced by the morphism $$\bL s|_{X(s)}: \bL_{E}|_{X(s)} \to \bL_{X}|_{X(s)}.$$
\end{proposition}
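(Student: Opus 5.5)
The plan is to reduce the statement to the base-change fiber square for cotangent complexes and then identify the term coming from the section. I would work with the defining homotopy Cartesian square for $X(s)$, whose two legs are $s, 0 : X \to E$, where $E = \V_X(\cE^\vee)$ and $p: E\to X$ is the projection. The relative cotangent complex $\bL_{X(s)/X}$ is computed along the leg $0$: by Lemma \ref{lem:cotangent-complex-homotopy-fib-prod}, base change gives
\[
\bL_{X(s)/X}\simeq \bL_{X/E}|_{X(s)},
\]
where $X\to E$ is the zero section, and the identification is made via the other leg $s$. So $\bL_{X(s)/X}$ is the pullback of the cotangent complex of the section $s$.

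Next I compute $\bL_{X/E}$ for a section $\sigma$ of $p$ (both $0$ and $s$ qualify). Applying Lemma \ref{lem:cotangent-complex-cofiber-seq} to $X \xrightarrow{\sigma} E \xrightarrow{p} X$, whose composite is $\id_X$, gives
\[
\sigma^*\bL_{E/X}\longrightarrow \bL_{X/X}=0 \longrightarrow \bL_{X/E}\overset{+1}{\longrightarrow},
\]
hence $\bL_{X/E}\simeq \sigma^*\bL_{E/X}[1]\simeq \cE^\vee[1]$. Substituting this into the cofiber sequence for $X(s)\to X \to \pt$ yields
\[
\bL_X|_{X(s)}\to \bL_{X(s)}\to \cE^\vee|_{X(s)}[1]\overset{+1}{\longrightarrow}.
\]
Rotating this triangle gives the claimed sequence $\cE^\vee|_{X(s)}\to \bL_X|_{X(s)}\to\bL_{X(s)}$.

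The main obstacle is identifying the connecting map $\cE^\vee|_{X(s)}\to\bL_X|_{X(s)}$ with $d\bar s$, i.e.\ with the map induced by $\bL s$. For this I would use Lemma \ref{lem:cotangent-complex-homotopy-cartesian-diagram} with $S=\pt$. It gives the fiber square whose top row is $\bL_E|_{X(s)} \to \bL_X|_{X(s)}\oplus\bL_X|_{X(s)}$ via $(\bL 0,\bL s)$, with cofiber $\bL_{X(s)}$. Split $\bL_E|_X\simeq \bL_X\oplus\cE^\vee$ using the zero section, so that $\bL 0$ is the projection onto the $\bL_X$ summand. Cancelling the $\bL_X$ summand that is mapped isomorphically, the remaining map is $\bL s$ restricted to $\cE^\vee$, which is $d\bar s$ by definition. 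The only delicate point is bookkeeping the splitting compatibly with both legs: the splitting is taken along $0$, while the leg $s$ differs from $0$ by the identity on $\bL_X$ plus $d\bar s$ on $\cE^\vee$. I expect this to be a routine check in the stable $\infty$-category $\QCoh(X(s))$.
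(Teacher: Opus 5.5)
Your proposal is correct and follows essentially the same route as the paper. The paper likewise splits $\bL_E|_{X(s)}\simeq\cE^\vee|_{X(s)}\oplus\bL_X|_{X(s)}$ so that $\bL 0\simeq 0\oplus\id$ and $\bL s\simeq d\bar s\oplus\id$, and then applies Lemma~\ref{lem:cotangent-complex-homotopy-cartesian-diagram}. It arranges this as a $3\times 3$ diagram that also contains your base-change identification $\bL_{X(s)/X}\simeq\bL_{X/E}|_{X(s)}\simeq\cE^\vee|_{X(s)}[1]$, and reads off $\bL_{X(s)}\simeq\cofib(d\bar s)$ exactly as your cancellation step does; that step alone already gives the full statement, so your preliminary rotation argument is only a consistency check.
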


\begin{proof}    
    There is a (co)fiber sequence
$$
\bL_{E}|_{X(s)} \simeq \cE^{\vee}|_{X(s)} \oplus \bL_X |_{X(s)} \overset{\bL s|_{X(s)}}{\longrightarrow} \bL_{X}|_{X(s)} \longrightarrow \bL_{X/E}|_{X(s)} \simeq \cE^{\vee}|_{X(s)}[1] \overset{+1}{\longrightarrow},
$$
    which is equivalent to the (co)fiber sequence
$$
\cE^{\vee}|_{X(s)} \overset{+1[-1]}{\longrightarrow} \cE^{\vee}|_{X(s)} \oplus \bL_X|_{X(s)} \overset{\bL s}{\longrightarrow} \bL_{X}|_{X(s)} \longrightarrow .
$$
    The cofiber sequence above is literally the (co)fiber square
$$
\begin{tikzcd}
{\cE^{\vee}|_{X(s)}} \arrow[r, "+1{[-1]}"] \arrow[d] & {\cE^{\vee}|_{X(s)} \oplus \bL_X|_{X(s)}} \arrow[d, "\bL s|_{X(s)}"] \\
{0} \arrow[r]           & {\bL_X|_{X(s)}}     ,
\end{tikzcd}
$$
    which is equivalent to the (co)fiber square
$$
\begin{tikzcd}
{\cE^{\vee}|_{X(s)}} \arrow[r, "d\bar{s}"] \arrow[d, "id"] & {\bL_X|_{X(s)}} \arrow[d, "id"] \\
{\cE^{\vee}|_{X(s)}} \arrow[r, "d\bar{s}"]           & {\bL_X|_{X(s)}}   
\end{tikzcd}
$$
such that 
\begin{align}
    \label{align:1-cotangent-complex-derived-zero-loci}
    \bL s|_{X(s)} \simeq d\bar{s} \oplus id &: \cE^{\vee}|_{X(s)} \oplus \bL_X|_{X(s)} \to \bL_X|_{X(s)}; \\
    \label{align:2-cotangent-complex-derived-zero-loci}
    +1[-1] \simeq (-id , d\bar{s}) &: \cE^{\vee}|_{X(s)} \to \cE^{\vee}|_{X(s)} \oplus \bL_X|_{X(s)}.
\end{align}

    Applying Lemma \ref{lem:cotangent-complex-homotopy-cartesian-diagram} to the homotopy Cartesian diagram
$$
\begin{tikzcd}
{X(s)} \arrow[r, "i_0"] \arrow[d, "i_s"] \ar[dr, phantom, "\lrcorner_{\text{\tiny h}}", very near start] & {X} \arrow[d,"s"] \\
{X} \arrow[r, "0"]           & {E}          ,
\end{tikzcd}
$$    
    we obtain the following commutative diagram, where rows and columns are (co)fiber sequences, and the bottom right square is a (co)fiber square.
\[\begin{tikzcd}
	{0} & {\left( \bL_{X / E}[-1] \right) |_{X(s)}} & {\bL_{X(s) / X}[-1]} & {} \\
	{\left( \bL_{X / E}[-1] \right) |_{X(s)}} & {\bL_{E}|_{X(s)}} & {\bL_{X}|_{X(s)}} & {} \\
	{\bL_{X(s) / X}[-1]} & {\bL_{X}|_{X(s)}} & {\bL_{X(s)}} & {} \\
	{} & {} & {} & {}
	\arrow[from=1-1, to=1-2]
	\arrow[from=1-1, to=2-1]
	\arrow[from=1-2, to=1-3, "\simeq"]
	\arrow[from=1-2, to=2-2, "+1{[-1]}"]
	\arrow[from=1-3, to=1-4]
	\arrow[from=1-3, to=2-3]
	\arrow[from=2-1, to=2-2, "+1{[-1]}"']
	\arrow[from=2-1, to=3-1, "\simeq"]
	\arrow[from=2-2, to=2-3, "\bL s"']
	\arrow[from=2-2, to=3-2, "d0"]
	\arrow[from=2-3, to=2-4]
	\arrow[from=2-3, to=3-3, "di_0"']
	\arrow[from=3-1, to=3-2]
	\arrow[from=3-1, to=4-1]
	\arrow[from=3-2, to=3-3, "di_s"]
	\arrow[from=3-2, to=4-2]
	\arrow[from=3-3, to=3-4]
	\arrow[from=3-3, to=4-3]
\end{tikzcd}\]
    By Equation (\ref{align:1-cotangent-complex-derived-zero-loci}) and Equation (\ref{align:2-cotangent-complex-derived-zero-loci}), we in fact obtain the following commutative diagram,  where rows and columns are (co)fiber sequences, and the bottom right square is a (co)fiber square.
\[\begin{tikzcd}
	{0} & {\cE^{\vee} |_{X(s)}} & {\cE^{\vee} |_{X(s)}} & {} \\
	{\cE^{\vee} |_{X(s)}} & { \cE^{\vee}|_{X(s)} \oplus \bL_{X}|_{X(s)} } & {\bL_{X}|_{X(s)}} & {} \\
	{\cE^{\vee} |_{X(s)}} & {\bL_{X}|_{X(s)}} & {\bL_{X(s)}} & {} \\
	{} & {} & {} & {}
	\arrow[from=1-1, to=1-2]
	\arrow[from=1-1, to=2-1]
	\arrow[from=1-2, to=1-3, "\simeq"]
	\arrow[from=1-2, to=2-2, "(-\id\text{,}0)"]
	\arrow[from=1-3, to=1-4]
	\arrow[from=1-3, to=2-3, "-d\bar{s}"']
	\arrow[from=2-1, to=2-2, "(-id\text{,}d\bar{s})"']
	\arrow[from=2-1, to=3-1, "\simeq"]
	\arrow[from=2-2, to=2-3, "d\bar{s} \oplus \id"']
	\arrow[from=2-2, to=3-2, "0 \oplus \id"]
	\arrow[from=2-3, to=2-4]
	\arrow[from=2-3, to=3-3, "di_0"']
	\arrow[from=3-1, to=3-2, "d\bar{s}"]
	\arrow[from=3-1, to=4-1]
	\arrow[from=3-2, to=3-3, "di_s"]
	\arrow[from=3-2, to=4-2]
	\arrow[from=3-3, to=3-4]
	\arrow[from=3-3, to=4-3]
\end{tikzcd}\]
    Therefore, we conclude that there is a quasi-isomorphism
    $$\bL_{X(s)} \simeq \text{cofib} \left(d\bar{s}: \cE^{\vee}|_{X(s)} \to \bL_{X}|_{X(s)} \right)$$
    in $\QCoh(X(s))$.
\end{proof}

\begin{re}
    See Example \ref{eg:aff-derived-zero-locus} for an explicit description of the cotangent complex of a derived zero locus.
\end{re}

\begin{re}\label{rmk:cotangent-complx-der-quot-stk}
    Following {\cite[Example~8.6.5]{KhanNotes23}}, together with similar arguments in the proof of Proposition \ref{prop:cotangent-complex-derived-zero-loci}, one could similarly give an explicit computation of (derived) quotient stacks, assuming the group scheme is linearly reductive (see Remark \ref{rmk:cotangent-complex-cA}, Example \ref{eg:cotangent-cmplx-der-proj-space}, Example \ref{eg:1st-cotangent-complex-genuine-der-quot-stack}, and Example \ref{eg:2nd-cotangent-complex-genuine-der-quot-stack}). 
\end{re}

We list examples of derived zero loci that are related to virtual techniques in enumerative geometry (also known as virtual geometry).

\begin{example}
    
    The most important example is a local model for perfect obstruction theory appeared in \cite[Section~2]{GrPa99}. See Example \ref{eg:local-model-POT} for more details. $\blacksquare$
\end{example}

\begin{example}\label{eg:degeneracy-loci}

    The next important example is the degeneracy locus of a everywhere defined cosection introduced in \cite{KiLi13}, or more generally in \cite[Notation~6.1(i)]{AKLPR22P}. See Section \ref{sec:cosection-localization} for more details. $\blacksquare$
\end{example}

\begin{example}

    Following the Example \ref{eg:degeneracy-loci}, we construct the degeneracy locus of a log cosection as a derived zero locus. See Section \ref{sec:log-cosection} for more details. $\blacksquare$
\end{example}

\subsection{Reduced Perfect Obstruction theories via Derived Zero Loci}\label{append:red-POT-in-general}

For the rest of the appendix, $X$ is assumed to be a quasi-smooth derived Deligne--Mumford stack. 

The definition given below is inspired by \cite[Construction~6.18]{AKLPR22P}\footnote{Note that we refer to a specific arXiv version, not the latest or published version.} and \cite[Construction~4.2.7]{ParkNotes25}. 

\begin{definition}
    Let $\cE^\vee$ be a perfect complex on $X$ that is of Tor-amplitude in $[-1,0]$, and $\phi$ be a section $\cO_X \to \cE$. If the derived zero locus $X(\phi)$ is quasi-smooth, and its classical truncation $X(\phi)_\cl$ coincides with $X_\cl$, then we call $X(\phi)$ the \emph{$\phi$-reduction of $X$}, denoted as $X^{\phi-\red}$. The \emph{$\phi$-reduced perfect obstruction theory on $X_\cl$} is thus defined to be the perfect obstruction theory induced by $X^{\phi-\red}$ (see Corollary \ref{cor:qsmooth-POT}).
\end{definition}

\begin{re}\label{rmk:cofib-phi-red-POT}
    For the $\phi$-reduction $X^{\phi-\red}$ of $X$, there is a (co)fiber sequence
    $$
    \bT_{X^{\phi-\red}}|_{X_{\cl}} \longrightarrow \bT_{X}|_{X_{\cl}} \overset{(d\bar{\phi})^\vee|_{X_\cl}}{\longrightarrow} \cE|_{X_{\cl}} \overset{+1}{\longrightarrow}
    $$
    by Proposition \ref{prop:cotangent-complex-derived-zero-loci}. The \emph{$\phi$-reduced obstruction sheaf $\Ob^{\phi-\red}_{X_{\cl}}$} is defined to be $H^1(\bT_{X^{\phi-\red}}|_{X_{\cl}})$.
\end{re}

\begin{re}
    In general, without additional assumptions, $X(\phi)$ is homotopically $2$-smooth, and its classical truncation $X(\phi)_\cl$ does not coincide with $X_\cl$.
\end{re}

\begin{re}
    By definition of derived zero locus, the classical truncation $X(\phi)_\cl$ of $X(\phi)$ fits into the following classical Cartesian diagram.
$$
\begin{tikzcd}
{X(\phi)_\cl} \arrow[r] \arrow[d] \ar[dr, phantom, "\lrcorner", very near start] & {X_\cl} \arrow[d,"\phi_\cl"] \\
{X_\cl} \arrow[r, "0"]           & {E_\cl}          
\end{tikzcd}
$$
    Hence, to show that $X(\phi)_{\cl} = X_{\cl}$, it suffices to show that $\phi_{\cl} = 0: X_{\cl} \to E_{\cl}$.
\end{re}

\begin{re}\label{rmk:X-phi-red-being-qsmth}
    Assuming $X(\phi)_{\cl} = X_{\cl}$, there is a (co)fiber sequence
    $$
    \bT_{X(\phi)}|_{X_{\cl}} \longrightarrow \bT_{X}|_{X_{\cl}} \overset{(d\bar{\phi})^\vee|_{X_\cl}}{\longrightarrow} \cE|_{X_{\cl}} \overset{+1}{\longrightarrow}
    $$
    by Proposition \ref{prop:cotangent-complex-derived-zero-loci}. Hence, to show that $X(\phi)$ is quasi-smooth, by similar arguments in Remark \ref{rmk:X-red-being-qsmth}, it suffices to show that the morphism $H^1\left((d\bar{\phi})^\vee|_{X_\cl}\right): H^1(\bT_X|_{X_{\cl}}) \to H^1(\cE|_{X_{\cl}})$ is surjective so that $H^2(\bT_{X(\phi)}|_{X_\cl}) = 0$.
\end{re}

Some existing reduced perfect obstruction theories are special cases of this general construction.

\begin{example}[\cite{OkPa10, MaPa13, MPT10}; {\cite[Definition~4.7]{STV15}}{\cite[Construction~6.18]{AKLPR22P}}{\cite[Construction~4.2.7]{ParkNotes25}}]
\label{eg:origin-red-POT}

    Let $w : X \to \bA^1[-1]$ be a $(-1)$-shifted function, which is equivalent to a section $\phi: \cO_X \to \cO_X[-1] =: \cE$. Abusing notation, we also use $\phi$ to denote the morphism.
    $$X \to \bA_X^1[-1] := \Spec_X \Sym_X \left((\cO_X[-1])^{\vee}\right)$$
    induced by $\phi: \cO_X \to \cO_X[-1]$.

    Let $X(\phi)$ be the derived zero locus of the section $\phi: \cO_X \to \cO_X[-1]$, defined via the following homotopy Cartesian diagram.
$$
    \begin{tikzcd}
{X(\phi)} \arrow[r] \arrow[d] \ar[dr, phantom, "\lrcorner_{\text{\tiny h}}", very near start] & {X} \arrow[d,"\phi"] \\
{X} \arrow[r, "0"]           & {\bA_X^1[-1]}         
    \end{tikzcd}
$$
    Note that the classical truncation $X(\phi)_{\cl}$ of $X(\phi)$ coincides with $X_{\cl}$, because the classical truncation of $\phi: X \to \bA_X^1[-1]$ is the identity morphism $\id: X_\cl \to X_\cl$.

    Note that the cosection $\sigma:= H^1\left(\bT w|_{X_\cl}\right) : \Ob_{X_{\cl}} := H^1(\bT_X|_{X_{\cl}}) \to \cO_{X_{\cl}}$ associated with $w : X \to \bA^1[-1]$ is precisely the morphism $H^1\left((d\bar{\phi})^\vee|_{X_\cl}\right): H^1(\bT_X|_{X_{\cl}}) \to H^1(\cE|_{X_{\cl}}) = \cO_{X_{\cl}}$. Thus, if the cosection $\sigma: \Ob_{X_{\cl}} \to \cO_{X_{\cl}}$ is surjective everywhere on $X_{\cl}$, then $X(\phi)$ is quasi-smooth by Remark \ref{rmk:X-phi-red-being-qsmth}, and hence $X(\phi)$ induces the desired reduced perfect obstruction theory on $X_{\cl}$ by Remark \ref{rmk:cofib-phi-red-POT}.
    $\blacksquare$
\end{example}

\begin{example}[{\cite[Section~6]{CJR21}}; Section \ref{sec:red-vir-cycle}]
\label{eg:alternate-construct-X-red}

Assume that $X$ is equipped with a morphism $\alpha_X: X \to \cA$. In other words, $(X, \alpha_X)$ is a quasi-smooth derived log Deligne--Mumford stack. 

$T[-1]\cA := \Spec_\cA \Sym_\cA \left( \left( \bT_\cA[-1] \right)^\vee \right)$ is the $(-1)$-shifted tangent bundle over $\cA$. We construct the derived vector bundle $E$ over $X$ via the following homotopy Cartesian diagram.
$$
    \begin{tikzcd}
{E} \arrow[r] \arrow[d] \ar[dr, phantom, "\lrcorner_{\text{\tiny h}}", very near start] & {T[-1]\cA} \arrow[d] \\
{X} \arrow[r, "\alpha_X"]           & {\cA}       
    \end{tikzcd}
$$
Thus, we have $$E \cong \Spec_X \Sym_X \left( \left( \bT_\cA[-1]|_X \right)^\vee \right),$$ where $\bT_\cA[-1]|_X$ is quasi-isomorphic to $$\left[s_{X}^\vee:= s_\cA^\vee|_{X}: \cO_{X} \to \cO_{X}(\D_{X}) \right],$$ which is a perfect complex of Tor-amplitude in $[0,1]$.

Let $w_\alpha : X \to L_{\cA}[-1]$ be a $(-1)$-shifted log function on $(X, \alpha_X)$, which is equivalent to a section $\cO_X \to \cO_X(\D_X)[-1]$. Hence, $w_\alpha$ induces a section $$\phi: \cO_X \to \cO_X(\D_X)[-1] \to \bT_\cA[-1]|_X =: \cE.$$ Abusing notation, we also use $\phi$ to denote the morphism $X \to E$ induced by $\phi: \cO_X \to \cE$.

Let $X(\phi)$ be the derived zero locus of the section $\phi: \cO_X \to \cE$, defined via the following homotopy Cartesian diagram.
$$
    \begin{tikzcd}
{X(\phi)} \arrow[r] \arrow[d] \ar[dr, phantom, "\lrcorner_{\text{\tiny h}}", very near start] & {X} \arrow[d,"\phi"] \\
{X} \arrow[r, "0"]           & {E}       
    \end{tikzcd}
$$
     Note that $\phi_\cl=0:X_\cl \to E_\cl$, so that the classical truncation $X(\phi)_\cl$ of $X(\phi)$ coincides with $X_\cl$.  By Remark \ref{rmk:X-phi-red-being-qsmth}, there is a (co)fiber sequence
$$
\bT_{X(\phi)}|_{X_{\cl}} \longrightarrow \bT_{X}|_{X_{\cl}} \overset{(d\bar{\phi})^{\vee}|_{X_{\cl}}}{\longrightarrow} (\bT_\cA[-1])|_{X_{\cl}} \overset{+1}{\longrightarrow}.
$$
    Note that $\cE|_{X_\cl} := \bT_\cA[-1]|_{X_{\cl}}$ is quasi-isomorphic to $\bF$ in Proposition \ref{prop:red-POT-log-GLSM}.
    
    Note that the morphism $$H^1\left((d\bar{\phi})^{\vee}|_{X_{\cl}}\right): H^1\left(\bT_X|_{X_\cl}\right) \to H^1\left((\bT_\cA[-1])|_{X_{\cl}}\right) = H^1(\bF)$$ factors through the morphism $$\sigma^\tw := H^1\left(\bT w_\alpha|_{X_\cl}\right) : H^1\left(\bT_X|_{X_\cl}\right) \to \cO_{X_\cl}(\D_{X_\cl}),$$ because $\phi: \cO_X \to \bT_\cA[-1]|_X$ is defined to factor through the section $\cO_X \to \cO_X(\D_X)[-1]$ induced by $w_\alpha: X \to L_{\cA}[-1]$.
    Thus, if the morphism $\sigma^\tw :  H^1\left(\bT_X|_{X_\cl}\right) \to \cO_{X_\cl}(\D_{X_\cl})$ is surjective along $(\D_{X_{cl}})_\cl$, then $H^1\left((d\bar{\phi})^{\vee}|_{X_{\cl}}\right): H^1\left(\bT_X|_{X_\cl}\right) \to H^1\left((\bT_\cA[-1])|_{X_{\cl}}\right)$ is surjective everywhere on $X_{\cl}$. Thus, $X(\phi)$ is quasi-smooth by Remark \ref{rmk:X-phi-red-being-qsmth}, and hence $X(\phi)$ induces the desired reduced perfect obstruction theory on $X_{\cl}$ by Remark \ref{rmk:cofib-phi-red-POT}.
    $\blacksquare$
\end{example}

\begin{example}[{\cite[Section~6]{CJR21}}; Section \ref{sec:red-bound-cycle}]

    Assume that $X$ is equipped with a morphism $\alpha_X: X \to \cA$. In other words, $(X, \alpha_X)$ is a quasi-smooth derived log Deligne--Mumford stack. 
    
    Let $w_\alpha : X \to \cL_{\cA}[-1]$ be a $(-1)$-shifted log function on $(X, \alpha_X)$, which is equivalent to a section $\phi: \cO_X \to \cO_X(\D_X)[-1] =: \cE$. Abusing notation, we also use $\phi$ to denote the morphism.
    $$X \to \Tot_X\left(\cO_X(\D_X)\right)[-1] := \Spec_X \Sym_X \left(\cO_X(-\D_X)[1]\right)$$
    induced by $\phi: \cO_X \to \cO_X(\D_X)[-1]$.
    
    Let $X(\phi)$ be the derived zero locus of the section $\phi: \cO_X \to \cO_X(\D_X)[-1]$, defined via the following homotopy Cartesian diagram.
$$
    \begin{tikzcd}
{X(\phi)} \arrow[r] \arrow[d] \ar[dr, phantom, "\lrcorner_{\text{\tiny h}}", very near start] & {X} \arrow[d,"\phi"] \\
{X} \arrow[r, "0"]           & {\Tot_X\left(\cO_X(\D_X)\right)[-1]}       .  
    \end{tikzcd}
$$
    The rest of the arguments are similar to those in Example \ref{eg:origin-red-POT}.
    $\blacksquare$
\end{example}

\begin{example}\label{eg:phi-red-POT}
    Let $\phi$ be a section $\cO_X \to \cE := \cO_X^{\oplus r_0} \oplus \left(\cO_X^{\oplus r_1}[-1]\right)$. Assume the $\phi$-reduction $X^{\phi-\red}$ of $X$ exists. Applying $H^0$ to the (co)fiber sequence in Remark \ref{rmk:cofib-phi-red-POT}, we obtain a long exact sequence
\[\begin{tikzcd}[column sep=4.5em, row sep=1em]
	0 & H^0(\bT_{X^{\phi-\red}}|_{X_{\cl}}) & H^0(\bT_{X}|_{X_{\cl}}) & \cO_{X_{\cl}}^{\oplus r_0} &{}\\
	{}& H^1(\bT_{X^{\phi-\red}}|_{X_{\cl}}) & H^1(\bT_{X}|_{X_{\cl}}) & \cO_{X_{\cl}}^{\oplus r_1} & 0    .
	\arrow[from=1-1, to=1-2]
	\arrow[from=1-2, to=1-3, "="]
	\arrow[from=1-3, to=1-4, "0"]
	\arrow[from=2-1, to=2-2]
	\arrow[from=2-2, to=2-3]
	\arrow[from=2-3, to=2-4]
    \arrow[from=2-4, to=2-5]
\end{tikzcd}\]
    Note that the isomorphism $H^0(\bT_{X^{\phi-\red}}|_{X_{\cl}}) = H^0(\bT_{X}|_{X_{\cl}})$ follows from Proposition \ref{prop:cotangent-complex-obstruction-theory}. We then see that $\phi$-reduced obstruction sheaf $\Ob^{\phi-\red}_{X_{\cl}} := H^1(\bT_{X^{\phi-\red}}|_{X_{\cl}})$ depends on the obstruction sheaf $\Ob_{X_{\cl}} := H^1(\bT_{X}|_{X_{\cl}})$ and $\cE|_{X_\cl}$. Note that there is an upper bound for $r_1$.
    
    If the virtual rank of $\cE|_{X_\cl}$ is negative, i.e. $r_0 < r_1$, then the virtual dimension of $X^{\phi-\red}$ is greater than the virtual dimension of $X$. This could be helpful when the virtual dimension of $X$ is negative (the virtual cycle is then immediately trivial), such as the quasi-smooth derived Deligne--Mumford stacks of higher genus ($\ge 2$) stable maps to higher dimensional ($\ge 4$) smooth projective Calabi-Yau varieties.
    $\blacksquare$
\end{example}

\section{Computation of Dual Logarithmic Obstruction Sheaves}\label{append:2}

For a quasi-smooth derived log Deligne--Mumford stack $(X, \alpha_X)$, we have 
\begin{align*}
    H^{-1}(\bLL_X|_{X_\cl}) = &\left(H^1(\bTT_X|_{X_\cl})\right)^\vee = \left(Ob^\log_{X_\cl}\right)^\vee, \\
    H^{-1}(\bL_X|_{X_\cl}) = &\left(H^1(\bT_X|_{X_\cl})\right)^\vee = \left(Ob_{X_\cl}\right)^\vee.
\end{align*}
The purpose of this appendix is to compare $\left(Ob^\log_{X_\cl}\right)^\vee$ and $\left(Ob_{X_\cl}\right)^\vee$. Although they agree in a special case, they differ in general.

For simplicity, we work with Example \ref{eg:log-cotangent-complex-aff-derived-log-schm} throughout the appendix. Note that Example \ref{eg:log-cotangent-complex-aff-derived-log-schm} is the continuation of Example \ref{eg:aff-derived-zero-locus}.

By Example \ref{eg:aff-derived-zero-locus}, $\bL_{Z(s)}|_{Z(s)_\cl}$ is quasi-isomorphic to
$$
\cO_{Z(s)_\cl}\langle d\nu_1,d\nu_2\rangle \overset{d\nu_j \mapsto df_j}{\longrightarrow} \cO_{Z(s)_\cl}\langle dx_1,dx_2,dx_3\rangle,
$$
which is a perfect complex of Tor-amplitude in $[-1,0]$.

By Example \ref{eg:log-cotangent-complex-aff-derived-log-schm}, $\bLL_{Z(s)}|_{Z(s)_\cl}$ is quasi-isomorphic to the total complex of the bi-complex
$$
\begin{tikzcd}
{\cO_{Z(s)_\cl}\langle d\lambda \rangle} \arrow[r]           & {0}           \\
{\cO_{Z(s)_\cl}\langle d\mu \rangle} \arrow[u, "d\mu \mapsto x_3d\lambda"] \arrow[r, "d\mu \mapsto dx_3"] & {\cO_{Z(s)_\cl}\langle dx_1,dx_2,dx_3\rangle} \arrow[u] \\
{0} \arrow[u] \arrow[r] & {\cO_{Z(s)_\cl}\langle d\nu_1,d\nu_2\rangle} \arrow[u,"d\nu_j \mapsto df_j"]     ,
\end{tikzcd}
$$
which is a perfect complex of Tor-amplitude in $[-1,0]$.

By direct computation, we have
\begin{align*}
H^{-1}(\bL_{Z(s)}|_{Z(s)_\cl}) &=
\left\{
b_1d\nu_1+b_2d\nu_2 \; \middle| \;
\begin{array}{l}
b_1,b_2 \in \cO_{Z(s)_\cl},  \\
b_1df_1+b_2df_2 = 0 \in \cO_{Z(s)_\cl}\langle dx_1,dx_2,dx_3\rangle
\end{array}
\right\}, \\[1ex]
H^{-1}(\bLL_{Z(s)}|_{Z(s)_\cl}) &=
\left\{
(ad\mu, b_1d\nu_1+b_2d\nu_2) \; \middle| \;
\begin{array}{l}
a,b_1,b_2 \in \cO_{Z(s)_\cl},  \\
ax_3d\lambda = 0 \in \cO_{Z(s)_\cl}\langle d\lambda \rangle,  \\
-adx_3+(b_1df_1+b_2df_2) = 0 \in \cO_{Z(s)_\cl}\langle dx_1,dx_2,dx_3\rangle
\end{array}
\right\}.
\end{align*}

\textbf{Case $1$}. If $x_3$ is a non-zero-divisor in $\cO_{Z(s)_\cl}$, then $ax_3d\lambda=0$ implies that $a = 0$. Thus, we see that $$H^{-1}(\bLL_{Z(s)}|_{Z(s)_\cl}) = H^{-1}(\bL_{Z(s)}|_{Z(s)_\cl}).$$

\begin{re}
    If $x_3$ is a non-zero-divisor in $\cO_{Z(s)_\cl}$ (i.e. $\D_{Z(s)_{cl}}$ is necessarily a classical scheme), then $\bLL_{Z(s)}|_{Z(s)_\cl}$ is quasi-isomorphic to the perfect complex
$$
\cO_{Z(s)_\cl}\langle d\nu_1,d\nu_2\rangle \overset{d\nu_j \mapsto df_j}{\longrightarrow} \cO_{Z(s)_\cl}\langle dx_1,dx_2,d\log x_3\rangle,
$$
    that is of Tor-amplitude in $[-1,0]$.
\end{re}

\textbf{Case $2$}. If $f_1=f_2=x_3^2$ (hence $x_3$ is a zero-divisor in $\cO_{Z(s)_\cl}$), then we have
$$
H^{-1}(\bLL_{Z(s)}|_{Z(s)_\cl}) = 
\left\{
(ad\mu, b_1d\nu_1+b_2d\nu_2) \; \middle| \;
a = 2(b_1+b_2)x_3 \in \cO_{Z(s)_\cl}
\right\}
\ne
H^{-1}(\bL_{Z(s)}|_{Z(s)_\cl}).
$$

\textbf{Case $3$}. If $f_1=x_2x_3$ and $f_2=x_2^3$ (hence $x_3$ is a zero-divisor in $\cO_{Z(s)_\cl}$), then we have
$$
H^{-1}(\bLL_{Z(s)}|_{Z(s)_\cl}) = 
\left\{
(ad\mu, b_1d\nu_1+b_2d\nu_2) \; \middle| \;
\begin{array}{l}
a = b_1x_2 \in \cO_{Z(s)_\cl},   \\
b_1x_3+3b_2x_2^2 =0 \in \cO_{Z(s)_\cl}
\end{array}
\right\}
\ne
H^{-1}(\bL_{Z(s)}|_{Z(s)_\cl}).
$$

\bibliographystyle{alpha}
\bibliography{biblio}

\end{document}